\newcommand{\tcr}[1]{\textcolor{black}{#1}}
\newcommand{\sspan}{\operatorname{span}}
\newcommand{\scol}{\operatorname{col}}
\newcommand{\fF}{\mathbb F}  
\newcommand{\mapsfrommy}{=} 
\newcommand{\kgp}[1]{k_{{\mathbf #1},gen}'}
\newcommand{\ML}{ML rank-$(1,L_r,L_r)$ }
\newcommand{\MLatmostLL}[1]{max ML rank-$(1,{#1},{#1})$ }
\newcommand{\MLatmost}{\MLatmostLL{L_r}}
\newcommand{\Bdiag}{\operatorname{blockdiag}} 
\newcommand{\nullsp}[1]{\operatorname{Null}\left( #1 \right) }
\newcommand{\unf}[2]{{\mathbf  #1}_{(#2)}}
\newcommand{\wprod}[2]{{#1}\wedge {#2}}
\newcommand{\symprod}[2]{{#1}\cdot {#2}}
\newcommand{\rubinom}[2]{C_{#1}^{#2}} 
\newcommand{\vecsym}[1]{\operatorname{vec}{(\fF^{#1\times #1}_{sym})}} 
\newcommand{\matrixU}{\textcolor{black}{\mathbf V}}
\newcommand{\vectoru}{\textcolor{black}{\mathbf v}}
\newcommand{\numberx}{d_1}
\newcommand{\numbery}{d_2}
\newcommand{\vectora}{\mathbf f}
\newcommand{\numbera}{f}
\newcommand{\vectorb}{\mathbf g}
\newcommand{\numberb}{g}
\newcommand{\vectorc}{\mathbf h}
\newcommand{\numberc}{h}
\newcommand{\parameterhone}{p_1}
\newcommand{\parameterhtwo}{p_2}
\newcommand{\valuep}{\alpha}
\newcommand{\valuehthree}{\beta}
\newcommand{\valuehfour}{\gamma}
\newcommand{\valued}{\delta}
\newcommand{\valueqone}{\tau_1}
\newcommand{\valueqtwo}{\tau_2}
\newcommand{\valueqthree}{\tau_3}
\newcommand{\valueqfour}{\tau_4}
\newcommand{\valueyone}{q_1}
\newcommand{\valueytwo}{q_2}
\newcommand{\valuezone}{r_1}
\newcommand{\valueztwo}{r_2}
\newcommand{\valuetone}{s_1}
\newcommand{\valuettwo}{s_2}
\newcommand{\valueu}{u}
\newcommand{\values}{t}
\newcommand{\valuev}{v}
\newcommand{\valuew}{w}
\newlist{assumptions}{enumerate}{10}
\setlist[assumptions]{label*=\alph*)}
\crefname{assumptionsi}{assumption}{assumptions}
\Crefname{assumptionsi}{Assumption}{Assumptions}
\newlist{conditions}{enumerate}{10}
\setlist[conditions]{label*=\alph*)}
\crefname{conditionsi}{condition}{conditions}
\Crefname{conditionsi}{Condition}{Conditions}
\newlist{statements}{enumerate}{10}
\setlist[statements]{label*=\arabic*)}
\crefname{statementsi}{statement}{statements}
\Crefname{statementsi}{Statement}{Statements}
\newcommand{\algorithmicinput}{\textbf{Input:}}
\newcommand{\INPUT}{\item[\algorithmicinput]}
\newcommand{\algorithmicoutput}{\textbf{Output:}}
\newcommand{\OUTPUT}{\item[\algorithmicoutput]}
\numberwithin{theorem}{section}
\newcommand{\TheTitle}{On uniqueness and computation of the decomposition of a  tensor into   multilinear rank-$(1, L_{\lowercase{r}},L_{\lowercase{r}})$ terms} 
\newcommand{\TheAuthors}{I. Domanov  and L. De Lathauwer}
\title{{\TheTitle}\thanks{Submitted to the editors DATE.
\funding{This work was funded by (1) Research Council KU Leuven: C1 project c16/15/059-nD; (2) the Flemish Government under the ``Onderzoeksprogramma Artifici\"ele Intelligentie (AI) Vlaanderen'' programme; (3) F.W.O.:  project  G.0830.14N, G.0881.14N, 
	 G.0F67.18N (EOS SeLMA); (4)  EU: The research leading to these results has received funding from the European Research Council under the European Union's Seventh Framework Programme (FP7/2007-2013) / ERC Advanced Grant: BIOTENSORS (no.  339804). This paper reflects only the authors' views and the Union is not liable for any use that may be made of the contained information.}}}
\author{
	Ignat Domanov\thanks{
		Group Science, Engineering and Technology, KU Leuven - Kulak,
		E. Sabbelaan 53, 8500 Kortrijk, Belgium and
		Dept. of Electrical Engineering  ESAT/STADIUS KU Leuven,
		Kasteelpark Arenberg 10, bus 2446, B-3001 Leuven-Heverlee, Belgium
		(\email{ignat.domanov@kuleuven.be}, \email{lieven.delathauwer@kuleuven.be}).}
	\and
	Lieven De Lathauwer\footnotemark[2]
}
\DeclareMathOperator{\diag}{diag}
\begin{document}

\maketitle

\begin{abstract}
Canonical Polyadic Decomposition (CPD) represents a third-order tensor as the minimal sum of rank-$1$ terms.
Because of its uniqueness properties the  CPD has found many concrete applications
in telecommunication, array processing, machine learning, etc.
On the other hand, in several applications the rank-$1$ constraint on the terms is too restrictive. A multilinear rank-$(M,N,L)$ constraint  (where a rank-$1$ term is the special case for which $M=N=L=1$) could be more realistic, while it still yields a decomposition with attractive uniqueness properties.

In this paper  we focus on the decomposition of a tensor $\mathcal T$ into a sum of  multilinear rank-$(1,L_r,L_r)$ terms, $r=1,\dots,R$.
This particular decomposition type has already found  applications in wireless communication, chemometrics and the  blind signal separation of signals that can be modelled as exponential polynomials and rational functions.
We find  conditions on the terms which guarantee that  the decomposition is  unique and can be computed by means of
the eigenvalue decomposition of a matrix \tcr{even in the cases where none of the factor matrices has full column rank}.  We consider both the case where the  decomposition is exact and the case where the decomposition holds only approximately. We show that in both cases the number of the terms $R$ and their ``sizes'' $L_1,\dots,L_R$ do not have to be known a priori and can be estimated as well. The conditions for uniqueness are easy to verify, especially  for terms that can be considered ``generic''.
 \tcr{In particular, we obtain  the following two generalizations of  a well known result on generic uniqueness of the CPD (i.e., the case $L_1=\dots=L_R=1$): we show that the multilinear rank-$(1,L_r,L_r)$ decomposition of an $I\times J\times K$ tensor is generically unique if i)  $L_1=\dots=L_R=:L$ and $R\leq \min((J-L)(K-L),I)$ or if ii) $\sum L_R\leq \min((I-1)(J-1),K)$ and $J\geq \max(L_i+L_j)$. }
\end{abstract}

\begin{keywords}
  multilinear algebra, third-order tensor, block term decomposition, multilinear rank, signal separation, factor analysis, eigenvalue decomposition, uniqueness 
\end{keywords}

\begin{AMS}
   15A23, 15A69
\end{AMS}

\section{Introduction}
\subsection{Terminology and problem setting}\label{sec:theveryfirstsubsection}
Throughout the paper $\fF$ denotes the field of real or complex numbers. 

By definition, a  tensor $\mathcal T=(t_{ijk})\in\fF^{I\times J\times K}$     is  {\em multiLinear rank-$(1,L,L)$ (ML rank-$(1,L,L)$)} if it equals the outer product of a nonzero vector $\mathbf a\in\fF^I$ and a rank-$L$  matrix $\mathbf E=(e_{ij})\in\fF^{J\times K}$:
$\mathcal T=\mathbf a\circ\mathbf E$, which means that 	$t_{ijk}=a_i e_{jk}$ for all values of indices. If it is only known that the rank of $\mathbf E$  is bounded by $L$, then 
we say that $\mathcal T=\mathbf a\circ\mathbf E$ is ML rank at most $(1,L,L)$ and write  ``$\mathcal T$ is \MLatmostLL{L}''.

In this paper we study the  {\em decomposition} of $\mathcal T\in\fF^{I\times J\times K}$ into a sum of such
 terms of \MLatmost\footnote{\tcr{The results of this paper can also be applied for the decomposition into a sum of max ML rank-$(L_r,1,L_r)$  (resp. -$(L_r,L_r,1)$) terms by switching the first and second (resp. third) dimensions of $\mathcal T$.}}:
 \begin{equation}
\mathcal T = \sum_{r=1}^R\mathbf a_r\circ\mathbf E_r, \qquad \mathbf a_r\in\fF^I\setminus\{\mathbf 0\},\qquad \mathbf E_r\in\fF^{J\times K},\qquad 
r_{\mathbf E_r}\leq L_r,
\label{eq:LrLr1}
\end{equation}
where $\mathbf 0$ denotes the zero vector and $r_{\mathbf E_r}$ denotes the rank of $\mathbf E_r$. If exactly $r_{\mathbf E_r}= L_r$ for all $r$, then we call \cref{eq:LrLr1}     \tcr{``the   decomposition of $\mathcal T$ into a sum of `\ML terms'' or, briefly, its ``\ML decomposition''}.

In this paper we study the uniqueness and computation of \cref{eq:LrLr1}. For uniqueness we use the following basic definition.

\begin{definition}\label{def:unic_dec} 
	Let $L_1,\dots, L_R$ be fixed positive integers.
The decomposition of  $\mathcal T$ into a sum of \MLatmost  terms
is   {\em unique} if for any two decompositions of the form \cref{eq:LrLr1} one can be obtained from another by a permutation of summands. 
\end{definition}
\tcr{Thus, the uniqueness is not affected by the trivial ambiguities in \cref{eq:LrLr1}:   permutation of  the \MLatmost terms and (nonzero) scaling/counterscaling
$\lambda\mathbf a_r$ and  $\lambda^{-1}\mathbf E_r$.} \tcr{\Cref{def:unic_dec} implies that if the decomposition is  unique, then it is necessarily  minimal, that is, if \cref{eq:LrLr1} holds with $r_{\mathbf E_r}=L_r$, then a decomposition of the form 
\cref{eq:LrLr1} with smaller $L_r$ does not exist, in particular, a decomposition with smaller number of terms does not exist.}

We will not only investigate  the ``global'' uniqueness of decomposition \cref{eq:LrLr1} but also particular instances of ``partial'' uniqueness. Let us call the matrix 
$$
\mathbf A=[\mathbf a_1\ \dots\ \mathbf a_R]
$$
{\em the first factor matrix} of the decomposition of  $\mathcal T$ into a sum of \MLatmost  terms. 
For uniqueness of $\mathbf A$, we will resort to the following definition.
\begin{definition}  \label{Def:overall_uniqueness}
	Let $L_1,\dots, L_R$ be fixed positive integers.
The first factor matrix of  the decomposition of  $\mathcal T$ into a sum of \MLatmost  terms is {\em unique} if for any two decompositions of the form \cref{eq:LrLr1}  
their first factor matrices  coincide up to column permutation and  \tcr{(nonzero)} scaling.
\end{definition}
It follows from \cref{Def:overall_uniqueness} that if $\mathcal T$ admits a decomposition of the form \cref{eq:LrLr1} with fewer than $R$ terms, then the first factor matrix   is not unique. On the other hand, as a preview of one result, \cref{example:1} will illustrate
that the first factor matrix may be unique without the overall ML rank decomposition being unique.

\Cref{def:unic_dec,Def:overall_uniqueness} concern deterministic forms of uniqueness. We will also develop generic uniqueness results.
To make the rank constraints $r_{\mathbf E_r}\leq L_r$ in \cref{eq:LrLr1}  easier to handle and to present the definition of generic uniqueness, we 
factorize $\mathbf E_r$ as $\mathbf B_r\mathbf C_r^T$, where the matrices $\mathbf B_r\in\fF^{J\times L_r}$ and $\mathbf C_r\in\fF^{K\times L_r}$   are rank at most $L_r$.
Thus, \cref{eq:LrLr1} can be rewritten as
\begin{equation}
\begin{gathered}
\mathcal T = \sum_{r=1}^R\mathbf a_r\circ(\mathbf B_r\mathbf C_r^T),\\ \mathbf a_r\in\fF^I\setminus\{\mathbf 0\},\ \mathbf B_r\in\fF^{J\times L_r},\ \mathbf C_r\in\fF^{K\times L_r},\ r_{\mathbf B_r}\leq L_r,\ r_{\mathbf C_r}\leq L_r,\ r= 1,\dots, R. 
\end{gathered}
 \label{eq:LrLr1mainBC}
\end{equation}
Throughout the paper, we set
\begin{align*}
&\mathbf B=[\mathbf B_1\ \dots\ \mathbf B_R]\in\fF^{J\times \sum L_r},\quad \mathbf B_r = [\mathbf b_{1,r}\ \dots\ \mathbf b_{L_r,r}]=(b_{jl,r})_{j,l=1}^{J,L_r}\\
&\mathbf C=[\mathbf C_1\ \dots\ \mathbf C_R]\in\fF^{K\times \sum L_r},\quad \mathbf C_r = [\mathbf c_{1,r}\ \dots\ \mathbf c_{L_r,r}]=(c_{kl,r})_{k,l=1}^{K,L_r}.
\end{align*}
We call the matrices $\mathbf B$ and $\mathbf C$ {\em the  second and  third factor matrix} of $\mathcal T$, respectively.
Decomposition \cref{eq:LrLr1mainBC} can then be represented in   matrix form as
\begin{align}
\unf{T}{1}&:=[\operatorname{vec}(\mathbf H_1)\ \dots\ \operatorname{vec}(\mathbf H_I)]=[\operatorname{vec}(\mathbf E_1)\ \dots\ \operatorname{vec}(\mathbf E_R)]\mathbf A^T,\label{eq:unf_T_1}\\
\unf{T}{2}&:=[\mathbf H_1\ \dots\ \mathbf H_I]^T = [\mathbf a_1\otimes\mathbf C_1\ \dots\ \mathbf a_R\otimes\mathbf C_R]\mathbf B^T=\sum\limits_{r=1}^R\mathbf a_r\otimes\mathbf E_r^T,\label{eq:unf_T_2}\\
\unf{T}{3}&:=[\mathbf H_1^T\ \dots\ \mathbf H_I^T]^T = [\mathbf a_1\otimes\mathbf B_1\ \dots\ \mathbf a_R\otimes\mathbf B_R]\mathbf C^T=\sum\limits_{r=1}^R\mathbf a_r\otimes\mathbf E_r,\label{eq:unf_T_3}
\end{align}
where $\mathbf H_1,\dots,\mathbf H_I\in\fF^{J\times K}$ denote the horizontal slices of $\mathcal T$, $\mathbf H_i:= (t_{ijk})_{j,k=1}^{J,K}$,
$\operatorname{vec}(\mathbf H_i)$ denotes the $JK\times 1$ column vector obtained by stacking the columns of the matrix $\mathbf H_i$ on top of one another, and
``$\otimes$'' denotes the Kronecker product. The matrices
$\unf{T}{1}\in\fF^{JK\times I}$, $\unf{T}{2}\in\fF^{IK\times J}$, and $\unf{T}{3}\in\fF^{IJ\times K}$ are called {\em the matrix unfoldings}\footnote{\tcr{Some papers, e.g., \cite{Kolda}, define the matrix unfoldings as  the transposed matrices $\unf{T}{1}^T$, $\unf{T}{2}^T$, and $\unf{T}{3}^T$.}} of $\mathcal T$. One can easily verify that $\mathcal T$ is ML rank-$(1,L,L)$ if and only if $r_{\unf{T}{1}}=1$ and $r_{\unf{T}{2}}=r_{\unf{T}{3}}=L$.

We have now what we need to formally define generic uniqueness.
\begin{definition}\label{def:genericuniqueness}
	Let $L_1,\dots, L_R$ be fixed positive integers and let
	$\mu$ be a measure on $\fF^{I\times R}\times\fF^{J\times\sum L_r}\times \fF^{K\times\sum L_r}$ that is absolutely continuous with respect to the Lebesgue measure. The
  decomposition of an $I\times J\times K$ tensor into a sum of \MLatmost terms is  {\em generically unique} if 
$$
	\mu\{(\mathbf A,\mathbf B,\mathbf C):\
	\text{  decomposition \cref{eq:LrLr1mainBC} is not unique} \}=0.
$$
\end{definition}
 Thus, if the entries of the matrices $\mathbf A$, $\mathbf B$, and $\mathbf C$ are randomly sampled from an absolutely continuous distribution, then generic uniqueness means uniqueness that holds with probability one.

If $L_1=\dots=L_R=1$, then the minimal decomposition of the form \cref{eq:LrLr1} is known as the Canonical Polyadic Decomposition
(CPD) (aka CANDECOMP/PARAFAC). Because of their uniqueness properties both  CPD and   decomposition into a sum of \MLatmost terms have found many concrete applications
in telecommunication, array processing, machine learning, etc. \cite{Kolda,LievenCichocki2013,ComoJ10,TensRev2017}.
For the decomposition into a sum of \MLatmost terms we  mention in particular applications in wireless communication \cite{LDL2008}, chemometrics \cite{Bro2009} and blind signal separation of signals that can be modeled as exponential polynomials \cite{LievenLrLr1} and rational functions \cite{Otto2016}. Some advantages of  a blind separation method that relies on decomposition of the form \cref{eq:LrLr1} over the methods that rely on PCA, ICA, and CPD are discussed in \cite{LievenCichocki2013,TensRev2017}. As a matter of fact, it is a profound advantage of the tensor setting over the common vector/matrix setting that data components do not need to be rank-$1$ to admit a unique recovery, i.e., terms such as the ones in  \cref{eq:LrLr1} allow us to model more general contributions to observed data. It is also worth noting that
if $R\leq I$, then  \cref{eq:LrLr1} can reformulated as a problem of finding a basis consisting of low-rank matrices, namely
the basis $\{\mathbf E_1,\dots,\mathbf E_R\}$ of the matrix subspace spanned by the horizontal slices of $\mathcal T$,
$\sspan\{\mathbf H_1,\dots,\mathbf H_I\}$ \cite{lowrankbasis2017}.

In this paper we find  conditions on the factor matrices  which guarantee that the decomposition of a tensor
into a sum of	\MLatmost terms
is   unique (in the deterministic or in the generic sense).  We also derive conditions under which, perhaps surprisingly, the decomposition can essentially be  computed  by means of a
\newcolumntype{C}{ >{\centering\arraybackslash} m{6mm} }
\newcolumntype{D}{ >{\centering\arraybackslash} m{3mm} }
\newcolumntype{M}{ >{\centering\arraybackslash} m{3mm} }
\newcolumntype{F}{ >{\centering\arraybackslash} m{3mm} }
\newcolumntype{E}{ >{\centering\arraybackslash} m{19mm} }
\begin{table}
	\captionsetup{position=top} 
	\caption{Known and some of the new bounds on $R$ and $L_1,\dots,L_R$ under which the decomposition of an $I\times J\times K$ tensor into a sum of \MLatmost terms is generically unique, where  $\min(I,J,K,R)\geq 2$. Additional  bounds can  be obtained by switching $J$ and $K$ in rows $2$, $5$, $6$, and $8$. The  boxed line in each cell with bounds indicates which factor matrices are required to have  full column rank (f.c.r). (Since we are in the generic setting,  full column rank of the first, second, and third factor matrix is equivalent to  $I\geq R$, $J\geq \sum L_r$, and $K\geq \sum L_r$, respectively.)  The check mark in the ``$\lambda$''-column indicates that the result on uniqueness comes with an EVD based algorithm. 
		The bounds in rows $4$ and $6$ hold upon verification that a particular matrix has full column rank. For row $4$ no exceptions have been reported.
		We have verified the bounds in row $6$ for $\max(I,J)\leq 5$. For the case where not all $L_r$ are identical we found three exceptions in which the matrix does not have full column rank;
			 for the case $L_1=\dots=L_R=L$ we haven't found exceptions. (For more details on the bounds in row $6$ see \cref{Appendix:FF}). 
			 The bounds in row $8$ imply that generic uniqueness does hold for two of three exceptions.
		 }
	\label{tab:KoMa14}
	\centering
	\subfloat[Known bounds (\cref{sec:secfortable})]{
		\begin{tabular}{|D|C|m{41mm}|m{54mm}|F|} \hline
			\#&	ref &\multicolumn{1}{c|}{$L_1\leq\dots\leq L_R$}&  \multicolumn{1}{c|}{$L_1=\dots=L_R=:L$} & 
			$\lambda$\\ \hline 
			1&	\cite{LDLBTDPartII}  &\center{$\boxed{\textstyle J\geq \sum L_r,\  K\geq \sum L_r}$}&\center{$\boxed{\textstyle J\geq RL,\  K\geq RL}$ }&     $\checkmark$ \\ \hline 
			2&	\cite{BTD1paper}  &\centering{$\boxed{\textstyle I\!\geq\! R,\ J\geq \sum L_r}$}\\   \justifying \raggedright{$K\geq  L_R +1$}  & \centering{$\boxed{\textstyle I\!\geq\!R,\ J\geq RL}$}\\   \justifying \raggedright{$K\geq  L +1$}&    $\checkmark$ \\ \hline 
			3&	\cite{LDLBTDPartII} &\centering{$\boxed{\textstyle I\!\geq\!R}$}\\ \justifying 
			\raggedright{
			$ J \geq \sum L_{p}+\dots+L_R$ and\\
			$ K \geq \sum L_{q}+\dots+L_R$,	\\
			for some $p+q\leq R$}			
					& 
			\centering{$\boxed{\textstyle I\!\geq\!R}$}\\ \justifying 
			\raggedright{$\min(\lfloor \frac{J}{L}\rfloor,R) + \min(\lfloor \frac{K}{L}\rfloor,R)\geq R+2$,
			\\ where
			$\lfloor x\rfloor $ denotes the greatest \newline integer less than or equal to $x$
		}
			&       \\ \hline 
			4&	\cite{MikaelCoupledPII}   & \multicolumn{1}{c|}{\em not applicable}& 
			\centering{(\em upon verification)} \\
			$\boxed{\textstyle I\geq R}$\\ \justifying
			\raggedright{$\rubinom{J}{L+1}\rubinom{K}{L+1}\geq \rubinom{R+L}{L+1}-R$}
			&    $\checkmark$ \\ \hline 
	\end{tabular}}
	
	\subfloat[New bounds (\cref{sec:genuniq})]{
		\begin{tabular}{|M|E|m{43mm}|m{39mm}|F|} \hline
			\#&	ref  &\multicolumn{1}{c|}{$L_1\leq\dots\leq L_R$}&  \multicolumn{1}{c|}{$L_1=\dots=L_R=:L$} & 	$\lambda$\\ \hline
			5&
			Theorem \ref{thm:maingenshort}
			& \centering{\fbox{no f.c.r. assumptions}}\\ \justifying
			   \raggedright{ $K\geq L_2+\dots+L_R+1$ and\\
			   $J\geq L_{\min(I,R)-1}+\dots+L_R$}
			&\centering{\fbox{no f.c.r.  assumptions} }\\ \justifying
			\raggedright{$K\geq (R-1)L+1$ and\\
			$J\geq (R-\min(R,I)+2)L$}
			&     $\checkmark$ \\ \cline{1-1}\hline
			\multirow[]{2}{*}[-1cm]{6} &  \centering{Theorem}\newline
			\centering{\ \ \ref{thm:maingen} \ref{item:genstate2}}   & 	 \centering{(\em upon verification)}\\
			$\boxed{\textstyle K \geq \sum L_r}$ \\ \justifying
			\raggedright{$J\geq L_{R-1}+L_R$ and\\
				$\rubinom{I}{2}\rubinom{J}{2}\geq \sum\limits_{r_1<r_2} \hspace{-2mm}L_{r_1}L_{r_2}$}  &  \centering{(\em upon verification) 
				$\boxed{K\geq RL}$}\\ \justifying 
			\raggedright{$J\geq 2L$ and\\
			$\rubinom{I}{2}\rubinom{J}{2}\geq \rubinom{R}{2}L^2$} & \multirow{2}{*}[-1cm]{$\checkmark$}\\
			\cdashline{2-4}
			&  
			verification \newline
			mechanism \newline
			is explained\newline
			\phantom{\quad}in\newline
			\centering{Appendix \ref{Appendix:FF}}  & 	 \raggedright{exceptions for\\ $\max(I,J)\leq 5$:\\
			3 tuples\\ $(I,J,R,L_1,\dots,L_R)$ with\\ $L_1=\dots,L_{R-1}=1$,\\ $L_R=4$, $J=5$, and $(I,R)\in\{(2,3), (4,9),(5,12)\}$}  &  there are no exceptions \newline for $\max(I,J)\leq 5$ &\\ \hline
			7&	Theorem \ref{thm:maingenLL1}   & \multicolumn{1}{c|}{\em not applicable}&  \centering{$\boxed{I\!\geq\!R}$}\\ \justifying 
		\raggedright{$(J-L)(K-L)\geq R$}
			&     \\ \hline 
			8&	Theorem \ref{thm:maingenStrassen}  &\centering{$\boxed{\textstyle  K\geq \sum L_r}$} \\ \justifying
			\raggedright{$J\geq L_{R-1}+L_R$ and\\
			$(I-1)(J-1)\geq \sum L_r$} &\centering{$\boxed{K\geq RL}$}\\ \justifying \raggedright{$J\geq 2L$ and\\ $(I-1)(J-1)\geq RL$} &      \\ \hline 
	\end{tabular}}
\end{table}
 \hspace{-0.6cm}matrix eigenvalue decomposition (EVD).
This will be possible even in  cases where none of the factor matrices has full column rank. The main results are formulated in \cref{thm: maintheorem,thm: maintheoremABC,thm:maingen,thm:maingenLL1,thm:maingenStrassen} below. \tcr{\cref{tab:KoMa14} summarizes known and new\footnote{\tcr{One of the  new results, namely,
			 the part of \cref{item:genstate2} in \cref{thm:maingen} that relies on the assumption  $I\geq R$,  
			is not mentioned  in the table because its presentation requires additional notations.}}  results for generic decompositions. By way of comparison,
the known results  guarantee that the decomposition of an  $8\times 8\times 50$ tensor into a sum of $R-1$  ML rank-$(1,1,1)$ terms and one  ML rank-$(1,2,2)$ term is generically unique up to $R\leq 8$ (row $3$) and can be computed by means of EVD   up to $R\leq 7$ (rows $1$ and $2$), while the  results obtained in the paper imply that  generic uniqueness holds up to $R\leq 48$ (row $8$) and that computation is possible up to $R\leq 39$ (row $6$). }

A final word of caution is in order. It may happen that a tensor admits  more than one  decomposition into a sum of \MLatmost terms among which only one is  exactly \ML  (see \cref{ex:2.4} below). In this case one can thus say  that the \ML decomposition of the tensor is unique. In this paper however, we will always present conditions for  uniqueness of the decomposition into a sum of {\em max} \ML terms.
It is clear that such conditions  imply also uniqueness of the (exactly) \ML decomposition.

Throughout the paper  \tcr{$\mathbf O$, $\mathbf I$, and $\mathbf I_n$ denote the zero matrix, the identity matrix, and the specific identity matrix of size $n\times n$, respectively;} $\nullsp{\cdot}$ denotes the null space of a matrix; 
``$^T$'',  ``$^H$'', and ``$^\dagger$'' denote the  transpose,  hermitian transpose, and pseudo-inverse, respectively.
\tcr{We will also use the shorthand notations
	$
	\sum L_r
	$, $
	\sum d_r
	$, and  $\min L_r$ for $\sum\limits_{r=1}^RL_r$, $\sum\limits_{r=1}^Rd_r$, and $\min\limits_{1\leq r\leq R} L_r$, respectively.}

\tcr{All numerical experiments in the paper were performed in MATLAB R2018b. To make  the results reproducible,  the random number generator was initialized	using the built-in function \texttt{rng('default')} (the Mersenne Twister with seed $0$).}

\subsection{Previous results}
\subsubsection{Results on  decomposition into a sum of \MLatmost  terms}\label{sec:secfortable}
In the following two theorems  it is assumed that at least two factor matrices have full column rank.
The first result is well-known. Its proof is essentially obtained by picking two generic mixtures of slices of $\mathcal T$ and  computing their generalized EVD.
The values $L_1,\dots,L_R$  need not   be known in advance and can be found  as  multiplicities of the  eigenvalues.
\begin{theorem}\label{thm:ll1_gevd}\cite[Theorem 4.1]{LDLBTDPartII}
	Let $\mathcal T$ admit decomposition \cref{eq:LrLr1mainBC}. Assume that any two columns of $\mathbf A$ are linearly independent and that the matrices $\mathbf B$ and $\mathbf C$ have full column rank.
Then    the decomposition of $\mathcal T$ into a sum of \MLatmost terms is unique and can be computed by means of EVD. Moreover, any
decomposition of $\mathcal T$ into a sum of $\hat R$ terms of max ML rank-$(1,\hat{L}_{\hat r},\hat{L}_{\hat r})$ for which $\sum\limits_{\hat r=1}^{\hat R} \hat{L}_{\hat r}=\sum\limits_{r=1}^R L_r$  should necessarily coincide with decomposition \cref{eq:LrLr1mainBC}.		
\end{theorem}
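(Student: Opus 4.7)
The plan is to reduce decomposition \cref{eq:LrLr1mainBC} to an eigenvalue problem whose spectrum simultaneously identifies the partition $\{L_1,\dots,L_R\}$ and the block ranges $\scol(\mathbf B_r)$, and then to peel off the remaining factors by linear algebra. I would write the horizontal slices as $\mathbf H_i=\mathbf B\Lambda_i\mathbf C^T$ with $\Lambda_i=\Bdiag(a_{i1}\mathbf I_{L_1},\dots,a_{iR}\mathbf I_{L_R})$, set $S:=\sum L_r$, and pick two generic vectors $\boldsymbol\alpha,\boldsymbol\beta\in\fF^I$ producing the combinations
\[
\mathbf M_1=\sum_i\alpha_i\mathbf H_i=\mathbf B D_1\mathbf C^T,\qquad
\mathbf M_2=\sum_i\beta_i\mathbf H_i=\mathbf B D_2\mathbf C^T,
\]
with $D_j=\Bdiag(d_{j,1}\mathbf I_{L_1},\dots,d_{j,R}\mathbf I_{L_R})$, $d_{1,r}=\sum_i\alpha_i a_{ir}$, $d_{2,r}=\sum_i\beta_i a_{ir}$. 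The next task is to pick $(\boldsymbol\alpha,\boldsymbol\beta)$ so that (i) $D_2$ is invertible and (ii) the $R$ ratios $\lambda_r:=d_{1,r}/d_{2,r}$ are pairwise distinct. The hypothesis that any two columns of $\mathbf A$ are linearly independent is precisely what makes (ii) generic: $\lambda_{r_1}=\lambda_{r_2}$ is equivalent to the singularity of $[\boldsymbol\alpha\ \boldsymbol\beta]^T[\mathbf a_{r_1}\ \mathbf a_{r_2}]$, an event of positive codimension exactly when $[\mathbf a_{r_1}\ \mathbf a_{r_2}]$ has rank two.

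Because $\mathbf B$ and $\mathbf C$ have full column rank $S$ and $D_2$ is invertible, $\mathbf M_2$ has column space $\scol(\mathbf B)$ and row space $\scol(\mathbf C)$. Writing $\mathbf M_2=\mathbf U\boldsymbol\Sigma\mathbf V^T$ in compact SVD form, I would check that $\mathbf N:=\mathbf U^T\mathbf M_1\mathbf V\boldsymbol\Sigma^{-1}$ is similar to $D_1D_2^{-1}$ via the nonsingular factor $\mathbf P$ defined by $\mathbf B=\mathbf U\mathbf P$. Hence the EVD of the $S\times S$ matrix $\mathbf N$ reads off the partition $\{L_1,\dots,L_R\}$ as the multiset of algebraic multiplicities of the eigenvalues $\lambda_r$, and reveals each $\scol(\mathbf B_r)$ as $\mathbf U\cdot\nullsp{\mathbf N-\lambda_r\mathbf I}$. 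With each $\mathbf B_r$ thereby determined up to a right-multiplication by a nonsingular $L_r\times L_r$ matrix, substituting $\mathbf B$ into \cref{eq:unf_T_3} and solving the resulting linear system yields each $\mathbf a_r\otimes\mathbf C_r$, whose rank-one structure separates $\mathbf a_r$ and $\mathbf C_r$; the residual gauge on $\mathbf C_r$ absorbs the one on $\mathbf B_r$, so each summand $\mathbf a_r\circ(\mathbf B_r\mathbf C_r^T)$ is canonical.

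For the ``moreover'' statement, suppose $\mathcal T=\sum_{\hat r=1}^{\hat R}\hat{\mathbf a}_{\hat r}\circ(\hat{\mathbf B}_{\hat r}\hat{\mathbf C}_{\hat r}^T)$ is any other decomposition into a sum of \MLatmostLL{\hat L_{\hat r}} terms with $\sum\hat L_{\hat r}=S$. Comparing \cref{eq:unf_T_2,eq:unf_T_3} for the two decompositions, the ranks $r_{\unf{T}{2}}$ and $r_{\unf{T}{3}}$ equal $S$ because of the full column rank of $\mathbf B$ and $\mathbf C$, while the new decomposition bounds these ranks above by $r_{\hat{\mathbf B}}$ and $r_{\hat{\mathbf C}}$; hence $\hat{\mathbf B}$ and $\hat{\mathbf C}$ must also have full column rank $S$. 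Applying the same EVD procedure with the new triple $(\hat{\mathbf A},\hat{\mathbf B},\hat{\mathbf C})$ to the identical pencil $(\mathbf M_1,\mathbf M_2)$ then produces the same eigenvalues, multiplicities, and eigenspaces; uniqueness of the EVD up to permutation and basis change inside each eigenspace forces the two decompositions to coincide summand by summand. The principal technical obstacle is the genericity of the eigenvalue separation in the first paragraph; the rest is routine linear algebra.
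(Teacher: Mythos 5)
Your argument is essentially the one the paper attributes to this (cited, not re-proved) result from \cite{LDLBTDPartII}: form two generic linear combinations of the slices, note that the resulting pencil is simultaneously block-diagonalized by $\mathbf B$ and $\mathbf C$, separate the generalized eigenvalues $\lambda_r$ via the $2\times 2$ minors of $[\boldsymbol\alpha\ \boldsymbol\beta]^T\mathbf A$ (which is exactly where the pairwise independence of the columns of $\mathbf A$ enters), and read off the $L_r$ as eigenvalue multiplicities and the spaces $\scol(\mathbf B_r)$ as eigenspaces, after which the remaining factors follow from a linear system. Two small points: the system that yields the blocks $\mathbf a_r\otimes\mathbf C_r$ once $\mathbf B$ is known is \cref{eq:unf_T_2}, not \cref{eq:unf_T_3}; and in the ``moreover'' part your ``summand by summand'' conclusion tacitly assumes the alternative first factor matrix has no proportional columns --- if it does, the corresponding blocks share an eigenvalue and merge into a single eigenspace, so the alternative decomposition is a refinement of \cref{eq:LrLr1mainBC} obtained by splitting some $\mathbf E_r$ among terms with a common $\mathbf a_r$, which is the sense in which the two decompositions ``coincide.''
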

\begin{theorem}\label{thm:ll1_btd1}\cite[Corollary 1.4]{BTD1paper} 
		Let $\mathcal T$ admit  \ML decomposition  \cref{eq:LrLr1mainBC}
		 and let at least one of the following assumptions hold:
		\begin{assumptions}
			\item  $\mathbf A$ and $\mathbf B$  have full column rank and
			$r_{[\mathbf C_i\ \mathbf C_j]}\geq \max(L_i,L_j)+1$ for all $1\leq i<j\leq R$;
			\item  \label{assum:bth1.5}$\mathbf A$ and $\mathbf C$  have full column rank and
			$r_{[\mathbf B_i\ \mathbf B_j]}\geq \max(L_i,L_j)+1$ for all $1\leq i<j\leq R$.
		\end{assumptions}
		Then   the decomposition of $\mathcal T$ into a sum of \MLatmost terms is unique  and can be computed by means of EVD.
\end{theorem}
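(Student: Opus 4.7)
My plan is to prove case (b) and obtain case (a) by interchanging the second and third modes of $\mathcal T$ (which swaps $\mathbf B$ and $\mathbf C$). So assume $\mathbf A$ and $\mathbf C$ have full column rank and $r_{[\mathbf B_i\ \mathbf B_j]}\geq \max(L_i,L_j)+1$ for all $1\leq i<j\leq R$. I first extract the span of the $\mathbf E_r$'s. Using $\mathbf C$ full column rank, the matrices $\mathbf E_r=\mathbf B_r\mathbf C_r^T$ are linearly independent: any relation $\sum_r \lambda_r\mathbf E_r = \mathbf B\operatorname{diag}(\lambda_1\mathbf I_{L_1},\dots,\lambda_R\mathbf I_{L_R})\mathbf C^T = \mathbf O$ implies, by right-invertibility of $\mathbf C^T$, that $\lambda_r\mathbf B_r=\mathbf O$ and hence $\lambda_r=0$ for each $r$. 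Combined with $\mathbf A$ full column rank, the column space of $\unf{T}{1}$ is precisely $V:=\operatorname{span}\{\operatorname{vec}(\mathbf E_1),\dots,\operatorname{vec}(\mathbf E_R)\}\subset \fF^{JK}$, a subspace of dimension $R$ that is computable from $\mathcal T$ by SVD.

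Next I characterize the $\mathbf E_r$'s inside $V$ and turn the characterization into an EVD. For any $\mathbf F=\sum_r\lambda_r\mathbf E_r\in V$, the identity $r_{\mathbf F}=r_{\mathbf B\operatorname{diag}(\lambda_1\mathbf I_{L_1},\dots,\lambda_R\mathbf I_{L_R})}$ (which follows from $\mathbf C$ having full column rank) combined with the pairwise hypothesis gives the key lower bound: if two coefficients $\lambda_i,\lambda_j$ are nonzero, then $r_{\mathbf F}\geq r_{[\mathbf B_i\ \mathbf B_j]}\geq \max(L_i,L_j)+1$. The EVD step picks two generic elements $\mathbf F_1,\mathbf F_2\in V$, writes $\mathbf F_1-\lambda\mathbf F_2 = \mathbf B(\mathbf D_1-\lambda\mathbf D_2)\mathbf C^T$ with each $\mathbf D_k$ block-scalar, and reads off the generalized eigenvalues $\lambda_r:=\lambda_{1,r}/\lambda_{2,r}$ from rank drops of this pencil (at $\lambda=\lambda_r$ the $r$-th block of $\mathbf D_1-\lambda\mathbf D_2$ vanishes). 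The associated null vectors should isolate $\mathbf E_r$, after which $\mathbf a_r$ is recovered by solving the linear system $\unf{T}{1}=[\operatorname{vec}(\mathbf E_1)\ \dots\ \operatorname{vec}(\mathbf E_R)]\mathbf A^T$ and $\mathbf E_r=\mathbf B_r\mathbf C_r^T$ is obtained by SVD (the residual $L_r\times L_r$ invertible right ambiguity is absorbed in the factorization and leaves $\mathbf a_r\circ\mathbf E_r$ intact).

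The main obstacle, I expect, is to tie pencil rank drops to individual blocks rigorously. In the setting of \cref{thm:ll1_gevd}, where $\mathbf B$ also has full column rank, each rank drop is exactly $L_r$ and classical arguments apply directly. Here, without $\mathbf B$ full column rank, rank drops can be smaller than $L_r$ or be shared between blocks whose $\mathbf B$-columns overlap, so a naive pencil may fail to isolate each block. The pairwise rank hypothesis $r_{[\mathbf B_i\ \mathbf B_j]}\geq\max(L_i,L_j)+1$ is exactly what excludes such pairwise overlaps, and I would exploit it (possibly combined with several generic pencils, or a projection onto a generic complement of a given block) to show that the $R$ eigenvalues $\lambda_r$ are distinct and that each eigenspace correctly recovers $\mathbf E_r$. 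Once the EVD is set up correctly, uniqueness of the decomposition into a sum of max ML rank-$(1,L_r,L_r)$ terms follows immediately, since every step of the procedure is determined by $\mathcal T$ and the block sizes $L_r$ alone.
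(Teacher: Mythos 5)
There is a genuine gap, and it sits exactly where you flagged it: the mechanism by which the EVD isolates the individual terms when $\mathbf B$ does not have full column rank. Your preliminary steps are fine (the reduction of (a) to (b) by swapping the last two modes, the linear independence of the $\mathbf E_r$, the identification of $V=\operatorname{span}\{\operatorname{vec}(\mathbf E_r)\}$ with the column space of $\unf{T}{1}$, and the lower bound $r_{\sum\lambda_r\mathbf E_r}\geq\max(L_i,L_j)+1$ when two coefficients are nonzero — the latter is essentially the necessary condition \cref{eq:necesswEr}). But the pencil $\mathbf F_1-\lambda\mathbf F_2=\mathbf B(\mathbf D_1-\lambda\mathbf D_2)\mathbf C^T$ simply does not exhibit rank drops at $\lambda=\lambda_r$ in the regime this theorem is designed for. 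Take $R=3$, $L_1=L_2=L_3=1$, $J=2$, $I=K=3$, with $\mathbf A,\mathbf C$ nonsingular and $\mathbf B\in\fF^{2\times 3}$ generic: condition (b) holds ($k_{\mathbf B}=2$), yet $r_{\mathbf F_1-\lambda\mathbf F_2}=2$ for \emph{every} $\lambda$, because deleting one column of $\mathbf B$ does not lower its rank. The pairwise hypothesis $r_{[\mathbf B_i\ \mathbf B_j]}\geq\max(L_i,L_j)+1$ controls only pairwise overlaps; it does nothing to prevent the global rank deficiency of $\mathbf B$ that destroys the eigenvalue detection, so no amount of "several generic pencils" of the same two-slice type can repair this. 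This is precisely why \cref{thm:ll1_gevd} needs \emph{both} $\mathbf B$ and $\mathbf C$ of full column rank, whereas the proof of the present statement (which the paper does not reproduce — it is imported from the cited reference) requires a genuinely different device: one must pass to $2\times 2$ minors taken jointly over the first two modes (the analogue of the matrix $\mathbf R_2(\mathcal T)$ of \cref{subsec:222}, built here on the $I\times J$ slices) and reduce to a simultaneous (block) diagonalization, which is the machinery the rest of the paper generalizes.

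A second, smaller gap is the closing claim that uniqueness "follows immediately since every step of the procedure is determined by $\mathcal T$". An algorithm that outputs one decomposition does not preclude others; you must show that any decomposition $\mathcal T=\sum\tilde{\mathbf a}_r\circ\tilde{\mathbf E}_r$ with $r_{\tilde{\mathbf E}_r}\leq L_{\pi(r)}$ has each $\tilde{\mathbf E}_r$ proportional to some $\mathbf E_q$. Your rank bound does most of the work (each $\tilde{\mathbf E}_r$ lies in $V$, and a combination with two nonzero components $i,j$ has rank $>\max(L_i,L_j)$), but it does not immediately exclude, say, a term with bound $L_{\pi(r)}=3$ absorbing a rank-$2$ combination of two terms with $L_i=L_j=1$; ruling this out requires an additional counting/linear-independence argument over the multiset $\{L_r\}$. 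Neither gap is fatal to the theorem — it is true — but as written the proposal does not prove it.
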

\par The uniqueness and computation of  the decomposition into a sum of \MLatmost terms, \tcr{where  $L_1=\dots=L_R:=L$}, was also studied in \cite[Subsection 5.2]{MikaelCoupledPII} \tcr{and \cite{Nion_LDL_LL1}}. 
We do not reproduce the results  from \cite{MikaelCoupledPII}  \tcr{(resp. \cite{Nion_LDL_LL1})} here because this would require many specific notations. We just mention that one of the assumptions in \cite{MikaelCoupledPII} \tcr{(resp. \cite{Nion_LDL_LL1})} is that the first factor matrix \tcr{(resp. the second or third factor matrix)} has full column rank
 and another   assumption implies that the dimensions of $\mathcal T$  satisfy the inequality  \tcr{$\rubinom{\min(J,RL)}{L+1}\rubinom{\min(K,RL)}{L+1}\geq \rubinom{R+L}{L+1}-R$} \tcr{(resp. the \tcr{inequality} $\rubinom{\min(I,R)}{2}\rubinom{\min(J,K,LR)}{2}\geq \rubinom{R}{2}L^2$)},
where \tcr{$\rubinom{n}{k}$} denotes the binomial coefficient  
\begin{equation*}
\tcr{
\rubinom{n}{k}:=\frac{n!}{k!(n-k)!}.
}
\end{equation*} 
\tcr{To present the next result we need}  the  definitions of $k$-rank  of a matrix \tcr{(``$k$'' refers to J.B. Kruskal)} and $k'$-rank of a block matrix. 
\begin{definition}
	The $k$-rank of the matrix $\mathbf A=[\mathbf a_1\ \dots\ \mathbf a_R]$  
	 is the largest number $k_{\mathbf A}$ such that any $k_{\mathbf A}$ columns of $\mathbf A$ are linearly independent.
\end{definition}
\begin{definition}\cite[Definition 3.2]{LDLBTDPartII}
	The $k'$-rank of the   matrix $\mathbf B=[\mathbf B_1\ \dots\ \mathbf B_R]$ is the largest number $k_{\mathbf B}'$ such that any set \tcr{$\{\mathbf B_i\}$} of $k_{\mathbf B}'$ blocks of $\mathbf B$ yields a
	set of linearly independent columns. 
 \end{definition}
\tcr{In the following theorem none of the factor matrices is required to have full column rank.}
\begin{theorem}\cite[Lemma 4.2]{LDLBTDPartII}\label{thm;LievenBTDKruskal}
	Let $\mathcal T$ admit \ML decomposition \cref{eq:LrLr1mainBC} with $L_1=\dots=L_R$. Assume that
	\begin{equation*}
	k_{\mathbf A}+k_{\mathbf B}'+k_{\mathbf C}'\geq 2R+2.
	\end{equation*}
	Then the first factor matrix in the  \MLatmost  decomposition of $\mathcal T$ is unique. If additionally,
	$r_{\mathbf A}=R$, then the overall \MLatmost decomposition of $\mathcal T$ is unique.
\end{theorem}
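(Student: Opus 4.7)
The plan is to adapt Kruskal's classical CPD uniqueness proof to the block structure. I would start by fixing two decompositions of $\mathcal T$ into a sum of \MLatmost\ terms (by \cref{def:unic_dec}, both have the same number of terms $R$): the given one with factor matrices $\mathbf A, \mathbf B=[\mathbf B_1\ \dots\ \mathbf B_R], \mathbf C=[\mathbf C_1\ \dots\ \mathbf C_R]$, and an alternative $\hat{\mathbf A}, \hat{\mathbf B}=[\hat{\mathbf B}_1\ \dots\ \hat{\mathbf B}_R], \hat{\mathbf C}=[\hat{\mathbf C}_1\ \dots\ \hat{\mathbf C}_R]$, where each block has $L$ columns but the blocks $\hat{\mathbf B}_r, \hat{\mathbf C}_r$ may have rank strictly less than $L$. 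Using \cref{eq:unf_T_2}, both factorizations express $\unf{T}{2}$ as
\begin{equation*}
\unf{T}{2}=[\mathbf a_1\otimes\mathbf C_1\ \dots\ \mathbf a_R\otimes\mathbf C_R]\mathbf B^T
=[\hat{\mathbf a}_1\otimes\hat{\mathbf C}_1\ \dots\ \hat{\mathbf a}_R\otimes\hat{\mathbf C}_R]\hat{\mathbf B}^T,
\end{equation*}
and analogously via \cref{eq:unf_T_3}. The target is to show that one block column $\hat{\mathbf a}_s\otimes\hat{\mathbf C}_s$ (and similarly for the $\hat{\mathbf B}$-side) is matched to one of the $\mathbf a_r\otimes\mathbf C_r$ up to block permutation and nonsingular $L\times L$ right-multiplication; in particular, $\hat{\mathbf a}_s$ is proportional to $\mathbf a_r$.

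The core step is a \emph{block} version of Kruskal's permutation lemma: if $\mathbf A$ has $k$-rank $k_{\mathbf A}$ and the block matrices $\mathbf B,\hat{\mathbf B}$ (resp.\ $\mathbf C,\hat{\mathbf C}$) have $k'$-ranks $k_{\mathbf B}', k_{\mathbf C}'$, and $k_{\mathbf A}+k_{\mathbf B}'+k_{\mathbf C}'\geq 2R+2$, then any two factorizations of $\unf{T}{2}$ (and $\unf{T}{3}$) of the above form are related by a common block permutation on the $R$ groups, together with column scaling of $\mathbf A$. The proof is the natural block-analogue of Kruskal's: assume for contradiction that some $\hat{\mathbf a}_s$ is not proportional to any $\mathbf a_r$, and use a compound-matrix / Sylvester-type argument (bounding ranks of block-compound matrices built from $\mathbf A$, $\mathbf B$, $\mathbf C$) to produce a contradiction with the $k$-rank/$k'$-rank inequality. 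The $k'$-rank is exactly the concept that makes the usual Kruskal inequality pass from single columns to blocks of $L$ columns.

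Finally, for the second statement, once $\hat{\mathbf A}=\mathbf A\mathbf\Pi\mathbf\Lambda$ is established with $\mathbf\Pi$ a permutation and $\mathbf\Lambda$ a nonsingular diagonal matrix, the additional hypothesis $r_{\mathbf A}=R$ lets me solve for the slice matrices directly. Indeed, from \cref{eq:unf_T_1},
\begin{equation*}
[\operatorname{vec}(\mathbf E_1)\ \dots\ \operatorname{vec}(\mathbf E_R)]=\unf{T}{1}(\mathbf A^T)^\dagger,
\end{equation*}
so each $\mathbf E_r$ is determined by $\mathbf A$ (up to the permutation/scaling that cancels against $\mathbf\Lambda$), which forces the entire decomposition to coincide with $(\mathbf A,\mathbf B,\mathbf C)$ up to the trivial ambiguities described after \cref{def:unic_dec}.

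The main obstacle I anticipate is the block permutation lemma itself: converting Kruskal's single-column compound-matrix argument into one where the ``atoms'' are $L$-dimensional subspaces requires careful bookkeeping, in particular verifying that the $k'$-rank controls ranks of the appropriate generalized compound matrices and that the hypothesis $L_1=\dots=L_R$ makes the block sizes uniform enough for the counting to work. The step from uniqueness of $\mathbf A$ to uniqueness of the full decomposition, by contrast, is a short linear-algebra argument once $r_{\mathbf A}=R$ is assumed.
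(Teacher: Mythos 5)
First, note that the paper does not prove this statement at all: it is quoted verbatim from \cite[Lemma 4.2]{LDLBTDPartII} as a known result, so there is no in-paper proof to measure your proposal against. Measured against the proof in that reference, your outline follows the same strategy — a Kruskal-type argument in which the $k'$-rank of the block matrices $\mathbf B$ and $\mathbf C$ plays the role of the ordinary $k$-rank, followed by the observation that full column rank of $\mathbf A$ lets one recover the $\mathbf E_r$ linearly from $\unf{T}{1}$. Your second step is complete and correct: from $\unf{T}{1}=[\operatorname{vec}(\mathbf E_1)\ \dots\ \operatorname{vec}(\mathbf E_R)]\mathbf A^T=[\operatorname{vec}(\hat{\mathbf E}_1)\ \dots\ \operatorname{vec}(\hat{\mathbf E}_R)]\hat{\mathbf A}^T$ with $\hat{\mathbf A}=\mathbf A\mathbf\Pi\mathbf\Lambda$ and $r_{\mathbf A}=R$, the terms are forced to coincide up to permutation and the trivial scaling; this mirrors how the present paper argues in Case 1 of the proof of \cref{lemma:lemma3.1}.

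The genuine gap is that the entire mathematical content of the theorem sits inside the ``block version of Kruskal's permutation lemma,'' which you state and then defer with ``the proof is the natural block-analogue of Kruskal's.'' That analogue is not routine bookkeeping: one must formulate and prove an equivalence lemma for partitioned matrices (controlling, for every vector $\mathbf x$, the number of blocks $\hat{\mathbf C}_r$ whose columns are annihilated by $\mathbf x^T$ in terms of the number of blocks $\mathbf C_r$ so annihilated, via rank counts on matrices of the form $[\mathbf a_1\otimes\mathbf C_1\ \dots\ \mathbf a_R\otimes\mathbf C_R]$), and then show that the resulting block matchings for the $\mathbf B$-side and the $\mathbf C$-side induce the \emph{same} permutation and hence proportionality of $\hat{\mathbf a}_s$ to some $\mathbf a_r$. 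In particular your claim that a single compound-matrix contradiction ``produces'' a common block permutation together with column scaling of $\mathbf A$ compresses several nontrivial steps into one sentence. As written, the proposal is a correct plan that identifies the right lemma, but it is not a proof: the step you yourself flag as the main obstacle is exactly the step that is missing.
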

\par In the following theorem we summarize the known results  on generic uniqueness of the decomposition into a sum of \MLatmost terms.
Statements 1), 2)-3), and 4) are just generic counterparts of \cref{thm:ll1_gevd}, \cref{thm:ll1_btd1}, and
\cref{thm;LievenBTDKruskal}, respectively. Some of the statements have also appeared in \cite{LDLBTDPartII, BTD1paper,Yang2014,StrangeLL1paper}.
\begin{theorem}\label{thm:genericknown}
Let $L_1\leq \dots\leq L_R$. Then each of the following conditions implies that 
the decomposition of an $I\times J\times K$ tensor into a sum of \MLatmost terms
is generically unique:
	\begin{statements}
	\item 
	$I\geq 2$, $J\geq \sum L_r$, and $K\geq \sum L_r$;
	\item 
	$I\geq R$, $J\geq \sum L_r$, and $K\geq  L_R +1$;
	\item 
	$I\geq R$, $J\geq  L_R +1$, and $K\geq \sum L_r$;	
	\item 
	$I\geq R$ and $\kgp{B} +  \kgp{C}\geq R+2$,  where
		\end{statements}
	\begin{equation*}
	\begin{split}
\kgp{B}&:= \max\{p:\ L_{R-p+1}+\dots+L_R\leq J\},\\
\kgp{C}&:= \max\{q:\ L_{R-q+1}+\dots+L_R\leq K\}.
	\end{split} 
	\end{equation*}
 \end{theorem}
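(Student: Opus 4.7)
The plan is to derive each of the four statements as the generic counterpart of the corresponding deterministic uniqueness result, by checking that its deterministic hypotheses hold off a set of Lebesgue measure zero in the parameter space $\fF^{I\times R}\times\fF^{J\times \sum L_r}\times\fF^{K\times \sum L_r}$. The common mechanism is that failure of a full-rank condition on a submatrix $\mathbf M$ of given format is the zero set of a nontrivial determinantal polynomial in the entries, hence has measure zero; since the statements involve only finitely many such submatrices, the union of the bad loci is still null.

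For statement~1) I invoke \cref{thm:ll1_gevd}. When $I\geq 2$, linear dependence of any two columns of $\mathbf A$ is cut out by $2\times 2$ minors and thus generic columns are pairwise independent; when $J\geq \sum L_r$ and $K\geq \sum L_r$, generic $\mathbf B$ and $\mathbf C$ have full column rank. For statements~2) and 3) I apply \cref{thm:ll1_btd1}: full column rank of the relevant pair of factor matrices is generic under the stated dimension bounds, and the remaining hypothesis $r_{[\mathbf C_i\ \mathbf C_j]}\geq \max(L_i,L_j)+1$ (or its $\mathbf B$-analogue) holds because the generic rank of $[\mathbf C_i\ \mathbf C_j]\in\fF^{K\times(L_i+L_j)}$ is $\min(K,L_i+L_j)$, while $L_i+L_j\geq \max(L_i,L_j)+1$ (from $L_i,L_j\geq 1$) and $K\geq L_R+1\geq \max(L_i,L_j)+1$ give the desired bound in both regimes.

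For statement~4) I appeal to \cref{thm;LievenBTDKruskal}. When $I\geq R$, generically $k_{\mathbf A}=R$. The central computation is that generically $k'_{\mathbf B}=\kgp{B}$ and $k'_{\mathbf C}=\kgp{C}$. For the former, note that $k'_{\mathbf B}\geq p$ is equivalent to the assertion that, for every choice of $p$ blocks $\mathbf B_{r_1},\dots,\mathbf B_{r_p}$, their columns are linearly independent; generically this holds for a fixed $p$-subset precisely when $L_{r_1}+\dots+L_{r_p}\leq J$, and the most restrictive $p$-subset is the one giving the largest total, namely $L_{R-p+1}+\dots+L_R$. Since only finitely many subsets are involved, the bad locus is still null. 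Plugging $k_{\mathbf A}=R$, $k'_{\mathbf B}=\kgp{B}$, $k'_{\mathbf C}=\kgp{C}$ into the Kruskal-type bound $k_{\mathbf A}+k'_{\mathbf B}+k'_{\mathbf C}\geq 2R+2$ yields exactly $\kgp{B}+\kgp{C}\geq R+2$.

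The one technical wrinkle, and the main obstacle I anticipate, is that \cref{thm;LievenBTDKruskal} is stated in the excerpt under the restriction $L_1=\dots=L_R$, whereas statement~4) allows unequal block sizes. The unequal-block BTD-Kruskal theorem required here is available in the referenced literature; once it is invoked as the deterministic ingredient, the generic evaluation of the three $k$- and $k'$-ranks goes through unchanged. No additional content is needed beyond these routine genericity reductions.
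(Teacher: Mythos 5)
Your proposal is correct and follows essentially the same route as the paper, which states these four bounds precisely as the generic counterparts of \cref{thm:ll1_gevd}, \cref{thm:ll1_btd1}, and \cref{thm;LievenBTDKruskal} and leaves the routine genericity checks (full column rank, generic $k$-rank, and generic $k'$-rank equal to $\kgp{B}$, $\kgp{C}$) to the reader. The caveat you raise about \cref{thm;LievenBTDKruskal} being quoted only for $L_1=\dots=L_R$ is real but is inherited from the paper itself, which relies on the unequal-block version from the cited reference for statement 4).
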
 
\subsubsection{An auxiliary result on symmetric joint block diagonalization problem}\label{subsub:alg1}
In \cref{subsubsection213} we will establish a link between  decomposition \cref{eq:LrLr1} and  a special case of the 
Symmetric Joint Block Diagonalization (S-JBD) problem introduced in this subsection. In particular, we will show in \cref{subsubsection213} that
uniqueness and computation of the first factor matrix in \cref{eq:LrLr1} follow from  uniqueness and computation  of a solution of the S-JBD problem. We will consider both the cases where   decomposition \cref{eq:LrLr1} is exact and the case where the decomposition holds only approximately. In the latter case, decomposition \cref{eq:LrLr1} is just fitted to the given tensor $\mathcal T$. Thus,  in this subsection, we also consider both the cases where the  S-JBD is exact and the case where the S-JBD holds approximately.

{\bf Exact S-JBD.} Let $\matrixU_1,\dots,\matrixU_Q$ be  $K\times K$ symmetric matrices that can be jointly block diagonalized  as
	\begin{equation}
	\begin{gathered}
	\matrixU_q =\mathbf N\mathbf D_q\mathbf N^T,\quad \mathbf N=[\mathbf N_1\ \dots\ \mathbf N_R],\quad \mathbf N_r\in\fF^{K\times d_r},\\
 	\mathbf D_q=\Bdiag(\mathbf D_{1,q},\dots,\mathbf D_{R,q}),\quad
	\mathbf D_{r,q}=\mathbf D_{r,q}^T\in\fF^{d_r\times d_r},\quad q=1,\dots,Q,
	\end{gathered}\label{eq:JBDaux}
	\end{equation}
	where  $d_1,\dots,d_R, Q$ are positive integers,
	and $\Bdiag(\mathbf D_{1,q},\dots,\mathbf D_{R,q})$
	denotes a block-diagonal matrix with  the matrices $\mathbf D_{1,q}$, $\dots, \mathbf D_{R,q}$ on the diagonal.
	It is worth noting that the columns of $\mathbf N$ are not required to be orthogonal and that we  deal with the non-hermitian transpose in \cref{eq:JBDaux} even if $\fF=\mathbb C$.
	Let $\mathbf \Pi$ be a $\sum d_r\times \sum d_r$ permutation matrix such that $\mathbf N\mathbf \Pi$  admits the same block partitioning as $\mathbf N$ and
	let $\mathbf D$ be a nonsingular symmetric  block diagonal matrix whose diagonal  blocks  have dimensions $d_1,\dots,d_R$.
	Then obviously  $\matrixU_1,\dots,\matrixU_Q$ can also be jointly block diagonalized  as
	\begin{equation*}
	\matrixU_q =(\mathbf N\mathbf D\mathbf \Pi)(\mathbf \Pi^T\mathbf D^{-1}\mathbf D_q\mathbf D^{-T}\mathbf \Pi)(\mathbf N\mathbf D\mathbf \Pi)^T=:
	\tilde{\mathbf N}\tilde{\mathbf D}_q\tilde{\mathbf N}^T,\quad q=1,\dots,Q.
	\end{equation*}
	We say that the solution of  the  S-JBD problem \cref{eq:JBDaux} {\em is unique}, if for any two solutions 
	$$
	\matrixU_q =\mathbf N\mathbf D_q\mathbf N^T=\tilde{\mathbf N}\tilde{\mathbf D}_q\tilde{\mathbf N}^T
	,\qquad q=1,\dots,Q
	$$
	there exist matrices $\mathbf D$ and $\mathbf \Pi$ such that
	$$
	\tilde{\mathbf N}=\mathbf N\mathbf D\mathbf \Pi, \quad
	\tilde{\mathbf D}_q=\mathbf \Pi^T\mathbf D^{-1}\mathbf D_q\mathbf D^{-T}\mathbf \Pi,\quad q=1,\dots,Q.
	$$ 
Thus, if the solution of \cref{eq:JBDaux} is unique, then the number of blocks $R$ in \cref{eq:JBDaux} is minimal and the column spaces of $\mathbf N_1,\dots,\mathbf N_R$ (as well as their dimensions $d_1,\dots,d_R$) can be identified up to permutation. For a  thorough study of JBD we refer to \cite{Cay2017SIMAX} and the references therein. 

In  \cref{subsubsection213} we will  rework \cref{eq:LrLr1mainBC} into a problem of the form  \cref{eq:JBDaux}.
\tcr{In the   case $d_1=\dots=d_R=1$ the  S-JBD problem \cref{eq:JBDaux} is reduced to a special case of the classical symmetric joint diagonalization (S-JD) problem  (a.k.a. simultaneous diagonalization by congruence), where ``special'' means that the number of matrices $Q$ equals the size $R$ of the diagonal  matrices. It is well known and can easily be derived from 
\cite[Theorem 4.5.17]{HornJohnson} that if there exists a rank-$R$ linear combination of	$\matrixU_1,\dots, \matrixU_Q$, then  the solution of S-JD is unique and can be computed by means of (simultaneous)  EVD. The  following theorem states that a similar result
also holds for S-JBD problem \cref{eq:JBDaux}.}
\begin{theorem}\label{thm:JBDaux}
	Let  $Q:=\rubinom{d_1+1}{2}+\dots+\rubinom{d_R+1}{2}$, \tcr{$\min (d_1,\dots,d_R)\geq 2$} and let $\matrixU_1,\dots,\matrixU_Q$ be  $K\times K$ symmetric matrices that can be jointly block diagonalized  as in \cref{eq:JBDaux}.
	Assume that
	\begin{assumptions}
		\item $\mathbf N$ has full column rank;
		\item the matrices $\mathbf D_1,\dots,\mathbf D_Q$ are linearly independent.
	\end{assumptions}
	Then the solution of S-JBD problem \cref{eq:JBDaux} is unique and can be computed by means of (simultaneous)  EVD\footnote{The simultaneous EVD problem consists of finding a similarity transform that reduces a set of (commuting) matrices to   diagonal form.}. 
\end{theorem}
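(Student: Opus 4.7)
The plan is to split the argument into a uniqueness part and the construction of a simultaneous EVD procedure. For uniqueness, the first key observation is that Assumption (b) forces $\mathbf{D}_1,\dots,\mathbf{D}_Q$ to span the entire $Q$-dimensional space $\mathcal{S}$ of symmetric $\sum d_r\times\sum d_r$ block-diagonal matrices with block sizes $d_1,\dots,d_R$, since $Q=\sum\binom{d_r+1}{2}$ equals $\dim\mathcal{S}$. In particular $\mathbf{I}_{\sum d_r}\in\mathcal{S}$, so some linear combination of the $\mathbf{V}_q=\mathbf{N}\mathbf{D}_q\mathbf{N}^T$ equals $\mathbf{N}\mathbf{N}^T$, whose column space coincides with that of $\mathbf{N}$. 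Any alternative factorization $\mathbf{V}_q=\tilde{\mathbf{N}}\tilde{\mathbf{D}}_q\tilde{\mathbf{N}}^T$ confines the column space of every $\mathbf{V}_q$, and hence of $\mathbf{N}$ itself, inside the column space of $\tilde{\mathbf{N}}$; since $\tilde{\mathbf{N}}$ has only $\sum d_r$ columns, the two column spaces coincide and $\tilde{\mathbf{N}}$ has full column rank. Writing $\tilde{\mathbf{N}}=\mathbf{N}\mathbf{M}$ for an invertible $\mathbf{M}$ and cancelling a left inverse of $\mathbf{N}$ on both sides of $\mathbf{V}_q=\mathbf{N}\mathbf{D}_q\mathbf{N}^T=\mathbf{N}\mathbf{M}\tilde{\mathbf{D}}_q\mathbf{M}^T\mathbf{N}^T$ yields $\mathbf{D}_q=\mathbf{M}\tilde{\mathbf{D}}_q\mathbf{M}^T$ for $q=1,\dots,Q$.

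The technical heart of the proof is to show that $\mathbf{M}=\mathbf{D}\mathbf{\Pi}$ where $\mathbf{D}$ is block-diagonal and $\mathbf{\Pi}$ permutes blocks of equal size. Since $\{\mathbf{D}_q\}$ spans $\mathcal{S}$, the identity above gives $\mathbf{M}\mathcal{S}\mathbf{M}^T\subseteq\mathcal{S}$, and invertibility of $\mathbf{M}$ upgrades this to equality. Partitioning $\mathbf{M}=[\mathbf{M}_{st}]_{s,t=1}^R$ conformably with the block sizes, I would test the constraint on a basis of $\mathcal{S}$: applying the map $\mathbf{X}\mapsto\mathbf{M}\mathbf{X}\mathbf{M}^T$ to the rank-one element of $\mathcal{S}$ at position $(a,a)$ of block $r$ produces the outer product of the $a$-th column of the $r$-th block column of $\mathbf{M}$ with itself, and this being block-diagonal forces that column to be supported in a single block row; applying the map to the symmetric rank-two element at positions $(a,b)$ and $(b,a)$ in block $r$ with $a\neq b$ yields a further rank-two constraint that forces these single block rows to coincide for all columns within block column $r$. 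The second step is the only place where $\min d_r\geq 2$ enters, as it is precisely what guarantees the existence of two distinct column indices inside some block. Each block column of $\mathbf{M}$ then has a unique nonzero block row, invertibility forces the assignment $r\mapsto s(r)$ to be a permutation, and a comparison of block-column and block-row dimensions shows that it preserves block sizes.

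For the computational part, I would reduce to a square problem by choosing an orthonormal basis $\mathbf{P}\in\mathbb{F}^{K\times\sum d_r}$ of the column space of $\mathbf{N}$, which can be extracted from the dominant subspace of, say, $\sum_q\mathbf{V}_q\mathbf{V}_q^T$. Setting $\mathbf{V}_q':=\mathbf{P}^T\mathbf{V}_q\mathbf{P}=\mathbf{N}'\mathbf{D}_q\mathbf{N}'^T$ with $\mathbf{N}':=\mathbf{P}^T\mathbf{N}$ square and invertible, a generic linear combination $\mathbf{V}':=\sum_q\alpha_q\mathbf{V}_q'$ corresponds to $\mathbf{D}':=\sum_q\alpha_q\mathbf{D}_q\in\mathcal{S}$ that is generically invertible (since $\mathbf{I}_{\sum d_r}\in\mathcal{S}$). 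The matrices $\mathbf{V}'^{-1}\mathbf{V}_q'=\mathbf{N}'^{-T}(\mathbf{D}'^{-1}\mathbf{D}_q)\mathbf{N}'^T$ are then simultaneously similar, through the common similarity $\mathbf{N}'^{-T}$, to block-diagonal matrices, so a simultaneous EVD across these $Q$ matrices recovers the block-column spans of $\mathbf{N}'^{-T}$ and from there the block columns of $\mathbf{N}$, up to the ambiguity identified in the first part.

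The main obstacle is the block-monomial characterization of $\mathbf{M}$: this is exactly where the two hypotheses of the theorem interact nontrivially, Assumption (b) forcing the $\mathbf{D}_q$'s to generate all of $\mathcal{S}$ so that the conjugation constraint is maximally strong, and $\min d_r\geq 2$ providing the extra rank-two test elements that prevent column reassignments inside a single block column. Once this characterization is in place, both the uniqueness statement and the EVD-based algorithm follow essentially automatically.
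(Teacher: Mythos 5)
Your uniqueness argument is essentially sound and is in fact a more self-contained route than the paper's: the paper proves \cref{thm:JBDaux} in three lines by observing that, since $Q=\dim\mathcal S$ and the $\mathbf D_q$ are linearly independent, they span $\mathcal S$, so that generic linear combinations of the equations reduce the problem to the case of \emph{generic} block-diagonal $\mathbf D_q$, which is then dispatched by citing \cite[Theorem~1.10]{BTD1paper} (applicable because $Q\geq\rubinom{2+1}{2}=3$). Your block-monomial characterization of $\mathbf M$ via the conjugation-invariance $\mathbf M\mathcal S\mathbf M^T=\mathcal S$, tested on the rank-one elements $\mathbf e_a\mathbf e_a^T$ and the rank-two elements $\mathbf e_a\mathbf e_b^T+\mathbf e_b\mathbf e_a^T$ of a single block, is correct and isolates cleanly where $\min d_r\geq 2$ enters. (Two loose ends: the containment initially goes the other way, $\mathcal S\subseteq\mathbf M\mathcal S\mathbf M^T$, before the dimension count upgrades it to equality; and you tacitly assume the alternative factorization carries the same block structure $(d_1,\dots,d_R)$ — an alternative with coarser or differently sized blocks requires a slightly more careful version of the same argument.)

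The computational half, however, has a genuine gap. The matrices $\mathbf V'^{-1}\mathbf V_q'=\mathbf N'^{-T}(\mathbf D'^{-1}\mathbf D_q)\mathbf N'^{T}$ are similar to block-diagonal matrices whose diagonal blocks $(\mathbf D'_r)^{-1}\mathbf D_{r,q}$ are \emph{arbitrary} $d_r\times d_r$ matrices: for $d_r\geq 2$ these matrices do not commute with one another and are not simultaneously diagonalizable, so ``simultaneous EVD'' in the sense of the theorem's footnote (a common similarity reducing commuting matrices to diagonal form) simply does not apply to them. Extracting the block column spans of $\mathbf N'^{-T}$ from this family is itself a joint block-diagonalization-by-similarity problem — you have traded congruence for similarity but not reduced anything to an eigenvalue decomposition. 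The missing idea, which is the heart of \cref{Alg:auxjbd}, is to pass to the solution space of the \emph{linear} system $\mathbf U\matrixU_q=\matrixU_q\mathbf U^T$, $q=1,\dots,Q$; under your own spanning and full-column-rank observations this space equals $\{\mathbf N\Bdiag(\lambda_1\mathbf I_{d_1},\dots,\lambda_R\mathbf I_{d_R})\mathbf N^{-1}\}$, a commuting family of diagonalizable matrices whose simultaneous EVD directly delivers $\mathbf N$, $R$, and the multiplicities $d_1,\dots,d_R$. A further minor point: since the theorem uses the non-Hermitian transpose over $\mathbb C$, an orthonormal basis $\mathbf P$ of $\operatorname{col}(\mathbf N)$ can have $\mathbf P^T\mathbf N$ singular (e.g., $\mathbf N=[1\ i]^T$); the compression must use $\mathbf P^H$ on the left so that $\mathbf P^H\matrixU_q\overline{\mathbf P}=(\mathbf P^H\mathbf N)\mathbf D_q(\mathbf P^H\mathbf N)^T$ with $\mathbf P^H\mathbf N$ nonsingular.
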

\begin{proof}
Let $\lambda_1,\dots,\lambda_Q\in\fF$ be generic.
Since $Q$ is equal to the  dimension of the subspace of all $\sum \tcr{d}_r\times \sum \tcr{d}_r$ symmetric block diagonal matrices,
the block diagonal matrix $\sum\lambda_q\mathbf D_q$ in $\sum\lambda_q\matrixU_q = \mathbf N (\sum\lambda_q\mathbf D_q) \mathbf N^T$ is also generic.
Thus, replacing each equation in  \cref{eq:JBDaux} by a (known) generic linear combination of all equations, we can assume without loss of generality (w.l.o.g.) that the matrices $\mathbf D_q$ are generic.
By 	\cite[Theorem 1.10]{BTD1paper}, the solution of the  obtained S-JBD problem  is unique and can be computed by means of (simultaneous)  EVD if we have at least $3$ equations, which is the case since $Q\geq \tcr{\rubinom{2+1}{2}=}3$. 
\end{proof}
\par The   algebraic procedure related to \cref{thm:JBDaux} is summarized in \cref{Alg:auxjbd} (see \cite[Subsection 2.3]{Cay2017SIMAX} and \cite[Algorithm 1 and Theorem 1.10]{BTD1paper}), where we assume w.l.o.g. that $K=\sum d_r$.
The value $R$ and the matrices $\mathbf U_1,\dots,\mathbf U_R$ in step 1 can be computed as follows.
Vectorizing the   matrix equation $\mathbf O =\mathbf U\matrixU_q-\matrixU_q\mathbf U^T$, we obtain that
$\mathbf 0=
(\matrixU_q^T\otimes \mathbf I)\operatorname{vec}(\mathbf U) - (\mathbf I\otimes \matrixU_q)\operatorname{vec}(\mathbf U^T)=
(\matrixU_q^T\otimes \mathbf I - (\mathbf I\otimes \matrixU_q)\mathbf P)\operatorname{vec}(\mathbf U)$,
where $\mathbf P$ denotes the $K^2\times K^2$ permutation matrix that transforms the vectorized form of a $K\times K$ matrix into the vectorized form of its transpose. Let $\mathbf M$ denote  the $K^2Q\times K^2$ matrix formed by the rows of $\matrixU_q^T\otimes \mathbf I - (\mathbf I\otimes \matrixU_q)\mathbf P$, $q=1,\dots,Q$. Then we obtain $R=\dim\nullsp{\mathbf M}$ and choose 
$\mathbf U_1,\dots,\mathbf U_R$ such that 
$\operatorname{vec}(\mathbf U_1),\dots\operatorname{vec}(\mathbf U_R)$ form a basis of $\nullsp{\mathbf M}$.
 
 \begin{algorithm}
 	\caption{Computation of S-JBD problem \cref{eq:JBDaux} under the conditions in  \cref{thm:JBDaux} }
 	\label{Alg:auxjbd}	
 	\begin{algorithmic}[1]
 		\INPUT{$K\times K$  symmetric matrices $\matrixU_1,\dots,\matrixU_Q$ with the property that there exist matrices  $\mathbf N$ and $\mathbf D_1,\dots,\mathbf D_Q$ such that $\matrixU_1,\dots,\matrixU_Q$ can be factorized as in \cref{eq:JBDaux},  the assumptions in \cref{thm:JBDaux} hold and $K=\sum d_r$
 		}
 		\STATE{ Find  $R$ and the matrices $\mathbf U_1,\dots,\mathbf U_R$ that form a basis of the subspace \\
 			\begin{center}$\{\mathbf U\in\fF^{K\times K}:\ \mathbf U\matrixU_q=\matrixU_q\mathbf U^T,\ q=1,\dots,Q \}$
 		\end{center}}
 		\STATE{Find $\mathbf N$  and the values $d_1,\dots,d_R$ from the simultaneous EVD\\
 			\begin{center}
 				$
 				\mathbf U_r=\mathbf N\Bdiag(\lambda_{1r}\mathbf I_{d_1},\dots,\lambda_{Rr}\mathbf I_{d_R})\mathbf N^{-1},\qquad r=1,\dots,R
 				$
 			\end{center}
 		} 
 		\STATE{For each $q=1,\dots,Q$ compute $\mathbf D_q=\mathbf N^{-1}\matrixU_q\mathbf N^{-T}$}
 		\OUTPUT{Matrices $\mathbf N$, $\mathbf D_1,\dots,\mathbf D_Q$ and the values $R$, $d_1,\dots,d_R$ such that  \cref{eq:JBDaux} holds}
 	\end{algorithmic}
 \end{algorithm}
 
It is worth noting that the computations in steps 1 and 2  can be simplified  as follows. From the proof of \cref{thm:JBDaux} it follows that the matrices $\matrixU_1,\dots,\matrixU_Q$ in step 1 can be replaced by three generic linear combinations.
It was also proved in \cite{Cay2017SIMAX} that the  simultaneous EVD in step 2 can be replaced by the EVD of a single matrix $\mathbf Z$, namely, a generic linear combination of $\mathbf U_1,\dots,\mathbf U_R$.
Then the values $d_1,\dots,d_R$ can be computed as the multiplicities of $R$ (distinct) eigenvalues of $\mathbf Z$. 

{\bf Approximate S-JBD.}  Optimization based schemes  for the approximate S-JBD problem are discussed in the  recent paper \cite{Cherrak2017} (see also \cite{Cay2017SIMAX,BTD1paper,VanDerVeen1996} and references therein).   The authors of \cite{Cay2017SIMAX} proposed a variant of \cref{Alg:auxjbd} in which   the null space of $\mathbf M$ in step 1 is replaced\footnote{In  noisy cases, the exact null space of $\mathbf M$ is always one-dimensional and spanned by the vectorized identity matrix.\label{ftn:test}}
   by  the subspace spanned by the $\tilde{R}\leq R$ smallest right singular vectors of $\mathbf M$, $\operatorname{vec}({\mathbf U_1}),\dots,\operatorname{vec}({\mathbf U_{\tilde R}})$, and   the simultaneous EVD problem in step 2 is replaced by  the EVD of single matrix
  $\mathbf Z$, where $\mathbf Z$ is a generic linear combination of $\mathbf U_1,\dots,\mathbf U_{\tilde R}$. 
  The block-diagonal matrices $\mathbf D_q$ in step 3 can be found without explicitly computing the inverse of $\mathbf N$ by solving the linear set of equations $\mathbf N\mathbf D_q\mathbf N^T= \matrixU_q$ in the least squares sense.
%
%
  Although the simultaneous EVD in step 2 is replaced by the EVD  of a single matrix $\mathbf Z$, the  experiments in \cite{Cay2017SIMAX} show that  the proposed variant of  \cref{Alg:auxjbd} may outperform optimization based algorithms. On the other hand, it is clear that \tcr{the loss of “diversity” when} replacing the $\tilde R$ matrices in step $2$ by a single generic linear combination may result in a poor estimate of $\mathbf N$  and  also in a wrong detection of $d_1,\dots,d_R$ \tcr{(cf. also the discussion for CPD in \cite{NVNagain})}.  That is why in this paper we will use the following  (still simple but more robust) procedure to compute an approximate   solution of the simultaneous EVD in step 2. (Note that the simultaneous EVD is (obviously) a new concept by itself, for which no dedicated numerical algorithms are available yet and  their derivation is outside the scope of this paper.)  First, we stack the matrices  $\mathbf U_1,\dots,\mathbf U_{\tilde R}$ into an $\tilde R\times K\times K$ tensor $\mathcal U$ and interpret the  simultaneous EVD in step 2 as a 
      structured  decomposition of $\mathcal U$ into a sum of ML rank-$(1,1,1)$ terms (i.e., just rank-$1$ terms): 
    \begin{equation}
    \mathcal U=\sum\limits_{k=1}^{K}\mathbf a_k\circ(\mathbf b_k\mathbf c_k^T)\ \text{ or }
    \mathbf U_r = \mathbf C\operatorname{diag}(a_{r1},\dots,a_{rK})\mathbf B^T,\quad r=1,\dots,\tilde R,
    \label{eq:somePD}
    \end{equation}
      where 
$    \mathbf B^T=\mathbf P^T\mathbf   N^{-1}$, $ \mathbf C=\mathbf N\mathbf P$ (implying that $\mathbf B=\mathbf C^{-T}$),
    \begin{equation}
        \operatorname{diag}(a_{r1},\dots,a_{rK}) = \mathbf P^T\Bdiag(\lambda_{1r}\mathbf I_{d_1},\dots,\lambda_{Rr}\mathbf I_{d_R})\mathbf P,\quad
     r=1,\dots,\tilde R.
     \label{eq:diaga=diaglambda}
    \end{equation}
    and $\mathbf P$ is an arbitrary permutation matrix. If $\mathbf P=\mathbf I_K$, then, by \cref{eq:diaga=diaglambda},
    \begin{equation}
    \mathbf a_1=\dots=\mathbf a_{d_1}=[\lambda_{11}\ \dots\ \lambda_{1\tilde R}]^T, \mathbf a_{d_1+1}=\dots=\mathbf a_{d_1+d_2}=
    [\lambda_{21}\ \dots\ \lambda_{2\tilde R}]^T,\dots 
    \label{eq:a1dotsaK}
    \end{equation}
         If $\mathbf P$ is not the identity, then the vectors $\mathbf a_1,\dots,\mathbf a_K$ can be permuted such that    
    \cref{eq:a1dotsaK} holds.    
    It can   easily be shown that,  in the exact case,   decomposition \cref{eq:somePD} is minimal, that is,  \cref{eq:somePD} is a CPD of $\mathcal U$, and that the  constraint  $\mathbf B=\mathbf C^{-T}$ holds for any solution of \cref{eq:somePD}.
         
    There exist many optimization based algorithms  that can compute the CPD of $\mathcal U$ in the  least squares sense (see, for instance, \cite{tensorlab3.0}).  Recall from  \cref{ftn:test} that, also in the noisy case,  $\mathbf U_{\tilde R}$ can be taken equal to a scalar multiple of the identity matrix.  This actually allows us to enforce the constraint $\mathbf B=\mathbf C^{-T}$  by setting $\mathbf U_{\tilde R}=\omega\mathbf I_K$, where   $\omega$ is a weight coefficient chosen by the user.
     Finally, clustering the  $K$ vectors $\mathbf a_k\in\mathbb F^{\tilde R}$ into $R$  clusters (modulo sign and scaling) we obtain the values $d_1,\dots,d_R$ as the sizes of clusters and also the permutation matrix $\mathbf P$. Then we set
     $\mathbf N=\mathbf C\mathbf P^T$.
   
 \section{Our contribution} \label{sec:contribfortable}
   Before stating the main results (\cref{subsubsection213,sec:genuniq}),  we present necessary conditions for uniqueness (\cref{subsubsectionnecess}),
  explain the key idea behind our derivation (\cref{subsubsectionkeyidea}),  introduce some  notations (\cref{subsec:222}) and a convention (\cref{subsubs:convention}).
  \subsection{Necessary conditions for uniqueness}\label{subsubsectionnecess}
 Let $\mathcal T\in\fF^{I\times J\times K}$ admit the \ML  decomposition \cref{eq:LrLr1}. It was shown in \cite[Theorem 2.4]{LievenLrLr1}
 that if the decomposition of $\mathcal T$ into a sum of \MLatmost terms is unique, then
  $\mathbf A$ does not have proportional columns (trivial) and  the following condition holds:
 \begin{equation}
 \begin{split}
 \text{for every vector }\mathbf{\mathbf w}\in\fF^R\ \text{that has at least two nonzero entries,} \\
 \text{the rank of the matrix }\sum\limits_{r=1}^R w_r\mathbf E_r \text{ is greater than } \max\limits_{\{r: w_r\ne 0\}}L_r.
 \end{split}\label{eq:necesswEr}
 \end{equation}
 In the following theorem we generalize well-known necessary conditions
 	for uniqueness of the CPD (see \cite{PartI} and  references therein) to the decomposition into a sum of \MLatmost terms.
The condition in  \cref{item:necc2} is more restrictive than \cref{eq:necesswEr} but is easier to check. 

 \begin{theorem}\label{thm:necessity}
 	Let $\mathcal T\in\fF^{I\times J\times K}$ admit the \ML  decomposition \cref{eq:LrLr1mainBC},
 	i.e., $r_{\mathbf B_r}=r_{\mathbf C_r}=L_r$ for all $r$.
 	 	If the decomposition of $\mathcal T$ into a sum of \MLatmost terms is unique, then the following statements hold:
 	\begin{statements}
 		\item\label{item:necc2}  the matrix  $[\operatorname{vec}(\mathbf E_1)\ \dots\ \operatorname{vec}(\mathbf E_R)]$ has full column rank, where
 		$\mathbf E_r:=\mathbf B_r\mathbf C_r^T$ for all $r$;
 		\item\label{item:necc4}  the matrix  $[\mathbf a_1\otimes \mathbf B_1\ \dots\ \mathbf a_R\otimes \mathbf B_R]$   has full column rank;
 		\item\label{item:necc3}  the matrix  $[\mathbf a_1\otimes \mathbf C_1\ \dots\ \mathbf a_R\otimes \mathbf C_R]$  has full column rank.
 	\end{statements}
 \end{theorem}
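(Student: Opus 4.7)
My plan is to prove all three statements in a unified way by contrapositive: if any one of the three listed matrices fails to have full column rank, I construct a nontrivial perturbation of the corresponding factor matrix that produces a second decomposition of $\mathcal T$ into \MLatmost terms inequivalent to the first modulo the trivial permutation and scaling ambiguities. The unfolding identities \cref{eq:unf_T_1,eq:unf_T_2,eq:unf_T_3} make the mechanism transparent: writing $\unf{T}{1}$, $\unf{T}{2}$, $\unf{T}{3}$ as the bracketed matrix times $\mathbf A^T$, $\mathbf B^T$, or $\mathbf C^T$, a nonzero $\mathbf w$ in the right null space of the bracketed matrix together with an arbitrary $\mathbf v$ yields a rank-one modification $\mathbf A \mapsto \mathbf A + \mathbf v \mathbf w^T$, $\mathbf B \mapsto \mathbf B + \mathbf v \mathbf w^T$, or $\mathbf C \mapsto \mathbf C + \mathbf v \mathbf w^T$ of the relevant factor matrix that leaves the corresponding unfolding, and hence $\mathcal T$, invariant.

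For \cref{item:necc4}, partition $\mathbf w = (\mathbf w_1^T, \dots, \mathbf w_R^T)^T \in \fF^{\sum L_r}$ compatibly with the block structure of $\mathbf B$ and choose any $\mathbf v \in \fF^J \setminus \{\mathbf 0\}$. The new blocks $\mathbf B'_r := \mathbf B_r + \mathbf v \mathbf w_r^T$ still have size $J \times L_r$, so $\mathbf E'_r := \mathbf B'_r \mathbf C_r^T = \mathbf E_r + \mathbf v (\mathbf C_r \mathbf w_r)^T$ has rank at most $L_r$, and $\sum_r \mathbf a_r \circ \mathbf E'_r$ is again a valid decomposition of the required form. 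Since the first factor matrix is unchanged and its columns are pairwise non-proportional (a necessary condition for uniqueness, recorded just before this theorem via \cref{eq:necesswEr}), any permutation-and-scaling equivalence between the two decompositions must be the identity with unit scalars, forcing $\mathbf E'_r = \mathbf E_r$, i.e.\ $\mathbf v (\mathbf C_r \mathbf w_r)^T = \mathbf O$ for every $r$. As $\mathbf v \neq \mathbf 0$ and $\mathbf C_r$ has full column rank $L_r$ under the \ML hypothesis, this yields $\mathbf w_r = \mathbf 0$ for all $r$, contradicting $\mathbf w \neq \mathbf 0$. \Cref{item:necc3} is completely symmetric, with the roles of $\mathbf B$ and $\mathbf C$ interchanged and the full column rank of $\mathbf B_r$ invoked at the end.

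For \cref{item:necc2} the same scheme applies to the perturbation $\mathbf A \mapsto \mathbf A' := \mathbf A + \epsilon \mathbf v \mathbf w^T$, but a small scalar $\epsilon \neq 0$ is needed so that each column $\mathbf a'_r := \mathbf a_r + \epsilon w_r \mathbf v$ remains nonzero, which holds for $\epsilon$ small enough by continuity. Then $(\mathbf A', \mathbf B, \mathbf C)$ is a valid decomposition, and any permutation-and-scaling matching against the original, by continuity in $\epsilon$ together with the non-proportionality of the columns of $\mathbf A$, must be the identity with unit scalars. This forces $\epsilon w_r \mathbf v = \mathbf 0$ for every $r$, again contradicting $\mathbf w \neq \mathbf 0$ and $\mathbf v \neq \mathbf 0$.

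I expect the main obstacle to be the last step in each argument: verifying that the perturbed tuple is not merely a trivial relabelling of the original. For \cref{item:necc4,item:necc3} the decisive ingredients are the block-structural rank bound (which makes the perturbed decomposition valid for any $\mathbf v$, without any smallness assumption) and the \ML hypothesis $r_{\mathbf C_r} = L_r$ (respectively $r_{\mathbf B_r} = L_r$), which turns the kernel condition $\mathbf C_r \mathbf w_r = \mathbf 0$ into $\mathbf w_r = \mathbf 0$. In \cref{item:necc2} the extra ingredient is the previously recorded necessity of non-proportional columns of $\mathbf A$, combined with a small-$\epsilon$ continuity argument to retain validity of the perturbed decomposition.
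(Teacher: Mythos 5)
Your mechanism is sound and the three arguments, taken together, do establish the theorem, but the pairing between null spaces and perturbed factors is crossed in your treatment of \cref{item:necc4,item:necc3}. Since $\unf{T}{2}=[\mathbf a_1\otimes\mathbf C_1\ \dots\ \mathbf a_R\otimes\mathbf C_R]\mathbf B^T$ while $\unf{T}{3}=[\mathbf a_1\otimes\mathbf B_1\ \dots\ \mathbf a_R\otimes\mathbf B_R]\mathbf C^T$, the perturbation $\mathbf B_r\mapsto\mathbf B_r+\mathbf v\mathbf w_r^T$ preserves $\mathcal T$ exactly when $\mathbf w$ annihilates $[\mathbf a_1\otimes\mathbf C_1\ \dots\ \mathbf a_R\otimes\mathbf C_R]$: the difference tensor is $\sum_r\mathbf a_r\circ\bigl(\mathbf v(\mathbf C_r\mathbf w_r)^T\bigr)$, whose second unfolding equals $\bigl([\mathbf a_1\otimes\mathbf C_1\ \dots\ \mathbf a_R\otimes\mathbf C_R]\mathbf w\bigr)\mathbf v^T$. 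So the argument you present under \cref{item:necc4} is in fact the proof of \cref{item:necc3}; if instead $\mathbf w$ is taken, as the statement being proved would suggest, from $\nullsp{[\mathbf a_1\otimes \mathbf B_1\ \dots\ \mathbf a_R\otimes \mathbf B_R]}$, then the key claim that $\sum_r\mathbf a_r\circ\mathbf E_r'$ is again a decomposition of $\mathcal T$ is simply false. The correct proof of \cref{item:necc4} perturbs $\mathbf C_r\mapsto\mathbf C_r+\mathbf v\mathbf w_r^T$ with $\mathbf v\in\fF^K$, giving $\mathbf E_r'=\mathbf E_r+(\mathbf B_r\mathbf w_r)\mathbf v^T$ and a contradiction from $r_{\mathbf B_r}=L_r$. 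Because you declare the two cases symmetric, both statements are in fact covered by your pair of arguments; once the labels are straightened out the proof stands.

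Beyond that, your route is genuinely different from the paper's. The paper proves \cref{item:necc2} by merging terms: linear dependence $\mathbf E_1=\sum_{r\geq 2}\alpha_r\mathbf E_r$ yields $\mathcal T=\sum_{r\geq 2}(\alpha_r\mathbf a_1+\mathbf a_r)\circ\mathbf E_r$, a decomposition with fewer terms; and it proves \cref{item:necc4,item:necc3} by absorbing one column of $\mathbf B_1$ (resp.\ $\mathbf C_1$) into the remaining terms, producing an alternative decomposition with $r_{\tilde{\mathbf E}_1}<L_1$. In both cases the contradiction is with minimality, so no analysis of the permutation/scaling matching is needed. Your one-parameter families of decompositions do require that matching step; for \cref{item:necc2} the somewhat breezy appeal to ``continuity in $\epsilon$'' is better replaced by the observation that a nontrivial permutation of summands would force $\mathbf E_r$ proportional to $\mathbf E_{\sigma(r)}$ for some $\sigma(r)\neq r$, which already produces a decomposition with $R-1$ terms and hence contradicts uniqueness directly. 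With these repairs your proof is correct.
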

 \begin{proof}The three statements come from the three matrix representations \cref{eq:unf_T_1}, \cref{eq:unf_T_3}, and \cref{eq:unf_T_2}. The details of the proof are given in \cref{sec:simplethms}.
 \end{proof}
   \subsection{The key idea}\label{subsubsectionkeyidea}
  Let $\mathcal T\in\fF^{I\times J\times K}$ admit the \ML decomposition  \cref{eq:LrLr1}, 
  and let   $\mathbf T_1,\dots,\mathbf T_K\in\fF^{I\times J}$ denote the frontal slices of $\mathcal T$, $\mathbf T_k:=(t_{ijk})_{i,j=1}^{I,J}$.
  It is clear that
  \begin{equation}
  f_1\mathbf T_1 + \dots +f_K\mathbf T_K = \sum\limits_{k=1}^Kf_k\sum\limits_{r=1}^R\mathbf a_r\mathbf e_{k,r}^T=\sum\limits_{r=1}^R\mathbf a_r\sum\limits_{k=1}^K \mathbf e_{k,r}^Tf_k
  =
  \sum\limits_{r=1}^R\mathbf a_r(\mathbf E_r\mathbf f)^T,\label{eq:lincombfT}
  \end{equation}
  where $\mathbf e_{k,r}$ denotes the $k$th column of $\mathbf E_r$. Thus, if $\mathbf f$ belongs to the null space of all but one of the matrices $\mathbf E_1,...,\mathbf E_R$ , then
  $f_1\mathbf T_1 + \dots +f_K\mathbf T_K$ is rank-$1$ and its column space is spanned by a column of $\mathbf A$.
  We will make  assumptions on $\mathbf A$ and $\mathbf E_1,\dots,\mathbf E_R$ that  guarantee that the    identity $f_1\mathbf T_1 + \dots +f_K\mathbf T_K=\mathbf z\mathbf y^T$  holds if and only if   $\mathbf z$ is proportional to a column of $\mathbf A$  and  $\mathbf f$ belongs to the null space of all matrices $\mathbf E_1,\dots,\mathbf E_R$ but one:
  \begin{align}
  f_1\mathbf T_1 + \dots +f_K\mathbf T_K=\mathbf z\mathbf y^T \ \Leftrightarrow\ &\exists r \text{ such that } \mathbf z=c\mathbf a_r,\ \mathbf Z_r  \mathbf f=\mathbf 0  \text{ and } \tcr{\mathbf E_r\mathbf f\ne \mathbf 0},
  \label{eq:iffrank1}\\
  &\ \text{ where }\mathbf Z_r := [\mathbf E_1^T\ \dots\ \mathbf E_{r-1}^T\ \mathbf E_{r+1}^T\ \dots\ \mathbf E_R^T]^T.\nonumber 
  \end{align}
    In our algorithm we use $\mathcal T$ to construct a $\rubinom{I}{2}\rubinom{J}{2}\times K^2$ matrix $\mathbf R_2(\mathcal T)$ such that
 the following equivalence holds true:
 \begin{equation}
 \mathbf f\in\fF^K\ \text{is a solution of }\ 
\mathbf R_2(\mathcal T)(\mathbf f\otimes \mathbf f)=\mathbf 0 \qquad \Leftrightarrow \qquad  r_{f_1\mathbf T_1 + \dots +f_K\mathbf T_K}\leq 1.
 \label{eq:rank1eqR20}
 \end{equation}
  By \cref{eq:lincombfT,eq:rank1eqR20,eq:iffrank1},  the set of all solutions of  
  \begin{equation}
\mathbf R_2(\mathcal T)(\mathbf f\otimes \mathbf f)=\mathbf 0\label{eq:R2T0}
  \end{equation}
   is the union of the subspaces $\nullsp{\mathbf Z_1},\dots,\nullsp{\mathbf Z_R}$ and any nonzero solution of \cref{eq:R2T0} gives us a column of $\mathbf A$.   We  establish a link between \cref{eq:R2T0} and    S-JBD problem \cref{eq:JBDaux}. By solving the S-JBD problem we will be able to find  the subspaces $\nullsp{\mathbf Z_1},\dots,$ $\nullsp{\mathbf Z_R}$
   and  the  entire factor matrix $\mathbf A$,  which will then be used to recover the overall decomposition. 
\subsection{Construction of the matrix  $\mathbf R_2(\mathcal T)$ and its submatrix $\mathbf Q_2(\mathcal T)$} \label{subsec:222} 
In this subsection we present the explicit construction of the matrix $\mathbf R_2(\mathcal T)$ in \cref{eq:rank1eqR20}.
In fact, the construction  follows directly from \cref{eq:rank1eqR20}.
It is clear that
\begin{equation}
r_{f_1\mathbf T_1 + \dots +f_K\mathbf T_K}\leq 1 \quad \Leftrightarrow \quad 
\text{ all } 2\times 2 \text{ minors of } f_1\mathbf T_1+ \dots +f_K\mathbf T_K\ \text{ are zero}.
\label{eq:idea}
\end{equation}
Since there are $\rubinom{I}{2}\rubinom{J}{2}$ minors and since each minor is a weighted sum of $K^2$ 
monomials $f_i f_j$, $1\leq i,j\leq K$, the condition in the RHS of  \cref{eq:idea} can be rewritten as
$ \mathbf R_2(\mathcal T)(\mathbf f\otimes \mathbf f)=\mathbf 0$,
where $\mathbf R_2(\mathcal T)$ is a  $\rubinom{I}{2}\rubinom{J}{2}\times K^2$ matrix whose entries are the second degree polynomials in the entries of $\mathcal T$.
Variants of the following explicit construction of $\mathbf R_2(\mathcal T)$ can be found in  \cite{DeLathauwer2006,LinkGEVD,MikaelCoupledPII}.
\begin{definition}\label{def:R2}
	The
\begin{equation}
\left((i_1+\rubinom{i_2-1}{2}-1)\rubinom{J}{2}+j_1+\rubinom{j_2-1}{2}, (k_2-1)K+k_1\right)\text{-th}\label{eq:ttttindex}
\end{equation}
entry of the $\rubinom{I}{2}\rubinom{J}{2}\times K^2$ matrix  $\mathbf R_2(\mathcal T)$ equals 
\begin{equation}
t_{i_1j_1k_1}t_{i_2j_2k_2}+t_{i_1j_1k_2}t_{i_2j_2k_1}-t_{i_1j_2k_1}t_{i_2j_1k_2}-t_{i_1j_2k_2}t_{i_2j_1k_1},
\label{eq: tttt}
\end{equation}
where
$$
1\leq i_1<i_2\leq I,\ 1\leq j_1<j_2\leq J,\ 1\leq k_1,k_2\leq K.
$$
\end{definition}
\par Since the expression in \cref{eq: tttt} is invariant under the permutation $(k_1,k_2)\rightarrow (k_2,k_1)$, the $((k_2-1)K+k_1)$-th column of  the matrix $\mathbf R_2(\mathcal T)$ coincides with its $((k_1-1)K+k_2)$-th column. 
In other words, the rows of $\mathbf R_2(\mathcal T)$ are vectorized $K\times K$ symmetric matrices,
implying that    $\rubinom{K-1}{2}$ columns of $\mathbf R_2(\mathcal T)$ are repeated twice.
Hence  $\mathbf R_2(\mathcal T)$ is of the form 
\begin{equation}
\mathbf R_2(\mathcal T) = \mathbf Q_2(\mathcal T){\mathbf P}_K^T,
\label{eq:R2Q2P}
\end{equation}
where $\mathbf Q_2(\mathcal T)$ holds the $ \rubinom{K+1}{2}$ unique columns of 
$\mathbf R_2(\mathcal T)$
and ${\mathbf P}_K^T\in\fF^{\rubinom{K+1}{2}\times K^2}$ is a binary $(0/1)$ matrix with exactly one element equal to ``1'' per column.
Formally, $\mathbf Q_2(\mathcal T)$ is defined as follows.
\begin{definition}\label{def:Q2}
 $\mathbf Q_2(\mathcal T)$ denotes the $\rubinom{I}{2}\rubinom{J}{2}\times \rubinom{K+1}{2}$ submatrix of $\mathbf R_2(\mathcal T)$ formed by   the  columns with indices $(k_2-1)K+k_1$, where
$1\leq k_1\leq k_2\leq K$. 
\end{definition}
It can be easily checked that \cref{eq:R2Q2P} holds for $\mathbf P_K$ defined by
\begin{equation}
(\mathbf P_K)_{(k_1-1)K+k_2,j}=
\begin{cases}
1,&\text{if } j=\min(k_1,k_2)+\rubinom{\max(k_1,k_2)}{2},\\
0,&\text{otherwise},
\end{cases}
\label{eq:matrixP_K}
\end{equation}
where $1\leq k_1,k_2\leq K$.

In our algorithm we will work with the smaller matrix $\mathbf Q_2(\mathcal T)$ while in the theoretical development we will use
$\mathbf R_2(\mathcal T)$. 
More specifically, a vector $\mathbf f\in\fF^K$ is a solution of \cref{eq:R2T0} if and only if   $\mathbf f\otimes\mathbf f$ belongs to the intersection of the null space of $\mathbf R_2(\mathcal T)$ and   the subspace of vectorized $K\times K$ symmetric matrices,
\begin{equation}
\vecsym{K}:=\{\operatorname{vec}(\mathbf M):\ \mathbf  M\in\fF^{K\times K},\ \mathbf M=\mathbf M^T\},\quad \dim(\vecsym{K})=\rubinom{K+1}{2}.
\label{eq:defvecsymM}
\end{equation}
By \cref{eq:R2Q2P}, the intersection can actually be recovered from the null space of $\mathbf Q_2(\mathcal T)$ as
\begin{equation}
\nullsp{\mathbf R_2(\mathcal T)}\cap\vecsym{K} = {\mathbf P}_K({\mathbf P}_K^T{\mathbf P}_K)^{-1} \nullsp{\mathbf Q_2(\mathcal T)}.
\label{eq:nullR2viaNullQ2}
\end{equation}
It is worth noting that the matrix  $\mathbf D:={\mathbf P}_K({\mathbf P}_K^T{\mathbf P}_K)^{-1}$ in \cref{eq:nullR2viaNullQ2} has the following simple form
\begin{equation}
(\mathbf D)_{(k_1-1)K+k_2,j}=
\begin{cases}
1,&\text{if } j=k_1+\rubinom{ k_1 }{2} \text{ and } k_1=k_2,\\
\frac{1}{2},&\text{if }  j=\min(k_1,k_2)+\rubinom{\max(k_1,k_2)}{2} \text{ and } k_1\ne k_2,\\
0,&\text{otherwise}.
\end{cases}
\label{eq:matrixP_KP_K}
\end{equation}
\subsection{Convention $r_{\unf{T}{3}}=K$}\label{subsubs:convention}
The results of this paper rely on equivalence \cref{eq:iffrank1}, which does not hold if the frontal slices  $\mathbf T_1,\dots,\mathbf T_K$ of the tensor $\mathcal T$ are linearly dependent. One can easily verify that $\unf{T}{3}=[\operatorname{vec}(\mathbf T_1)\ \dots\ \operatorname{vec}(\mathbf T_K)]$, implying that  linear independence of $\mathbf T_1,\dots,\mathbf T_K$
is equivalent to  full column rank of $\unf{T}{3}$, i.e., to the condition $r_{\unf{T}{3}}=K$.

Thus, to apply the results of the paper for tensors with $r_{\unf{T}{3}} < K$, one should first ``compress'' $\mathcal T$ to an $I\times J\times \tilde{K}$ tensor $\tilde{\mathcal T}$ such that  $r_{\tilde{\mathbf T}_{(3)}}=\tilde{K}$. Such a compression can, for instance, be done by taking $\tilde{\mathcal T}$ with $\tilde{\mathbf T}_{(3)}$ equal to the ``U'' factor in the compact SVD of $\unf{T}{3}=\mathbf U\mathbf S\mathbf V^H$. In this case, by \cref{eq:unf_T_3},
$$
\tilde{\mathbf T}_{(3)}:=\mathbf U=\unf{T}{3}\mathbf V\mathbf S^{-1}=[\mathbf a_1\otimes\mathbf B_1\ \dots\ \mathbf a_R\otimes\mathbf B_R](\mathbf S^{-1}\mathbf V^T\mathbf C)^T,
$$
implying that  $\tilde{\mathcal T}$ and $\mathcal T$ share the first two factor matrices and that \tcr{the  slices of $\tilde{\mathcal T}$ are obtained from
linear mixtures of the \tcr{$I\times J$ matrix} slices of $\mathcal T$}. 
If the  decomposition of $\tilde{\mathcal T}$ into a sum of \MLatmost terms is unique, then, by \cref{item:necc4} of \cref{thm:necessity},
the matrix $[\mathbf a_1\otimes\mathbf B_1\ \dots\ \mathbf a_R\otimes\mathbf B_R]$ has full column rank.
Thus, when the matrices $\mathbf A$ and $\mathbf B$ are obtained from $\tilde{\mathcal T}$, the remaining matrix $\mathbf C$ can be found from \cref{eq:unf_T_3} as
$
\mathbf C=\left([\mathbf a_1\otimes\mathbf B_1\ \dots\ \mathbf a_R\otimes\mathbf B_R]^{\dagger}\unf{T}{3}\right)^T.
$
For future reference, we summarize the above discussion in \cref{item:thm:sonvention:statement1} of the following theorem.
\Cref{item:thm:sonvention:statement2}  is the generic version of \cref{item:thm:sonvention:statement1} and   can be proved in a similar way.
\begin{theorem}\label{thm:convention} \qquad 
	\begin{statements}
		\item	\label{item:thm:sonvention:statement1}
	Let $\mathcal T$ be an $I\times J\times K$ tensor and let $\tilde{\mathcal T}$ be an $I\times J\times \tilde K$ tensor formed by 
	$\tilde K$ linearly independent mixtures of the  $I\times J$ matrix  slices of $\mathcal T$. If the 	decomposition of  $\tilde{\mathcal T}$  into a sum of \MLatmost terms  i) is    unique or, moreover, ii)  is  unique and can be computed by means of (simultaneous) EVD, then  the  same holds true for $\mathcal T$.
\item	\label{item:thm:sonvention:statement2}
	If the 	decomposition of an $I\times J\times \tilde K$ tensor into a sum of \MLatmost terms  i) is   generically unique or, moreover, ii) is generically unique and can \tcr{generically} be computed by means of (simultaneous) EVD, then  the  same holds true for tensors with dimensions $I\times J\times K$, where $K\geq \tilde{K}$.
	\end{statements}
\end{theorem}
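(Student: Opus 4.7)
My plan is to make precise the compression argument already sketched in the paragraph preceding the statement. For part (\ref{item:thm:sonvention:statement1}), I would write $\tilde{\mathbf T}_{(3)} = \unf{T}{3}\mathbf M$ for some $\mathbf M\in\fF^{K\times\tilde K}$ of full column rank chosen so that the $\tilde K$ mixed slices are linearly independent. The key observation is that any decomposition $\mathcal T = \sum_r \mathbf a_r'\circ\mathbf B_r'\mathbf C_r'^T$ into \MLatmost terms induces a corresponding decomposition $\tilde{\mathcal T} = \sum_r \mathbf a_r'\circ\mathbf B_r'(\mathbf M^T\mathbf C_r')^T$, and this is still a bona fide decomposition of $\tilde{\mathcal T}$ into \MLatmost terms because $r_{\mathbf M^T\mathbf C_r'}\leq r_{\mathbf C_r'}\leq L_r$. (Here the ``at most'' formulation in \cref{def:unic_dec} is essential, since $\mathbf M^T\mathbf C_r'$ may legitimately drop rank.)

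By the assumed uniqueness for $\tilde{\mathcal T}$, the first two factor matrices $\mathbf A'$ and $\mathbf B'$ coincide, up to permutation and scaling, with those of any other decomposition of $\mathcal T$. To close the argument I still need to pin down $\mathbf C$. Here I would invoke \cref{thm:necessity}(\ref{item:necc4}) for $\tilde{\mathcal T}$: its uniqueness forces $[\mathbf a_1\otimes\mathbf B_1\ \dots\ \mathbf a_R\otimes\mathbf B_R]$ to have full column rank. Rearranging \cref{eq:unf_T_3} for $\mathcal T$ itself then yields the explicit identity $\mathbf C = \bigl([\mathbf a_1\otimes\mathbf B_1\ \dots\ \mathbf a_R\otimes\mathbf B_R]^\dagger\,\unf{T}{3}\bigr)^T$, determining $\mathbf C$ uniquely modulo the complementary scaling that accompanies the remaining freedom in $\mathbf A$. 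This settles (i); for (ii), the same formula shows that once the EVD-based algorithm for $\tilde{\mathcal T}$ has returned $\mathbf A$ and $\mathbf B$, the matrix $\mathbf C$ is recovered by solving a single overdetermined linear system, so the whole decomposition of $\mathcal T$ is computable by (simultaneous) EVD followed by elementary linear algebra.

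Part (\ref{item:thm:sonvention:statement2}) will follow by a measure-theoretic lift of part (\ref{item:thm:sonvention:statement1}). I would fix any $\mathbf M\in\fF^{K\times\tilde K}$ of full column rank and observe that the linear surjection $(\mathbf A,\mathbf B,\mathbf C)\mapsto(\mathbf A,\mathbf B,\mathbf M^T\mathbf C)$ pushes an absolutely continuous measure on the $I\times J\times K$ parameter space forward to an absolutely continuous measure on the $I\times J\times\tilde K$ parameter space; the preimage of the measure-zero ``bad'' set (on which $\tilde{\mathcal T}$'s decomposition fails to be unique) is therefore measure-zero by Fubini over the kernel fibers, and part (\ref{item:thm:sonvention:statement1}) then lifts both uniqueness and EVD-computability back to $\mathcal T$ on the complementary full-measure set. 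The main thing I expect to verify with some care is just this measure-theoretic transfer; the algebra in part (\ref{item:thm:sonvention:statement1}) is essentially already written in the paragraph preceding the theorem, and the main conceptual point is simply that the rank condition on $[\mathbf a_r\otimes\mathbf B_r]$ is inherited from $\tilde{\mathcal T}$ via \cref{thm:necessity} and does the work of uniquely recovering $\mathbf C$ even though no factor matrix of $\mathcal T$ itself needs to have full column rank.
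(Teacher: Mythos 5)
Your proof is correct and takes essentially the same route as the paper's: the paper's proof of \cref{item:thm:sonvention:statement1} is precisely the compression discussion preceding the theorem (uniqueness for $\tilde{\mathcal T}$ pins down $\mathbf A$ and $\mathbf B$, \cref{item:necc4} of \cref{thm:necessity} applied to $\tilde{\mathcal T}$ gives full column rank of $[\mathbf a_1\otimes\mathbf B_1\ \dots\ \mathbf a_R\otimes\mathbf B_R]$, and $\mathbf C$ is then recovered from $\unf{T}{3}$ via the pseudo-inverse formula). The paper dismisses \cref{item:thm:sonvention:statement2} as provable ``in a similar way,'' and your pushforward/Fubini argument for transferring the measure-zero exceptional set is a standard and adequate way to fill that in.
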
  

Thus,   in the cases  where the assumption $r_{\unf{T}{3}} =K$ (resp. the assumptions $IJ\geq\sum L_r\geq K$) allows us to simplify the presentation, namely, in \cref{thm: maintheorem,thm: maintheoremABC} (resp. in \cref{thm:maingen}),  we will assume w.l.o.g. that $r_{\unf{T}{3}}=K$ (resp. $\sum L_r\geq K$). 
\subsection{Main uniqueness results and algorithm}\label{subsubsection213}
In \cref{subsub251} we present results on uniqueness and computation of the exact \ML decomposition   \cref{eq:LrLr1}.
In \cref{subsub252} we explain how to compute an approximate solution in the case where the decomposition is not exact. In  \cref{subsub253}
we illustrate our results by examples.
\subsubsection{ Exact  \ML decomposition}\label{subsub251}
In the following theorem both assumptions \cref{item:th1cond1}, \cref{item:th1cond2} need to hold, and at least one  of the assumptions \cref{eq:ranksofFand G} and \cref{item:th1cond3}. In \cref{sta:lemmaApp4} of \cref{lemma: Q2viaABC} below we will show that  \cref{eq:ranksofFand G} actually implies \cref{item:th1cond3}.

By itself, \cref{thm: maintheorem} can be used to show uniqueness of a decomposition, but not only that. As we will explain later, the theorem comes with an algorithm for the actual computation of the decomposition (namely, \cref{algorithm:1}). In this respect, another comment is in order. 
	If one wishes to use \cref{thm: maintheorem} to show uniqueness, and if one wishes to do so via \cref{eq:ranksofFand G}, then there is no need to construct the matrix $\mathbf Q_2(\mathcal T)$ in \cref{item:th1cond3}. On the other hand, \cref{thm: maintheorem} comes with
	\cref{algorithm:1} for the actual computation of the decomposition. In this algorithm we work via the null space of $\mathbf Q_2(\mathcal T)$ (and not just its dimension as in \cref{item:th1cond3}), i.e., matrix $\mathbf Q_2(\mathcal T)$ is constructed, also in cases where the uniqueness by itself follows from \cref{eq:ranksofFand G}.
\begin{theorem} \label{thm: maintheorem} 
	Let $\mathcal T\in\fF^{I\times J\times K}$ admit the \ML decomposition  \cref{eq:LrLr1}, i.e., $r_{\mathbf E_r}=L_r$ for all $r$.
	Assume that 
	\begin{align}
 r_{\unf{T}{3}}&=K \label{item:th1cond1}\ \text{and}\\
 d_r:=\dim\nullsp{\mathbf Z_r}&\geq 1,\qquad r=1,\dots,R,\label{item:th1cond2}
 	\end{align} 
 	where	$\unf{T}{3}$ is defined in \cref{eq:unf_T_3} and  
 	$\mathbf Z_r := [\mathbf E_1^T\ \dots\ \mathbf E_{r-1}^T\ \mathbf E_{r+1}^T\ \dots\ \mathbf E_R^T]^T$.
  	Assume also that
	\begin{gather}
	\begin{gathered}
	k_{\mathbf A}\geq 2\ \text{ and rank of}\ 
	\mathbf F:=[\mathbf E_{r_1}\ \mathbf E_{r_2}\ \dots\ \mathbf E_{r_{R-r_{\mathbf A}+2}}] \text{ is }
	L_{r_1}+\dots+L_{r_{R-r_{\mathbf A}+2}}\\
	 \text{ for all }\ 1\leq r_1<\dots<r_{R-r_{\mathbf A}+2}\leq R \label{eq:ranksofFand G}
	\end{gathered}\\
\shortintertext{or} 
	\dim\nullsp{\mathbf Q_2(\mathcal T)}=\sum\limits_{r=1}^R \rubinom{d_r+1}{2}=:Q, \label{item:th1cond3}
	\end{gather}
	where 	$\mathbf Q_2(\mathcal T)$ is constructed by \cref{def:Q2}.
	 Consider the following conditions:
	\begin{conditions}
		\renewcommand{\theenumi}{\alph{enumi}}    
		\item  \label{item:th1cond3a} $K\geq \sum L_r-\min L_r +1$ and $k_{\mathbf A}\geq 2$;
		\item  \label{item:th1cond5} the matrix $\mathbf A$ has full column rank, i.e., $r_{\mathbf A}=R$;
		\item  \label{item:th1cond6}  $k_{\mathbf A}=r_{\mathbf A}<R$, 
		assumption \cref{eq:ranksofFand G} holds and 
		\begin{equation}
		\begin{gathered}
        \text{rank of } \mathbf G:=[\mathbf E_{r_1}^T\ \mathbf E_{r_2}^T\ \dots\ \mathbf E_{r_{R-r_{\mathbf A}+2}}^T]
         \text{ is }   L_{r_1}+\dots+L_{r_{R-r_{\mathbf A}+2}}\\
        \text{ for all }\ 1\leq r_1<\dots<r_{R-r_{\mathbf A}+2}\leq R;
		\label{eq:ranksofFand G2} 
		\end{gathered}
		\end{equation}
		\item  \label{item:th1cond4}  \tcr{the matrix $[\mathbf E_1^T\ \dots\ \mathbf E_R^T]^T$ has maximum possible rank, namely, $\sum L_r$;} 
		\item  \label{eq:ineqforuni1fm} the inequality 
		\begin{equation*}
		\rubinom{K+1}{2} - Q > \tcr{-\tilde{L}_1\tilde{L}_2} + \sum\limits_{1\leq r_1<r_2\leq R} L_{r_1}L_{r_2} 
		\end{equation*}
		holds, where $\tilde{L}_1$ and $\tilde{L}_2$ denote the two smallest values in $\{L_1,\dots,L_R\}$.
			\end{conditions}
	The following statements hold.
	\begin{statements}
	\item	\label{item:th1stat1}
	        The matrix $\mathbf A$ in the \ML decomposition \cref{eq:LrLr1} can be computed by means of (simultaneous) EVD up to  column  permutation and scaling. 
    \item  \label{item:th1stat2}
	        If  either \cref{item:th1cond5}  or \cref{item:th1cond6} holds,
        	then the overall ML rank-\\ $(1,L_r,L_r)$ decomposition \cref{eq:LrLr1} can be computed by means of (simultaneous) EVD.
  \item  \label{item:th1newstat} If \cref{item:th1cond3a} holds, then  any decomposition of $\mathcal T$ into a sum of \MLatmost terms  has $R$ nonzero terms and  its first factor matrix  \tcr{can be chosen as} $\mathbf A\mathbf P$, where every column of  $\mathbf P\in\fF^{R\times R}$ contains precisely a single $1$ with zeros everywhere else.
   \item \label{item:th1firstfmhard} If \cref{item:th1cond3a,eq:ineqforuni1fm} hold,
    	then   	the first factor matrix of the decomposition of $\mathcal T$ into a sum of \MLatmost terms is unique
   	and can be computed by means of (simultaneous) EVD.
   \item   \label{item:th1stat4}
	       If \cref{item:th1cond3a,item:th1cond5} hold, or \cref{item:th1cond3a,item:th1cond6} hold, or \cref{item:th1cond4} holds,
	       then  the decomposition of $\mathcal T$ into a sum of \MLatmost terms is unique and can be computed by means of (simultaneous) EVD.
 \end{statements}
\end{theorem}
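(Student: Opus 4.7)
The strategy is to reduce the problem to the symmetric joint block diagonalization result \cref{thm:JBDaux} by exploiting the key equivalence \cref{eq:iffrank1}. First I would verify that \cref{eq:iffrank1} holds under the hypotheses: suppose $f_1\mathbf T_1+\dots+f_K\mathbf T_K=\mathbf z\mathbf y^T$. By \cref{eq:lincombfT} this says $\sum_r\mathbf a_r(\mathbf E_r\mathbf f)^T=\mathbf z\mathbf y^T$. Using $k_{\mathbf A}\geq 2$ together with the block full-rank property \cref{eq:ranksofFand G}, at most one vector $\mathbf E_r\mathbf f$ can be nonzero, for otherwise one obtains a rank-$1$ combination of two or more $\mathbf a_r$'s weighted by independent block-column images, contradicting the rank of $\mathbf F$ in conjunction with the two-column Kruskal condition on $\mathbf A$. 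Thus exactly one index $r$ satisfies $\mathbf E_r\mathbf f\neq\mathbf 0$, $\mathbf Z_r\mathbf f=\mathbf 0$, and $\mathbf z$ is proportional to $\mathbf a_r$.

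Next I would pass to the symmetric formulation. By \cref{eq:idea}, $\mathbf R_2(\mathcal T)(\mathbf f\otimes\mathbf f)=\mathbf 0$ iff $\operatorname{rank}(\sum f_k\mathbf T_k)\leq 1$, so every symmetric rank-$1$ matrix in $\nullsp{\mathbf R_2(\mathcal T)}\cap\vecsym{K}$ has the form $\mathbf f\mathbf f^T$ with $\mathbf f\in\nullsp{\mathbf Z_r}$ for some $r$, and therefore lies in the subspace $\mathcal S:=\sum_r\{\operatorname{vec}(\mathbf N_r\mathbf D\mathbf N_r^T):\mathbf D=\mathbf D^T\in\fF^{d_r\times d_r}\}$, where the columns of $\mathbf N_r$ form a basis of $\nullsp{\mathbf Z_r}$. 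A polarization/secant argument consuming \cref{eq:ranksofFand G} upgrades this to $\nullsp{\mathbf R_2(\mathcal T)}\cap\vecsym{K}=\mathcal S$; the dimension is $Q=\sum\rubinom{d_r+1}{2}$, which via \cref{eq:nullR2viaNullQ2} equals $\dim\nullsp{\mathbf Q_2(\mathcal T)}$, so \cref{eq:ranksofFand G} implies \cref{item:th1cond3} (as will be formalized in \cref{sta:lemmaApp4}). A basis of $\nullsp{\mathbf Q_2(\mathcal T)}$, reshaped via \cref{eq:matrixP_KP_K}, yields $Q$ symmetric $K\times K$ matrices $\matrixU_1,\dots,\matrixU_Q$ satisfying the S-JBD \cref{eq:JBDaux} with $\mathbf N=[\mathbf N_1\ \dots\ \mathbf N_R]$; full column rank of $\mathbf N$ follows from $r_{\unf{T}{3}}=K$ (which also gives $\sum d_r\leq K$), and the linear independence of the blocks $\mathbf D_q$ is immediate from their role as a basis of symmetric block-diagonal matrices of signature $(d_1,\dots,d_R)$.

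Applying \cref{thm:JBDaux} recovers $\operatorname{col}(\mathbf N_r)=\nullsp{\mathbf Z_r}$ together with the multiplicities $d_r$ by simultaneous EVD. For any nonzero $\mathbf f\in\nullsp{\mathbf Z_r}$, $\sum f_k\mathbf T_k=\mathbf a_r(\mathbf E_r\mathbf f)^T$ is rank-$1$ with column space $\sspan\{\mathbf a_r\}$, giving $\mathbf A$ up to column permutation and scaling and proving \cref{item:th1stat1}. Condition \cref{item:th1cond3a} forces $d_r\geq 1$ for every $r$, so all $R$ columns of $\mathbf A$ are actually produced and no decomposition of $\mathcal T$ into \MLatmost terms can have fewer nonzero summands, yielding \cref{item:th1newstat}. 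For \cref{item:th1stat2}: with $\mathbf A$ known, \cref{item:th1cond5} lets us solve \cref{eq:unf_T_1} directly for $\operatorname{vec}(\mathbf E_r)$ and then extract $\mathbf B_r,\mathbf C_r$ from a rank-$L_r$ factorization, while under \cref{item:th1cond6} the transposed full-rank property \cref{eq:ranksofFand G2} plays the symmetric role and lets us solve \cref{eq:unf_T_2} instead. For \cref{item:th1firstfmhard}, inequality \cref{eq:ineqforuni1fm} sharpens the dimension count so that no competing first factor matrix can reproduce $\dim\nullsp{\mathbf Q_2(\mathcal T)}=Q$, forcing it to coincide with $\mathbf A\mathbf P$; \cref{item:th1stat4} then follows by combining \cref{item:th1firstfmhard} with \cref{item:th1stat2}, or, under \cref{item:th1cond4}, by combining \cref{item:th1stat1} with a direct solve of \cref{eq:unf_T_1}.

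The main obstacle I anticipate is the polarization step that upgrades the rank-$1$ characterization \cref{eq:iffrank1} into a statement about the entire intersection $\nullsp{\mathbf R_2(\mathcal T)}\cap\vecsym{K}$. A priori the intersection could contain symmetric matrices not expressible as $\sum_r\mathbf N_r\mathbf D_r\mathbf N_r^T$, and eliminating such ``cross'' contributions — symmetric matrices that would require rank-$1$ summands from several different $\nullsp{\mathbf Z_r}$'s at once — is exactly where \cref{eq:ranksofFand G} and the dimension identity $Q=\sum\rubinom{d_r+1}{2}$ do their work; this is also the bridge that makes \cref{eq:ranksofFand G} and \cref{item:th1cond3} interchangeable as hypotheses.
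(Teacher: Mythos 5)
Your overall architecture (reduce to the S-JBD problem via the null space of $\mathbf Q_2(\mathcal T)$, then read off $\mathbf A$ from the blocks $\mathbf N_r$) is the paper's, but the central step is not closed and the route you propose for it does not work. You try to first prove the equivalence \cref{eq:iffrank1} directly from $k_{\mathbf A}\geq 2$ and \cref{eq:ranksofFand G}, and then "polarize" the rank-$1$ elements of $\nullsp{\mathbf R_2(\mathcal T)}\cap\vecsym{K}$ up to the whole intersection. Both halves have gaps. For the first: when more than $R-r_{\mathbf A}+2$ of the vectors $\mathbf E_r\mathbf f$ are nonzero, \cref{eq:ranksofFand G} no longer forces them to be linearly independent, and your one-line contradiction does not go through; the actual implication \cref{eq:ranksofFand G} $\Rightarrow$ \cref{item:th1cond3} is a separate, genuinely nontrivial result (\cref{sta:lemmaApp4} of \cref{lemma: Q2viaABC}, proved via compound matrices in \cref{sec:appendixLemmas4}). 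For the second: the intersection need not be spanned by its rank-$1$ elements a priori, so "polarization" is exactly the missing idea you flag without supplying. The paper avoids both problems by inverting your logic: it proves the easy inclusion $\scol(\mathbf W)\subseteq\nullsp{\mathbf R_2(\mathcal T)}\cap\vecsym{K}$ (only the trivial direction of \cref{eq:iffrank1} is needed), shows $\dim\scol(\mathbf W)=Q$ from full column rank of $\mathbf N$ (which comes from $r_{\unf{T}{3}}=K$, not from \cref{eq:ranksofFand G}), and then uses the assumed dimension identity \cref{item:th1cond3} to force equality. The rank-$1$ characterization is then a \emph{consequence} of this equality (used in the proof of \cref{item:th1newstat}), not a prerequisite.

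Phase II is also not correct as you describe it. Under \cref{item:th1cond6} the matrix $\mathbf A$ does \emph{not} have full column rank, so "solving \cref{eq:unf_T_2} instead" is not available: the coefficient matrix there involves the unknown $\mathbf C$, and there is no linear system in the remaining unknowns with a full-column-rank coefficient matrix. The paper's argument (\cref{lemma:lemma3.1}) instead projects $\mathcal T$ onto two-dimensional subspaces orthogonal to $r_{\mathbf A}-2$ columns of $\mathbf A$, obtaining $2\times J\times K$ subtensors with only $R-r_{\mathbf A}+2$ terms, to which \cref{thm:ll1_gevd} applies precisely because \cref{eq:ranksofFand G,eq:ranksofFand G2} guarantee full column rank of the corresponding sub-blocks of $\mathbf B$ and $\mathbf C$; the scalings are then fixed by one global linear solve. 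Similarly, under \cref{item:th1cond4} alone you cannot "directly solve \cref{eq:unf_T_1}" since $\mathbf A$ may be rank deficient; the paper instead exploits that $\mathbf C$ and $\mathbf N$ are square nonsingular, sets $\mathbf C=\mathbf N^{-T}$ up to block-diagonal scaling, and recovers $\mathbf B$ from \cref{eq:unf_T_2}, with uniqueness following from the fact that every alternative decomposition must again have a nonsingular third factor matrix.
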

\begin{proof}
	See \cref{sec:proof_of_main_thm}.
\end{proof}
\par We make the following comments on the  assumptions, conditions,  and statements in  \cref{thm: maintheorem}.

1) \tcr{\Cref{item:th1stat1} says that $\mathbf A$ can be computed by means of EVD. On the other hand, \cref{item:th1firstfmhard} says that the first factor matrix is unique and can be computed by means of EVD, under a more restrictive condition. A similar observation can be made for the computation of the entire decomposition  in \cref{item:th1stat2,item:th1newstat}, respectively. What we mean is the following.
All assumptions and conditions in \cref{thm: maintheorem}, except \cref{item:th1cond1}, are formulated in terms of a specific
\ML decomposition of $\mathcal T$, namely, in terms of the matrices $\mathbf A$ and $\mathbf E_1,\dots,\mathbf E_R$. 
 There is a subtlety in the sense that $\mathcal T$ may admit alternative decompositions for which  the assumptions \eqref{item:th1cond2} and \eqref{item:th1cond3} and   \cref{item:th1cond5,item:th1cond6} do not all hold and which cannot necessarily be (partially) found by means of EVD.
 The more restrictive conditions in \cref{item:th1firstfmhard,item:th1stat4} exclude the existence of such alternative decompositions. \Cref{item:th1newstat}
 is a ``transition statement'' in which the alternatives for the first factor matrix are restricted.}
 Thus,   \cref{item:th1stat1,item:th1stat2} \tcr{are mainly meant to cover}   cases  where the  first factor matrix and the overall decomposition, respectively, are not   unique in the sense that there may be alternatives for which the assumptions/conditions do not hold. See \cref{ex:2.4} below for an illustration.

2) The matrix $\mathbf P$ in \cref{item:th1newstat} is a column selection matrix, \tcr{possibly with repeated columns}. Thus, \cref{item:th1newstat} says that  the first factor matrix of any decomposition of $\mathcal T$ into a sum of \MLatmost
terms can be obtained  \tcr{by selecting columns of $\mathbf A$, where   column repetition is allowed but}  the total number of columns \tcr{should be} equal to $R$. 

3) The assumptions in \cref{thm:ll1_gevd}, \cref{thm:ll1_btd1}, and  \cref{thm;LievenBTDKruskal} are symmetric with respect to the last two dimensions while the assumptions and conditions
in \cref{thm: maintheorem} are not. To get another  set of conditions on uniqueness and computation one can just  permute the last two dimensions of $\mathcal T$. 

4)  As in \cref{thm:ll1_gevd} and \cref{thm:ll1_btd1}, the number of \ML terms and the values of $L_r$ are not required to be known in advance;  they are found by the algorithm.

\tcr{5) Assumption \cref{item:th1cond3} means that we require the subspace $\dim\nullsp{\mathbf Q_2(\mathcal T)}$ to have the minimal possible dimension
	(see \cref{sta:lemmaApp3} of \cref{lemma: Q2viaABC} below).}

\tcr{6)  It can be shown that \Cref{item:th1stat4} is a criterion that is ``effective'' in the sense of \cite{2016NVNeffective}.}

\tcr{Instead of the matrices $\mathbf A$ and $\mathbf E_1,\dots,\mathbf E_R$,} \cref{thm: maintheorem}   can also be given in terms of the factor matrices
$\mathbf A$, $\mathbf B$, and $\mathbf C$ (cf.  \cref{thm:ll1_gevd,thm:ll1_btd1,thm;LievenBTDKruskal}).  Namely, substituting  $\mathbf E_r=\mathbf B_r\mathbf C_r^T$ and $\mathcal T = \sum\mathbf a_r\circ(\mathbf B_r\mathbf C_r^T)$, in the expressions for $\mathbf Z_r$, $\mathbf F$, $\mathbf G$, $[\mathbf E_1^T\ \dots\ \mathbf E_R^T]^T$ and $\mathbf Q_2(\mathcal T)$, respectively,   we obtain the following result. 
\begin{theorem} \label{thm: maintheoremABC}  
	Let $\mathcal T\in\fF^{I\times J\times K}$ admit the \ML decomposition  \cref{eq:LrLr1mainBC}, i.e., $r_{\mathbf B_r}=
	r_{\mathbf C_r}=L_r$ for all $r$.
	Assume that 
	\begin{align}
	&\text{the matrix }\ [\mathbf a_1\otimes\mathbf B_1\ \dots\ \mathbf a_R\otimes\mathbf B_R]\mathbf C^T\ \text{ has full column rank and} \label{item:th1cond1ABC}\\
	&d_r:=\dim\nullsp{\mathbf Z_{r,\mathbf C}}\geq 1,\qquad r=1,\dots,R,\label{item:th1cond2ABC}
	\end{align}
	where  
	$\mathbf Z_{r,\mathbf C} := [\mathbf C_1\ \dots\ \mathbf C_{r-1}\ \mathbf C_{r+1}\ \dots\ \mathbf C_R]^T$. Assume also that
	\begin{gather}
	 k_{\mathbf A}\geq 2 \text{  and } k_{\mathbf B}'\geq R-r_{\mathbf A}+2\label{eq:ranksofFand GABC} \\
	\shortintertext{or\footnotemark}
	\dim\nullsp{\Phi(\mathbf A,\mathbf B)\mathbf S_2(\mathbf C)^T}=\sum\limits_{r=1}^R \rubinom{d_r+1}{2}=:Q, \label{item:th1cond3ABC}
	\end{gather}
	\footnotetext{In \cref{sta:lemmaApp4} of \cref{lemma: Q2viaABC} below we show that \cref{eq:ranksofFand GABC} implies \cref{item:th1cond3ABC}.}
	where the matrices $\Phi(\mathbf A,\mathbf B)$ and $\mathbf S_2(\mathbf C)$ are defined in \cref{eq:matrixPhi,eq:matrixS2}   below\footnote{The  definitions of $\Phi(\mathbf A,\mathbf B)$ and $\mathbf S_2(\mathbf C)$  require additional notations and
			are postponed to \cref{sec:mainidentity} for the sake of readability. Here we just mention that each entry of 
		$\Phi(\mathbf A,\mathbf B)$ is a product of a $2\times 2$ minor of $\mathbf A$ and a $2\times 2$ minor of $\mathbf B$ and  that each entry of 
	$\mathbf S_2(\mathbf C)$ is of the form $c_{i_1j_1}c_{i_2j_2}+c_{i_1j_2}c_{i_2j_1}$.}.
	Consider the following conditions:
	\begin{conditions}
		\renewcommand{\theenumi}{\alph{enumi}}    
		\item  
		$K\geq \sum L_r-\min L_r +1$ and $k_{\mathbf A}\geq 2$;
		\item  
		 the matrix $\mathbf A$ has full column rank, i.e., $r_{\mathbf A}=R$;
		\item  \label{item:th1cond6ABC}  $k_{\mathbf A}=r_{\mathbf A}<R $, \cref{eq:ranksofFand GABC} holds and \tcr{$ k_{\mathbf C}'\geq R-r_{\mathbf A}+2$};
	\item  \label{item:th1cond4ABC}    $K=\sum\limits_{r=1}^R L_r$ (implying that $\mathbf C$ is $ K\times K$ nonsingular  and that
		$d_r = L_r$ for all $r$);
		\item   the inequality 
		\begin{equation*}
		\rubinom{K+1}{2} - Q > \tcr{-\tilde{L}_1\tilde{L}_2} + \sum\limits_{1\leq r_1<r_2\leq R} L_{r_1}L_{r_2} 
		\end{equation*}
		holds, where $\tilde{L}_1$ and $\tilde{L}_2$ denote the two smallest values in $\{L_1,\dots,L_R\}$.
	\end{conditions}
	Then \cref{item:th1stat1,item:th1stat2,item:th1newstat,item:th1firstfmhard,item:th1stat4} in \cref{thm: maintheorem} hold.
	\end{theorem}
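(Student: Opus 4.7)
The plan is to derive \cref{thm: maintheoremABC} directly from \cref{thm: maintheorem} by showing that, under the substitution $\mathbf E_r=\mathbf B_r\mathbf C_r^T$ (where $\mathbf B_r$ and $\mathbf C_r$ both have rank $L_r$), every assumption and condition of the new theorem is equivalent to its counterpart in the original. Once this dictionary is in place, \cref{item:th1stat1,item:th1stat2,item:th1newstat,item:th1firstfmhard,item:th1stat4} of \cref{thm: maintheorem} transfer verbatim.

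I would first verify the bare assumptions. \Cref{item:th1cond1ABC} asserts that $[\mathbf a_1\otimes\mathbf B_1\ \dots\ \mathbf a_R\otimes\mathbf B_R]\mathbf C^T$ has full column rank; by \cref{eq:unf_T_3} this product equals $\unf{T}{3}$, so it matches \cref{item:th1cond1}. For \cref{item:th1cond2ABC}, observe that $\mathbf Z_r\mathbf f=\mathbf 0$ if and only if $\mathbf B_s\mathbf C_s^T\mathbf f=\mathbf 0$ for every $s\ne r$, and since each $\mathbf B_s$ has full column rank $L_s$ this is equivalent to $\mathbf C_s^T\mathbf f=\mathbf 0$ for all $s\ne r$, i.e.\ $\mathbf f\in\nullsp{\mathbf Z_{r,\mathbf C}}$; hence the dimensions $d_r$ coincide in the two formulations.

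Next, I would translate the structural hypotheses. Writing $[\mathbf E_{r_1}\ \dots\ \mathbf E_{r_m}]=[\mathbf B_{r_1}\ \dots\ \mathbf B_{r_m}]\Bdiag(\mathbf C_{r_1}^T,\dots,\mathbf C_{r_m}^T)$ and using that each $\mathbf C_{r_i}$ has full column rank, the block-diagonal factor has full row rank, so the rank of $\mathbf F$ equals the rank of $[\mathbf B_{r_1}\ \dots\ \mathbf B_{r_m}]$. The requirement in \cref{eq:ranksofFand G} is therefore $k_{\mathbf B}'\geq R-r_{\mathbf A}+2$, which is \cref{eq:ranksofFand GABC}. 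An identical argument on the transpose side rewrites the condition on $\mathbf G$ in \cref{item:th1cond6} as the condition $k_{\mathbf C}'\geq R-r_{\mathbf A}+2$ in \cref{item:th1cond6ABC}. Finally, under \cref{item:th1cond4ABC} (i.e.\ $K=\sum L_r$), \cref{item:th1cond1ABC} forces the square matrix $\mathbf C$ to be nonsingular, so $[\mathbf E_1^T\ \dots\ \mathbf E_R^T]^T$ attains maximal rank $\sum L_r$ and $d_r=L_r$ for every $r$, matching \cref{item:th1cond4}. The three inequality-type conditions (a), (b), and (e) are identical on both sides of the dictionary and require no translation.

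The main technical ingredient is the treatment of \cref{item:th1cond3ABC}. Here I would invoke the forthcoming \cref{lemma: Q2viaABC}, whose \cref{sta:lemmaApp4} is expected to supply the factorization $\mathbf Q_2(\mathcal T)=\Phi(\mathbf A,\mathbf B)\mathbf S_2(\mathbf C)^T$. Verifying this identity is the delicate step and, I expect, the main obstacle: it should follow by expanding the entry formula \cref{eq: tttt} in \cref{def:Q2} as a sum, indexed by pairs $(r_1,r_2)$ of term indices, of products of a $2\times 2$ minor of $\mathbf A$, a $2\times 2$ minor of $\mathbf B$, and a symmetrized product $c_{i_1j_1}c_{i_2j_2}+c_{i_1j_2}c_{i_2j_1}$, which is precisely the structure of $\Phi(\mathbf A,\mathbf B)\mathbf S_2(\mathbf C)^T$ foreshadowed by \cref{eq:matrixPhi,eq:matrixS2}. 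Once this factorization is in hand, $\nullsp{\mathbf Q_2(\mathcal T)}=\nullsp{\Phi(\mathbf A,\mathbf B)\mathbf S_2(\mathbf C)^T}$, so the dimension hypothesis in \cref{item:th1cond3ABC} coincides with the one in \cref{item:th1cond3}. With the full dictionary established, \cref{thm: maintheorem} applies and yields all the claims of \cref{thm: maintheoremABC}.
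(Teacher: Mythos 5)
Your proposal is correct and follows essentially the same route as the paper: substitute $\mathbf E_r=\mathbf B_r\mathbf C_r^T$, use the full column rank of $\mathbf B_r$ and $\mathbf C_r$ to translate each assumption and condition of \cref{thm: maintheorem} (including the chain $K=r_{\unf{T}{3}}\leq r_{\mathbf C^T}\leq K$ for condition \cref{item:th1cond4ABC}), and invoke the factorization $\mathbf Q_2(\mathcal T)=\Phi(\mathbf A,\mathbf B)\mathbf S_2(\mathbf C)^T$ to identify the null-space conditions. The only slip is a citation: that factorization is \cref{sta:lemmaApp1} of \cref{lemma: Q2viaABC}, not \cref{sta:lemmaApp4} (the latter is the statement that \cref{eq:ranksofFand GABC} implies \cref{item:th1cond3ABC}).
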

\begin{proof}
 	The proof is given in \cref{sec:simplethms}.
\end{proof}
\Cref{item:th1stat4} in \cref{thm: maintheoremABC}/\cref{thm: maintheorem} allows us  to trade full column rank of the factor matrices $\mathbf B$ and $\mathbf C$ for a higher $k$-rank of $\mathbf A$ than
in \cref{thm:ll1_gevd}.
 In particular the following result   \tcr{can be used in  cases where none of the factor matrices has full column rank}.
 \begin{corollary}\label{corollary:verynew}
 	Let $\mathcal T\in\fF^{I\times J\times K}$ admit the \ML decomposition  \cref{eq:LrLr1mainBC}, i.e., $r_{\mathbf B_r}=
 	r_{\mathbf C_r}=L_r$ for all $r$. Assume that 
 	\begin{equation}
 r_{\mathbf C}\geq \sum L_r-\min L_r +1,\ \  \ \ k_{\mathbf B}'\geq R-r_{\mathbf A}+2 
 	\ \ \text{ and }\ \ k_{\mathbf A}\geq 2.\label{eq:corollary261}
 	\end{equation}
 	Then the decomposition of $\mathcal T$ into a sum of \MLatmost terms is unique and can be computed by means of (simultaneous) EVD 
 	if
 	\begin{equation}
 	 \text{either } \ r_{\mathbf A}=R\ \ \ \ \ \ \text{ or  }\ \ \ \ \ \ k_{\mathbf A}=r_{\mathbf A}<R\ \text{ and  }\ k_{\mathbf C}'\geq R-r_{\mathbf A}+2.\label{eq:corollary262}
 	\end{equation}
  \end{corollary}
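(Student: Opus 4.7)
The corollary is naturally set up as a direct specialization of \cref{item:th1stat4} of \cref{thm: maintheoremABC}, and the plan is simply to verify that every hypothesis of that theorem follows from the hypotheses of \cref{corollary:verynew}. First, I would invoke the convention of \cref{subsubs:convention} (cf. \cref{thm:convention}) to assume without loss of generality that $r_{\unf{T}{3}}=K$; by \cref{eq:unf_T_3} this is exactly the full column rank statement \cref{item:th1cond1ABC}. Next, since $\mathbf C\in\fF^{K\times\sum L_r}$, the inequality $K\geq r_{\mathbf C}\geq \sum L_r-\min L_r+1$ is immediate, which combined with $k_{\mathbf A}\geq 2$ gives condition \cref{item:th1cond3a} of \cref{thm: maintheoremABC}.

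To obtain \cref{item:th1cond2ABC}, I would observe that $\mathbf Z_{r,\mathbf C}$ has at most $\sum_{i\neq r}L_i=\sum L_r-L_r$ rows, so
\[
r_{\mathbf Z_{r,\mathbf C}}\leq \sum L_r - L_r\leq \sum L_r-\min L_r < K,
\]
hence $d_r=K-r_{\mathbf Z_{r,\mathbf C}}\geq 1$ for every $r$. The hypothesis $k_{\mathbf B}'\geq R-r_{\mathbf A}+2$ together with $k_{\mathbf A}\geq 2$ is then precisely \cref{eq:ranksofFand GABC}, so the alternative assumption required by \cref{thm: maintheoremABC} is met.

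It remains to translate the dichotomy \cref{eq:corollary262} into one of the sufficient conditions of \cref{item:th1stat4}. If $r_{\mathbf A}=R$, then \cref{item:th1cond5} holds, so \cref{item:th1cond3a} and \cref{item:th1cond5} together yield the first branch of \cref{item:th1stat4}. Otherwise $k_{\mathbf A}=r_{\mathbf A}<R$ and $k_{\mathbf C}'\geq R-r_{\mathbf A}+2$, which combined with the already-verified \cref{eq:ranksofFand GABC} is exactly \cref{item:th1cond6ABC}, so \cref{item:th1cond3a,item:th1cond6ABC} trigger the second branch of \cref{item:th1stat4}. In either case, \cref{thm: maintheoremABC} delivers uniqueness of the decomposition and its computability by means of (simultaneous) EVD, which is the claim. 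Since the proof is purely a bookkeeping exercise of checking that inequalities chain up correctly, there is no real obstacle; the only point worth being careful about is the (harmless) use of the convention $r_{\unf{T}{3}}=K$, which is why it is worth citing \cref{thm:convention} explicitly at the start.
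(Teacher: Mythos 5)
Your overall strategy---reduce to \cref{item:th1stat4} of \cref{thm: maintheoremABC} and check its hypotheses one by one---is the right one, and your verifications of \cref{item:th1cond2ABC}, of the condition $K\geq\sum L_r-\min L_r+1$, of \cref{eq:ranksofFand GABC}, and of the two branches of \cref{eq:corollary262} are all correct \emph{for the original tensor}. The gap is in how you dispose of assumption \cref{item:th1cond1ABC}. Declaring ``w.l.o.g.\ $r_{\unf{T}{3}}=K$'' does not make that assumption true for free: the reduction behind \cref{thm:convention} replaces $\mathcal T$ by a compressed tensor whose third dimension is $\tilde K=r_{\unf{T}{3}}$ and whose third factor matrix is $\tilde{\mathbf C}=\mathbf W\mathbf C$ for some $\tilde K\times K$ mixing matrix $\mathbf W$. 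Every subsequent check involving $K$ or $\mathbf C$ (namely $\tilde K\geq r_{\tilde{\mathbf C}}\geq\sum L_r-\min L_r+1$, the bound $d_r\geq 1$, and in the second branch $k_{\tilde{\mathbf C}}'\geq R-r_{\mathbf A}+2$) must then be carried out for $\tilde K$ and $\tilde{\mathbf C}$, not for $K$ and $\mathbf C$, and none of them is ``immediate'': if $r_{\unf{T}{3}}$ were strictly smaller than $r_{\mathbf C}$, the compression could destroy them. As written, your argument verifies \cref{item:th1cond1ABC} for the compressed tensor but the remaining hypotheses for the uncompressed one.

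The missing ingredient---and the actual content of the paper's proof---is that $k_{\mathbf A}\geq 2$ together with $k_{\mathbf B}'\geq R-r_{\mathbf A}+2$ forces $[\mathbf a_1\otimes\mathbf B_1\ \dots\ \mathbf a_R\otimes\mathbf B_R]$ to have full column rank; this is \cref{sta:lemmaApp4,sta:lemmaApp45} of \cref{lemma: Q2viaABC} (full column rank of $\Phi(\mathbf A,\mathbf B)$ via compound matrices, then of the block Khatri--Rao-type matrix). Once this is in hand, $r_{\unf{T}{3}}=r_{\mathbf C}$, and the argument closes: if $r_{\mathbf C}=K$, then $\unf{T}{3}$ has full column rank and \cref{item:th1cond1ABC} holds for $\mathcal T$ itself, no compression needed; if $r_{\mathbf C}<K$, one compresses to $\tilde K=r_{\mathbf C}$ with $\mathbf W$ injective on the column space of $\mathbf C$, so that $r_{\tilde{\mathbf C}}=r_{\mathbf C}$ and $k_{\tilde{\mathbf C}}'=k_{\mathbf C}'$ and all of your remaining checks survive the reduction. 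The point to internalize is that in \cref{thm: maintheoremABC} assumption \cref{item:th1cond1ABC} is a \emph{hypothesis}, whereas in the corollary it must be \emph{derived} from the conditions on $\mathbf A$, $\mathbf B$, $\mathbf C$; that derivation is exactly what your write-up omits.
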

\begin{proof}
	The proof is given in \cref{sec:simplethms}.
\end{proof}

 The   algebraic procedure that will result from     \cref{thm: maintheorem} (or \cref{thm: maintheoremABC}) is summarized in \cref{algorithm:1}. In this subsection we explain how     \cref{algorithm:1} computes the exact  \ML decomposition   \cref{eq:LrLr1}. In \cref{subsub252}  we  will explain  how the steps in  \cref{algorithm:1} can be modified   to compute an approximate  \ML decomposition  of $\mathcal T$.
 
 In Phase I we recover the first factor matrix.
In steps $1-3$ we compute a basis $\vectoru_1,\dots,\vectoru_Q$ of the subspace $\nullsp{\mathbf R_2(\mathcal T)}\cap\vecsym{K}$. The computation relies on 
identity \cref{eq:nullR2viaNullQ2}:  we construct the  smaller matrix $\mathbf Q_2(\mathcal T)$, compute a basis of  $\nullsp{\mathbf Q_2(\mathcal T)}$ and map it to a basis of $\nullsp{\mathbf R_2(\mathcal T)}\cap\vecsym{K}$. In steps 4 and 5 we construct S-JBD problem \cref{eq:JBDaux} and solve it by \cref{Alg:auxjbd}.  
 
 It will be proved \tcr{(see proof of the first statement of \cref{thm: maintheorem})} that  submatrix $\mathbf N_r\in\fF^{K\times d_r}$ of the matrix  $\mathbf N=[\mathbf N_1\ \dots\ \mathbf N_R]$ computed in step 5 holds a basis of
$\nullsp{\mathbf Z_{r}}$, $r=1,\dots,R$. In addition, it can   be easily verified that
$\nullsp{\mathbf Z_{r}}=\nullsp{\mathbf Z_{r,\mathbf C}}$, so we have that
\begin{equation}
\mathbf N_r^T[\mathbf C_1\ \dots\ \mathbf C_{r-1}\ \mathbf C_{r+1}\ \dots\ \mathbf C_R] =\mathbf O,\qquad r = 1,\dots,R.
\label{eq:NrinnullCk} 
\end{equation}  
In step 6 we use \cref{eq:NrinnullCk}  to compute the columns of $\mathbf A$: since by \cref{eq:NrinnullCk} and \cref{eq:unf_T_3},
 \begin{equation}
 \begin{split}
 [\mathbf N_r^T\mathbf H_1^T\ \dots\ \mathbf N_r^T\mathbf H_I^T]=&\mathbf N_r^T\unf{T}{3}^T= \mathbf N_r^T
 \mathbf C[\mathbf a_1\otimes\mathbf B_1\ \dots\ \mathbf a_R\otimes\mathbf B_R]^T
      =\\
 &\mathbf N_r^T\mathbf C_r(\mathbf a_r^T\otimes\mathbf B_r^T)=
  ( 1\otimes\mathbf N_r^T\mathbf C_r)(\mathbf a_r^T\otimes\mathbf B_r^T)=\\
 &\mathbf a_r^T\otimes (\mathbf N_r^T\mathbf C_r\mathbf B_r^T)=
 \mathbf a_r^T\otimes (\mathbf N_r^T\mathbf E_r^T),\qquad r=1,\dots,R,
 \end{split}
 \label{eq:a_rviaN_r}
 \end{equation}
%
\begin{algorithm}
	\caption{Computation of \ML decomposition   \cref{eq:LrLr1}   under various conditions expressed in \cref{thm: maintheorem}
	} 
	\label{algorithm:1}	
	\begin{algorithmic}[1]
		\INPUT{tensor $\mathcal T\in\fF^{I\times J\times K}$ admitting decomposition \cref{eq:LrLr1}
		}
		
		{\bf Phase I} (computation of $\mathbf A$)
		\STATE{Construct the $\rubinom{I}{2}\rubinom{J}{2}$-by-$\rubinom{K+1}{2}$ matrix $\mathbf Q_2(\mathcal T)$ as in \cref{def:Q2}}
		\STATE{Find  $\mathbf g_q \in\fF^{\rubinom{K+1}{2}}$, $q=1,\dots,Q$ that form a basis of $\nullsp{\mathbf Q_2(\mathcal T)}$, where  $Q=\rubinom{d_1+1}{2}+\dots+\rubinom{d_R+1}{2}$}
		\STATE{Compute  $\vectoru_q  := \mathbf D\mathbf g_q\in\fF^{K^2}$, $q=1,\dots,Q$, where $\mathbf D$ is defined in \cref{eq:matrixP_KP_K}
		}
		\STATE{For each $q=1,\dots,Q$  reshape $\vectoru_q$ into the $K\times K$ symmetric matrix $\matrixU_q$}
		\STATE{Compute $\mathbf N$ and the values $R$, $d_1,\dots,d_R$} in S-JBD problem \cref{eq:JBDaux} by  \cref{Alg:auxjbd}
		\STATE{For each $r=1,\dots,R$ take $\mathbf a_r$ equal to the vector that generates the row space of $[\operatorname{vec}(\mathbf N_r^T\mathbf H_1^T)\ \dots\ \operatorname{vec}(\mathbf N_r^T\mathbf H_I^T)]$\tcr{, where $\mathbf H_i:= (t_{ijk})_{j,k=1}^{J,K}$}}
		\qquad\\
		\qquad\\
		{\bf Phase II} (computation of the overall decomposition under one of the conditions \ref{item:th1cond4}, \ref{item:th1cond5}, or \ref{item:th1cond6})\\
		{\em Case 1: \cref{item:th1cond4} in \cref{thm: maintheorem} holds} 
		\STATE{\qquad For each $r=1,\dots,R$ compute the vector that generates the column space of\\ \qquad  $[\operatorname{vec}(\mathbf N_r^T\mathbf H_1^T)\ \dots\ \operatorname{vec}(\mathbf N_r^T\mathbf H_I^T)]$ and reshape it into the matrix $\mathbf B_r$}
		\STATE{\qquad Compute $\mathbf C$ from  the set of linear equations \\
			{\centering $\unf{T}{3}=[\mathbf a_1\otimes\mathbf B_1\ \dots\ \mathbf a_R\otimes\mathbf B_R]\mathbf C^T$\par}
		}
		\STATE{\qquad For each $r=1,\dots, R$ set $\mathbf E_r=\mathbf B_r\mathbf C_r^T$}\\
		\qquad\\
		{\em Case 2: \cref{item:th1cond5} in \cref{thm: maintheorem} holds} 
		\STATE{
			\qquad Compute $\mathbf E_1,\dots,\mathbf E_R$ by solving the  set of linear equations\\
			{\centering $\unf{T}{1}=[\operatorname{vec}(\mathbf E_1)\ \dots\ \operatorname{vec}(\mathbf E_R)]\mathbf A^T$\par}
		}
		\qquad\\
		{\em Case 3: \cref{item:th1cond6} in \cref{thm: maintheorem} holds}
		\STATE{\qquad Choose (possibly overlapping) subsets $\Omega_1,\dots,\Omega_M\subset\{1,\dots,R\}$ such that\\
			{\centering
				\qquad $\operatorname{card}(\Omega_1)=\dots=\operatorname{card}(\Omega_M)=R-r_{\mathbf A}+2$ and $\{1,\dots,R\}=\Omega_1\cup\dots\cup \Omega_M$\par}
		}
		\STATE{\qquad {\bf for} each $m=1,\dots,M$ {\bf do}}
		\STATE{\quad \qquad Find linearly independent vectors $\mathbf h_1,\mathbf h_2\in\fF^I$ that belong to the column\\ 
			\quad \qquad  space of $\mathbf A$ and satisfy\\ 
			{\centering 
				$\mathbf a_r^T\mathbf h_1=\mathbf a_r^T\mathbf h_2=0$ for all $r\in \{1,\dots,R\}\setminus\Omega_m$
				\par}}
		\STATE{\quad \qquad Compute the $2\times J\times K$ tensor $\mathcal Q^{(m)}$ with  $\mathbf Q_{(1)}^{(m)}=\unf{T}{1}[\mathbf h_1\ \mathbf h_2]$}
		\STATE{\quad \qquad Compute the ML rank-$(1,L_r,L_r)$ decomposition of $\mathcal Q^{(m)}$ by the EVD \\ 
			\quad \qquad in \cref{thm:ll1_gevd}:\\
			{\centering
				$\mathcal Q^{(m)}=\sum\limits_{r\in\Omega_m} \hat{\mathbf a}_r\circ\hat{\mathbf E}_r$\qquad (the vectors $\hat{\mathbf a}_r$ are a by-product)  \par}
		}
		\STATE{\qquad {\bf end for}}
		\STATE{\qquad Compute   $\mathbf x$ from the linear equation\\
			{\centering $[\mathbf a_1\otimes\operatorname{vec}(\hat{\mathbf E}_1)\ \dots\ \mathbf a_r\otimes\operatorname{vec}(\hat{\mathbf E}_R)]\mathbf{x}=\operatorname{vec}(\unf{T}{1})$\par}
		}
		\STATE{\qquad For each $r=1,\dots, R$ set $\mathbf E_r=x_r\hat{\mathbf E}_r$}
		\qquad\\ \qquad\\
		\OUTPUT{{Matrices $\mathbf A\in\fF^{I\times R}$, $\mathbf E_1,\dots,\mathbf E_R\in\fF^{J\times K}$  such that  \cref{eq:LrLr1} holds}}
	\end{algorithmic}
\end{algorithm}
\raggedright{it follows that}\justifying
\begin{equation}\label{eq:22prime}
[\operatorname{vec}(\mathbf N_r^T\mathbf H_1^T)\ \dots\ \operatorname{vec}(\mathbf N_r^T\mathbf H_I^T)]=\operatorname{vec}(\mathbf N_r^T\mathbf E_r^T)\mathbf a_r^T,\qquad r=1,\dots,R,
\end{equation}
implying that $\mathbf a_r$ is the vector that generates the row space of only right singular vector of $[\operatorname{vec}(\mathbf N_r^T\mathbf H_1^T)\ \dots\ \operatorname{vec}(\mathbf N_r^T\mathbf H_I^T)]$
that corresponds  to a nonzero singular value.

In Phase II we   recover the overall decomposition. Since, by \cref{thm: maintheorem} (or \cref{thm: maintheoremABC}), the computation is possible if  at least one of the conditions \ref{item:th1cond4}, \ref{item:th1cond5}, or \ref{item:th1cond6}    holds, we consider three cases.

{\em Case 1:}   \cref{item:th1cond4ABC} in \cref{thm: maintheoremABC} implies  that  $\mathbf C$ is a $K\times K$ nonsingular matrix and that $K=\sum d_r=\sum L_r$. 
Since the $K\times \sum d_r$ matrix $\mathbf N$ computed in step 5 has full column rank, it follows that
$\mathbf N$ is also  $K\times K$ nonsingular. Since, by \cref{eq:NrinnullCk},  
$$
\mathbf N^T\mathbf C= [\mathbf N_1\ \dots \mathbf N_R]^T [\mathbf C_1\ \dots\ \mathbf C_R]=
\Bdiag(\mathbf N_1^T\mathbf C_1,\dots,\mathbf N_R^T\mathbf C_R),
$$
we have that $\mathbf C = \mathbf N^{-T}\Bdiag(\mathbf N_1^T\mathbf C_1,\dots,\mathbf N_R^T\mathbf C_R)$.
%
Since $\mathbf C$ and $\mathbf N$ are  nonsingular, the matrices $\mathbf N_r^T\mathbf C_r\in\fF^{L_r\times L_r}$ are also nonsingular. 
  To compute $\mathbf B_1,\dots,\mathbf B_R$ we use  identity \cref{eq:22prime}.
In step 7  we compute  $\operatorname{vec}(\mathbf N_r^T\mathbf E_r^T)$ as the vector that generates the column space of \tcr{the}  left  singular vector of $
[\operatorname{vec}(\mathbf N_r^T\mathbf H_1^T)\ \dots\ \operatorname{vec}(\mathbf N_r^T\mathbf H_I^T)]$ \tcr{corresponding to the only nonzero singular value}. In addition, $(\mathbf N_r^T\mathbf E_r^T)^T=\mathbf B_r(\mathbf N_r^T\mathbf C_r)^T$ by definition of $\mathbf E_r$.  W.l.o.g. we set $\mathbf B_r$ equal to $(\mathbf N_r^T\mathbf E_r^T)^T$, as the nonsingular factor $(\mathbf N_r^T\mathbf C_r)^T$ can be compensated for in the factor $\mathbf C$. As such, in step 8 we finally recover $\mathbf C$ from  \cref{eq:unf_T_3}.

It is worth noting that   the vectors $\mathbf a_r$ in step 6 and the matrices $\mathbf B_r$ in step 7 can be computed simultaneously.  Indeed, by \cref{eq:22prime}, $\mathbf B_r$ and $\mathbf a_r$, can be found from $\operatorname{vec}(\mathbf B_r)\mathbf a_r^T=[\operatorname{vec}(\mathbf N_r^T\mathbf H_1^T)\ \dots\ \operatorname{vec}(\mathbf N_r^T\mathbf H_I^T)]$.

{\em Case 2}:  \cref{item:th1cond5} implies that $\mathbf A$ has full column rank. Hence, by \cref{eq:unf_T_1},
$[\operatorname{vec}(\mathbf E_1)\ \dots\ \operatorname{vec}(\mathbf E_R)]=\unf{T}{1}(\mathbf A^T)^\dagger$.

{\em Case 3:} We assume that \cref{item:th1cond6} holds. In steps $11-18$ we use the matrix $\mathbf A$ estimated in Phase I and the tensor $\mathcal T$ to recover the matrices $\mathbf E_1,\dots,\mathbf E_R$. \tcr{There exist $\rubinom{R}{R-r_{\mathbf A}+2}$ subsets of $\{1,\dots,R\}$ of cardinality $R-r_{\mathbf A}+2$. In principle, one can choose any $M$ of them that cover the set $\{1,\dots,R\}$.} \tcr{(One can, for instance, choose $M=\lceil\frac{R}{R-\tcr{r_{\mathbf A}}+2}\rceil$ and set 
	$\Omega_m = \{(m-1)(R-\tcr{r_{\mathbf A}}+2)+1,\dots,m(R-\tcr{r_{\mathbf A}}+2)\}$ for $m=1,\dots,M-1$ and
	$\Omega_M = \{\tcr{r_{\mathbf A}}-1,\dots,R\}$, where $\lceil x\rceil$ denotes the least integer greater than or equal to $x$.)}
To explain  steps $12-16$ we assume for  simplicity that, in step 11, $\Omega_1=\{1,\dots,R-\tcr{r_{\mathbf A}}+2\}$. In steps $13$ and $14$ we project out the last $\tcr{r_{\mathbf A}}-2$ terms in the \ML decomposition of $\mathcal T$. It can be shown that the tensor $\mathcal Q^{(1)}$ constructed in step $14$ admits the \ML decomposition $\mathcal Q^{(1)}=\sum\limits_{r=1}^{R-\tcr{r_{\mathbf A}}+2} \hat{\mathbf a}_r\circ\hat{\mathbf E}_r$, where $\hat{\mathbf a}_r=[\mathbf h_1\ \mathbf h_2]^T\mathbf a_r\in\fF^2$ and  $\hat{\mathbf E}_r$ is proportional to $\mathbf E_r$, $r=1,\dots,R-\tcr{r_{\mathbf A}}+2$.
\tcr{By  \cref{item:th1cond6},} $\mathcal Q^{(1)}$ satisfies the assumptions in \cref{thm:ll1_gevd}. Thus, the \ML decomposition $\mathcal Q^{(1)}$ is unique and can be computed by means of (simultaneous) EVD. The remaining matrices $\mathbf E_{R-\tcr{r_{\mathbf A}}+3},\dots,\mathbf E_{R}$ can be estimated up to scaling factors in a similar way by choosing other subsets $\Omega_m$. 
In step 17 we use \cref{eq:unf_T_1} to compute the scaling factors $x_1,\dots,x_R$ such that $\mathcal T=\sum\limits_{r=1}^R\mathbf a_r\circ(x_r\hat{\mathbf E}_r)$.

 One may wonder  what to do if several of conditions \ref{item:th1cond5}, \ref{item:th1cond6} or \ref{item:th1cond4} hold together. \Cref{item:th1cond5,item:th1cond6} are mutually exclusive.
 If \cref{item:th1cond4,item:th1cond5} hold, then uniqueness and computation  follow already from
\cref{thm:ll1_btd1}. Indeed, \cref{item:th1cond4,item:th1cond5} in \cref{thm: maintheoremABC}  imply that the matrices
$\mathbf A$ and $\mathbf C$ have full column rank, and, by \cref{Lemma:redtobtdI}, assumption \cref{item:th1cond3ABC} is more restrictive than the assumption $r_{[\mathbf B_i\ \mathbf B_j]}\geq \max(L_i,L_j)+1$ for all $1\leq i<j\leq R$. It is less clear if 
\cref{algorithm:1} can further be simplified if \cref{item:th1cond6,item:th1cond4} hold together. Since the computation in Case 1 consists basically of step $8$ (it was explained above that step $7$ can be integrated into step $6$) we give   priority to Case 1 over the more cumbersome Case 3  when \cref{item:th1cond6,item:th1cond4} hold together.

The number of \ML terms $R$ and their ``sizes'' $L_1,\dots,L_R$ do not have to be known a priori as they are found in Phase 1 and Phase 2, respectively. Namely,  \cref{Alg:auxjbd} in step $5$ estimates  $R$  as the number of blocks of  $\mathbf N$ and
estimates $d_r$ as the number of columns in the $r$th block. If \cref{item:th1cond4}  in \cref{thm: maintheorem} holds, then we set $L_r:=d_r$.
If \cref{item:th1cond5} or \ref{item:th1cond6} in \cref{thm: maintheorem} holds, then we just set $L_r = r_{\mathbf E_r}$.

It is worth noting  that if  \cref{item:th1cond6} in \cref{thm: maintheorem} holds and if the sets $\Omega_m$ in step $11$ are chosen in  a particular way, then the ``sizes'' $r_{\hat{\mathbf E}_r}=L_r$ of the \ML terms of the tensors $\mathcal Q^{(m)}$, constructed in step $14$, can be computed
by solving an overdetermined system of linear equations. 
That is, the values $L_1,\dots,L_R$ can be found without executing step $15$.
Indeed, one can easily verify that \cref{item:th1cond6} in \cref{thm: maintheorem} implies that the equalities
\begin{equation}
\sum\limits_{r\in\Omega_m} r_{\hat{\mathbf E}_r}=r_{\mathbf Q_{(2)}^{(m)}}=r_{\mathbf Q_{(3)}^{(m)}}
\label{eq:overdetsystem}
\end{equation}
hold for any $\Omega_m$, $m=1,\dots,M$. If $M$ has the maximum possible value, i.e., $M=\rubinom{R}{R-r_{\mathbf A}+2}$, 
then the $M$ identities in \cref{eq:overdetsystem} can be rewritten as the system of linear equations 
$\tilde{\mathbf A}\tilde{\mathbf x}=\tilde{\mathbf b}$, where $\tilde{\mathbf A}$ is a binary ($0/1$) $M\times R$ matrix such that none of the rows are proportional and each row of $\tilde{\mathbf A}$ has exactly $R-r_{\mathbf A}+2$ ones. The vectors  $\tilde{\mathbf x}$ and $\tilde{\mathbf b}$ consist of the values $ r_{\hat{\mathbf E}_r}$, $1\leq r\leq R$ and $r_{\mathbf Q_{(2)}^{(m)}}$, $1\leq m\leq M$, respectively.
One can easily verify that $\tilde{\mathbf A}$ has full column rank, i.e., the unique solution of \cref{eq:overdetsystem} yields the values $L_1,\dots, L_R$.

\tcr{\Cref{algorithm:1} should be seen as an algebraic computational proof-of-concept. It opens a new line of research of numerical aspects and strategies; the development of such dedicated numerical strategies is out of the scope of this paper.}

\tcr{In the given form, the computational cost of \cref{algorithm:1}
	is dominated by steps $1$, $2$, and $5$.
	Since each entry of the $\rubinom{I}{2}\rubinom{J}{2}$-by-$\rubinom{K+1}{2}$ matrix ${\mathbf Q}_2(\mathcal T)$ is of the form \cref{eq: tttt}, step $1$ requires at most $7\rubinom{I}{2}\rubinom{J}{2}\rubinom{K+1}{2}$ flops, i.e. $4$ multiplications and $3$ additions per entry (note that no distinction between complex and real data is made). The cost of finding a basis  $\mathbf g_1,\dots,\mathbf g_Q$ via the SVD is  of order $6\rubinom{I}{2}\rubinom{J}{2}(\rubinom{K+1}{2})^2 + 20(\rubinom{K+1}{2})^3$ when the SVD is implemented via the R-SVD method \cite{GolubVanLoan}. 
		 The cost of step $5$ is dominated by step $1$ in
	 \cref{Alg:auxjbd}. This cost is of order $6(K^2Q)^2 (K^2)^2 + 20 (K^2)^3 =(6Q^2+20)K^6$  (cost of the SVD of a $K^2 Q\times K^2$ matrix\footnote{\tcr{Recall that the vectorized matrices $\mathbf U_1,\dots,\mathbf U_R$ in step $1$ of \cref{Alg:auxjbd} can be found from the SVD of the   $K^2Q\times K^2$ matrix  $\mathbf M$  formed by  the rows of $\matrixU_q^T\otimes \mathbf I - (\mathbf I\otimes \matrixU_q)\mathbf P$, $q=1,\dots,Q$, where
	 		$\mathbf P$ denotes the $K^2\times K^2$ permutation matrix that transforms the vectorized form of a $K\times K$ matrix into the vectorized form of its transpose.}}).
	 Thus, the total computational cost of \cref{algorithm:1} is of order $\mathcal O(I^2J^2K^4+K^6)$. Paper \cite[Section S.1]{MikaelCoupledPII} explains an indirect technique to reduce the  total cost of the steps $1$ and $2$ to  $\mathcal O(\operatorname{max}(IJ^2K^2,J^2K^4))$.
	 In this case,  the total computational cost of \cref{algorithm:1} will be of order $\mathcal O(\operatorname{max}(IJ^2K^2+K^6,J^2K^4+K^6))$.
%
 }

\subsubsection{ Approximate  \ML decomposition}\label{subsub252}
Now we discuss noisy variants of the steps in \cref{algorithm:1}. 
\tcr{We consider two scenarios. }	
	
\tcr{I. In the exact case the matrix $\mathbf Q_2(\mathcal T)$ has exactly $Q$ nonzero singular values, the matrices $\matrixU_q$ obtained in step $6$ are at most rank-$\sum d_r$  and the matrix $\mathbf M$ constructed  in \cref{subsub:alg1} has exactly $R$	nonzero singular values.  In the \textit{first scenario} we assume that the perturbation of the tensor is ``small enough'' to recover the correct values of $Q$, $R$ and $d_1,\dots,d_R$ in Phase I. In this case we proceed as follows.}	
 In step $2$ we set $\mathbf g_q$ equal to the
$q$th smallest right singular vector of 
 $\mathbf Q_2(\mathcal T)$. In step $5$ we use the noisy variant of \cref{Alg:auxjbd} (see the end of \cref{subsub:alg1}) which gives us $R$ and the values $d_1,\dots,d_R$.
  In steps $6$ and $7$ we choose $\mathbf a_r$ and $\mathbf B_r$ such that 
 $\operatorname{vec}(\mathbf B_r)\mathbf a_r^T$ is the best rank-$1$ approximation of the matrix $[\operatorname{vec}(\mathbf N_r^T\mathbf H_1^T)\ \dots\ \operatorname{vec}(\mathbf N_r^T\mathbf H_I^T)]$.
  After steps $10$ and $18$ we replace the matrices $\mathbf E_1,\dots,\mathbf E_R$   by their truncated SVDs.
 Assuming the values of $d_1,\dots,d_R$ computed in step $5$ are correct, the truncation ranks
can generically be determined as 
 \begin{equation}
 L_r=d_r+\frac{K-\sum d_r}{R-1},\qquad r=1,\dots,R.
 \label{eq:L_r_via_d_r}
 \end{equation}
 Indeed, if the matrices $\mathbf Z_{1,\mathbf C},\dots,\mathbf Z_{R,\mathbf C}$ have full column rank, then, by \cref{item:th1cond2ABC}, $d_r=K-\sum\limits_{k=1}^RL_k+L_r$. Hence
 $\sum d_r=RK-R\sum\limits_{k=1}^RL_k+\sum\limits_{k=1}^RL_k$, implying that $\sum\limits_{k=1}^RL_k =\frac{RK-\sum d_r}{R-1}$.
 Thus, $L_r = d_r-K+\sum\limits_{k=1}^RL_k=d_r-K+\frac{RK-\sum d_r}{R-1}=d_r+\frac{K-\sum d_r}{R-1}$. In steps $8$, $10$, and $17$ we solve the linear systems in the least squares sense.
   
 An approximate \ML decomposition of the tensor $\mathcal Q^{(m)}$ in step $15$ can be computed in the least squares sense using optimization based techniques. In this case the values $L_1,\dots,L_R$ should be known in advance. They can be estimated as follows. First the values
 $r_{\unf{Q}{2}^{(m)}}$  and $r_{\unf{Q}{3}^{(m)}}$ in \cref{eq:overdetsystem} should be replaced by their  numerical ranks (with respect to some threshold). Then the system of linear equations   \cref{eq:overdetsystem} should be solved in the least squares sense, subject to positive integer constraints on $r_{\hat{\mathbf E}_r}=L_r$.
 
 II. In the \textit{second scenario} we assume that the perturbation of the tensor is not ``small enough'' to guess the values of $Q$, $R$ and $d_1,\dots,d_R$ in Phase 1. We explain how we proceed if (only) the values of $R$ and $\sum L_r$ are known. Since, generically, $d_r=K-\sum\limits_{k=1}^RL_k+L_r$, we obtain
 	that  $\sum d_r=RK-(R-1)\sum L_r$. In step 2, we replace $Q$ by its lower bound 
  $$
   Q_{min}:= \underset{\sum\hat{d}_r=\sum d_r}{\operatorname{argmin}}\left(\rubinom{\hat{d}_1+1}{2}+\dots+\rubinom{\hat{d}_R+1}{2}\right).
    $$
  In the first scenario,  the matrix $\mathbf N$  was estimated as the third factor matrix in  CPD \cref{eq:somePD} and the 
  partition of $\mathbf N$ into blocks $\mathbf N_1,\dots,\mathbf N_R$ (and, in particular, the values of  $d_1,\dots,d_R$)
  was obtained by clustering the  columns of the first factor matrix in the CPD. In the second scenario, we compute   only matrix $\mathbf N$  in step $5$, without estimating the values of $d_1,\dots,d_R$. Since, by \cref{eq:a_rviaN_r}, $\unf{T}{3}\mathbf N_r=\mathbf a_r\otimes (\mathbf E_r\mathbf N_r)$, it follows that $\unf{T}{3}\mathbf N$ coincides up to permutation of columns with the matrix $[\mathbf a_1\otimes (\mathbf E_1\mathbf N_1)\ \dots\ \mathbf a_R\otimes (\mathbf E_R\mathbf N_R)]$. So,  clustering the columns of $\unf{T}{3}\mathbf N$  into $R$  clusters (modulo sign and scaling) we obtain the values $d_1,\dots,d_R$ as the sizes of clusters and the columns of $\mathbf A$ as their centers. The noisy variants of the remaining steps are the same as in
  the first scenario.
     
 \subsubsection{Examples}\label{subsub253}
 \begin{expl}\label{ex:2.4}
 	In this example we illustrate how to apply   \cref{item:th1stat2} of \cref{thm: maintheorem} for the computation of  a  decomposition  that is not unique but does  satisfy \cref{item:th1cond2}. 
  	Let $R\geq 2$. We consider an $R\times (R+2)\times (R+2)$ tensor $\mathcal T$ generated by 
 		\cref{eq:LrLr1mainBC}
 		 in which 
 		 \begin{gather*}
 		 \mathbf A=[{\mathbf a}_1\ \dots\ {\mathbf a}_R],\\ 
 		 \mathbf B=[{\mathbf b}_1\ {\mathbf b}_2\ {\mathbf b}_3\ {\mathbf b}_1\ {\mathbf b}_2\ {\mathbf b}_4\
 		 {\mathbf b}_5\ \dots {\mathbf b}_{3R-2}], \text{ and }
 		 \mathbf C=[{\mathbf c}_1\ {\mathbf c}_2\ {\mathbf c}_3\ {\mathbf c}_1\ {\mathbf c}_2\ {\mathbf c}_4\
 		  \dots\  {\mathbf c}_1\ {\mathbf c}_2\ {\mathbf c}_{R+2}],
 		\end{gather*}
 			where the entries of ${\mathbf a}_1,\dots,{\mathbf a}_R$, $\mathbf b_1,\dots,\mathbf b_{3R-2}$, and $\mathbf c_1,\dots,\mathbf c_{R+2}$ are independently drawn from the standard normal distribution $N(0,1)$. Thus, $\mathcal T$ is a sum of $R$ ML rank-$(1,3,3)$ terms (i.e., $L_1=\dots =L_R=3$): 		 
  	\begin{equation}
 	\begin{split}
 	&\mathcal T=\sum\limits_{r=1}^R\mathbf a_r\circ \mathbf E_r,\ \text{ where }\\
 	&\mathbf E_1 = [{\mathbf b}_1\ {\mathbf b}_2\ {\mathbf b}_3][{\mathbf c}_1\ {\mathbf c}_2\ {\mathbf c}_3]^T, \qquad
 	\mathbf E_2 = [{\mathbf b}_1\ {\mathbf b}_2\ {\mathbf b}_4][{\mathbf c}_1\ {\mathbf c}_2\ {\mathbf c}_4]^T, \text{ and }\\
 	&\mathbf E_r = [{\mathbf b}_{3r-4}\ {\mathbf b}_{3r-3}\ {\mathbf b}_{3r-2}][{\mathbf c}_1\ {\mathbf c}_2\ {\mathbf c}_{r+2}]^T
 	\qquad \text{ for }r\geq 3.
 	\end{split}
 	\label{eq:tensorTR}
 	\end{equation}

{\em Nonuniqueness.} Let us show that the decomposition of $\mathcal T$ into a sum of max ML rank-$(1,3,3)$ terms is not unique. 	
 	Let $\mathcal T_{2}$  equal the sum of the first two ML rank-$(1,L_r,L_r)$ terms:
 	\begin{equation}
 	\mathcal T_{2} = \mathbf a_1\circ(\mathbf b_1\mathbf c_1^T + \mathbf b_2\mathbf c_2^T+\mathbf b_3\mathbf c_3^T) + 
 	\mathbf a_2\circ(\mathbf b_1\mathbf c_1^T + \mathbf b_2\mathbf c_2^T+\mathbf b_4\mathbf c_4^T).
 	\label{eq:firstdec}
 	\end{equation}
 	It can be proved that $\mathcal T_{2}$ admits exactly three
 	decompositions  into a sum of \MLatmost terms, namely \cref{eq:firstdec} itself and the decompositions
 	\begin{equation}
 	\begin{split}
 	\mathcal T_{2}=\ &
 	\mathbf a_1\circ (\mathbf b_3\mathbf c_3^T-\mathbf b_4\mathbf c_4^T) +(\mathbf a_1+\mathbf a_2)\circ(\mathbf b_1\mathbf c_1^T+\mathbf b_2\mathbf c_2^T +\mathbf b_4\mathbf c_4^T)=\\
 	&(\mathbf a_1+\mathbf a_2)\circ(\mathbf b_1\mathbf c_1^T+\mathbf b_2\mathbf c_2^T +\mathbf b_3\mathbf c_3^T)-
 	\mathbf a_2\circ (\mathbf b_3\mathbf c_3^T-\mathbf b_4\mathbf c_4^T). \label{eq:twodecsofT21}
 	\end{split}
 	\end{equation}
 	Since $\mathcal T_{2}$ admits three decompositions  it follows that $\mathcal T$ admits at least three decompositions
 	for $R\geq 2$. In other words,
 	the decomposition of $\mathcal T$ into a sum of \MLatmost terms is not unique.
 	 	
 	{\em Computation for $R\geq 3$.}
 	Now we show that,  by \cref{item:th1stat2} of \cref{thm: maintheorem},  decomposition \cref{eq:tensorTR} can be computed
 	by means of (simultaneous) EVD, at least for $R=3,\dots,20$ (which are the values of $R$ we have tested).
 			First we show   that assumptions \eqref{item:th1cond1},  \eqref{item:th1cond2}, \eqref{item:th1cond3}, and \cref{item:th1cond5}  hold. Assumption \eqref{item:th1cond1} and \cref{item:th1cond5} are trivial. The values of $d_1,\dots,d_R$  in  \cref{item:th1cond2} can be computed by  \cref{item:th1cond2ABC}, which easily gives $d_1=\dots=d_R=1$.
 	It can also be verified that $\mathbf Q_2(\mathcal T)$ is a $\rubinom{R}{2}\rubinom{R+2}{2}\times \rubinom{R+3}{2}$ matrix  and that  (at least for $R=3,\dots,20$) $\dim\nullsp{\mathbf Q_2(\mathcal T)}
 	=R=\sum\rubinom{d_r+1}{2}$, i.e., \cref{item:th1cond3} holds as well. \tcr{(To compute the null space we used the MATLAB built-in  function \texttt{null}.)}
 	
 	Let us now illustrate how \cref{algorithm:1} recovers the  matrices  $\mathbf A$, $\mathbf E_1,\dots,\mathbf E_R$.   As has been mentioned before, since the matrix $\mathbf N$ computed in  step $5$ consists of the blocks $\mathbf N_1\in\fF^{K\times d_1},\dots, \mathbf N_R\in\fF^{K\times d_R}$ which hold, respectively, bases of the subspaces $\nullsp{\mathbf Z_{1}}=\nullsp{\mathbf Z_{1,\mathbf C}},\dots, \nullsp{\mathbf Z_{R}}=\nullsp{\mathbf Z_{R,\mathbf C}}$, it follows that
 	\cref{eq:NrinnullCk} holds. Since $d_1=\dots=d_R=1$, the  S-JBD problem in step $5$ is actually a symmetric joint diagonalization problem. Thus, in step $5$, we obtain an $(R+2)\times R$ matrix $\mathbf N=[\mathbf n_1\ \dots\ \mathbf n_R ]$   and \cref{eq:NrinnullCk} takes the following form : 	
    $$
     \mathbf n_r^T[{\mathbf c}_1\ {\mathbf c}_2\ {\mathbf c}_{3}\ \dots\ 
     {\mathbf c}_1\ {\mathbf c}_2\ {\mathbf c}_{r+1}\ 
     {\mathbf c}_1\ {\mathbf c}_2\ {\mathbf c}_{r+3}\ \dots
     {\mathbf c}_1\ {\mathbf c}_2\ {\mathbf c}_{R+2}]=\mathbf 0,\qquad r=1,\dots,R.
    $$ 
 	Then in step $6$ we compute $\mathbf a_r$, by \cref{eq:22prime}, i.e., as the vector that generates the row space of only right singular vector of  $[\mathbf H_1\mathbf n_r\ \dots\ \mathbf H_I\mathbf n_r]$ :
 	$$
     [\mathbf H_1\mathbf n_r\ \dots\ \mathbf H_I\mathbf n_r]=
 	[\operatorname{vec}(\mathbf n_r^T\mathbf H_1^T)\ \dots\ \operatorname{vec}(\mathbf n_r^T\mathbf H_I^T)]=\operatorname{vec}(\mathbf n_r^T\mathbf E_r^T)\mathbf a_r^T=	
 	(\mathbf E_r \mathbf n_r)\mathbf a_r^T.
 	$$
 	Finally, in step $12$ we reshape the columns of $\unf{T}{1}(\mathbf A^T)^\dagger$ into the matrices $\mathbf E_1$ and $\mathbf E_2$.

 	It is worth noting that  none of the three decompositions of  $\mathcal T_{2}$ can be computed by \cref{thm: maintheorem} while for \tcr{$R=3,\dots,20$}  decomposition \cref{eq:tensorTR}  of $\mathcal T$, involving additional terms, can be computed by \cref{thm: maintheorem}.
 	Let us explain. First,  one can easily verify that the third matrix unfolding of  $\mathcal T_{2}\in\mathbb F^{R\times(R+2)\times (R+2)}$ is rank-$4$, so, as it was explained in \cref{subsubs:convention}, for investigating properties of $\mathcal T_{2}$, we can  w.l.o.g. focus on  $\mathcal T_{2}\in\fF^{R\times (R+2)\times 4}$. It can be verified that $\mathbf Q_2(\mathcal T_2)$ is a $\rubinom{R}{2}\rubinom{R+2}{2}\times 10$ matrix, that $\dim\nullsp{\mathbf Q_2(\mathcal T_2)} = 5$, and that 
 	for all decompositions in \cref{eq:firstdec,eq:twodecsofT21} we have $(d_1,d_2)\in \{(1,1), (2,1), (1,2)\}$.
 	Thus,  $\rubinom{d_1+1}{2}+\rubinom{d_2+1}{2} \leq 4 < 5 = \dim\nullsp{\mathbf Q_2(\mathcal T_2)}$, implying that assumption \cref{item:th1cond3} does not hold. 
 	
	To explain why \cref{item:th1cond3} does hold for  $\mathcal T$ while it  does not hold for $\mathcal T_2$, we refer to equivalence 
 	\cref{eq:iffrank1}. 
 	From \cref{eq:lincombfT,eq:tensorTR} it follows that
 	\begin{multline}
 	f_1\mathbf T_1+\dots+f_{R+2}\mathbf T_{R+2}=\left((\mathbf a_1+\mathbf a_2)\mathbf b_1^T+\sum\limits_{r=3}^R\mathbf a_r\mathbf b_{3r-4}^T\right)\mathbf f^T\mathbf c_1 +\\
 	\left((\mathbf a_1+\mathbf a_2)\mathbf b_2^T+\sum\limits_{r=3}^R\mathbf a_r\mathbf b_{3r-3}^T\right)\mathbf f^T\mathbf c_2+  
 	(\mathbf a_1\mathbf b_3^T)\mathbf f^T\mathbf c_3+ 	(\mathbf a_2\mathbf b_4^T)\mathbf f^T\mathbf c_4+\\
 	\sum\limits_{r=3}^R(\mathbf a_r\mathbf b_{3r-2}^T)\mathbf f^T\mathbf c_{r+2}.
 	\label{eq:new12}
 	\end{multline}
 	 Above, we have numerically verified that $\dim\nullsp{\mathbf Q_2(\mathcal T)}
 	 	=R=\sum\rubinom{d_r+1}{2}$, which guarantees that \cref{eq:iffrank1} holds for $\mathcal T$, i.e., 
 	 	$f_1\mathbf T_1+\dots+f_{R+2}\mathbf T_{R+2}$ is rank-$1$ if and only if $\mathbf f$ belongs to the null spaces of all
 	 	matrices $[\mathbf c_1\ \mathbf c_2\ \mathbf c_3]^T,\dots,[\mathbf c_1\ \mathbf c_2\ \mathbf c_{R+3}]^T$ but one.
 	  On the other hand, in the case of $\mathcal T_2$, one can easily find a counterexample to the implication ``$\Rightarrow$'' in \cref{eq:iffrank1}. Indeed, for $\mathcal T_2$ the linear combination in the LHS of \cref{eq:new12} of the frontal slices of $\mathcal T_2$ can be rewritten as the RHS
 	  without the terms under the summation signs. Then the implication ``$\Rightarrow$'' in \cref{eq:iffrank1} does not hold for a vector $\mathbf f$ such that $\mathbf c_3^T\mathbf f=\dots=\mathbf c_{R+2}^T\mathbf f=0$ but
 	  $|\mathbf c_1^T\mathbf f|+|\mathbf c_2^T\mathbf f|\ne 0$.
  	   \end{expl}
 \begin{expl}\label{example:3J15}
	We consider a $3\times J\times 15$ tensor generated by \cref{eq:LrLr1mainBC} in which  the entries of $\mathbf A$, $\mathbf B$, and $\mathbf C$ are independently drawn from the standard normal distribution $N(0,1)$ and $L_1=L_2=L_3=2$, $L_4=L_5=3$, and $L_6=4$. Thus, $\mathcal T$ is a sum of $R=6$ terms. For $J\geq 9$,  one can easily check that 
	$d_r=L_r-1$ and that  \cref{item:th1cond1,item:th1cond3a} in \cref{thm: maintheorem} hold.
	We  illustrate   \cref{item:th1stat4,item:th1firstfmhard} of \cref{thm: maintheorem}  by considering $J$ in the sets $\{9,10,11,12,13\}$ and $\{14,15\}$, respectively.
	\begin{enumerate} 
		\item Let $J\in\{9,\dots,12,13\}$.
	 Computations indicate that for $J=9$ the  null space of the $108\times 120$ matrix $\mathbf Q_2(\mathcal T)$ has dimension $15$.
	 \tcr{(To compute the null space we used the MATLAB built-in  function \texttt{null}.)}
	 Since 	 $\sum C^2_{d_r+1}=C^2_2+C^2_2+C^2_2+C^2_3+C^2_3+C^2_4 = 15$, it follows that \cref{item:th1cond3} holds.
 	 It is clear that   \cref{item:th1cond3} will also hold for $J>9$.
	 Since
	 \begin{equation*}
	 \rubinom{K+1}{2} - Q=105 >101= - \tilde{L}_1\tilde{L}_2 + \sum\limits_{1\leq r_1<r_2\leq R} L_{r_1}L_{r_2} ,
	 \end{equation*}
	  it follows that \cref{eq:ineqforuni1fm} also holds. 
	 Hence, by   \cref{item:th1firstfmhard} of \cref{thm: maintheorem}, the first factor  matrix of $\mathcal T$ is unique and can be computed in Phase I of \cref{algorithm:1}. 
	 \item Let $J\in\{14,15\}$. Then \cref{item:th1cond6} in \cref{thm: maintheorem} holds.
	 Hence, by   \cref{item:th1stat4} of \cref{thm: maintheorem}, the overall decomposition is unique and can be computed by \cref{algorithm:1}. 	
	In step $11$ we can, for instance, set $M=2$ and choose $\Omega_1=\{1,2,3,4,5\}$ and $\Omega_2=\{1,2,3,4,6\}$.
	In this case the loop in  steps $12-16$ is executed  twice which yields   matrices  $\hat{\mathbf E}_1,\dots,\hat{\mathbf E}_4,\hat{\mathbf E}_5$ and
	  matrices  $\alpha_1\hat{\mathbf E}_1,\dots,\alpha_4\hat{\mathbf E}_4,\hat{\mathbf E}_6$, respectively, where $\alpha_1,\dots,\alpha_4$
	are nonzero values. The computed matrices $\hat{\mathbf E}_1,\dots, \hat{\mathbf E}_6$ necessarily coincide with the matrices
	$\mathbf E_1,\dots, \mathbf E_6$ in decomposition \cref{eq:LrLr1} up to permutation of indices and scaling factors.
	Note that neither $R$ nor $L_1,\dots,L_R$ should be known a priori.   
	 \end{enumerate}
\end{expl}

In the  following two examples we assume that the decomposition in
\cref{eq:LrLr1} is perturbed with a random additive term. The examples demonstrate the 
computation of the approximate  \ML decomposition   \cref{eq:LrLr1}.


\begin{expl}\label{example2.8} 
%
In this example we illustrate  the computation of $L_1,\dots,L_R$ and the computation of the approximate \ML decomposition  assuming that
the exact decomposition satisfies  \cref{item:th1cond5} in \cref{thm: maintheorem} (i.e., Case 2 in \cref{algorithm:1}).

First we consider the case where the decomposition is exact.
We consider a $3\times 8\times 8$ tensor generated by \cref{eq:LrLr1mainBC} in which  the entries of $\mathbf A$, $\mathbf B$, and $\mathbf C$ are independently drawn from the standard normal distribution $N(0,1)$ and $L_1=2$, $L_2=3$, $L_3=4$. Thus, $\mathcal T$ is a sum of $R=3$ terms.
It can be numerically verified that $d_1=1$, $d_2=2$, $d_3=3$ and that the  null space of the $84\times 36$ matrix $\mathbf Q_2(\mathcal T)$ has dimension $10=\rubinom{d_1+1}{2}+\rubinom{d_2+1}{2}+\rubinom{d_3+1}{2}$. Hence, by   \cref{item:th1stat4} of \cref{thm: maintheorem}, the overall decomposition is unique and can be computed by \cref{algorithm:1} (Case 2). 
Note that if the third dimension is decreased by $1$,
	 then  \cref{item:th1cond3a} in \cref{thm: maintheorem} does not hold. It can also be shown that if the first dimension is decreased by $1$, then
	 assumption 	 \cref{item:th1cond3}	in \cref{thm: maintheorem} does not hold.

Now we consider a noisy variant.
 Since the problem is already challenging we  exclude to some extent random tensors that may pose additional numerical difficulties\footnote{
Note that,  if the first or third matrix unfolding has a large condition number,
	we are approaching, as explained above, a situation in which the conditions in  \cref{thm: maintheorem} 
	and hence the working assumptions in \cref{algorithm:1} are not satisfied.}
  by limiting the condition numbers of the matrix unfoldings $\unf{T}{1}$ \tcr{and} $\unf{T}{3}$. More concretely,
 we select $100$ random tensors with \tcr{$\max(cond(\unf{T}{1}), cond(\unf{T}{3}))\leq 10$, where $cond(\cdot)$ denotes the condition number of a matrix, i.e., the ratio of the largest and smallest singular value}. \tcr{We estimate the  \tcr{ML rank values} and the factor matrices  from $T+c\mathcal N$, where $\mathcal N$ is a perturbation tensor and $c$ controls the signal-to-noise level. The entries of $\mathcal N$ 
 	are independently drawn from the standard normal distribution $N(0,1)$ and
the following Signal-to-Noise Ratio (SNR) measure is used: $SNR\ [dB] = 10\log (\|\mathcal T\|^2_F/c^2\|\mathcal N\|_F^2)$, where $\|\cdot\|_F$ denotes the Frobenius norm of a tensor.}
To compute the decomposition \tcr{of $\mathcal T+c\mathcal N$}  we use the  noisy version of \cref{algorithm:1} \tcr{explained in \cref{subsub252} (the second scenario). We assume that  $R=3$ and $\sum L_r=9$ are known. 
Since we are in a generic setting, $\sum d_r=RK-(R-1)\sum L_r=6$. Assuming that $d_1\leq d_2\leq d_3$,  this implies that the triplet  $(d_1,d_2,d_3)$
coincides with one of the triplets	$(1,1,4)$, $(1,2,3)$, $(2,2,2)$.
The respective values for  $\rubinom{d_1+1}{2}+\rubinom{d_2+1}{2}+\rubinom{d_3+1}{2}$ are $8$, $10$, and $9$. Consequently, in our computations we replace $Q$ by
$Q_{min}=\min(8,10,9)=8$. 
	}

The  \tcr{ matrix $\mathbf A$ and the values of} $d_1$, $d_2$, and $d_3$ are estimated  as \tcr{in \cref{subsub252} (the second scenario). 
	 The 	matrix $\mathbf N$ in  the simultaneous EVD in step $2$ of \cref{Alg:auxjbd} was found in two ways: i) from  the EVD of a single generic linear combination of $\mathbf U_1,\dots,\mathbf U_R$ and 	ii) by computing  CPD \cref{eq:somePD}.
	Since we are in a generic setting, the values of $L_1$, $L_2$, and $L_3$ can be found from the values of $d_1$, $d_2$, and $d_3$  by \cref{eq:L_r_via_d_r}.}
\tcr{This means that if $L_1\leq L_2\leq L_3$, then the triplet  $(L_1,L_2,L_3)$ necessarily coincides with one of the triplets $(2,2,5)$, $(2,3,4)$, $(3,3,3)$. \Cref{table:example2.8} shows the frequencies with which each triplet occurs as a function of the SNR.}
 To measure the performance we compute the relative error on the estimates of the first factor matrix $\mathbf A$ and   on the estimates of the matrix formed by the vectorised multilinear terms, $[\mathbf a_1\otimes\operatorname{vec}(\mathbf E_1)\ \dots\ \mathbf a_R\otimes\operatorname{vec}(\mathbf E_R)]$. (We compensate for scaling and permutation ambiguities.)
The results are shown in \cref{fig:testfig1}. Note that the accuracy of the estimates is of about the same order as the accuracy of the given tensors.

\begin{table}[htbp]\label{table:example2.8}
		\caption{Frequencies with which the ML rank values have been estimated correctly (second row) or incorrectly  (first and third row) (see \cref{example2.8})}
	\centering
	\begin{tabular}{|c|cccccccc|}
		\hline
\multirow{2}{*}{$L_1$, $L_2$, $L_3$}	&\multicolumn{8}{c|}{SNR (dB)}	\\
	\cline{2-9}
         &      15&        20&        25&        30&        35&        40&        45&        50\\
         \hline
         \hline
\multicolumn{1}{|c|}{2, 2, 5}&      21&        12&         8&         -&         -&         -&         -&         -\\  
\multicolumn{1}{|c|}{2, 3, 4}&      63&        79&        89&        96&        100&        99&        100&        100 \\ 
\multicolumn{1}{|c|}{3, 3, 3}&      16&        9&         3&         4&         -&         1&         -&         -\\
\hline 	
\end{tabular}
\end{table}
\end{expl}
\begin{figure}[htbp]
  \centering
  \label{fig:1}
   \includegraphics[scale=0.35]{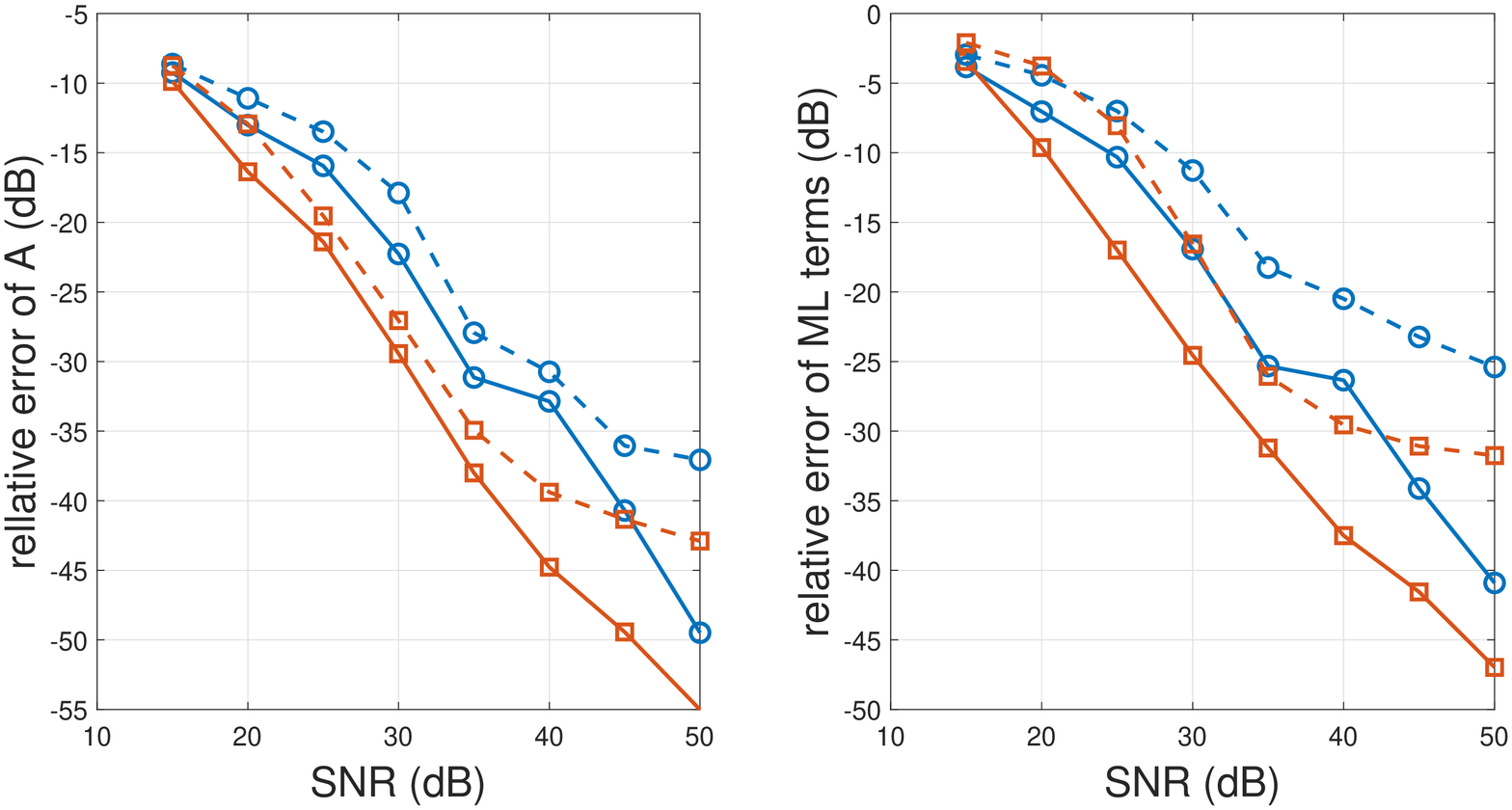}
  \caption{ Mean (\textcolor{blue}{$\Circle$}) and median (\textcolor{red}{$\Square$}) curves for the relative errors on the first factor matrix $\mathbf A$ (left plot) and the matrix formed by the vectorized ML terms $[\mathbf a_1\otimes\operatorname{vec}(\mathbf E_1)\ \dots\ \mathbf a_R\otimes\operatorname{vec}(\mathbf E_R)]$ (right plot). \tcr{The dashed and solid line correspond to the version of \cref{Alg:auxjbd} where the solution $\mathbf N$ of the simultaneous EVD in step $2$  is obtained from  the EVD of a single generic linear combination and from the CPD \cref{eq:somePD}, respectively} (see \cref{example2.8}).}
  \label{fig:testfig1}
\end{figure}
 \begin{expl} \label{example2.9}
    In this example we illustrate  the computation of $L_1,\dots,L_R$ and the computation of the approximate \ML decomposition  assuming that
 the exact decomposition satisfies  \cref{item:th1cond4} in \cref{thm: maintheorem} (i.e., Case 1 in \cref{algorithm:1}).
 
 We consider a $3\times 9\times 10$ tensor generated by \cref{eq:LrLr1mainBC} in which  the entries of $\mathbf A$, $\mathbf B$, and $\mathbf C$ are independently drawn from the standard normal distribution $N(0,1)$ and $L_1=1$, $L_2=2$, $L_3=3$, and $L_4=4$. Thus, $\mathcal T$ is a sum of $R=4$ terms.
We find numerically that $d_1=1$, $d_2=2$, $d_3=3$, $d_4=4$ and that the  null space of the $216\times 55$ matrix $\mathbf Q_2(\mathcal T)$ has dimension $20=\rubinom{d_1+1}{2}+\rubinom{d_2+1}{2}+\rubinom{d_3+1}{2}+\rubinom{d_4+1}{2}$. Hence, by   \cref{item:th1stat4} of \cref{thm: maintheorem}, the overall decomposition is unique and can be computed by \cref{algorithm:1} (Case 1). 
 It can be shown that  in this example we are again in a bordering case with respect to working assumptions in \cref{algorithm:1}, i.e., 
 if \tcr{the first or third} dimension is decreased by  $1$, then the decomposition cannot be computed by \cref{algorithm:1}.
 \tcr{As in \cref{example2.8},  we use the  noisy version of \cref{algorithm:1}  explained in \cref{subsub252} (the second scenario). We assume that  $R=4$ and $\sum L_r=10$ are known. Since we are in a generic setting, $\sum d_r=RK-(R-1)\sum L_r=10$. One can easily verify that there exist exactly  $9$  tuples $(d_1,d_2,d_3,d_4)$ such that $d_1\leq d_2\leq d_3\leq d_4$ and $\sum d_r=10$. Since $K=\sum L_r$ we have that $L_r=d_r$.
 	   The possible tuples $(L_1,L_2,L_3,L_4)$ ($=(d_1,d_2,d_3,d_4)$) are shown in the first column of \cref{table:example2.9}. 
 	   The respective $9$ values for  $\rubinom{d_1+1}{2}+\rubinom{d_2+1}{2}+\rubinom{d_3+1}{2}+\rubinom{d_4+1}{2}$ are $31$, $26$, $23$, $22$, $22$, $20$, $19$, $19$ and $18$. Consequently, in our computations we replace $Q$ by   $Q_{min}=18$. The
     matrix $\mathbf N$ was found in two ways: i) from  the EVD of a single generic linear combination of $\mathbf U_1,\dots,\mathbf U_R$ and 
    ii) by computing  CPD \cref{eq:somePD}. In the latter case the last frontal slice of $\mathcal U$ in \cref{eq:somePD}, i.e., the matrix $\mathbf U_R$, was replaced by $\omega\mathbf U_R$ with  $\omega=2$ (see explanation at the end of \cref{subsub:alg1}).} The results are shown in \cref{table:example2.9,fig:testfig2}. Again, despite the difficulty of the problem the accuracy of the  estimates is of about the same order as the accuracy of the given tensors. 
 
 \begin{table}[htbp]\label{table:example2.9}
 	\caption{Frequencies with which the ML rank values have been estimated correctly (sixth row) or incorrectly  (remaining rows) (see \cref{example2.9})}
 	\centering
 	\begin{tabular}{|c|cccccccc|}
 		\hline
 		\multirow{2}{*}{$L_1$, $L_2$, $L_3$, $L_4$}	&\multicolumn{8}{c|}{SNR (dB)}	\\
 		\cline{2-9}
 		&      15&        20&        25&        30&        35&        40&        45&        50\\
 		\hline
 		\hline
 	    \multicolumn{1}{|c|}{1, 1, 1, 7}&      1&        -&        -&        -&        -&        -&        -&        - \\ 
 	    \multicolumn{1}{|c|}{1, 1, 2, 6}&      5&        1&        -&        -&        -&        -&        -&        - \\ 
 	    \multicolumn{1}{|c|}{1, 1, 3, 5}&      8&        2&        2&        -&        -&        -&        -&        - \\ 
 	    \multicolumn{1}{|c|}{1, 1, 4, 4}&      4&        4&        1&        3&        -&        1&        -&        - \\ 
 	    \multicolumn{1}{|c|}{1, 2, 2, 5}&      13&        10&        5&        -&        -&        -&        -&        - \\ 
 		\multicolumn{1}{|c|}{1, 2, 3, 4}&      54&        73&        88&        96&        100&        99&        100&        100 \\ 
 		\multicolumn{1}{|c|}{1, 3, 3, 3}&      6&        3&        2&        -&        -&        -&        -&        - \\ 
 		\multicolumn{1}{|c|}{2, 2, 2, 4}&      3&        2&        2&        -&        -&        -&        -&        - \\ 
 		\multicolumn{1}{|c|}{2, 2, 3, 3}&      6&        5&        -&        1&       -&        -&        -&        - \\ 
 		\hline 	
 	\end{tabular}
 \end{table}
\end{expl} 
  \begin{figure}[htbp]
   \centering
   \label{fig:2}
     \includegraphics[scale=0.35]{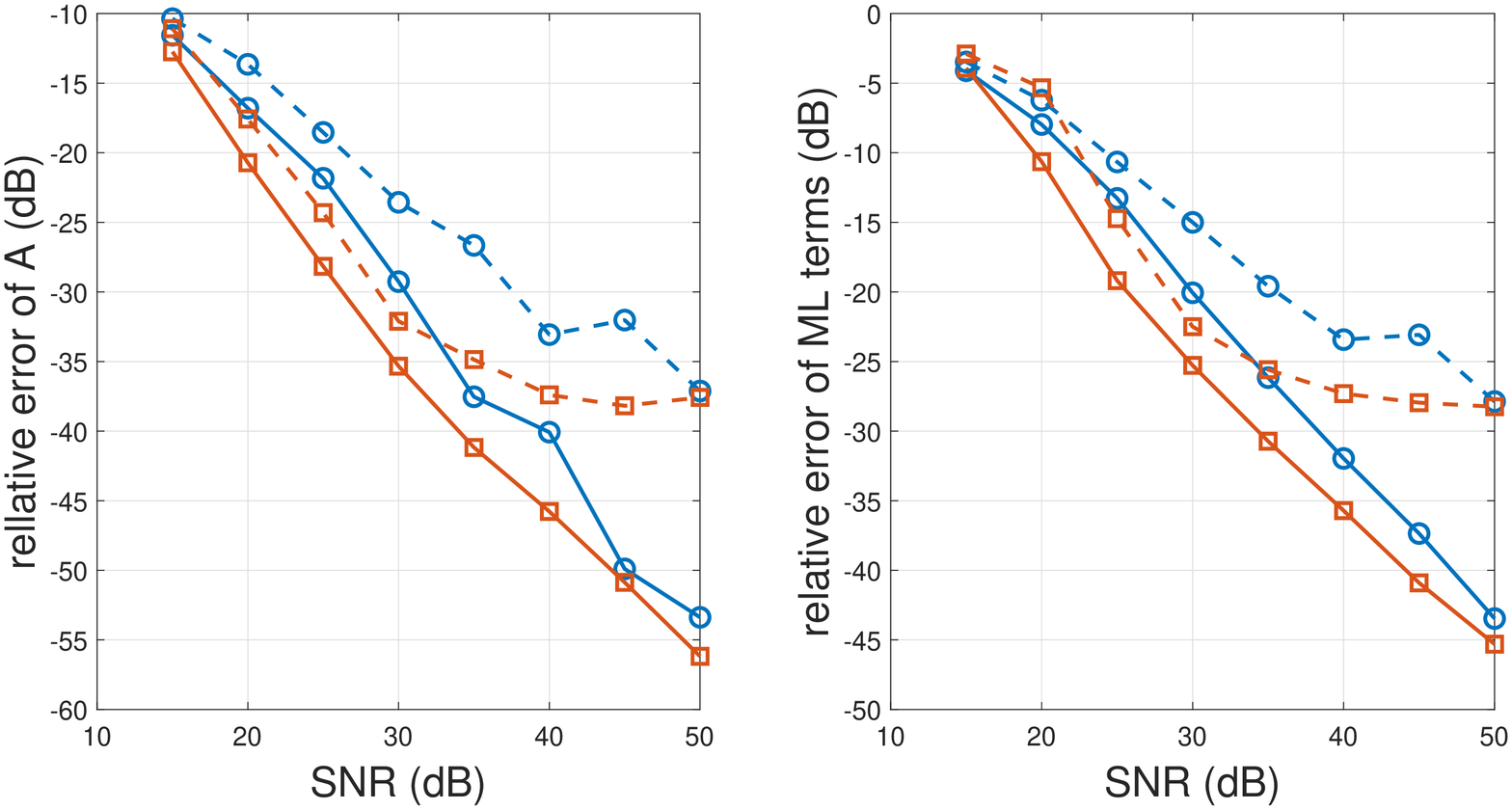}
   \caption{Mean (\textcolor{blue}{$\Circle$}) and median (\textcolor{red}{$\Square$}) curves for the relative errors on the first factor matrix  $\mathbf A$ (left plot) and the matrix formed by the vectorized ML terms $[\mathbf a_1\otimes\operatorname{vec}(\mathbf E_1)\ \dots\ \mathbf a_R\otimes\operatorname{vec}(\mathbf E_R)]$ (right plot). \tcr{The dashed and solid line correspond to the version of \cref{Alg:auxjbd} where the solution $\mathbf N$ of the simultaneous EVD in step $2$  is obtained from  the EVD of a single generic linear combination and from the CPD \cref{eq:somePD}, respectively} (see \cref{example2.9}).}
        \label{fig:testfig2}
 \end{figure}
\subsection{Results   for  \tcr{generic  decompositions}}\label{sec:genuniq}
The main results  of this subsection are   summarized in \cref{tab:KoMa14}(b). The results in \cref{subsec:genericcounterparts} are  generic counterparts of \cref{corollary:verynew} and \cref{thm: maintheorem} and therefore are
sufficient for generic uniqueness and guarantee that  a generic decomposition can be computed by means of EVD. In \cref{subsec:necccondgen} we discuss a necessary condition for generic uniqueness that  is more restrictive than
 generic versions of the conditions in  \cref{thm:necessity} at least for $\mathbb F=\mathbb C$. In \cref{subsub:genstrassen} we present two results on generic uniqueness of decompositions with a factor matrix that has full column rank. These results are   generalizations of  
 Strassen's  result on generic uniqueness of the CPD. The  conditions  are very mild are and easy to verify but they do not imply an algorithm. 

\subsubsection{Generic counterparts of the results from \cref{subsub251}}\label{subsec:genericcounterparts}
The first two results of this subsection are the generic counterparts of \cref{corollary:verynew} and \cref{thm: maintheorem} (or \cref{thm: maintheoremABC}). 
To simplify the presentation and w.l.o.g.  we assume 
that $L_1\leq \dots\leq L_R$.
  It is clear that the assumptions  $J\geq L_{\min(I,R)-1}+\dots+L_R$ \tcr{and $I\geq 2$ in \cref{thm:maingenshort} are, respectively, the generic  version of the assumption  $k_{\mathbf B}'\geq R-r_{\mathbf A}+2$ and $k_{\mathbf A}\geq 2$} in \cref{eq:corollary261}.
 The generic version of the condition $k_{\mathbf C}'\geq R-r_{\mathbf A}+2$ in
\cref{eq:corollary262}   coincides with $K\geq L_{\min(I,R)-1}+\dots+L_R$, which always holds because of the assumption $K\geq L_2+\dots+L_R+1$ in 
\cref{eq:corollary261gen}. Hence, in the generic setting, the conditions in \cref{eq:corollary262} can be dropped.
Thus, we have  the following result.
\begin{theorem}\label{thm:maingenshort}
	Let $L_1\leq \dots\leq L_R\leq \min(J,K)$ and let	 $\mathcal T\in\fF^{I\times J\times K}$ admit decomposition \cref{eq:LrLr1mainBC}, where
the entries of the matrices $\mathbf A\in\fF^{I\times R}$, $\mathbf B\in\fF^{J\times \sum L_r}$, and $\mathbf C\in\fF^{K\times \sum L_r}$ are randomly sampled from an absolutely continuous distribution. Assume that  
\begin{gather}
 K\geq L_2+\dots+L_R+1,\label{eq:corollary261gen}\\
J\geq L_{\min(I,R)-1}+\dots+L_R,
\ \text{ and }
I  \geq 2.\label{eq:genc}
\end{gather}
Then the decomposition of $\mathcal T$ into a sum of \MLatmost terms
is unique and can be computed by means of (simultaneous) EVD.	
\end{theorem}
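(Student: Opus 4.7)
The plan is to derive \cref{thm:maingenshort} as the generic specialization of \cref{corollary:verynew}, essentially by substituting the generic rank values of the factor matrices into the deterministic hypotheses of the corollary. Since the entries of $\mathbf A$, $\mathbf B$, $\mathbf C$ are sampled from an absolutely continuous distribution, all rank statements below are meant to hold with probability one.

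First I would record the generic rank data. Generically, $r_{\mathbf A}=k_{\mathbf A}=\min(I,R)$, and each block $\mathbf B_r\in\fF^{J\times L_r}$, $\mathbf C_r\in\fF^{K\times L_r}$ has full column rank (since $L_r\leq\min(J,K)$), so \cref{eq:LrLr1mainBC} is indeed an \ML decomposition. By the ordering $L_1\leq\dots\leq L_R$, picking $p$ blocks whose total column count is largest means picking the last $p$ of them; hence $k_{\mathbf B}'$ (respectively $k_{\mathbf C}'$) equals the largest integer $p$ for which $L_{R-p+1}+\dots+L_R\leq J$ (respectively $\leq K$). In particular, $r_{\mathbf C}=\min(K,\sum L_r)$.

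Next I would read off, from these generic values, that the three standing hypotheses of \cref{corollary:verynew} in \cref{eq:corollary261} are implied by the assumptions of the theorem: \cref{eq:corollary261gen} rewrites as $K\geq \sum L_r-\min L_r+1$ and hence gives $r_{\mathbf C}\geq \sum L_r-\min L_r+1$; the inequality in \cref{eq:genc} is precisely the dimensional translation of $k_{\mathbf B}'\geq R-\min(I,R)+2=R-r_{\mathbf A}+2$; and $I\geq 2$ yields $k_{\mathbf A}\geq 2$ (the case $R=1$ is trivial and we may take $R\geq 2$).

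Finally I would dispose of the dichotomy in \cref{eq:corollary262}. If $I\geq R$ then generically $r_{\mathbf A}=R$, so the first alternative holds. Otherwise $I<R$ and $k_{\mathbf A}=r_{\mathbf A}=I<R$ generically; the required $k_{\mathbf C}'\geq R-I+2$ translates to $K\geq L_{\min(I,R)-1}+\dots+L_R$, which, as observed in the discussion immediately preceding the theorem statement, is already implied by \cref{eq:corollary261gen}. Invoking \cref{corollary:verynew} then delivers the generic uniqueness together with the EVD-based algorithm. The main obstacle in this programme is the careful bookkeeping of the $k$- and $k'$-rank conditions as \emph{worst-case} subset sums rather than typical ones, and verifying that the $k_{\mathbf C}'$-clause in the second alternative of \cref{eq:corollary262} is fully absorbed by the $K$-bound in \cref{eq:corollary261gen}.
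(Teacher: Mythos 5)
Your proposal is correct and follows essentially the same route as the paper: \cref{thm:maingenshort} is obtained there not by a standalone formal proof but by the paragraph immediately preceding its statement, which identifies \cref{eq:corollary261gen} and \cref{eq:genc} as the generic versions of the hypotheses in \cref{eq:corollary261} of \cref{corollary:verynew} and observes that the alternatives in \cref{eq:corollary262} hold automatically in the generic setting. Your bookkeeping of the generic $k$- and $k'$-ranks as worst-case subset sums, and your final observation that the $k_{\mathbf C}'$ clause is absorbed by the bound \cref{eq:corollary261gen}, reproduce exactly the paper's argument.
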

 In the following theorem, assumptions \cref{item:th1gencond1}, \cref{item:th1gencond2}, \cref{eqLgenericdimQ_2}, 
conditions \cref{eq:ineqforuni1fmgen,eq:genb,eq:gend} and \cref{item:genstate1,item:genstate2ad,item:th1firstfmhardgen,item:genstate2}
correspond, respectively,  to assumptions \cref{item:th1cond1}, \cref{item:th1cond2}, \cref{item:th1cond3},
conditions  \ref{eq:ineqforuni1fm}, \ref{item:th1cond5},  \ref{item:th1cond4} and
statements \ref{item:th1stat1}, \ref{item:th1newstat}, \ref{item:th1firstfmhard},  \ref{item:th1stat4} 
 in \cref{thm: maintheorem}. 
The convention $L_1\leq \dots\leq L_R$ implies that
$d_1 := K-\sum\limits_{k=1}^RL_k+L_1\leq\dots\leq d_R:=K-\sum\limits_{k=1}^RL_k+L_R$. 
Thus, the  $R$ constraints in \cref{item:th1cond2} are   replaced by the single constraint $d_1\geq 1$ in 
\cref{item:th1gencond2}, which  moreover coincides with \cref{item:th1cond3a} in \cref{thm: maintheorem}. Hence,
in a generic setting, \cref{item:th1stat2} in \cref{thm: maintheorem} becomes the part of \cref{item:th1stat4} that relies on \cref{item:th1cond3a}.
That is why the following result contains fewer statements than \cref{thm: maintheorem}.
  \begin{theorem}\label{thm:maingen}
	Let $L_1\leq \dots\leq L_R\leq \min(J,K)$ and let	 $\mathcal T\in\fF^{I\times J\times K}$ admit decomposition \cref{eq:LrLr1mainBC}, where
	the entries of the matrices $\mathbf A\in\fF^{I\times R}$, $\mathbf B\in\fF^{J\times \sum L_r}$, and $\mathbf C\in\fF^{K\times \sum L_r}$ are randomly sampled from an absolutely continuous distribution. Assume that\footnote{\tcr{The inequality $\sum L_r\geq K$ in \cref{item:th1gencond1} is added for notational purposes; it simplifies the formulation of \cref{item:th1gencond2,eqLgenericdimQ_2}. By  \cref{item:thm:sonvention:statement2} of \cref{thm:convention},  uniqueness and computation of a generic decomposition of an $I\times J\times K$ tensor with $K\geq\sum L_r$ follow 	from uniqueness and computation of a generic decomposition of an $I\times J\times \sum L_r$ tensor. In other words,  the assumption $\sum L_r \geq K$ in \eqref{item:th1gencond1} is not a constraint:  if $K\geq\sum L_r$, then the assumptions and conditions in \cref{thm:maingen} should be verified for $K=\sum L_r$.}}
	\begin{gather}
	IJ\geq \tcr{\sum\limits_{r=1}^R L_r\geq}  K,\label{item:th1gencond1}\\  
	d_1:=K-\sum\limits_{r=1}^R L_r+L_1\geq 1, \label{item:th1gencond2}
	\end{gather}
	 and that
%
	there exist vectors $\tilde{\mathbf a}_r\in\fF^{I}$, and matrices $\tilde{\mathbf B}_r\in\fF^{J\times L_r}$, $\tilde{\mathbf C}_r\in\fF^{K\times L_r}$  such that
	\begin{equation}
	\dim\nullsp{\mathbf Q_2(\tilde{\mathcal T})}=\sum\limits_{r=1}^R \rubinom{d_r+1}{2},\label{eqLgenericdimQ_2}
	\end{equation}
	where $\tilde{\mathcal T}=\sum\tilde{\mathbf a}_r\circ(\tilde{\mathbf B}_r\tilde{\mathbf C}_r^T)$ and $d_r := K-\sum\limits_{k=1}^RL_k+L_r$, $r=1,\dots,R$.
	The following statements hold \tcr{generically}.
	\begin{statements}
			\item \label{item:genstate1} 
		The matrix $\mathbf A$  in \cref{eq:LrLr1mainBC} can be computed by means of (simultaneous) EVD. 
		\item \label{item:genstate2ad}	
        Any  decomposition of $\mathcal T$ into a sum of \MLatmost terms  has
        	$R$ nonzero terms and  its first factor matrix  is equal to $\mathbf A\mathbf P$, where every column of  $\mathbf P\in\fF^{R\times R}$ contains precisely a single $1$ with zeros everywhere else.
         \item \label{item:th1firstfmhardgen} If 
        	\begin{equation}
              K\geq -\frac{1}{2} -\sqrt{\frac{1}{4}+\frac{2L_1L_2}{R-1}}+\sum\limits_{r=1}^R L_r, \label{eq:ineqforuni1fmgen}
        	\end{equation}
         then the first factor matrix of  the decomposition of  $\mathcal T$ into a sum of \MLatmost  terms is  unique. 
         \item \label{item:genstate2}
        The decomposition of $\mathcal T$ into a sum of \MLatmost terms
          is unique and can be computed by means of (simultaneous) EVD if
		any of the following \tcr{two} conditions holds:
		 \begin{align}
		 I&\geq R\label{eq:genb}, \\
		 K&=\sum\limits_{r=1}^R L_r.\label{eq:gend}
		 		 \end{align}
	\end{statements}
	\end{theorem}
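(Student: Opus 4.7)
The plan is to derive \cref{thm:maingen} from its deterministic counterpart \cref{thm: maintheorem} by verifying that each of its assumptions and conditions holds on a Zariski open set under the stated generic hypotheses. The key observation is that every relevant quantity (the rank of $\unf{T}{3}$, the dimensions $d_r$, the dimension of $\nullsp{\mathbf Q_2(\mathcal T)}$, the rank of $\mathbf A$, the rank of $[\mathbf E_1^T\ \dots\ \mathbf E_R^T]^T$) is determined by the non-vanishing of suitable minors in the entries of $\mathbf A, \mathbf B, \mathbf C$; since these minors are polynomials and the entries are drawn from an absolutely continuous distribution, any condition that can be verified at a single point automatically holds with probability one.

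First I would verify the three assumptions of \cref{thm: maintheorem}. Assumption \cref{item:th1cond1} requires $r_{\unf{T}{3}} = K$; since $\unf{T}{3}$ is an $IJ \times K$ matrix with polynomial entries and $IJ \geq \sum L_r \geq K$ by \cref{item:th1gencond1}, this holds generically. Assumption \cref{item:th1cond2} requires $d_r \geq 1$ for all $r$; since $\mathbf Z_r \in \fF^{(\sum L_k - L_r)\times K}$ generically has rank $\sum L_k - L_r$, we have $d_r = K - \sum L_k + L_r$ generically, and by the ordering $L_1 \leq \dots \leq L_R$ the smallest value is $d_1$, so \cref{item:th1gencond2} guarantees $d_r \geq 1$ for all $r$. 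Assumption \cref{item:th1cond3} requires $\dim \nullsp{\mathbf Q_2(\mathcal T)} = \sum \rubinom{d_r+1}{2}$; by (the already-alluded-to) \cref{sta:lemmaApp3} of \cref{lemma: Q2viaABC}, this value is the minimum possible, and the minimum is attained at the witness tensor $\tilde{\mathcal T}$ provided by \cref{eqLgenericdimQ_2}. Since the rank of a polynomial matrix is lower semicontinuous and takes its maximum on a Zariski open set, equality holds generically.

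With the three assumptions secured, the statements of the theorem follow by matching each with its counterpart in \cref{thm: maintheorem}. \Cref{item:genstate1} is immediate from \cref{item:th1stat1}. For \cref{item:genstate2ad}, I apply \cref{item:th1newstat}: condition \cref{item:th1cond3a} requires $K \geq \sum L_r - \min L_r + 1$, which is equivalent to $d_1 \geq 1$, together with $k_{\mathbf A} \geq 2$, which holds generically for $I \geq 2$ (the case $I = 1$ is degenerate and can be handled separately). For \cref{item:genstate2}, the hypothesis $I \geq R$ generically implies $r_{\mathbf A} = R$, i.e., \cref{item:th1cond5}, while $K = \sum L_r$ matches \cref{item:th1cond4} exactly, since in that regime $[\mathbf E_1^T\ \dots\ \mathbf E_R^T]^T$ is a $K \times K$ matrix that is generically nonsingular. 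In both branches, \cref{item:th1stat4} of \cref{thm: maintheorem} delivers uniqueness and EVD-based computation.

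The main technical obstacle lies in \cref{item:th1firstfmhardgen}, where I must translate the deterministic inequality \cref{eq:ineqforuni1fm} into the clean closed-form bound \cref{eq:ineqforuni1fmgen}. Substituting $d_r = K - S + L_r$ (with $S := \sum L_r$) and using $\tilde L_1 = L_1$, $\tilde L_2 = L_2$, together with the identity $\sum_{r_1<r_2} L_{r_1}L_{r_2} = \tfrac{1}{2}(S^2 - \sum L_r^2)$, the inequality $\rubinom{K+1}{2} - Q > -L_1 L_2 + \sum_{r_1<r_2} L_{r_1}L_{r_2}$ reduces, after expansion and cancellation of $\sum L_r^2$, to
\begin{equation*}
(R-1)\bigl[(K-S)^2 + (K-S)\bigr] < 2 L_1 L_2.
\end{equation*}
Setting $y := S - K \geq 0$ (so $K \leq S$), this becomes $y^2 - y < 2 L_1 L_2/(R-1)$, a quadratic in $y$ whose solution is $y < \tfrac{1}{2} + \sqrt{\tfrac{1}{4} + \tfrac{2L_1 L_2}{R-1}}$. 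Rearranging for $K$ yields precisely \cref{eq:ineqforuni1fmgen}. This algebraic reduction is routine but needs to be executed with care about the sign of $K-S$; once done, \cref{item:th1firstfmhard} of \cref{thm: maintheorem} applies verbatim and the proof is complete.
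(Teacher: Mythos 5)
Your proposal is correct and follows essentially the same route as the paper's proof: reduce to \cref{thm: maintheorem} by checking its assumptions generically, establish \cref{item:th1cond3} via the witness $\tilde{\mathcal T}$ together with the semicontinuity of rank and the lower bound $\dim\nullsp{\mathbf Q_2(\mathcal T)}\geq\sum\rubinom{d_r+1}{2}$ from \cref{sta:lemmaApp3} of \cref{lemma: Q2viaABC}, and reduce \cref{eq:ineqforuni1fm} to \cref{eq:ineqforuni1fmgen} by the same quadratic-in-$K$ computation (your form $(R-1)[(K-S)^2+(K-S)]<2L_1L_2$ is the paper's $K^2+K(1-2S)+S^2-S-\frac{2L_1L_2}{R-1}<0$). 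The only cosmetic difference is that the paper packages the semicontinuity step as an appeal to a cited lemma on full column rank of analytic matrix functions, applied to a column submatrix of $\mathbf Q_2(\mathcal T)$ selected at the witness point.
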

\begin{proof}
 	The proof is   given in \cref{sec:simplethms}.
 \end{proof}

\tcr{\tcr{To verify the uniqueness and EVD-based computability of a generic decomposition in the case $I\geq R$,} one can use   \cref{thm:maingenshort} (i.e.,   verify the assumptions $K-\sum L_r+L_1\geq 1$  and $J\geq L_{\min(I,R)-1}+\dots+L_R=L_{R-1}+L_R$) or \cref{thm:maingen} (i.e.,  verify the assumptions $IJ\geq \sum L_r$, $K-\sum L_r+L_1\geq 1$, 
	and \eqref{eqLgenericdimQ_2}). Let us briefly comment on these two options. From \cref{sta:lemmaApp4} of \cref{lemma: Q2viaABC} below, it follows that for $I\geq R$,
the assumptions in \cref{thm:maingen} are  at least as relaxed as the assumptions in  \cref{thm:maingenshort}.
On one hand, the assumption $J\geq L_{R-1}+L_R$ in \cref{thm:maingenshort} is easy to verify; on the other hand,  it can be more restrictive than assumption \eqref{eqLgenericdimQ_2} in  \cref{thm:maingen}.  For instance, it can be verified that  uniqueness and EVD-based computability of a generic decomposition of a $3\times 6\times 8$ tensor into a sum of  \MLatmost terms with $L_1=L_2=3$ and $L_3=4$ follow from  \cref{thm:maingen} but do not follow from \cref{thm:maingenshort} (indeed, $6=J\geq L_{R-1}+L_R=3+4$ does not hold).}

\tcr{We now  explain  how to  verify assumption \eqref{eqLgenericdimQ_2}.}

\tcr{In the proof of \cref{thm:maingen} we explain that if assumption
	\cref{eqLgenericdimQ_2} holds for  one triplet of matrices $\tilde{\mathbf A}$, $\tilde{\mathbf B}$, and $\tilde{\mathbf C}$, then 
	\cref{eqLgenericdimQ_2} holds also for a generic triplet. The other way around, it suffices to verify \cref{eqLgenericdimQ_2} for a generic triplet, where some care needs to be taken that the algebraic situation is not obfuscated by numerical effects. Hence one possibility to 
    verify  \cref{eqLgenericdimQ_2} is to randomly select matrices $\tilde{\mathbf A}$, $\tilde{\mathbf B}$, and $\tilde{\mathbf C}$, construct
   $\mathbf Q_2(\tilde{\mathcal T})$ and estimate its rank numerically. Because of the rounding errors such  computations cannot be considered as a formal proof of \cref{eqLgenericdimQ_2}, unless it is clear that the rounding did not affect the rank of  $\mathbf Q_2(\tilde{\mathcal T})$. 
   To have a formal proof of \cref{eqLgenericdimQ_2} one can chose  matrices $\tilde{\mathbf A}$, $\tilde{\mathbf B}$, and $\tilde{\mathbf C}$ such that the entries of  $\mathbf Q_2(\tilde{\mathcal T})$  are integers and, possibly, such that  $\mathbf Q_2(\tilde{\mathcal T})$ is sparse, so the
   identity in \cref{eqLgenericdimQ_2} becomes easy to prove. Both possibilities 
   are illustrated in the upcoming \cref{example335}. Another possibility  to have a formal proof of \cref{eqLgenericdimQ_2} is to perform all computations over a finite field. This approach is  explained in \cref{Appendix:FF}.
Note that both approaches can be quite expensive and may require a third-party implementation.
 }
\begin{expl}\label{example335}
	Let $\mathcal T$ be $3\times 3\times 5$ tensor 
	 generated by \cref{eq:LrLr1mainBC} in which  the entries of $\mathbf A$, $\mathbf B$, and $\mathbf C$ are independently drawn from the standard normal distribution $N(0,1)$ and $L_1=L_2=L_3=1$, $L_4=2$.
	To prove  that  the decomposition of $\mathcal T$ into a sum of \MLatmost terms
	is unique and can be computed by means of (simultaneous) EVD we verify assumptions
	\cref{item:th1gencond1}, \cref{item:th1gencond2}, \cref{eqLgenericdimQ_2} and condition \cref{eq:gend} in \cref{thm:maingen}.
	Assumptions 	\cref{item:th1gencond1}, \cref{item:th1gencond2} and condition \cref{eq:gend} obviously hold.
		Let us now  illustrate two possibilities to verify \cref{eqLgenericdimQ_2}.
	
    \textit{I. The matrices $\tilde{\mathbf A}$, $\tilde{\mathbf B}$, and $\tilde{\mathbf C}$ are generic.}
	\tcr{For $5$   randomly generated triplets $(\tilde{\mathbf A}, \tilde{\mathbf B},\tilde{\mathbf C})$ in \cref{example335}, we have obtained
	that the condition number of the $9\times 15$ matrix $\mathbf Q_2(\tilde{\mathcal T})$ took values $223.12 $,  $75.46 $,	   $681.37 $, $2832.9$, and $147.65 $ 	which clearly suggests that $\mathbf Q_2(\tilde{\mathcal T})$ is a full-rank  matrix (i.e., $r_{\mathbf Q_2(\tilde{\mathcal T})}=9$).
	 Hence, by the rank-nullity theorem, $\dim\nullsp{\mathbf Q_2(\tilde{\mathcal T})}=15-9=6$. Since  \cref{eq:gend} holds, it follows that
	$d_r=K-\sum\limits_{k=1}^RL_k+L_r=L_r$, implying that   $\rubinom{d_1+1}{2}+\dots+\rubinom{d_4+1}{2}=1+1+1+3=6$. Thus, assumption
	\cref{eqLgenericdimQ_2} holds  if  we can trust our impression that
		$\mathbf Q_2(\tilde{\mathcal T})$ has full rank generically}.
	
	\textit{II. The matrices $\tilde{\mathbf A}$, $\tilde{\mathbf B}$, and $\tilde{\mathbf C}$ have integer entries.}
	\tcr{We set
	$$	
	\tilde{\mathbf A} = 
	\begin{bmatrix}
	1& 0& 0& 1\\
	0& 1& 0& 1\\
	0& 0& 1& 1
	\end{bmatrix},\qquad
	\tilde{\mathbf B} =
	\begin{bmatrix}
	1& 1& 1& 0& 0\\
	1& 2& 0& 1& 0\\
	1& 3& 0& 0& 1
	\end{bmatrix},\qquad \tilde{\mathbf C} = \mathbf I_5
	$$
	and compute $\tilde{\mathcal T}=\sum\tilde{\mathbf a}_r\circ(\tilde{\mathbf B}_r\tilde{\mathbf C}_r^T)$. It can be easily verified that
		\setcounter{MaxMatrixCols}{15}
	$$
	\mathbf Q_2(\tilde{\mathcal T})=
	\begin{bmatrix*}[r] 
	0&	1&	0&	1&	0&	0&	0&	-1&	0&	0&	0&	0&	0&	0&	0\\
	0&	2&	0&	0&	1&	0&	0&	0&	-1&	0&	0&	0&	0&	0&	0\\
	0&	1&	0&	-1&	1&	0&	0&	3&	-2&	0&	0&	0&	0&	0&	0\\
	0&	0&	-1&	1&	0&	0&	0&	0&	0&	0&	-1&	0&	0&	0&	0\\
	0&	0&	-1&	0&	1&	0&	0&	0&	0&	0&	0&	-1&	0&	0&	0\\
	0&	0&	0&	-1&	1&	0&	0&	0&	0&	0&	0&	0&	0&	0&	0\\
	0&	0&	0&	0&	0&	0&	-2&	1&	0&	0&	-1&	0&	0&	0&	0\\
	0&	0&	0&	0&	0&	0&	-3&	0&	1&	0&	0&	-1&	0&	0&	0\\
	0&	0&	0&	0&	0&	0&	0&	-3&	2&	0&	0&	0&	0&	0&	0
	\end{bmatrix*}
	$$
	and that  the nine nonzero columns of $\mathbf Q_2(\tilde{\mathcal T})$ are linearly independent. Hence, again, by the rank-nullity theorem, $\dim\nullsp{\mathbf Q_2(\tilde{\mathcal T})}=15-9=6$. Thus, assumption \cref{eqLgenericdimQ_2} holds  with certainty.} Note that
	the  matrix $\mathbf Q_2(\tilde{\mathcal T})$ is sparse and the identity in \cref{eqLgenericdimQ_2} is easy to prove because
	 we paid attention to the choice of the entries of $\mathbf A$, $\mathbf B$, and $\mathbf C$.
	   
	   	It is worth noting that  the decomposition of a $3\times 3\times 5$ tensor into a sum of $5$ generic rank-$1$  terms is not  unique. More precisely,
	 it is known that such tensors admit exactly six decompositions \cite{TenBerge2004}. Our example demonstrates  that
	 if two of the rank-$1$ terms are forced to share the same vector in the first mode,  and hence together form an ML rank-$(1,2,2)$ term, then
	 the  decomposition  becomes    unique.
\end{expl}
\subsubsection{\tcr{Necessary condition  for generic uniqueness}}\label{subsec:necccondgen}
The necessity of the conditions 
\begin{equation}
R\leq JK,\quad \sum L_r\leq IJ,\quad \sum L_r\leq IK\label{eq:threegencond}
\end{equation}
follows trivially  from \cref{thm:necessity}.
Next, counting the number of parameters on each side of \cref{eq:LrLr1}, one would expect that uniqueness does not hold 
if the RHS of \cref{eq:LrLr1}  contains more parameters than the LHS:
\begin{equation}
S:=\sum\limits_{r=1}^R (I-1+(J+K-L_r)L_r)< IJK,\label{eq:S=}
\end{equation}
where
the value $S$ is
an upper bound on the number of parameters needed to parameterize\footnote{
	The number of parameters can be computed as follows. Using, for instance,  the LDU factorization  we obtain that   a generic $J\times K$ rank-$L_r$ matrix involves
	$(JL_r-\frac{L_r(L_r+1)}{2})+L_r+(KL_r-\frac{L_r(L_r+1)}{2})= (J+K-L_r)L_r$ parameters, where
		we obviously assume that  $\max L_r\leq\min(J,K)$. Hence,  the $r$th term in \cref{eq:LrLr1}  can be parameterized with $I-1+(J+K-L_r)L_r$ parameters.}
a sum of $R$ generic  \ML terms in the  LHS of  \cref{eq:LrLr1} and 
$IJK$ is equal to the  dimension of the space of  $I\times J\times K$ tensors.
In fact it is known
\cite{Yang2014} and follows from the 
fiber dimension theorem  \cite[Theorem 3.7, p. 78]{Perrin2008}
that \textit{condition \cref{eq:S=}
is necessary} for  generic uniqueness if $\fF=\mathbb C$.
    It can be verified that  condition \cref{eq:S=} is more restrictive than \cref{eq:threegencond}
   \tcr{and, thus, is more interesting at least for $\fF=\mathbb C$}.

Recall that for $L_1=\dots=L_R=1$ the minimal decomposition of form \cref{eq:LrLr1mainBC} corresponds to CPD.
It has been shown in \cite{Nick2014} that, for CPD,  \textit{the condition $S< IJK\leq 15000$ is \tcr{also} sufficient} for  generic uniqueness, with a few known exceptions.
The following example demonstrates that 
for the decomposition into a sum of \MLatmost terms the bound is $S< IJK$ \textit{not} sufficient. 
However, in the example the first factor matrix is generically unique, i.e., the decomposition is generically partially unique. 
  \begin{expl}
	\label{example:1}
	We consider a $2\times 8\times 7$ tensor generated as the sum of $3$ random ML rank-$(1,3,3)$ tensors. More precisely, the tensors are generated by  \cref{eq:LrLr1mainBC} in which the entries of $\mathbf A$, $\mathbf B$, and $\mathbf C$ are independently drawn from the standard normal distribution $N(0,1)$. 
		Since $S=3(2-1+(8+7-3)3)=111$ and $IJK=112$, the inequality $S<IJK$ holds. 	
	    On the other hand, in \cref{AppendixE}  we prove that  tensors generated in this way  admit infinitely many decompositions, namely, we show that there exists \tcr{at least } a two-parameter family of decompositions. 
	   In this example, first, we present a specific tensor $\tilde{\mathcal T}$ that admits a one-parameter family of decompositions all of which share the same factor matrix. Second, we show how $\tilde{\mathcal T}$ can be used to 
	     prove generic uniqueness of the first factor matrix.
	    
	    Let $\tilde{\mathcal T}:=\sum\tilde{\mathbf a}_r\circ(\tilde{\mathbf B}_r\tilde{\mathbf C}_r^T)$ with 
		\begin{align*}
		\tilde{\mathbf A} &=\begin{bmatrix}
		1&1&0\\
		1&0&1
		\end{bmatrix},& &
		\tilde{\mathbf B}_1\tilde{\mathbf C}_1^T=[\mathbf e_5+\mathbf e_7\ \mathbf e_1\ \mathbf e_2\ \mathbf 0\ \mathbf 0\ \mathbf 0\ \mathbf 0],\\
		\tilde{\mathbf B}_2\tilde{\mathbf C}_2^T&=[\mathbf 0\ \mathbf 0\ \mathbf e_5\ \mathbf e_3\ \mathbf e_4\ \mathbf e_5\ \mathbf e_5],& &
				\tilde{\mathbf B}_3\tilde{\mathbf C}_3^T=[\mathbf e_8\ \mathbf 0\ \mathbf e_8\ \mathbf 0\ \mathbf e_8\ \mathbf e_6\ \mathbf e_7],
		\end{align*}
		where $\mathbf e_1,\dots,\mathbf e_8$ denote the vectors of the  canonical basis of $\fF^8$. 
		Let $t\in \fF$, $\mathbf h:=\mathbf e_5-\mathbf e_7$,  $\mathbf g(t):= (t+3)(\mathbf e_5-t\mathbf h)$, and
\begin{align*}
{\mathbf E}_1(t):=&[(2t-1)\mathbf h                      & &\mathbf e_1& &\mathbf g(t)+\mathbf e_5 +\mathbf e_2&              & \mathbf 0  & &-2\mathbf h            & &t\mathbf h             & &t\mathbf h],\\ 
{\mathbf E}_2(t):=&[2(\mathbf e_5-t\mathbf h)            & &\mathbf 0  & &-\mathbf g(t)&                                       & \mathbf e_3& &\mathbf e_4+2\mathbf h & &\mathbf e_5-t\mathbf h & &\mathbf e_5-t\mathbf h],\\
{\mathbf E}_3(t):=&[2(\mathbf e_5-t\mathbf h)+\mathbf e_8& &\mathbf 0  & & -\mathbf g(t)-\mathbf e_5+\mathbf e_8&             &  \mathbf 0 & &2\mathbf h+\mathbf e_8 & &t\mathbf h+\mathbf e_6 & &\mathbf e_7 -t\mathbf h].
 \end{align*}
It can easily be verified that also   $\tilde{\mathcal T}=\sum\tilde{\mathbf a}_r\circ {\mathbf E}_r(t)$, and that the column spaces of  
${\mathbf E}_1(t)$, ${\mathbf E}_2(t)$ and ${\mathbf E}_3(t)$ coincide with $\operatorname{span}\{\mathbf h , \mathbf e_1, \mathbf g(t)+\mathbf e_5 +\mathbf e_2\}$, $\operatorname{span}\{\mathbf e_5-t\mathbf h, \mathbf e_3,\mathbf e_4+2\mathbf h\}$, and  $\operatorname{span}\{2\mathbf h+\mathbf e_8,t\mathbf h+\mathbf e_6,\mathbf e_7 -t\mathbf h\}$, respectively, that is, have dimension $3$. Thus, the decomposition of $\tilde{\mathcal T}$ is not generically unique.

	 \tcr{Generic} uniqueness of the first factor matrix follows from   \cref{item:th1firstfmhardgen} of \cref{thm:maingen}. 
		Indeed,   \cref{item:th1gencond1,item:th1gencond2,eq:ineqforuni1fmgen} are trivial:  $7=K<IJ=16$, $K-\sum L_r+\min L_r = 7 - 9+3=1$, \tcr{$7=K\geq -\frac{1}{2} -\sqrt{\frac{1}{4}+\frac{2L_1L_2}{R-1}}+\sum\limits_{r=1}^R L_r=
		-\frac{1}{2}-\sqrt{\frac{1}{4}+9}+9\approx 5.5$}. \tcr{Condition \cref{eqLgenericdimQ_2} can be verified exactly, i.e., without roundoff errors for the specific $\tilde{\mathbf A}$, $\tilde{\mathbf B}$, and $\tilde{\mathbf C}$ given above. (For this particular choice of $\tilde{T}$, 	the $28\times 28$ matrix $\mathbf Q_2(\tilde{\mathcal T})$ is sparse and its nonzero entries belong to the set $\{-2, -1,0, 1,2\}$).} Moreover, the first factor matrix can be computed in Phase I of  \cref{algorithm:1}. Since $d_r=K - (\sum_{p=1}^R L_p -L_r) = 7-(9-3)=1$, it follows that the S-JBD  in step $5$   reduces to joint diagonalization.
 \end{expl}
\subsubsection{\tcr{Strassen type results: decompositions  with a factor matrix that has  full column rank}}\label{subsub:genstrassen}
In this subsection we narrow the investigation of generic uniqueness to the  situation where one of the factor matrices has full column rank.
Put the other way around,
we generalize the famous Strassen result for generic uniqueness of the CPD for situations in which a factor matrix has full column rank to the decomposition into a sum of \MLatmost terms. While CPD is symmetric in $\mathbf A$, $\mathbf B$ and $\mathbf C$, in the decomposition into a sum of \ML terms factor matrix $\mathbf A$ plays a role that is different from the role of $\mathbf B$ and $\mathbf C$. Consequently, we will consider two cases. In the first case we assume that $R\leq I$, i.e., that the first factor matrix has full column rank (see \cref{thm:maingenLL1}). In the second case we assume that 
$\sum L_r\leq  K$, i.e., that the third factor matrix has full column rank (see \cref{thm:maingenStrassen}). The result for $\sum L_r\leq  J$, i.e., for the case where the second factor matrix has full column rank then follows from  \cref{thm:maingenStrassen} by symmetry. 

\textit{First factor matrix has full column rank.} First we recall the corresponding result for the CPD. One can easily verify that  if $L_1=\dots=L_R=1$ and $R\leq I$, then the bound   $S<IJK$ in \cref{eq:S=} is equivalent to
\begin{equation}
R\leq (J-1)(K-1).
\label{eq:strassen}
\end{equation}
Hence, condition \cref{eq:strassen} is necessary for generic uniqueness of the CPD if $R\leq I$ and $\fF=\mathbb C$.
If $R\leq I$ and $\fF=\mathbb R$, then, in general,  
condition \cref{eq:strassen} is not necessary for  generic uniqueness of  CPD \cite{realvscomplex}.
%
%
On the other hand, it is well-known \cite{Strassen1983} (see also  \tcr{\cite[Corollary 1.7]{AlgGeom1}, \cite{Bocci2013}} and  references therein) that
if  $R\leq I$, then  condition \cref{eq:strassen} is sufficient for generic uniqueness of the CPD for both $\fF=\mathbb R$ and $\fF=\mathbb C$.
Thus, under the assumption $R\leq I$, condition  \cref{eq:strassen} is sufficient  if $\fF=\mathbb R$ and
condition  \cref{eq:strassen} is necessary and sufficient  if $\fF=\mathbb C$. 
%
%
%
The following theorem generalizes this ``Strassen-type'' CPD result for the decomposition into a sum of ML rank-$(1,L,L)$ terms. (One can easily verify
that if $R\leq I$, then the condition $R\leq (J-L)(K-L)$ in \cref{eq:RleqI} is equivalent to  the bound $S<IJK$ in \cref{eq:S=}).
%
%
%
\begin{theorem}\label{thm:maingenLL1}
	Let $\mathcal T$ admit decomposition \cref{eq:LrLr1mainBC}, where
	$$
	L_1=\dots=L_R=:L\leq \min(J,K),\qquad R\leq I
	$$
	and the entries of the matrices $\mathbf A$, $\mathbf B$, and $\mathbf C$ are randomly sampled from an absolutely continuous distribution.
	If $\fF=\mathbb R$  and
	\begin{equation}
	R\leq (J-L)(K-L), \label{eq:RleqI}
	\end{equation}
	then  the decomposition of $\mathcal T$ into a sum of \MLatmost terms is unique.
	If $\fF=\mathbb C$, then the decomposition of $\mathcal T$ into a sum of \MLatmost terms is  unique
		if and only if  \eqref{eq:RleqI} holds.
\end{theorem}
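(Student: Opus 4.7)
The plan is to translate uniqueness of the decomposition into a question about intersections of an $R$-dimensional linear subspace of $\fF^{J\times K}$ with the variety of low-rank matrices. Since $R\leq I$ and the entries of $\mathbf A$ are sampled from an absolutely continuous distribution, generically $r_{\mathbf A}=R$. Identity \cref{eq:unf_T_1} then implies that the subspace $\mathcal V:=\sspan\{\mathbf H_1,\dots,\mathbf H_I\}\subset\fF^{J\times K}$ coincides with $\sspan\{\mathbf E_1,\dots,\mathbf E_R\}$ and is $R$-dimensional. In any other decomposition $\mathcal T=\sum_{r=1}^{\tilde R}\tilde{\mathbf a}_r\circ\tilde{\mathbf E}_r$ into max ML rank-$(1,L,L)$ terms, every horizontal slice is a linear combination of the $\tilde{\mathbf E}_r$'s, so $\tilde{\mathbf E}_r\in\mathcal V$ for each $r$; once the $\tilde{\mathbf E}_r$'s are pinned down, the vectors $\tilde{\mathbf a}_r$ are determined up to permutation and scaling through the linear system $\unf{T}{1}=[\operatorname{vec}(\tilde{\mathbf E}_1)\ \dots\ \operatorname{vec}(\tilde{\mathbf E}_{\tilde R})]\tilde{\mathbf A}^T$. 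Consequently, uniqueness reduces to the claim that the only rank-$\leq L$ matrices in $\mathcal V$ are, up to scaling, the $R$ matrices $\mathbf E_1,\dots,\mathbf E_R$.

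\textbf{Proof over $\mathbb C$.} Let $X_L\subset\mathbb P(\mathbb C^{J\times K})$ denote the irreducible projective determinantal variety of matrices of rank $\leq L$; it has dimension $L(J+K-L)-1$ and codimension $(J-L)(K-L)$. The subspace $\mathcal V$ projectivizes to an $(R-1)$-plane $\mathbb P(\mathcal V)$ containing the $R$ smooth points $[\mathbf E_r]=[\mathbf B_r\mathbf C_r^T]$ of $X_L$. For the \emph{``if''} direction assume $R\leq(J-L)(K-L)$; then the expected dimension of $\mathbb P(\mathcal V)\cap X_L$ in $\mathbb P^{JK-1}$ is $R-1-(J-L)(K-L)<0$, so a wholly generic $R$-plane misses $X_L$. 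A Terracini-type tangent-space computation --- the tangent space $T_{\mathbf E_r}X_L$ consists of matrices of the form $\mathbf B_r\mathbf M+\mathbf N\mathbf C_r^T$ and has dimension $L(J+K-L)$ --- shows that for a dense open set of $(\mathbf B,\mathbf C)$ each $T_{\mathbf E_r}X_L$ meets $\mathcal V$ in precisely the line $\fF\mathbf E_r$, forcing every $[\mathbf E_r]$ to be an isolated, reduced intersection point. A specialization argument, degenerating $\mathcal V$ within the Grassmannian to a wholly generic $(R-1)$-plane (where the intersection is empty) and tracking intersection multiplicities, rules out additional isolated points. For the \emph{``only if''} direction assume $R\geq (J-L)(K-L)+1$. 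The expected dimension of $\mathbb P(\mathcal V)\cap X_L$ is then non-negative; a deformation of any chosen $[\mathbf E_r]$ inside $\mathcal V\cap X_L$ (equivalent to a tangent-space count in the opposite direction) yields either a positive-dimensional family of rank-$L$ matrices in $\mathcal V$ or, when the expected dimension is $0$, extra isolated intersection points beyond the original $R$ because the degree of $X_L$ strictly exceeds $R$ under the hypotheses $L\leq\min(J,K)$ and $R\geq(J-L)(K-L)+1$. In either case $\mathcal V$ contains strictly more than $R$ independent rank-$\leq L$ lines and uniqueness fails.

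\textbf{Passage to $\mathbb R$ and main obstacle.} Over $\fF=\mathbb R$ the set of complex parameter tuples $(\mathbf A,\mathbf B,\mathbf C)\in\mathbb C^{I\times R}\times\mathbb C^{J\times RL}\times\mathbb C^{K\times RL}$ for which the decomposition fails to be unique is Zariski closed and defined by polynomial equations with real coefficients. The complex sufficiency just established shows this closed set is a proper subvariety under the hypothesis $R\leq(J-L)(K-L)$; hence its intersection with the real locus is a proper real-algebraic subvariety of the real parameter space and in particular has Lebesgue measure zero, giving generic uniqueness over $\mathbb R$. The main technical obstacle is the Terracini-type tangent-space computation at the core of the complex sufficiency argument: one must verify that for generic $(\mathbf B_r,\mathbf C_r)$'s, the direct sum $\bigoplus_{r=1}^R T_{\mathbf E_r}X_L$ is in maximally general position relative to $\mathcal V$, which is an explicit rank condition on a block-structured matrix whose entries are polynomials in the entries of $\mathbf B$ and $\mathbf C$. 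As is standard in this paper, the genericity of that rank condition can be certified by exhibiting a single nondegenerate numerical or finite-field instance and invoking semicontinuity of matrix rank (cf.\ \cref{Appendix:FF}).
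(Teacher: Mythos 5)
Your route is genuinely different from the paper's: the paper rewrites $\unf{T}{1}^T=\mathbf A\mathbf P^T$ with the columns of $\mathbf P$ constrained to the (vectorized) determinantal variety and then simply verifies the hypotheses of the external structured-decomposition result restated as \cref{thm:auxgen}, whose condition $R\leq \hat N-\hat l=JK-(J+K-L)L=(J-L)(K-L)$ is exactly \cref{eq:RleqI}; the necessity over $\mathbb C$ is not re-proved in the appendix but imported from the parameter count $S<IJK$ of \cref{eq:S=} via the fiber dimension theorem. Your reduction of uniqueness to the statement ``$\mathcal V=\sspan\{\mathbf E_1,\dots,\mathbf E_R\}$ contains no rank-$\leq L$ matrices other than the $\fF\mathbf E_r$'' is the same underlying geometric claim, and your passage from $\mathbb C$ to $\mathbb R$ is fine. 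However, your proof of that geometric claim has a genuine gap.

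The problem is the specialization step. Because $\dim\mathbb P(\mathcal V)+\dim X_L<JK-1$, the intersection class $[\mathbb P(\mathcal V)]\cdot[X_L]$ is zero and a truly generic $(R-1)$-plane misses $X_L$ altogether; but $\mathbb P(\mathcal V)$ is not a generic plane — it lies in the proper subfamily of planes through $R$ points of $X_L$, and \emph{every} member of that subfamily meets $X_L$. You therefore cannot degenerate within the relevant family to the empty-intersection case, and conservation of number gives no information: the actual intersection is an excess intersection whose virtual class is zero, so ``tracking multiplicities'' neither bounds nor localizes it. The Terracini-type tangent computation only shows that each $[\mathbf E_r]$ is an isolated point of $\mathbb P(\mathcal V)\cap X_L$; it says nothing about components of the intersection away from the $\mathbf E_r$, which is precisely what must be excluded. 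The step that is actually needed is an incidence-variety dimension count: parameterize the pairs $(\mathbf A,\mathbf B,\mathbf C,\mathbf F)$ with $\mathbf F\in\mathcal V\cap X_L$ not proportional to any $\mathbf E_r$, bound the dimension of this incidence set, and show that its projection to parameter space is not dominant — this is what the hypothesis $R\leq\hat N-\hat l$ of \cref{thm:auxgen} encodes, and it cannot be replaced by the expected-dimension/conservation argument you propose. (A smaller issue: in the ``only if'' direction, when $R=(J-L)(K-L)+1$ you rely on $\deg X_L>R$ without proof; the paper instead derives necessity over $\mathbb C$ from $S<IJK$ and the fiber dimension theorem.)
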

\begin{proof}
 	The proof is  given in \cref{sec:AppendixC}.
\end{proof}


\textit{Second or third factor matrix has full column rank.} Permuting $I$, $J$ and $K$ in the Strassen condition \cref{eq:strassen}, 
 we have that
generic uniqueness of the CPD holds if 
\begin{equation}
R\leq(I-1)(J-1)\ \text{ and }\ R\leq K.\label{eq:strassen2}
\end{equation}
 While \cref{thm:maingenLL1} extended CPD condition \cref{eq:strassen}, 
the following theorem  generalizes   \cref{eq:strassen2} for the decomposition into a sum of \MLatmost terms.
\begin{theorem}\label{thm:maingenStrassen}
		Let $L_1\leq \dots\leq L_R\leq \min(J,K)$ and let	 $\mathcal T$ admit decomposition \cref{eq:LrLr1mainBC}, where
	the entries of the matrices $\mathbf A$, $\mathbf B$, and $\mathbf C$ are randomly sampled from an absolutely continuous distribution.
	If
	\begin{equation}
	 2\leq I,\quad L_{R-1}+L_R\leq J,\qquad\sum\limits_{r=1}^R L_r\leq (I-1)(J-1),\ \ \ \text{ and }\ \ \ \sum\limits_{r=1}^R L_r\leq  K, \label{eq:sumLrleqI1J1}
	\end{equation}
	then  the decomposition of $\mathcal T$ into a sum of \MLatmost terms is unique.
\end{theorem}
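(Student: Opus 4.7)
By \cref{item:thm:sonvention:statement2} of \cref{thm:convention}, it suffices to treat the case $K = \sum L_r$, since generic uniqueness for this case transfers to all $K \geq \sum L_r$. Under this reduction the third factor matrix $\mathbf C$ is square and generically invertible, so
\begin{equation*}
\unf{T}{3}(\mathbf C^T)^{-1} = [\mathbf a_1 \otimes \mathbf B_1 \ \cdots \ \mathbf a_R \otimes \mathbf B_R].
\end{equation*}
The columns of the right-hand side span a subspace $\mathcal V \subset \fF^{IJ}$ of dimension $\sum L_r$, and recovering the decomposition reduces to identifying the $R$ blocks $\mathbf a_r \otimes \scol(\mathbf B_r)$ from $\mathcal V$; once $(\mathbf A,\mathbf B)$ is known, $\mathbf C$ is read off from the linear system $\unf{T}{3} = [\mathbf a_1 \otimes \mathbf B_1 \ \cdots \ \mathbf a_R \otimes \mathbf B_R]\mathbf C^T$.

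Identifying $\fF^{IJ} \simeq \fF^{I \times J}$, every element of $\mathbf a_r \otimes \scol(\mathbf B_r)$ has matrix rank at most one, so $\bigcup_{r=1}^R \mathbf a_r \otimes \scol(\mathbf B_r) \subset \mathcal V \cap \Sigma$, where $\Sigma \subset \fF^{I \times J}$ denotes the affine cone of rank-at-most-one matrices. The heart of the argument is to show that, generically, equality holds, i.e., $\mathcal V$ contains no ``spurious'' rank-one matrix. Since $\dim \Sigma = I + J - 1$ in the $IJ$-dimensional ambient space, the complementary codimension is $(I-1)(J-1)$, and the bound $\sum L_r \leq (I-1)(J-1)$ in \eqref{eq:sumLrleqI1J1} is precisely the Strassen-type threshold ensuring that a subspace $\mathcal V$ of dimension $\sum L_r$ does not pick up rank-one components beyond those it is forced to contain. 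To turn this heuristic into a rigorous statement the plan is either (a) to perform a tangent-space computation on the incidence variety parametrizing triples $(\mathbf A, \mathbf B, \mathcal V)$ in which $\mathcal V \supset \bigcup_r \mathbf a_r \otimes \scol(\mathbf B_r)$, or (b) to exhibit a single tensor $\tilde{\mathcal T}$ for which $\mathcal V \cap \Sigma$ is exactly $\bigcup_r \mathbf a_r \otimes \scol(\mathbf B_r)$ and invoke upper semicontinuity of $\dim(\mathcal V \cap \Sigma)$ in the parameters.

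Once $\mathcal V \cap \Sigma = \bigcup_r \mathbf a_r \otimes \scol(\mathbf B_r)$ is established, the partition into $R$ blocks is obtained by clustering rank-one elements according to their column direction in $\fF^I$: a nonzero rank-one matrix $\mathbf a \otimes \mathbf b$ has a well-defined projective direction $[\mathbf a] \in \mathbb P^{I-1}$, generically $[\mathbf a_1], \dots, [\mathbf a_R]$ are pairwise distinct (this is where $I \geq 2$ enters), and the hypothesis $L_{R-1} + L_R \leq J$ forces $\scol(\mathbf B_i) \cap \scol(\mathbf B_j) = \{\mathbf 0\}$ for $i \neq j$ generically, so distinct blocks cannot share any rank-one element and the clustering is unambiguous. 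Each $\mathbf a_r$ is thus recovered up to scaling and each $\mathbf B_r$ up to right-multiplication by a nonsingular $L_r \times L_r$ matrix, which are exactly the ambiguities allowed by \cref{def:unic_dec}. The main obstacle is the claim of the preceding paragraph: because $\mathcal V$ is not a generic $(\sum L_r)$-dimensional subspace but is constrained to contain the $R$ structured rank-one pieces, a direct transversality argument fails, and one must either produce an explicit witness tensor whose $\mathcal V \cap \Sigma$ admits no extra component, or set up a degeneration / induction on $R$ that preserves the block structure while keeping the Strassen inequality tight.
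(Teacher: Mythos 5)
Your setup is the right one and you have correctly isolated the crux: after reducing to $K=\sum L_r$ via \cref{thm:convention}, uniqueness hinges on showing that, generically, the only rank-$\leq 1$ matrices of the form $\sum_r\mathbf a_r(\mathbf B_r\mathbf g_r)^T$ are those with a single $\mathbf g_r$ nonzero --- this is precisely assumption \cref{eq:U2} of the paper's deterministic \cref{Thm:determgen}, and your clustering step for recovering the blocks once this holds is in the same spirit as the paper's analysis of the transition matrix $\mathbf G=\mathbf C^T\widehat{\mathbf C}^{-T}$. The problem is that you stop exactly there: the proposal offers two candidate strategies (a tangent-space/incidence-variety computation, or a witness tensor plus semicontinuity) without carrying either out, and you yourself flag that a naive transversality count fails because the subspace $\mathcal V$ is constrained to contain the $R$ structured rank-one families. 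As written, the step that carries all the content is asserted, not proved, so this is a genuine gap.

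The paper closes it with your option (a), a dimension count on the incidence variety $\widehat Z$ of tuples $(\mathbf A,\mathbf B,\mathbf g_1,\dots,\mathbf g_R,\mathbf z,\mathbf y)$ satisfying $\sum_r\mathbf a_r(\mathbf B_r\mathbf g_r)^T=\mathbf z\mathbf y^T$, and the key idea you are missing is a strengthening of the bad event: when $k_{\mathbf A}=I$ and $k'_{\mathbf B}\geq 2$ (both generic, the latter thanks to $L_{R-1}+L_R\leq J$), the Frobenius rank inequality forces \emph{at least $I$} of the vectors $\mathbf g_r$ to be nonzero, not merely two. This is what makes the count close: one can then solve the relation $[\mathbf B_{r_1}\mathbf g_{r_1}\ \cdots\ \mathbf B_{r_I}\mathbf g_{r_I}]=\bigl(\mathbf y\mathbf z^T-[\cdots]\mathbf A_2^T\bigr)\mathbf A_{r_1,\dots,r_I}^{-T}$ rationally for one whole column of each of $I$ blocks of $\mathbf B$, capping $\dim\widehat Z$ by $IR+J\sum L_r-IJ+\sum L_r+I+J$, and the two-parameter scaling $(\alpha\beta\mathbf g_r,\alpha\mathbf z,\beta\mathbf y)$ in the fibers of $\pi:(\mathbf A,\mathbf B,\mathbf g,\mathbf z,\mathbf y)\mapsto(\mathbf A,\mathbf B)$ subtracts two more, giving $\dim\overline{\pi(\widehat Z)}\leq IR+J\sum L_r-1+\bigl(\sum L_r-(I-1)(J-1)\bigr)\leq IR+J\sum L_r-1$ by \cref{eq:sumLrleqI1J1}. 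Without the ``at least $I$ nonzero'' observation the bound does not close. Your alternative (b) is also not a drop-in fix: the bad set is an existentially quantified (constructible, not closed) set, so verifying that a single witness $(\mathbf A_0,\mathbf B_0)$ lies outside its Zariski closure amounts to an infeasibility certificate for a polynomial system rather than a semicontinuous rank condition, and in practice one is driven back to the dimension count anyway.
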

\begin{proof}
	 	The proof is  given in \cref{Appendixthm:maingenStrassen}.
\end{proof}
Recall that if $\fF=\mathbb C$, then condition \cref{eq:RleqI} in \cref{thm:maingenLL1} is both necessary and sufficient for generic uniqueness.
 Apparently, condition $\sum\limits_{r=1}^R L_r\leq (I-1)(J-1)$ in \cref{thm:maingenStrassen}  is only sufficient. Indeed,
one can easily verify that if $\sum\limits L_r\leq  K$, then  the necessary bound   $S<IJK$ in \cref{eq:S=} is equivalent to
$
\sum L_r\leq (I-1)(J-1)+ (I-1)\frac{\sum L_r-R}{\sum L_r}+ \frac{\sum L_r^2}{\sum L_r}-1 
$. Thus, the  gap between the necessary bound   $S<IJK$ in \cref{eq:S=} and the sufficient bound  $\sum L_r\leq (I-1)(J-1)$ in \cref{thm:maingenStrassen} is equal to $(I-1)\frac{\sum L_r-R}{\sum L_r}+ \frac{\sum L_r^2}{\sum L_r}-1$. 

\subsection{Constrained decompositions}\label{subsec:constraineddec}
In many applications the factor matrices  $\mathbf A$, $\mathbf B$, and/or $\mathbf C$ in decomposition \cref{eq:LrLr1mainBC}
are subject to constraints like non-negativity \cite{Bro2009},
partial symmetry \cite{Mueller_Smith_2016},
Vandermonde structure of columns \cite{Liu2012},  
etc.

In this subsection we briefly explain how the results from previous sections can be applied to constrained decompositions.

It is clear that \cref{thm: maintheorem} can be applied as is. Indeed, if, for instance,  assumptions
\cref{item:th1cond1,item:th1cond2,eq:ranksofFand G} and \cref{item:th1cond3a,item:th1cond5} in \cref{thm: maintheorem} hold for a constrained decomposition of 
$\mathcal T$, then, by  \cref{item:th1stat4}, the decomposition of $\mathcal T$ into a sum of \MLatmost terms is unique and can be computed by means of (simultaneous) EVD. This result also implies that  \cref{algorithm:1} will  find the constrained decomposition.

Now we discuss variants for generic uniqueness. 
We assume that the factor matrices in the constrained decomposition depend analytically on some  complex or real parameters, which is the case in all instances above. More specifically, we  assume that the entries of $\mathbf A(\mathbf z)$, $\mathbf B(\mathbf z)$, and  $\mathbf C(\mathbf z)$ are analytic functions of $\mathbf z\in\fF^n$ and that the matrix functions  $\mathbf A(\mathbf z)$, $\mathbf B(\mathbf z)$, $\mathbf C(\mathbf z)$ are known.  One can define generic uniqueness of a constrained decomposition similar to the unconstrained case:
 the decomposition of an $I\times J\times K$ tensor into a sum of constrained \MLatmost terms is  generically unique if 
$$
\mu_n\{\mathbf z:\ \text{  decomposition }  
\mathcal T = \sum_{r=1}^R\mathbf a_r(\mathbf z)\circ(\mathbf B_r(\mathbf z)\mathbf C_r(\mathbf z)^T)
  \text{ is not unique} \}=0,
$$
where $\mu_n$ denotes  a measure on $\fF^n$ that is absolutely continuous with respect to the Lebesgue measure. It is clear that
\cref{def:genericuniqueness} corresponds to the case   $n= IR +J\sum L_r+K\sum L_r $. 
Note that depending on structure of the factor matrices, the bounds in the statements of \cref{thm:maingenLL1,thm:maingenStrassen} may not hold or can be further improved. Also, \cref{thm:maingenshort,thm:maingen} cannot be used as is; instead
one should verify that the conditions of \cref{thm: maintheorem} hold for generic $\mathbf z$. Note that, because of the analytical dependency of the factor matrices on $\mathbf z$, it is sufficient to verify the assumptions  and conditions in \cref{thm: maintheorem} for a single  
triplet of constrained factor matrices. 
%
%
\begin{expl}\label{constrainedexample}
In the decomposition considered in \cite{Liu2012}, $\mathbf B$ and $\mathbf C$  are  Vandermonde structured matrices, namely,  
\begin{align*}
\mathbf b_p&=[1\ \exp(jC_1 z_p) \ \dots\ (\exp(jC_1 z_p)^{J-1})]^T,\ p=1,\dots,s\\
\mathbf c_q&=[1\ \exp(jC_2 \sin(z_{s+q})) \ \dots\ \exp(jC_2 \sin(z_{s+q}))^{K-1}]^T,\ q=1,\dots,s,
\end{align*}
 where $C_1$ and $C_2$ are known real values,  $s:=\sum L_r$, and 
 $z_1,\dots,z_{2s}$ are unknown real values. No structure is assumed on $\mathbf A$, so it can parameterized 
 with $IR$ parameters $z_{2s+1},\dots, z_{2s+IR}$ which we will also assume real. Thus, the overall constrained decomposition can be parameterized with  $n=2s+IR$ real parameters. W.l.o.g. we assume that $L_1\leq\dots\leq L_R$. We claim that if
  \begin{equation}
  IJ\geq\sum\limits_{r=1}^R L_r,\quad
  K\geq L_2+\dots+L_R+1,\quad
  R\!\geq\!I\geq 3,\quad  J\geq L_{I-1}+\dots+L_R, \label{eq:thesamecondition}
  \end{equation}
   then the constrained decomposition is generically unique.
    Indeed, generically the matrices $\mathbf B$ and  $\mathbf C$ have maximal $k'$-rank and the matrix $\mathbf A$ has maximal $k$-rank.
  The assumptions in \eqref{eq:thesamecondition} just express the fact that assumptions
  \cref{item:th1cond1,item:th1cond2,eq:ranksofFand G} and \cref{item:th1cond3a,item:th1cond6} in \cref{thm: maintheorem}
  hold generically. Thus, the generic uniqueness of the constrained decomposition follows from  \cref{item:th1stat4} of \cref{thm: maintheorem}.
 %
 %
\end{expl}

\section{Expression of $\mathbf R_2(\mathcal T)$ and $\mathbf Q_2(\mathbf T)$ in terms of $\mathbf A$, $\mathbf B$, and $\mathbf C$}\label{sec:mainidentity}
In this section we explain construction of the matrices 
	 $\Phi(\mathbf A,\mathbf B)$ and $\mathbf S_2(\mathbf C)$ that have appeared in \cref{thm: maintheoremABC}.
	 The results of this section will also  be used later in the proof of \cref{item:th1firstfmhard} of \cref{thm: maintheorem}.  

Let  $\mathbf x,\mathbf y\in\fF^{n}$. Then  $\wprod{\mathbf x}{\mathbf y}$  denotes a $\rubinom{n}{2}\times 1$ vector formed by all $2\times 2$ minors of  $[\mathbf x\ \mathbf y]$  and $\symprod{\mathbf x}{\mathbf y}$ denotes a $\rubinom{n+1}{2}\times 1$ vector formed by all $2\times 2$ permanents of  $[\mathbf x\ \mathbf y]$. More specifically,
\begin{align*}
&\text{the } (n_1+\rubinom{n_2-1}{2})\text{-th entry of } &\wprod{\mathbf x}{\mathbf y}\text{ equals }   &x_{n_1}y_{n_2}-x_{n_2}y_{n_1},& 1\leq n_1< n_2\leq n,\\
&\text{the } (n_1+\rubinom{n_2}{2})\text{-th entry of }   &\symprod{\mathbf x}{\mathbf y}\text{ equals } &x_{n_1}y_{n_2}+x_{n_2}y_{n_1},& 1\leq n_1\leq n_2\leq n.
\end{align*}
\tcr{It can easily be verified that $\wprod{\mathbf x}{\mathbf y}$ and $\symprod{\mathbf x}{\mathbf y}$ coincide with the vectorized strictly upper triangular part
of $\mathbf x\mathbf y^T-\mathbf y\mathbf x^T$ and with the vectorized upper triangular part of  $\mathbf x\mathbf y^T+\mathbf y\mathbf x^T$, respectively.}

We extend the definitions of ``$\wprod{}{}$'' and ``$\symprod{}{}$'' to matrices as follows.
If $\mathbf B_{r_1}\tcr{\in\mathbb F^{J\times L_{r_1}}}$ and  $\mathbf B_{r_2}\tcr{\in\mathbb F^{J\times L_{r_2}}}$ are  submatrices of $\mathbf B$, then 
$ \wprod{\mathbf B_{r_1}}{\mathbf B_{r_2}}$ is the   $\rubinom{J}{2}\times L_{r_1}L_{r_2}$ matrix that has columns
$
\wprod{\mathbf b_{l_1,r_1}}{\mathbf b_{l_2,r_2}}
$,
where $1\leq l_1\leq L_{r_1}$ and $1\leq l_2\leq L_{r_2}$, i.e.,
\begin{equation*}
\wprod{\mathbf B_{r_1}}{\mathbf B_{r_2}}:=[\wprod{\mathbf b_{1,r_1}}{\mathbf b_{1,r_2}}\ \dots \wprod{\mathbf b_{1,r_1}}{\mathbf b_{L_2,r_2}}\ \dots\ 
\wprod{\mathbf b_{L_1,r_1}}{\mathbf b_{1,r_2}}\ \dots \wprod{\mathbf b_{L_1,r_1}}{\mathbf b_{L_2,r_2}}].
\end{equation*}
If $\mathbf C_{r_1}\tcr{\in\mathbb F^{K\times L_{r_1}}}$ and  $\mathbf C_{r_2}\tcr{\in\mathbb F^{K\times L_{r_1}}}$ are submatrices of $\mathbf C$, then 
$ \symprod{\mathbf C_{r_1}}{\mathbf C_{r_2}}$ is the   $\rubinom{K+1}{2}\times L_{r_1}L_{r_2}$ matrix that has columns
$
\symprod{\mathbf c_{l_1,r_1}}{\mathbf c_{l_2,r_2}}
$,
where $1\leq l_1\leq L_{r_1}$ and $1\leq l_2\leq L_{r_2}$, i.e.,
$$
\symprod{\mathbf C_{r_1}}{\mathbf C_{r_2}}:=[\symprod{\mathbf c_{1,r_1}}{\mathbf c_{1,r_2}}\ \ldots \symprod{\mathbf c_{1,r_1}}{\mathbf c_{L_2,r_2}}\ \ldots\ 
\symprod{\mathbf c_{L_1,r_1}}{\mathbf c_{1,r_2}}\ \ldots \symprod{\mathbf c_{L_1,r_1}}{\mathbf c_{L_2,r_2}}].
$$
Let $\mathbf P_n$ denote the $n^2\times \rubinom{n+1}{2}$ matrix defined on all vectors of the form $\symprod{\mathbf x}{\mathbf y}$ by  
\begin{equation}
\mathbf P_n(\symprod{\mathbf x}{\mathbf y})=\mathbf x\otimes \mathbf y+\mathbf y\otimes \mathbf x 
\label{eq:defPn}
\end{equation}
and extended by  linearity. It can be easily checked that for $n=K$ the matrix $\mathbf P_n$ can be constructed as in  \cref{eq:matrixP_K},
so $\mathbf P_n^T$ is a column selection matrix.
\begin{lemma}\label{lemma: Q2viaABC} 
	Let $\mathcal T$ admit decomposition  \cref{eq:LrLr1mainBC}\tcr{, $r_{\mathbf C}=K$, and let the values $d_r$ be defined in \cref{item:th1cond2ABC}}. Define the $\rubinom{I}{2}\rubinom{J}{2}\times \sum\limits_{r_1<r_2} L_{r_1}L_{r_2}$ matrix
	$\Phi(\mathbf A,\mathbf B)$ and $\rubinom{K+1}{2} \times \sum\limits_{r_1<r_2} L_{r_1}L_{r_2}$ matrix $\mathbf S_2(\mathbf C)$ as
	\begin{align}
	\Phi(\mathbf A,\mathbf B)&:=
	\left[(\wprod{\mathbf a_1}{\mathbf a_2})\otimes(\wprod{\mathbf B_1}{\mathbf B_2})\ \dots\ 
	(\wprod{\mathbf a_{R-1}}{\mathbf a_R})\otimes(\wprod{\mathbf B_{R-1}}{\mathbf B_R})
	\right],\label{eq:matrixPhi}\\%
	\mathbf S_2(\mathbf C)&:=[\symprod{\mathbf C_{1}}{\mathbf C_2}\ \dots\ \symprod{\mathbf C_{R-1}}{\mathbf C_{R}}].\label{eq:matrixS2}
	\end{align}
	Then 
	\begin{statements}
		\item\label{sta:lemmaApp1}  	$\mathbf Q_2(\mathcal T)=\Phi(\mathbf A,\mathbf B)\mathbf S_2(\mathbf C)^T$;
		\item\label{sta:lemmaApp2} 
		 	$\mathbf R_2(\mathcal T)=\Phi(\mathbf A,\mathbf B)\mathbf S_2(\mathbf C)^T\mathbf P_K^T$, where $\mathbf P_K$ is defined as in \cref{eq:defPn};
		 \item\label{sta:lemmaApp3}   	\tcr{$\dim\nullsp{\mathbf Q_2(\mathcal T)}\geq\dim\nullsp{\mathbf S_2(\mathbf C)^T}=\sum\rubinom{d_r+1}{2}$;}
		 \item\label{sta:lemmaApp4}    if $r_{\mathbf A}+k_{\mathbf B}'\geq R+2$ and 	 $k_{\mathbf A}\geq 2$, then the matrix  $\Phi(\mathbf A,\mathbf B)$ has full column rank and $\dim\nullsp{\Phi(\mathbf A,\mathbf B)\mathbf S_2(\mathbf C)^T}=\sum\rubinom{d_r+1}{2}$, i.e., \cref{eq:ranksofFand GABC} implies \cref{item:th1cond3ABC}; similarly,   \cref{eq:ranksofFand G} implies \cref{item:th1cond3};
		 \item\label{sta:lemmaApp45} \tcr{If $\Phi(\mathbf A,\mathbf B)$ has full column rank, then
		 $[\mathbf a_1\otimes\mathbf B_1\ \dots\ \mathbf a_R\otimes\mathbf B_R]$ also has full column rank;}
		 \item\label{sta:lemmaApp5} \tcr{If $\Phi(\mathbf A,\mathbf B)$ has full column rank, then $k_{\mathbf B}'\geq 2$.}
	\end{statements}
\end{lemma}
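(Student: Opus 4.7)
\textit{Statements \cref{sta:lemmaApp1}--\cref{sta:lemmaApp2}.} The plan is to prove \cref{sta:lemmaApp1} by a direct expansion of the defining formula \cref{eq: tttt} for a generic entry of $\mathbf Q_2(\mathcal T)$. Substituting $t_{ijk}=\sum_{r,l}a_{ir}b_{jl,r}c_{kl,r}$, the $r_1=r_2$ and $l_1=l_2$ diagonal contributions cancel identically; pairing the index pairs $(r_1,r_2)\leftrightarrow(r_2,r_1)$ peels off the antisymmetric factor $a_{i_1 r_1}a_{i_2 r_2}-a_{i_1 r_2}a_{i_2 r_1}$ (an entry of $\wprod{\mathbf a_{r_1}}{\mathbf a_{r_2}}$), while the analogous pairing on the $b$-indices produces the wedge factor $\wprod{\mathbf b_{l_1,r_1}}{\mathbf b_{l_2,r_2}}$ and leaves the $c$-indices in the permanent combination $c_{k_1 l_1,r_1}c_{k_2 l_2,r_2}+c_{k_2 l_1,r_1}c_{k_1 l_2,r_2}$, which is an entry of $\symprod{\mathbf c_{l_1,r_1}}{\mathbf c_{l_2,r_2}}$. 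The Kronecker identity $\mathbf u\otimes(\mathbf M\mathbf N^T)=(\mathbf u\otimes\mathbf M)\mathbf N^T$ then regroups these pieces exactly as $\Phi(\mathbf A,\mathbf B)\mathbf S_2(\mathbf C)^T$; \cref{sta:lemmaApp2} follows from \cref{eq:R2Q2P}.

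\textit{Statements \cref{sta:lemmaApp45}--\cref{sta:lemmaApp5}.} For \cref{sta:lemmaApp45}, suppose $\sum_r\mathbf a_r\otimes\mathbf B_r\mathbf w_r=\mathbf 0$ and reshape it as $\sum_r\mathbf a_r(\mathbf B_r\mathbf w_r)^T=\mathbf 0$. All $2\times 2$ minors of this zero $I\times J$ matrix vanish, and the same $(r,s)$-pairing used above rewrites the system as $\sum_{r<s}(\wprod{\mathbf a_r}{\mathbf a_s})\otimes(\wprod{\mathbf B_r}{\mathbf B_s})(\mathbf w_r\otimes\mathbf w_s)=\mathbf 0$, i.e., $\Phi(\mathbf A,\mathbf B)\mathbf v=\mathbf 0$ with $(r,s)$-block $\mathbf w_r\otimes\mathbf w_s$. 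Full column rank of $\Phi(\mathbf A,\mathbf B)$ forces $\mathbf w_r\otimes\mathbf w_s=\mathbf 0$ for all $r<s$, so at most one $\mathbf w_{r_0}$ can be nonzero; the reshaped identity then collapses to $\mathbf a_{r_0}(\mathbf B_{r_0}\mathbf w_{r_0})^T=\mathbf 0$, and $\mathbf a_{r_0}\neq\mathbf 0$ together with full column rank of $\mathbf B_{r_0}$ forces $\mathbf w_{r_0}=\mathbf 0$. Statement \cref{sta:lemmaApp5} is the contrapositive of the same identity: a rank deficiency $\mathbf B_r\mathbf u+\mathbf B_s\mathbf v=\mathbf 0$ in $[\mathbf B_r\ \mathbf B_s]$ (with both $\mathbf u$ and $\mathbf v$ forced to be nonzero by full column rank of each block) yields $(\wprod{\mathbf B_r}{\mathbf B_s})(\mathbf u\otimes\mathbf v)=\wprod{\mathbf B_r\mathbf u}{\mathbf B_s\mathbf v}=\mathbf 0$, producing a nontrivial kernel in the $(r,s)$-block of $\Phi(\mathbf A,\mathbf B)$.

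\textit{Statement \cref{sta:lemmaApp3}.} Introduce the linear map $\Psi:(\mathbf X_1,\dots,\mathbf X_R)\mapsto\sum_r\mathbf N_r\mathbf X_r\mathbf N_r^T$ from $\bigoplus_r\operatorname{Sym}_{d_r}$ into $\operatorname{Sym}_K$. Its image lies in $\nullsp{\mathbf S_2(\mathbf C)^T}$ because $\mathbf N_r^T\mathbf C_s=\mathbf 0$ for $s\neq r$ kills the off-diagonal blocks of $\mathbf C^T\mathbf V\mathbf C$. Injectivity follows by right-multiplying $\sum_r\mathbf N_r\mathbf X_r\mathbf N_r^T=\mathbf 0$ by $\mathbf C_s$: only the $r=s$ summand survives, and $\mathbf N_s^T\mathbf C_s$ has full row rank $d_s$ (any $\mathbf y$ with $\mathbf y^T\mathbf N_s^T\mathbf C_s=\mathbf 0$ would make $\mathbf N_s\mathbf y$ orthogonal to $\operatorname{Col}(\mathbf C)=\fF^K$, forcing $\mathbf y=\mathbf 0$), so $\mathbf X_s=\mathbf 0$. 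For surjectivity, any $\mathbf V\in\nullsp{\mathbf S_2(\mathbf C)^T}$ satisfies $\mathbf V\mathbf C_r=\mathbf N_r\mathbf G_r$ (the columns of $\mathbf V\mathbf C_r$ are orthogonal to every $\mathbf C_s$ with $s\neq r$); hence $\mathbf Y_r:=\mathbf C_r^T\mathbf V\mathbf C_r=(\mathbf N_r^T\mathbf C_r)^T\mathbf G_r$ is a symmetric matrix whose row and column spans both sit inside the $d_r$-dimensional subspace $\operatorname{Col}((\mathbf N_r^T\mathbf C_r)^T)$, and a standard factorization lemma then supplies a unique symmetric $\mathbf X_r$ with $\mathbf Y_r=(\mathbf N_r^T\mathbf C_r)^T\mathbf X_r(\mathbf N_r^T\mathbf C_r)$. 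Injectivity of $\mathbf V\mapsto\mathbf C^T\mathbf V\mathbf C$ (a consequence of $r_{\mathbf C}=K$) identifies $\mathbf V$ with $\sum_r\mathbf N_r\mathbf X_r\mathbf N_r^T$, so $\Psi$ is a bijection and $\dim\nullsp{\mathbf S_2(\mathbf C)^T}=\sum\rubinom{d_r+1}{2}$. The inequality $\dim\nullsp{\mathbf Q_2(\mathcal T)}\geq\dim\nullsp{\mathbf S_2(\mathbf C)^T}$ is then immediate from \cref{sta:lemmaApp1}.

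\textit{Statement \cref{sta:lemmaApp4}.} Once $\Phi(\mathbf A,\mathbf B)$ is known to have full column rank, \cref{sta:lemmaApp1} together with injectivity identifies $\nullsp{\Phi(\mathbf A,\mathbf B)\mathbf S_2(\mathbf C)^T}$ with $\nullsp{\mathbf S_2(\mathbf C)^T}$, so the claimed dimension formula (and the parallel implication \cref{eq:ranksofFand G}$\Rightarrow$\cref{item:th1cond3} of \cref{thm: maintheorem}) follows at once from \cref{sta:lemmaApp3}. The substance of the statement is therefore the full column rank of $\Phi(\mathbf A,\mathbf B)$ under $r_{\mathbf A}+k_{\mathbf B}'\geq R+2$ and $k_{\mathbf A}\geq 2$. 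Writing a putative kernel element as in the proof of \cref{sta:lemmaApp45} yields the antisymmetric identity $\sum_{r<s}(\mathbf a_r\wedge\mathbf a_s)\otimes(\mathbf B_r\mathbf V_{rs}\mathbf B_s^T-\mathbf B_s\mathbf V_{rs}^T\mathbf B_r^T)=\mathbf 0$ in $\bigwedge^2\fF^I\otimes\bigwedge^2\fF^J$. The plan is a Kruskal-style peeling argument: $k_{\mathbf A}\geq 2$ ensures each wedge $\wprod{\mathbf a_r}{\mathbf a_s}$ is nonzero, while $k_{\mathbf B}'\geq R-r_{\mathbf A}+2$ permits projecting against well-chosen $(r_{\mathbf A}-2)$-dimensional subspaces of $\fF^I$ so as to kill all but a carefully selected subcollection of pairs, reducing the remaining equation to the linear independence of the wedge factors $\wprod{\mathbf B_r}{\mathbf B_s}\mathbf V_{rs}$ over that subcollection, which is controlled by $k_{\mathbf B}'$. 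This permutation-lemma-style extraction, analogous to the core combinatorial step in Kruskal's uniqueness proof, is the part that I expect to require the most care.
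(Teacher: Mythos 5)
Your treatment of \cref{sta:lemmaApp1,sta:lemmaApp2,sta:lemmaApp45,sta:lemmaApp5} matches the paper's own proofs (direct expansion of \cref{eq: tttt} for the factorization; the identity $(\wprod{\mathbf B_{r}}{\mathbf B_{s}})(\mathbf u\otimes\mathbf v)=\wprod{(\mathbf B_{r}\mathbf u)}{(\mathbf B_{s}\mathbf v)}$ for the last two), and these parts are correct. For \cref{sta:lemmaApp3} you take a genuinely different route: you parameterize $\nullsp{\mathbf S_2(\mathbf C)^T}$ directly via the bijection $(\mathbf X_1,\dots,\mathbf X_R)\mapsto\sum\mathbf N_r\mathbf X_r\mathbf N_r^T$, whereas the paper computes the codimension of $\scol(\mathbf S_2(\mathbf C))$ inside $\vecsym{K}$ by exhibiting an explicit complementary subspace built from orthogonal complements inside each $\scol(\mathbf C_r)$. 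Your version is arguably more transparent (it also makes visible the block-diagonal structure later exploited in \cref{thm:JBDaux}), and the surjectivity step --- factoring the symmetric matrix $\mathbf C_r^T\mathbf V\mathbf C_r$ through the full-row-rank matrix $\mathbf N_r^T\mathbf C_r$ --- is sound; just state explicitly that $\nullsp{\mathbf S_2(\mathbf C)^T}$ is identified with $\{\mathbf V=\mathbf V^T:\ \mathbf C_{r_1}^T\mathbf V\mathbf C_{r_2}=\mathbf O,\ r_1\ne r_2\}$ only up to the harmless diagonal reweighting relating the permanent coordinates to the entries of a symmetric matrix.

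The genuine gap is in \cref{sta:lemmaApp4}. The reduction you give --- once $\Phi(\mathbf A,\mathbf B)$ has full column rank, $\nullsp{\Phi(\mathbf A,\mathbf B)\mathbf S_2(\mathbf C)^T}=\nullsp{\mathbf S_2(\mathbf C)^T}$ and the dimension count follows from \cref{sta:lemmaApp3} --- is correct but is the easy part. The entire content of the statement is the full column rank of $\Phi(\mathbf A,\mathbf B)$ under $r_{\mathbf A}+k_{\mathbf B}'\geq R+2$ and $k_{\mathbf A}\geq 2$, and for this you offer only a plan, which you yourself defer as ``the part requiring the most care.'' That is where the lemma lives: the paper devotes all of \cref{sec:appendixLemmas4} to it, introducing the compound matrices $\mathcal C_2$, $\mathcal C_2'$ and their multiplicativity (\cref{lemma:compound}), rewriting the kernel equation as $\mathcal C_2'(\mathbf B)\Bdiag(\mathbf f_{1,2},\dots,\mathbf f_{R-1,R})\mathcal C_2(\mathbf A)^T=\mathbf O$, and annihilating all but the pairs among $\{k_{\mathbf B}'-1,\dots,R\}$ by a left factor $\mathbf Y$ built from a pseudo-inverse of $[\mathbf B_1\ \dots\ \mathbf B_{k_{\mathbf B}'}]$ (a projection in the \emph{second} mode, not the first as you suggest), before invoking full column rank of $\mathcal C_2$ of $r_{\mathbf A}$ independent columns of $\mathbf A$. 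Your dual plan --- project $\fF^I\to\fF^2$ with kernel spanned by $r_{\mathbf A}-2$ columns of $\mathbf A$ that extend $\{\mathbf a_i,\mathbf a_j\}$ to a basis of the range of $\mathbf A$, so that only pairs inside a set $\Omega$ of cardinality $R-r_{\mathbf A}+2$ survive, then use $k_{\mathbf B}'\geq R-r_{\mathbf A}+2$ to make the surviving $\wprod{\mathbf B_r}{\mathbf B_s}$ jointly full column rank --- can be made to work, but until you actually carry out the basis-extension argument, verify that the surviving scalar coefficient of the $(i,j)$ block is nonzero, and run the argument for every pair $(i,j)$, the central claim of \cref{sta:lemmaApp4} remains unproved.
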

\begin{proof}
	The proofs of \cref{sta:lemmaApp1,sta:lemmaApp2,sta:lemmaApp5}  follow from the construction of the matrices $\mathbf Q_2(\mathcal T)$, $\Phi(\mathbf A,\mathbf B)$, $\mathbf S_2(\mathbf C)$  and are   therefore grouped in \cref{sec:appendixB}. 
	The proof of \cref{sta:lemmaApp3} consists of several steps and is given in a dedicated \cref{sec:appendixLemmas3}.
	The proofs of  \cref{sta:lemmaApp4,sta:lemmaApp45} rely on  \cref{lemma:compound},  which  contains auxiliary results on compound matrices.
	 \Cref{lemma:compound} and statements \ref{sta:lemmaApp4}, \ref{sta:lemmaApp45}	are proved in  \cref{sec:appendixLemmas4}.
	\end{proof}
\begin{corollary}\label{Lemma:redtobtdI}
	Let $\mathcal T\in\fF^{I\times J\times K}$ admit the \ML decomposition  \cref{eq:LrLr1mainBC}. Let also the matrices $\mathbf A$ and $\mathbf C$ have full column rank and  assumptions  \cref{item:th1cond1ABC,item:th1cond2ABC,item:th1cond3ABC}
in \cref{thm: maintheoremABC}  hold. Then  the matrices $[\mathbf B_i\ \mathbf B_j]$ have full column rank for all $1\leq i<j\leq R$.
In particular,  \cref{assum:bth1.5} in \cref{thm:ll1_btd1} holds.
\end{corollary}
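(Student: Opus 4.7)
The plan is to reduce the conclusion to full column rank of the auxiliary matrix $\Phi(\mathbf A,\mathbf B)$ introduced in \cref{lemma: Q2viaABC}. First I would observe that the hypotheses already force $\mathbf C$ to be square and nonsingular: full column rank of $\mathbf C$ gives $\sum L_r\leq K$, while \cref{item:th1cond1ABC} demands that the $IJ\times K$ matrix $\unf{T}{3}=[\mathbf a_1\otimes\mathbf B_1\ \dots\ \mathbf a_R\otimes\mathbf B_R]\mathbf C^T$ have rank $K$, hence $\sum L_r\geq K$. Consequently $\sum L_r=K$, $\mathbf C\in\fF^{K\times K}$ is nonsingular, and the values from \cref{item:th1cond2ABC} collapse to $d_r=L_r$ for all $r$.

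Because $\mathbf C$ is nonsingular, its columns form a basis of $\fF^K$, and the induced family $\{\symprod{\mathbf c_{l_1,r_1}}{\mathbf c_{l_2,r_2}}\}$ indexed by unordered pairs of column labels is a basis of the $\binom{K+1}{2}$-dimensional symmetric square. The columns of $\mathbf S_2(\mathbf C)$ are the sub-family indexed by pairs with $r_1<r_2$; as a subset of a basis they are linearly independent, so $\mathbf S_2(\mathbf C)$ has full column rank and $\mathbf S_2(\mathbf C)^T$ is surjective onto $\fF^{\sum_{r_1<r_2}L_{r_1}L_{r_2}}$. Combining \cref{item:th1cond3ABC} with \cref{sta:lemmaApp3} of \cref{lemma: Q2viaABC} then yields $\dim\nullsp{\Phi(\mathbf A,\mathbf B)\mathbf S_2(\mathbf C)^T}=\sum\rubinom{d_r+1}{2}=\dim\nullsp{\mathbf S_2(\mathbf C)^T}$. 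Since the inclusion $\nullsp{\mathbf S_2(\mathbf C)^T}\subseteq\nullsp{\Phi(\mathbf A,\mathbf B)\mathbf S_2(\mathbf C)^T}$ is automatic, the two null spaces coincide, so $\Phi(\mathbf A,\mathbf B)$ is injective on $\operatorname{col}(\mathbf S_2(\mathbf C)^T)$; by the surjectivity just established this is the whole ambient space, whence $\Phi(\mathbf A,\mathbf B)$ has full column rank.

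To extract the pairwise conclusion I would fix $i<j$. Since $\mathbf A$ has full column rank, $\wprod{\mathbf a_i}{\mathbf a_j}\neq\mathbf 0$, and the full column rank of $\Phi(\mathbf A,\mathbf B)$ forces the block columns $(\wprod{\mathbf a_i}{\mathbf a_j})\otimes(\wprod{\mathbf b_{l_i,i}}{\mathbf b_{l_j,j}})$ to be linearly independent, which is equivalent to $\wprod{\mathbf B_i}{\mathbf B_j}$ having full column rank $L_iL_j$. I would then argue by contrapositive: any linear dependence in $[\mathbf B_i\ \mathbf B_j]$ yields $\mathbf B_i\boldsymbol\alpha=\mathbf B_j\boldsymbol\beta$ with both $\boldsymbol\alpha,\boldsymbol\beta$ nonzero (because each $\mathbf B_r$ individually has full column rank), producing a nonzero $\boldsymbol\alpha\otimes\boldsymbol\beta$ in $\nullsp{\wprod{\mathbf B_i}{\mathbf B_j}}$ via $(\wprod{\mathbf B_i}{\mathbf B_j})(\boldsymbol\alpha\otimes\boldsymbol\beta)=\wprod{\mathbf B_i\boldsymbol\alpha}{\mathbf B_j\boldsymbol\beta}=\mathbf 0$, a contradiction. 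Hence $[\mathbf B_i\ \mathbf B_j]$ has full column rank $L_i+L_j\geq\max(L_i,L_j)+1$, and \cref{assum:bth1.5} of \cref{thm:ll1_btd1} follows.

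The main obstacle I would anticipate is the bookkeeping in the middle step, namely carefully propagating the null-space equality all the way to full column rank of $\Phi(\mathbf A,\mathbf B)$. The key leverage point is that surjectivity of $\mathbf S_2(\mathbf C)^T$ comes essentially for free from $\mathbf C$ being nonsingular, because the relevant symmetric products sit inside a ready-made basis of the symmetric square; after that, the transition from full column rank of $\Phi$ to the wedge matrices and finally to $[\mathbf B_i\ \mathbf B_j]$ is a direct unravelling of definitions.
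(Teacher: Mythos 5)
Your proof is correct and follows essentially the same route as the paper: establish that $\mathbf C$ is square nonsingular with $d_r=L_r$, deduce full column rank of $\Phi(\mathbf A,\mathbf B)$ from assumption \cref{item:th1cond3ABC} together with \cref{lemma: Q2viaABC}, and then pass from full column rank of $\wprod{\mathbf B_i}{\mathbf B_j}$ to that of $[\mathbf B_i\ \mathbf B_j]$ via the identity $(\wprod{\mathbf B_i}{\mathbf B_j})(\boldsymbol\alpha\otimes\boldsymbol\beta)=\wprod{(\mathbf B_i\boldsymbol\alpha)}{(\mathbf B_j\boldsymbol\beta)}$. The only (immaterial) difference is that the paper gets full column rank of $\Phi(\mathbf A,\mathbf B)$ in one line from the rank--nullity theorem, $r_{\Phi}\geq r_{\Phi\mathbf S_2(\mathbf C)^T}=\rubinom{K+1}{2}-\sum\rubinom{L_r+1}{2}=\sum_{r_1<r_2}L_{r_1}L_{r_2}$, whereas you argue equivalently via surjectivity of $\mathbf S_2(\mathbf C)^T$ and the coincidence of the two null spaces.
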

\begin{proof}
	The proof is   given in \cref{sec:appendixB}.
\end{proof}
\section{Proof of \cref{thm: maintheorem} }\label{sec:proof_of_main_thm}
We will need the following two lemmas.
\begin{lemma}\label{lemma:new}
	Let $\mathcal T\in\fF^{I\times J\times K}$ admit the \ML decomposition  \cref{eq:LrLr1}. Assume that conditions \cref{item:th1cond1,item:th1cond2} hold.
	Let $\mathbf N_r$ be a $K\times d_r$ matrix whose columns form a basis
	of  $\nullsp{\mathbf Z_r}$ and  let  $\mathbf M_r$ be a $d_r^2\times\rubinom{d_r+1}{2}$ matrix whose columns form a basis of the subspace $\vecsym{d_r}$ (see \cref{eq:defvecsymM}), $r=1,\dots,R$. 
	By definition, set
	$$
	\mathbf N:=[\mathbf N_1\ \dots\ \mathbf N_R],\qquad 
	\mathbf W:=[(\mathbf N_1\otimes \mathbf N_1)\mathbf M_1\ \dots\ (\mathbf N_R\otimes \mathbf N_R)\mathbf M_R].
	$$
	The following statements hold.
	\begin{statements}
		\item\label{eq:Nfcr} The $ K\times\sum d_r$ matrix $\mathbf N$ has full column rank.
		\item \label{eq:NNfcr} The $ K^2\times Q$ matrix $\mathbf W$ has full column rank\tcr{, where  $Q=\rubinom{d_1+1}{2}+\dots+\rubinom{d_R+1}{2}$}.
		\item \label{linindEEE} The matrices $\mathbf E_1,\dots,\mathbf E_R$ are linearly independent.
	\end{statements}
\end{lemma}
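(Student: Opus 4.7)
The plan is to prove the three statements in the stated order, as each will feed into the next. The common theme is that every ``nullspace'' that appears reduces, via condition \eqref{item:th1cond2}, to the nullspace of all the $\mathbf E_i$'s simultaneously, which is trivial by condition \eqref{item:th1cond1} on $\unf{T}{3}$.

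For \eqref{eq:Nfcr}, I would suppose $\mathbf N\mathbf x=\mathbf 0$, decompose $\mathbf x=[\mathbf x_1^T\ \dots\ \mathbf x_R^T]^T$ conformably, and set $\mathbf n_r:=\mathbf N_r\mathbf x_r$, so that $\mathbf n_r\in\nullsp{\mathbf Z_r}$, i.e.\ $\mathbf E_i\mathbf n_r=\mathbf 0$ for every $i\ne r$, and $\sum_r\mathbf n_r=\mathbf 0$. Applying $\mathbf E_j$ to this sum kills every term except the $j$th, yielding $\mathbf E_j\mathbf n_j=\mathbf 0$. Hence $\mathbf E_i\mathbf n_j=\mathbf 0$ for all $i$, which in view of $\unf{T}{3}\mathbf n_j=\sum_r\mathbf a_r\otimes(\mathbf E_r\mathbf n_j)$ and \eqref{item:th1cond1} forces $\mathbf n_j=\mathbf 0$. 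Since $\mathbf N_j$ has linearly independent columns by construction, $\mathbf x_j=\mathbf 0$. A useful consequence I would record is that the subspaces $\nullsp{\mathbf Z_1},\dots,\nullsp{\mathbf Z_R}$ are in direct sum in $\fF^K$.

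For \eqref{eq:NNfcr}, I would suppose $\mathbf W\mathbf z=\mathbf 0$ with $\mathbf z=[\mathbf z_1^T\ \dots\ \mathbf z_R^T]^T$. Since the columns of $\mathbf M_r$ span $\vecsym{d_r}$, the vector $\mathbf y_r:=\mathbf M_r\mathbf z_r$ is $\operatorname{vec}(\mathbf Y_r)$ for some symmetric $\mathbf Y_r\in\fF^{d_r\times d_r}$, and the standard identity $(\mathbf N_r\otimes\mathbf N_r)\operatorname{vec}(\mathbf Y_r)=\operatorname{vec}(\mathbf N_r\mathbf Y_r\mathbf N_r^T)$ reduces $\mathbf W\mathbf z=\mathbf 0$ to $\sum_r\mathbf N_r\mathbf Y_r\mathbf N_r^T=\mathbf O$. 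Now the $k$th column of $\mathbf N_r\mathbf Y_r\mathbf N_r^T$ lies in the column space of $\mathbf N_r=\nullsp{\mathbf Z_r}$; using the direct-sum property just observed, each $\mathbf N_r\mathbf Y_r\mathbf N_r^T$ must vanish. Then full column rank of $\mathbf N_r$ (from \eqref{eq:Nfcr} applied to its block) gives $\mathbf Y_r=\mathbf O$, hence $\mathbf M_r\mathbf z_r=\mathbf 0$, and linear independence of the columns of $\mathbf M_r$ yields $\mathbf z_r=\mathbf 0$.

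For \eqref{linindEEE}, I would suppose $\sum_r c_r\mathbf E_r=\mathbf O$ and multiply on the right by $\mathbf N_j$. Because $\mathbf E_i\mathbf N_j=\mathbf O$ for all $i\ne j$, only the $j$th term survives, giving $c_j\mathbf E_j\mathbf N_j=\mathbf O$. It then suffices to rule out $\mathbf E_j\mathbf N_j=\mathbf O$: if it held, every column $\mathbf n$ of $\mathbf N_j$ would satisfy $\mathbf E_i\mathbf n=\mathbf 0$ for every $i$, whence $\unf{T}{3}\mathbf n=\mathbf 0$ and $\mathbf n=\mathbf 0$ by \eqref{item:th1cond1}, contradicting $d_j\geq 1$. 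Therefore $c_j=0$ for every $j$. The main conceptual point throughout, and what I would take care to state clearly, is the reduction of each statement to the direct-sum decomposition $\nullsp{\mathbf Z_1}\oplus\dots\oplus\nullsp{\mathbf Z_R}\subseteq\fF^K$ established in \eqref{eq:Nfcr}; the only mildly technical step is the symmetric-Kronecker manipulation in \eqref{eq:NNfcr}.
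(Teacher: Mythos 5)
Your proof is correct and takes essentially the same route as the paper's: each statement is reduced to the observation that, by \cref{item:th1cond1}, no nonzero vector is annihilated by all of $\mathbf E_1,\dots,\mathbf E_R$ simultaneously (since $\unf{T}{3}\mathbf f=\sum\mathbf a_r\otimes(\mathbf E_r\mathbf f)$ and $r_{\unf{T}{3}}=K$), combined with the defining property $\mathbf E_i\mathbf N_r=\mathbf O$ for $i\ne r$. The only cosmetic differences are that the paper proves \cref{eq:Nfcr} through the factor matrix $\mathbf C$ (via $\mathbf C^T\mathbf N=\Bdiag(\mathbf C_1^T\mathbf N_1,\dots,\mathbf C_R^T\mathbf N_R)$ and full column rank of $\mathbf C^T$) rather than through $\unf{T}{3}$ directly, and proves \cref{eq:NNfcr} in one line by writing $\mathbf W=[\mathbf N_1\otimes\mathbf N_1\ \dots\ \mathbf N_R\otimes\mathbf N_R]\Bdiag(\mathbf M_1,\dots,\mathbf M_R)$ as a product of two full-column-rank matrices, whereas your longer symmetric-Kronecker and direct-sum argument for that part is equally valid.
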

\begin{proof}
The proof is   given in \cref{sec:appendixC}.
\end{proof}
\begin{lemma}\label{lemma:lemma3.1} Let $\mathcal T\in\fF^{I\times J\times K}$ admit the  \ML decomposition \cref{eq:LrLr1} in which  the matrices	$\mathbf E_1,\dots,\mathbf E_R$ are linearly independent and  such that either 	\cref{item:th1cond5} or \cref{item:th1cond6} in \cref{thm: maintheorem} holds.
	Then the following statements hold.
	\begin{statements}
		\item\label{st:hardlemma1} If the matrix $\mathbf A$ is known, then the matrices $\mathbf E_1,\dots,\mathbf E_R$ can be computed by means of EVD.
		\item\label{st:hardlemma2} Any  decomposition of $\mathcal T$ of the form
		\begin{equation*}
		\mathcal T = \sum_{r=1}^{\tilde R}\tilde{\mathbf a}_r\circ\tilde{\mathbf E}_r,\ \ \tilde{\mathbf a}_r\text{ is a column of }\mathbf A,\ \ \tilde{\mathbf E}_r\in\fF^{J\times K},\ \ 1\leq r_{\tilde{\mathbf E}_r}\leq L_r,\ \ \tilde R\leq R
		\end{equation*}
		coincides with decomposition  \cref{eq:LrLr1}.
	\end{statements}
\end{lemma}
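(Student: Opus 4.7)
My plan is to prove the two statements by mirroring Phase II of \cref{algorithm:1}, using throughout that $\mathbf E_1,\ldots,\mathbf E_R$ are linearly independent (by hypothesis, or, in the context of \cref{thm: maintheorem}, via \cref{lemma:new}).

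For statement 1, I treat conditions \cref{item:th1cond5} and \cref{item:th1cond6} separately. Under \cref{item:th1cond5} the matrix $\mathbf A$ has full column rank, so \cref{eq:unf_T_1} gives $[\vect(\mathbf E_1)\ \ldots\ \vect(\mathbf E_R)] = \unf{T}{1}(\mathbf A^T)^\dagger$ directly (step 10 of \cref{algorithm:1}). Under \cref{item:th1cond6} I follow steps 11--18 of \cref{algorithm:1}: for each subset $\Omega_m\subset\{1,\ldots,R\}$ of cardinality $R-r_{\mathbf A}+2$, pick linearly independent vectors $\mathbf h_1,\mathbf h_2\in\operatorname{col}(\mathbf A)$ annihilating $\{\mathbf a_r:r\notin\Omega_m\}$, which is possible because $k_{\mathbf A}=r_{\mathbf A}$ makes those $r_{\mathbf A}-2$ columns span a subspace of codimension $2$ in $\operatorname{col}(\mathbf A)$. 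A direct computation shows that the $2\times J\times K$ tensor $\mathcal Q^{(m)}$ with $\mathbf Q^{(m)}_{(1)} = \unf{T}{1}[\mathbf h_1\ \mathbf h_2]$ admits the \ML decomposition $\mathcal Q^{(m)}=\sum_{r\in\Omega_m}\hat{\mathbf a}_r\circ\mathbf E_r$ with $\hat{\mathbf a}_r = [\mathbf h_1\ \mathbf h_2]^T\mathbf a_r$. I then verify the assumptions of \cref{thm:ll1_gevd}: pairwise linear independence of the $\hat{\mathbf a}_r$'s follows from $k_{\mathbf A}=r_{\mathbf A}$ (any proportional pair would yield $r_{\mathbf A}$ linearly dependent columns of $\mathbf A$), and the full column rank of the second and third factor matrices of $\mathcal Q^{(m)}$ follows respectively from the full column rank of $\mathbf F$ in \cref{eq:ranksofFand G} and $\mathbf G$ in \cref{eq:ranksofFand G2} via the identities $\operatorname{col}(\mathbf E_r)=\operatorname{col}(\mathbf B_r)$. \Cref{thm:ll1_gevd} thus recovers each $\mathbf E_r$ up to a scalar; the scalars are then determined uniquely by the linear system $\vect(\unf{T}{1})=[\mathbf a_1\otimes\vect(\hat{\mathbf E}_1)\ \ldots\ \mathbf a_R\otimes\vect(\hat{\mathbf E}_R)]\mathbf x$, since a standard dual-basis argument together with the linear independence of the $\hat{\mathbf E}_r$'s makes that matrix full column rank.

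For statement 2, let $\mathcal T=\sum_{r=1}^{\tilde R}\tilde{\mathbf a}_r\circ\tilde{\mathbf E}_r$ be an alternative decomposition; writing (up to scaling) $\tilde{\mathbf a}_r=\mathbf a_{\pi(r)}$, setting $S=\pi(\{1,\ldots,\tilde R\})$ and $\tilde{\mathbf E}'_s=\sum_{r':\pi(r')=s}\tilde{\mathbf E}_{r'}$, equating first-mode unfoldings gives
\begin{equation*}
\sum_{r\in S}\vect(\mathbf E_r-\tilde{\mathbf E}'_r)\mathbf a_r^T + \sum_{r\notin S}\vect(\mathbf E_r)\mathbf a_r^T = \mathbf 0.
\end{equation*}
Under \cref{item:th1cond5}, linear independence of the columns of $\mathbf A$ immediately forces $\tilde{\mathbf E}'_r=\mathbf E_r$ for $r\in S$ and $\mathbf E_r=\mathbf 0$ for $r\notin S$; since no $\mathbf E_r$ vanishes, $S=\{1,\ldots,R\}$, and the constraint $\tilde R\leq R$ then forces $\tilde R=R$ with $\pi$ a bijection, giving $\tilde{\mathbf E}_{\pi^{-1}(r)}=\mathbf E_r$ as required. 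Under \cref{item:th1cond6} I argue by contradiction: if $r_0\notin S$, choose $\Omega_m\ni r_0$ of size $R-r_{\mathbf A}+2$ and apply the projection of statement 1 to both sides, obtaining two representations of $\mathcal Q^{(m)}$---one containing the nonzero term $\hat{\mathbf a}_{r_0}\circ\mathbf E_{r_0}$ and one missing it---in contradiction with the uniqueness clause of \cref{thm:ll1_gevd}. Hence $S=\{1,\ldots,R\}$, and invoking statement 1 (applied to the regrouped decomposition) pins down $\tilde{\mathbf E}_{\pi^{-1}(r)}=\mathbf E_r$.

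The technical obstacle I anticipate is the rank-budget bookkeeping in the contradiction step under \cref{item:th1cond6}: \cref{thm:ll1_gevd} quantifies uniqueness across rank-tuples summing to $\sum_{r\in\Omega_m}L_r$, whereas the aggregated $\tilde{\mathbf E}'_s$ may have rank strictly less than $L_s$. Handling this will require refactoring each $\tilde{\mathbf E}'_s$ via its SVD into rank-at-most-$L_{r'}$ pieces and carefully tallying the individual bounds $r_{\tilde{\mathbf E}_{r'}}\leq L_{r'}$ together with $\tilde R\leq R$ so that the alternative projected decomposition is admissible for the hypothesis of \cref{thm:ll1_gevd}.
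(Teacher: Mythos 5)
Your treatment of \cref{st:hardlemma1}, and of \cref{st:hardlemma2} under \cref{item:th1cond5}, follows the paper's own argument essentially step for step (pseudo-inverse of $\mathbf A^T$ in the full-column-rank case; the two-dimensional slice mixtures $\mathbf h_{1},\mathbf h_2$, the application of \cref{thm:ll1_gevd} to each $\mathcal Q^{(m)}$, and the full-column-rank linear system for the scaling factors under \cref{item:th1cond6}), and those parts are fine.

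The genuine gap is in \cref{st:hardlemma2} under \cref{item:th1cond6}, and it is precisely the step you flag as a ``technical obstacle'' and defer. Your per-subset contradiction does not close: projecting the alternative decomposition gives $\mathcal Q^{(m)}=\sum_{r:\pi(r)\in\Omega_m}\hat{\mathbf a}_{\pi(r)}\circ\tilde{\mathbf E}_r$, and neither clause of \cref{thm:ll1_gevd} applies to this representation unless its rank budget is under control --- the basic uniqueness clause requires a decomposition with the same multiset of $L$'s, and the ``Moreover'' clause requires the total max ML ranks to equal $\sum_{r\in\Omega_m}L_r$, whereas here the surviving terms carry the bounds $L_r$ for $r\in\pi^{-1}(\Omega_m)$, which can sum to strictly more (or less) than $\sum_{r\in\Omega_m}L_r$; an SVD refactoring of the aggregated terms does not by itself restore the budget. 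The paper resolves this globally rather than locally: rank subadditivity applied to $\mathbf Q_{(3)}^{(m)}$ gives one inequality \cref{eq:allidentities} for each of the $\rubinom{R}{R-r_{\mathbf A}+2}$ subsets; each $L_r$ and each $r_{\tilde{\mathbf E}_{r'}}$ occurs in exactly $\rubinom{R-1}{R-r_{\mathbf A}+1}$ of them, so summing all inequalities and using $\tilde R\leq R$ and $r_{\tilde{\mathbf E}_{r'}}\leq L_{r'}$ produces the sandwich \cref{summofineq}, which forces $\tilde R=R$, $r_{\tilde{\mathbf E}_r}=L_r$, and equality in every instance of \cref{eq:allidentities}. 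Only then does \cref{thm:ll1_gevd} identify the terms of each $\mathcal Q_{\Omega}$, yielding $\tilde{\mathbf E}_r=\lambda_r\mathbf E_r$ up to permutation, after which $\lambda_r=1$ follows from the linear independence of $\mathbf E_1,\dots,\mathbf E_R$ together with $k_{\mathbf A}=r_{\mathbf A}\geq 2$. Note also that ``invoking \cref{st:hardlemma1} on the regrouped decomposition'' cannot substitute for this last step: \cref{st:hardlemma1} constructs one decomposition from $(\mathcal T,\mathbf A)$, it does not compare two given ones. You need to supply the counting argument; without it the proof of \cref{st:hardlemma2} under \cref{item:th1cond6} is incomplete.
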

\begin{proof}
	The proof is   given in \cref{sec:appendixC}.
\end{proof}

\begin{proof}[Proof of \cref{thm: maintheorem}]
 {\em Proof of \cref{item:th1stat1}.}
 Let $\mathbf T_1,\dots,\mathbf T_K$ denote the frontal slices of $\mathcal T$,  $\mathbf T_k:=(t_{ijk})_{i,j=1}^{I,J}$ and let 	 $\mathbf N_r$ be a $K\times d_r$ matrix whose columns form a basis
 	of  $\nullsp{\mathbf Z_r}$.
 If $\mathbf f=\mathbf N_r\mathbf x$ for some nonzero $\mathbf x\in\fF^{d_r}$, then
\begin{equation}
\begin{split}
f_1\mathbf T_1 + \dots +f_K\mathbf T_K = \sum\limits_{k=1}^Kf_k\sum\limits_{q=1}^R\mathbf a_q\mathbf e_{k,q}^T=\sum\limits_{q=1}^R\mathbf a_q\sum\limits_{k=1}^K \mathbf e_{k,q}^Tf_k
=\\
\sum\limits_{q=1}^R\mathbf a_q(\mathbf E_q\mathbf f)^T=\sum\limits_{q=1}^R\mathbf a_q(\mathbf E_q\mathbf N_r\mathbf x)^T=\mathbf a_r(\mathbf E_r\mathbf N_r\mathbf x)^T,
\end{split}
\label{eq:f1T1fKT1K}
\end{equation}
where $\mathbf e_{k,q}$ denotes the $k$th column of $\mathbf E_q$. Thus,
\begin{equation}
r_{f_1\mathbf T_1 + \dots +f_K\mathbf T_K}\leq 1\ \text{for all }\mathbf f=\mathbf N_r\mathbf x,\ \text{where } \mathbf x\in\fF^{d_r},\ r=1,\dots,R.
\label{eq:rank1x}
\end{equation}
In \cref{subsec:222} we have explained that the condition $r_{f_1\mathbf T_1 + \dots +f_K\mathbf T_K}\leq 1$ is equivalent to the condition $\mathbf R_2(\mathcal T)(\mathbf f\otimes\mathbf f)=\mathbf 0$, where the matrix $\mathbf R_2(\mathcal T)$ is constructed in \cref{def:R2}, i.e., that  equality \cref{eq:rank1eqR20} holds. Hence
from \cref{eq:rank1x}, \cref{eq:rank1eqR20} and the identity
$$
\mathbf R_2(\mathcal T)(\mathbf f\otimes\mathbf f)=\mathbf R_2(\mathcal T)((\mathbf N_r\mathbf x)\otimes(\mathbf N_r\mathbf x))=\mathbf R_2(\mathcal T)(\mathbf N_r\otimes\mathbf N_r)(\mathbf x\otimes\mathbf x),
$$
it follows that
\begin{equation}
\mathbf R_2(\mathcal T)(\mathbf N_r\otimes\mathbf N_r)(\mathbf x\otimes\mathbf x)=\mathbf 0,\ \text{ for all  } \mathbf x\in\fF^{d_r}\ \text{ and } r=1,\dots,R.
\label{eq:R2xx}
\end{equation}
Since
$$
\vecsym{d_r}=\sspan\{ \mathbf x\otimes \mathbf x:\ \mathbf x\in\fF^{d_r}\},
$$
it follows that \cref{eq:R2xx} is equivalent to
\begin{equation*}
\mathbf R_2(\mathcal T)(\mathbf N_r\otimes\mathbf N_r)\mathbf m_r=\mathbf 0,\ \text{ for all  } \mathbf m_r\in\vecsym{d_r}\ \text{ and } r=1,\dots,R.
\end{equation*}
In other words,
\begin{equation}
\mathbf R_2(\mathcal T)(\mathbf N_r\otimes\mathbf N_r)\mathbf M_r=\mathbf O,\qquad r=1,\dots,R,
\label{eq:R2NrNrMr0}
\end{equation}
where $\mathbf M_r$ is a $d_r^2\times\rubinom{d_r+1}{2}$ matrix whose columns form a basis of  $\vecsym{d_r}$. By  \cref{eq:NNfcr} of \cref{lemma:new} and \cref{eq:R2NrNrMr0},
$\mathbf R_2(\mathcal T)\mathbf W=\mathbf O$. Since the columns of $\mathbf W$ belong to $\vecsym{K}$, it follows that
\begin{equation}
\text{ column space of } \mathbf W\subseteq \nullsp{\mathbf R_2(\mathcal T)}\cap\vecsym{K}.
\label{eq:rhsQlhsQ}
\end{equation}
By \cref{eq:NNfcr} of \cref{lemma:new}, the column space of $\mathbf W$ has dimension $Q$. On the other hand, from
\cref{eq:nullR2viaNullQ2,item:th1cond3} it follows that the dimension of $\nullsp{\mathbf R_2(\mathcal T)}\cap\vecsym{K}$
is also $Q$. Hence, by \cref{eq:rhsQlhsQ},
\begin{equation}
\text{ column space of } \mathbf W= \nullsp{\mathbf R_2(\mathcal T)}\cap\vecsym{K}.
\label{eq:Wsp=R2sp}
\end{equation}
Let $\vectoru_1,\dots,\vectoru_Q$ be a basis of $\nullsp{\mathbf R_2(\mathcal T)}\cap\vecsym{K}$. Then there exists a nonsingular
$Q\times Q$ matrix $\mathbf M$ such that 
\begin{multline}
[\vectoru_1\ \dots\ \vectoru_Q] = \mathbf W\mathbf M = 
[(\mathbf N_1\otimes\mathbf N_1)\mathbf M_1\ \dots \ (\mathbf N_R\otimes\mathbf N_R)\mathbf M_R]\mathbf M=\\
[\mathbf N_1\otimes\mathbf N_1 \dots \ \mathbf N_R\otimes\mathbf N_R]\Bdiag(\mathbf M_1,\dots,\mathbf M_R)\mathbf M=:
[\mathbf N_1\otimes\mathbf N_1 \dots \ \mathbf N_R\otimes\mathbf N_R]\tilde{\mathbf M},
\label{eq:matrixJBDLL1}
\end{multline}
where
$$
\tilde{\mathbf M}=\Bdiag(\mathbf M_1,\dots,\mathbf M_R)\mathbf M\in\fF^{\sum d_r^2\times Q}.
$$
Let 
$$
\mathbf D_q: =\Bdiag(\mathbf D_{1,q},\dots,\mathbf D_{R,q})\in\fF^{\sum q_r\times \sum q_r},  
$$
where the blocks $\mathbf D_{1,q},\dots,\mathbf D_{R,q}$ are defined as
$$
\begin{bmatrix}
\operatorname{vec}(\mathbf D_{1,q})\\
\vdots\\
\operatorname{vec}(\mathbf D_{R,q})
\end{bmatrix}=\text{ the } q\text{-th column of }\ \tilde{\mathbf M}
$$
and let $\matrixU_q$ denote the $K\times K$   matrix such that $\vectoru_q=\operatorname{vec}(\matrixU_q)$, $q=1,\dots,Q$.
Thus, we can rewrite \cref{eq:matrixJBDLL1} as
\begin{equation}
\matrixU_q = [\mathbf N_1\ \dots\ \mathbf N_R]\mathbf D_q [\mathbf N_1\ \dots\ \mathbf N_R]^T = \mathbf N\mathbf D_q\mathbf N^T,\qquad q=1,\dots,Q.
\label{eq:BTDStat1}
\end{equation}
Since $\matrixU_1,\dots,\matrixU_Q$ are symmetric and since,  by \cref{eq:Nfcr} of \cref{lemma:new}, the matrix $\mathbf N$ has full column rank, it follows easily that
the matrices $\mathbf D_1,\dots,\mathbf D_Q$ are also symmetric. Besides, since $\matrixU_1,\dots,\matrixU_Q$ are linearly independent, the same \tcr{holds} for $\mathbf D_1,\dots,\mathbf D_Q$. Thus, \cref{eq:BTDStat1} is the S-JBD problem of the form \cref{eq:JBDaux}.
By  \cref{thm:JBDaux}, the solution of \cref{eq:BTDStat1} is unique and can be computed by means of (simultaneous) EVD. 
Now we can use the matrices $\mathbf N_r$ to recover the columns of $\mathbf A$.  Recall that  the matrix $\mathbf N_r$ holds a basis of $\nullsp{\mathbf Z_{r}}$, so we can repeat the derivation in \cref{eq:NrinnullCk,eq:a_rviaN_r,eq:22prime} and obtain that 
the column $\mathbf a_r$ is proportional to the right singular vector of the matrix  $[\operatorname{vec}(\mathbf N_r^T\mathbf H_1^T)\ \dots\ \operatorname{vec}(\mathbf N_r^T\mathbf H_1^T)]$
corresponding to the only  nonzero singular value.

{\em Proof of  \cref{item:th1stat2}.} 
By \cref{linindEEE} of \cref{lemma:new}, the matrices $\mathbf E_1,\dots,\mathbf E_R$ are linearly independent and,
by \cref{item:th1stat1}, we can assume that the matrix $\mathbf A$ is known. Thus, the result follows from \cref{st:hardlemma1} of \cref{lemma:lemma3.1}.
 
{\em Proof of  \cref{item:th1newstat}.}
  We assume that  $\mathcal T$ admits an alternative decomposition of the form \cref{eq:LrLr1}:
\begin{equation*}
\mathcal T = \sum_{r=1}^{\tilde R}\tilde{\mathbf a}_r\circ\tilde{\mathbf E}_r, \quad \tilde{\mathbf a}_r\in\fF^I\setminus\{\mathbf 0\},\quad \tilde{\mathbf E}_r\in\fF^{J\times K},\quad 1\leq r_{\tilde{\mathbf E}_r}\leq L_r,
\end{equation*}
\tcr{in which we obviously  assume that $\tilde R\leq R$.}
First we show that $\tilde R=R$.
From  \cref{item:th1cond3a,item:th1cond1} it follows that
\begin{equation}
\sum\limits_{k=1}^R L_k -\min\limits_{1\leq k\leq R} L_k +1\leq K=r_{\unf{T}{3}}\leq \sum\limits_{k=1}^{\tcr{\tilde R}} r_{\tilde{\mathbf E}_k}\leq \sum\limits_{k=1}^{\tilde R}   L_k.
\label{eq:sumLsumL}
\end{equation}
Assuming that $\tilde R< R$, we obtain, by \cref{eq:sumLsumL},  the contradiction
$$
L_R=L_R+\sum\limits_{k=1}^{\tilde R} L_k-\sum\limits_{k=1}^{\tilde R}   L_k\leq 
\sum\limits_{k=1}^{R} L_k-\sum\limits_{k=1}^{\tilde R}   L_k\leq  \min\limits_{1\leq k\leq R} L_k-1<L_R.
$$
Thus $\tilde R=R$. 

Now we prove that each $\tilde{\mathbf a}_r$ is proportional to a column of $\mathbf A$.
   By definition, set
   $$
   \tilde{d}_r:=\dim\nullsp{\tilde{\mathbf Z}_r},\ \text{where }
   \tilde{\mathbf Z}_r := [\tilde{\mathbf E}_1^T\ \dots\ \tilde{\mathbf E}_{r-1}^T\ \tilde{\mathbf E}_{r+1}^T\ \dots\ \tilde{\mathbf E}_R^T]^T,\qquad 
   r=1,\dots, R.
   $$
   Since $r_{\tilde{\mathbf Z}_r}\leq\min(\sum L_r - \min L_r, K)$, it follows from  \cref{item:th1cond3a}  that  $\tilde{d}_r\geq 1$. Let $\tilde{\mathbf N}_r$ be a $K\times \tilde{d}_r$ matrix whose columns form a basis    of  $\nullsp{\tilde{\mathbf Z}_r}$. 
If $\mathbf f=\tilde{\mathbf N}_r\mathbf x$ for some nonzero $\mathbf x\in\fF^{\tilde{d}_r}$, then we obtain
(see \cref{eq:f1T1fKT1K}) that
\begin{equation*}
f_1\mathbf T_1 + \dots +f_K\mathbf T_K = \tilde{\mathbf a}_r(\tilde{\mathbf E}_r\tilde{\mathbf N}_r\mathbf x)^T,\qquad r=1,\dots,R.
\end{equation*}
By   \cref{item:th1cond1}, the linear combination $f_1\mathbf T_1 + \dots +f_K\mathbf T_K$ is not zero for any $f_1,\dots,f_K$ \tcr{such that $\mathbf f\ne \mathbf 0$}. Hence, for any column
$\tilde{\mathbf a}_r$ there exist $f_1,\dots,f_K$ such that  the column space of  the linear combination $f_1\mathbf T_1 + \dots +f_K\mathbf T_K$  is one-dimensional and is spanned by $\tilde{\mathbf a}_r$.
Thus,  to prove that each $\tilde{\mathbf a}_r$ is proportional to a column of $\mathbf A$, it is sufficient to show that
the following implication holds:
\begin{equation}
f_1\mathbf T_1 + \dots +f_K\mathbf T_K=\mathbf z\mathbf y^T \ \Rightarrow\ \text{there exists }\  r \text{ such that } \mathbf z=c\mathbf a_r.
\label{eq:implftrank1}
\end{equation}
If $r_{f_1\mathbf T_1 + \dots +f_K\mathbf T_K}=1$, then, by \cref{eq:rank1eqR20}, $\mathbf R_2(\mathcal T)(\mathbf f\otimes\mathbf f)=\mathbf 0$. Hence, by \cref{eq:Wsp=R2sp},
 $\mathbf f\otimes \mathbf f$ belongs to the column space of the matrix $\mathbf W$.  
Hence, there exists a block diagonal matrix $\mathbf D$ such that $\mathbf f\mathbf f^T=\mathbf N\mathbf D\mathbf N^T$. Since, by  \cref{eq:Nfcr} of \cref{lemma:new}, $\mathbf N$ has full column rank, the matrix $\mathbf D$ contains exactly one nonzero block and its rank is one. In other words, $\mathbf f$ belongs to the null space of $\mathbf N_r$ for some $r=1,\dots,R$.
Hence implication \cref{eq:implftrank1} follows from \cref{eq:f1T1fKT1K}.

{\em Proof of  \cref{item:th1firstfmhard}.}
Let $\tilde{\mathbf A}$, $\tilde{\mathbf B}$, and $\tilde{\mathbf C}$ denote the factor matrices of an alternative  decomposition of $\mathcal T$ into a sum of \MLatmost terms. 
By \cref{item:th1newstat}, it is sufficient to show that  $\tilde{\mathbf A}$  does not have repeated columns. We argue by contradiction. If   $\tilde{\mathbf a}_i=\tilde{\mathbf a}_j$ for some $i\ne j$, then $\tcr{\wprod{\tilde{\mathbf a}_i}{\tilde{\mathbf a}_j}=\mathbf 0}$. Hence, the matrix $\Phi(\tilde{\mathbf A},\tilde{\mathbf B})$ defined in \cref{eq:matrixPhi}, has at least $L_iL_j$ zero columns, implying that $r_{\Phi(\tilde{\mathbf A},\tilde{\mathbf B})}\tcr{\leq}\sum\limits_{1\leq r_1<r_2\leq R} L_{r_1}L_{r_2} - L_iL_j$. Hence,  by  \cref{sta:lemmaApp1} of \cref{lemma: Q2viaABC},
\begin{multline}
r_{\mathbf Q_2(\mathcal T)} =    r_{\Phi(\tilde{\mathbf A},\tilde{\mathbf B})\mathbf S_2(\tilde{\mathbf C})^T} 
\leq r_{\Phi(\tilde{\mathbf A},\tilde{\mathbf B})}\leq\\
\sum\limits_{1\leq r_1<r_2\leq R} L_{r_1}L_{r_2} - L_iL_j\leq
\sum\limits_{1\leq r_1<r_2\leq R} L_{r_1}L_{r_2} - \tilde{L}_1\tilde{L}_2.\label{eq:oppositeineq}
\end{multline}
On the other hand, from the rank-nullity theorem and \cref{eq:ineqforuni1fm} it follows that 
$$
 r_{\mathbf Q_2(\mathcal T)} = \rubinom{K+1}{2} - Q >  \sum\limits_{1\leq r_1<r_2\leq R} L_{r_1}L_{r_2} - \tilde{L}_1\tilde{L}_2,
$$
which is a contradiction with \cref{eq:oppositeineq}.

{\em Proof of  \cref{item:th1stat4}.}
If \cref{item:th1cond3a,item:th1cond5} hold or \cref{item:th1cond3a,item:th1cond6} hold, then the result follows from \cref{item:th1newstat} and \cref{lemma:lemma3.1}.

Let \cref{item:th1cond4} hold.  Then the matrices $\mathbf C$ and $\mathbf N$ are square nonsingular and, by  \cref{eq:NrinnullCk},
$\mathbf C^T\mathbf N=\Bdiag(\mathbf C_1^T\mathbf N_1,\dots,\mathbf C_R^T\mathbf N_R)$. Hence 
$$
\mathbf C = \mathbf N^{-T}\Bdiag(\mathbf N_1^T\mathbf C_1,\dots,\mathbf N_R^T\mathbf C_R)
$$
 in which the matrices $\mathbf N_r^T\mathbf C_r\in\fF^{L_r\times L_r}$ are also nonsingular. Thus, w.l.o.g.  we can set $\mathbf C=\mathbf N^{-T}$. Finally, by \cref{eq:unf_T_2}, the matrix $\mathbf B$  can be \tcr{uniquely} recovered  from the set of linear equations $[\mathbf a_1\otimes\mathbf C_1\ \dots\ \mathbf a_R\otimes\mathbf C_R]\mathbf B^T=\unf{T}{2} $. We can also avoid the computation of $\mathbf N^{-T}$ and proceed as in steps $8-9$ of \cref{algorithm:1}  (for details we refer to ``Case 1'' after
	\cref{thm: maintheoremABC}).

To prove the uniqueness it is sufficient to show that assumptions \cref{item:th1cond1,item:th1cond2,item:th1cond3} and \cref{item:th1cond4} hold for any decomposition of $\mathcal T$ into a sum of \MLatmost terms.
Assume that $\mathcal T$ admits an alternative decomposition with factor matrices
$\tilde{\mathbf A}=[\tilde{\mathbf a}_1\ \dots\ \tilde{\mathbf a}_{\tilde R}]$, 
$\tilde{\mathbf B}=[\tilde{\mathbf B}_1\ \dots\ \tilde{\mathbf B}_{\tilde R}]$, and 
$\tilde{\mathbf C}=[\tilde{\mathbf C}_1\ \dots\ \tilde{\mathbf C}_{\tilde R}]$, where $\tilde{R}\leq R$, the matrices
$\tilde{\mathbf B}_r\in\fF^{J\times \tilde{L}_r}$ and $\tilde{\mathbf C}_r\in\fF^{K\times \tilde{L}_r}$ have full column rank, and $\tilde{L}_r\leq L_r$ for $1\leq r\leq \tilde{R}$. 
Then, by \cref{eq:unf_T_3}, 
\begin{equation}
\unf{T}{3}=[\mathbf a_1\otimes\mathbf B_1\ \dots\ \mathbf a_R\otimes\mathbf B_R]\mathbf C^T=
[\tilde{\mathbf a}_1\otimes\tilde{\mathbf B}_1\ \dots\ \tilde{\mathbf a}_{\tilde R}\otimes\tilde{\mathbf B}_{\tilde R}]\tilde{\mathbf C}^T.
\label{eq:lastidentity}
\end{equation}
Since $r_{\unf{T}{3}}=K$ and $\mathbf C$ is $K\times K$ nonsingular, it readily follows 
from \cref{eq:lastidentity}
that $\tilde{R}=R$,  that $\tilde{L}_r=L_r$ for all $r$ and that $\tilde{\mathbf C}$ is $K\times K$ nonsingular. 
Hence, the values $d_1,\dots,d_R$ in \cref{item:th1cond2ABC} and the values $d_1,\dots,d_R$ computed for the alternative decomposition are equal to  $L_1,\dots,L_R$, respectively. Thus,  assumptions \cref{item:th1cond1,item:th1cond2,item:th1cond3} and \cref{item:th1cond4}
hold for the alternative decomposition. 
%
%
\end{proof}
 \section{Conclusion}
In this paper we have studied the decomposition of a third-order tensor into a sum of \ML terms.
We have obtained conditions for uniqueness of the first factor matrix and for uniqueness of the overall decomposition.
We have also presented an algorithm that computes the decomposition, 
estimates the  number of \ML terms $R$ and  their ``sizes'' $L_1,\dots,L_R$.
All steps of the algorithm rely on   conventional linear algebra.
In the case where the decomposition is not exact, 
a noisy version of the algorithm can compute an approximate \ML decomposition. In our examples the accuracy of the estimates was of about the same order as the accuracy of the tensor. 

The \ML  decomposition takes an intermediate place between the  
little studied decomposition into a sum of ML rank-$(M_r,N_r,L_r)$ terms and the well studied CPD (the special case where $M_r=N_r=L_r=1$). Namely, the \ML decomposition is the special case where $M_r=1$ and $N_r=L_r$.
The results in this paper may be used as stepping stones towards a better understanding of the ML rank-$(M_r,N_r,L_r)$ decomposition. 
 \section*{Acknowledgments}
The authors would like to thank Yang Qi (The University of Chicago)   for his comments on \cref{sec:genuniq}.


\appendix
\section{On testing  \cref{eqLgenericdimQ_2} over a finite field}\label{Appendix:FF}
	In this appendix we explain how   to verify assumption \cref{eqLgenericdimQ_2} over a finite field.
	We also explain how to test whether
	the decomposition of an $I\times J\times K$ tensor into a sum of \MLatmost terms is generically unique under
	the assumptions in row $6$ of \cref{tab:KoMa14}.
	
	We rely on an idea  proposed in \cite{Nick2014}. The idea is to  generate random integer  matrices $\tilde{\mathbf A}_r$, $\tilde{\mathbf B}_r$, $\tilde{\mathbf C}_r$ and then to perform all computations over a finite field  $GF(p^k)$, where $p$ is prime. Obviously, if \cref{eqLgenericdimQ_2} holds for $\tilde{\mathbf A}_r$, $\tilde{\mathbf B}_r$ and $\tilde{\mathbf C}_r$ considered over $GF(p^k)$, then it will necessarily hold for
	$\tilde{\mathbf A}_r$, $\tilde{\mathbf B}_r$ and $\tilde{\mathbf C}_r$ considered over $\fF$\footnote{\tcr{In the proof of   \cref{thm:maingen} we have explained that this will in turn apply that
			\cref{eqLgenericdimQ_2} holds over $\fF$ for generic  $\tilde{\mathbf A}_r$, $\tilde{\mathbf B}_r$, $\tilde{\mathbf C}_r$.}}.
	On the other hand, if \cref{eqLgenericdimQ_2} does not hold for $\tilde{\mathbf A}_r$, $\tilde{\mathbf B}_r$, $\tilde{\mathbf C}_r$ over $GF(p^k)$, then no conclusion can be drawn. In this case  one
	can  try to repeat the computations for other random integer  matrices $\tilde{\mathbf A}_r$, $\tilde{\mathbf B}_r$, $\tilde{\mathbf C}_r$
	or increment $k$, or choose another prime $p$.
	If  \cref{eqLgenericdimQ_2}  does not hold for several such trials, this can be an indication that \cref{eqLgenericdimQ_2}
	does not hold for any $\tilde{\mathbf A}_r$, $\tilde{\mathbf B}_r$ and $\tilde{\mathbf C}_r$. 
	Note that, by the rank-nullity theorem,  the computation of the null space can be reduced to the  computation of the rank.
	Although the   computation  of the rank over the finite field is more expensive than the numerical estimation of the rank, it has the advantage that the dimension  in \cref{eqLgenericdimQ_2} is computed exactly, i.e., without roundoff errors.
		
	Now we explain how to test whether the bounds in row $6$ of \cref{tab:KoMa14} guarantee generic uniqueness of the decomposition. 
	By \cref{lemma: Q2viaABC}, $\mathbf Q_2(\tilde{\mathcal T})$ can be  factorized as  
	$\mathbf Q_2(\tilde{\mathcal T})=\Phi(\tilde{\mathbf A},\tilde{\mathbf B})\mathbf S_2(\tilde{\mathbf C})$, 
	where $\Phi(\tilde{\mathbf A},\tilde{\mathbf B})$ is an	$\rubinom{I}{2}\rubinom{J}{2}\times \sum\limits_{r_1<r_2} L_{r_1}L_{r_2}$ matrix
	and $\mathbf S_2(\tilde{\mathbf C})$ is an $\rubinom{K+1}{2} \times \sum\limits_{r_1<r_2} L_{r_1}L_{r_2}$ matrix.
	Also, by \cref{sta:lemmaApp3} of \cref{lemma: Q2viaABC}, 	
		$\dim\nullsp{\mathbf S_2(\tilde{\mathbf C})^T}=\sum\rubinom{d_r+1}{2}$ for generic $\tilde{\mathbf C}$.
	It is clear now that if $
	\Phi(\tilde{\mathbf A},\tilde{\mathbf B})$ has full column rank, then  \cref{eqLgenericdimQ_2} holds for $\tilde{\mathbf A}$, $\tilde{\mathbf B}$ and generic $\tilde{\mathbf C}$. 
	
	We claim that the assumptions	$\rubinom{I}{2}\rubinom{J}{2}\geq \sum\limits_{r_1<r_2} \hspace{-2mm}L_{r_1}L_{r_2}$ 
	and $J\geq L_{R-1}+L_R$  in 	row $6$ of \cref{tab:KoMa14} are necessary 	for $\Phi(\tilde{\mathbf A},\tilde{\mathbf B})$ to have full column rank.
	Indeed, the former 	expresses the fact that the number of columns of $\Phi(\tilde{\mathbf A},\tilde{\mathbf B})$ does not exceed the number of its rows. The latter means that $k_{\tilde{\mathbf B}}'\geq 2$ holds for generic $\tilde{\mathbf B}$, which, by \cref{sta:lemmaApp5} of \cref{lemma: Q2viaABC}, is necessary for full column rank of $\Phi(\tilde{\mathbf A},\tilde{\mathbf B})$.
	To verify that $\Phi(\tilde{\mathbf A},\tilde{\mathbf B})$ has full column rank
	for some $\tilde{\mathbf A}$ and $\tilde{\mathbf B}$ we performed computations over   $GF(2^{15})$ as explained above. The computations were done in MATLAB R2018b, where $\tilde{\mathbf A}$ and $\tilde{\mathbf B}$ were generated using the built-in function \texttt{gf} (Galois field arrays) and
	the rank of $\Phi(\tilde{\mathbf A},\tilde{\mathbf B})$ was computed   with the built-in  function \texttt{rank}.  We limited ourselves to the cases where
	$\min(I,J)\geq 2$ and  $\max(I,J)\leq 5$. Together with the assumptions  $J\geq L_{R-1}+L_R$ and $\rubinom{I}{2}\rubinom{J}{2}\geq \sum\limits_{r_1<r_2}  L_{r_1}L_{r_2}$ we ended up with $435$ tuples $(I,J,R,L_1,\dots,L_R)$. The matrix $\Phi(\tilde{\mathbf A},\tilde{\mathbf B})$ did not have full column rank
	in three cases: $(I,R)\in\{(2,3), (4,9),(5,12)\}$, $J=5$, $L_1=\dots,L_{R-1}=1$, and $L_R=4$.  
	
	To show that in the remaining $432$ cases generic uniqueness and computation follow from \cref{item:genstate2} of \cref{thm:maingen}, we need  to verify
	assumptions \eqref{item:th1gencond1},\eqref{item:th1gencond2} and condition \cref{eq:gend}. The assumption $\sum L_r=K$ in
	row $6$ of \cref{tab:KoMa14} coincides with condition \cref{eq:gend} and implies assumption \cref{item:th1gencond2}. From 
	\cref{sta:lemmaApp45} of \cref{lemma: Q2viaABC} it follows that  $[\tilde{\mathbf a}_1\otimes\tilde{\mathbf B}_1\ \dots\ \tilde{\mathbf a}_R\otimes\tilde{\mathbf B}_R]$ has full column rank, and in particular, that $IJ\geq \sum L_r$. Hence, since $\sum L_r=K$, we obtain
	that assumption  \eqref{item:th1gencond1} also holds.

\section{Proofs of Theorems \ref{thm:necessity}, \ref{thm: maintheoremABC}, \cref{corollary:verynew,thm:maingen}}\label{sec:simplethms}
\begin{proof}[Proof of \cref{thm:necessity}]
	{Proof of \cref{item:necc2}.} Assume to the contrary that the matrix $[\operatorname{vec}(\mathbf E_1)\ \dots\ \operatorname{vec}(\mathbf E_R)]$
	does not have full column rank. Then the matrices $\mathbf E_1,\dots,\mathbf E_R$ are linearly dependent. We assume w.l.o.g.  that $\mathbf E_1=\alpha_2\mathbf E_2+\dots+\alpha_R\mathbf E_R$. Then $\mathcal T$ admits a decomposition into a sum of $R-1$ terms:
	$$
	\mathcal T=\sum\limits_{r=1}^R\mathbf a_r\circ\mathbf E_r= \mathbf a_1\circ(\sum\limits_{r=2}^R\alpha_r\mathbf E_r)+\sum\limits_{r=2}^R\mathbf a_r\circ\mathbf E_r=\sum\limits_{r=2}^R(\alpha_r\mathbf a_1+\mathbf a_r)\circ\mathbf E_r,
	$$
	which is a contradiction.
	
	{Proof of \cref{item:necc4}.} Assume to the contrary that the matrix $[\mathbf a_1\otimes \mathbf B_1\ \dots\ \mathbf a_R\otimes \mathbf B_R]$ does not have full column rank. Then there exists $\mathbf f=[\mathbf f_1^T\ \dots\ \mathbf f_R^T]^T\in\fF^{\sum L_r}\setminus\{\mathbf 0\}$ such that $\sum (\mathbf a_r\otimes\mathbf B_r)\mathbf f_r=\mathbf 0$. We assume  w.l.o.g. that the first entry of $\mathbf f$ is nonzero and partition $\mathbf f_1$, $\mathbf B_1$, and $\mathbf C_1$ as
	$$
	\mathbf f=\begin{bmatrix}
	f_1\\ \bar{\mathbf f}_1
	\end{bmatrix},\qquad \mathbf B_1 = [\mathbf b_1\ \bar{\mathbf B}_1],\qquad \mathbf C_1 = [\mathbf c_1\ \bar{\mathbf C}_1].
	$$
	Since $\sum (\mathbf a_r\otimes\mathbf B_r)\mathbf f_r=\mathbf 0$, it follows that
	\begin{multline}
	\mathbf a_1\otimes \mathbf b_1=-\frac{1}{f_1}\left[ (\mathbf a_1\otimes \bar{\mathbf B}_1)\bar{\mathbf f}_1 +\sum\limits_{r=2}^R
	(\mathbf a_r\otimes\mathbf B_r)\mathbf f_r\right]=\\
	-\frac{1}{f_1}\left[ \mathbf a_1\otimes (\bar{\mathbf B}_1\bar{\mathbf f}_1) +\sum\limits_{r=2}^R
	\mathbf a_r\otimes(\mathbf B_r\mathbf f_r)\right].\label{eq:a1kronb1}
	\end{multline}
	Hence, by \cref{eq:unf_T_3,eq:a1kronb1},
	\begin{multline*}
	\unf{T}{3} = \sum\limits_{r=1}^R (\mathbf a_r\otimes\mathbf B_r)\mathbf C_r^T=(\mathbf a_1\otimes\mathbf b_1)\mathbf c_1^T+
	(\mathbf a_1\otimes\bar{\mathbf B}_1)\bar{\mathbf C}_1^T+\sum\limits_{r=2}^R (\mathbf a_r\otimes\mathbf B_r)\mathbf C_r^T=\\
	-\frac{1}{f_1}\left[ \mathbf a_1\otimes (\bar{\mathbf B}_1\bar{\mathbf f}_1) +\sum\limits_{r=2}^R
	\mathbf a_r\otimes(\mathbf B_r\mathbf f_r)\right]\mathbf c_1^T+(\mathbf a_1\otimes\bar{\mathbf B}_1)\bar{\mathbf C}_1^T+\sum\limits_{r=2}^R (\mathbf a_r\otimes\mathbf B_r)\mathbf C_r^T=\\
	\mathbf a_1\otimes\left[ -\frac{1}{f_1}\bar{\mathbf B}_1\bar{\mathbf f}_1\mathbf c_1^T + \bar{\mathbf B}_1\bar{\mathbf C}_1^T
	\right] +\sum\limits_{r=2}^R  \mathbf a_r\otimes\left[   -\frac{1}{f_1}{\mathbf B}_r{\mathbf f}_r\mathbf c_1^T+   \mathbf B_r\mathbf C_r^T\right]=: \sum\limits_{r=1}^R\mathbf a_{\tcr{r}}\otimes \tilde{\mathbf E}_r,
	\end{multline*}
	where $r_{\tilde{\mathbf E}_1}\leq r_{\bar{\mathbf B}_1}=L_1-1$ and 
	$r_{\tilde{\mathbf E}_r}\leq r_{\mathbf B_r}=L_r$ for $r\geq 2$.
	Thus, $\mathcal T$ admits an alternative decomposition into a sum of \MLatmost terms $\mathcal T=\sum \mathbf a_r\circ \tilde{\mathbf E}_r$
	with \tcr{$r_{\tilde{\mathbf E}_1}< r_{{\mathbf E}_1}$ and $r_{\tilde{\mathbf E}_r}\leq r_{{\mathbf E}_r}$  for $r\geq 2$.} This contradiction completes the proof.
	
	{Proof of \cref{item:necc3}.} The proof is similar to the proof of \cref{item:necc4} .
\end{proof}
\begin{proof}[Proof of \cref{thm: maintheoremABC}]
	By \cref{eq:unf_T_3},  assumption \cref{item:th1cond1ABC} is equivalent to assumption
	\cref{item:th1cond1}. Substituting  $\mathbf E_r=\mathbf B_r\mathbf C_r^T$ in the expressions for $\mathbf Z_r$, $\mathbf F$, $\mathbf G$,  and $[\mathbf E_1^T\ \dots\ \mathbf E_R^T]^T$,  we obtain that
	\begin{align*}
	&\mathbf Z_r = \Bdiag(\mathbf B_1,\dots,\mathbf B_{r-1},\mathbf B_{r+1},\dots,\mathbf B_R)[\mathbf C_1\ \dots\ \mathbf C_{r-1}\ \mathbf C_{r+1}\ \dots\  &\mathbf C_R]^T,\\
	&\mathbf F = [\mathbf B_{r_1}\ \mathbf B_{r_2}\ \dots\ \mathbf B_{r_{R-r_{\mathbf A}+2}}]\Bdiag(\mathbf C_{r_1}^T,\mathbf C_{r_2}^T,\dots,\mathbf C_{r_{R-r_{\mathbf A}+2}}^T),\\
	&\mathbf G = [\mathbf C_{r_1}\ \mathbf C_{r_2}\ \dots\ \mathbf C_{r_{R-r_{\mathbf A}+2}}]\Bdiag(\mathbf B_{r_1}^T,\mathbf B_{r_2}^T,\dots,\mathbf B_{r_{R-r_{\mathbf A}+2}}^T),\\
	&[\mathbf E_1^T\ \dots\ \mathbf E_R^T]^T =\Bdiag(\mathbf B_1,\dots,\mathbf B_R)\mathbf C^T.
	\end{align*}
	Since the matrices $\mathbf B_r$ and $\mathbf C_r$ have full column rank, it follows that
	\begin{align}
	d_r=\dim\nullsp{\mathbf Z_r} &=\dim\nullsp{[\mathbf C_1\ \dots\ \mathbf C_{r-1}\ \mathbf C_{r+1}\ \dots\ \mathbf C_R]^T}=\dim\nullsp{\mathbf Z_{r,\mathbf C}},\label{eq:drviaC}
	\end{align}
	that \cref{eq:ranksofFand G,eq:ranksofFand G2} are equivalent to  \cref{eq:ranksofFand GABC} \tcr{and  $ k_{\mathbf C}'\geq R-r_{\mathbf A}+2$, respectively},
	and that \cref{item:th1cond4} in \cref{thm: maintheorem} is equivalent to  $r_{\mathbf C^T}=\sum L_r$. Since, by \cref{item:th1cond1} and \cref{eq:unf_T_3}, $K=r_{\unf{T}{3}}\leq r_{\mathbf C^T}\leq K$, it follows that $r_{\mathbf C}=r_{\mathbf C^T}=K=\sum L_r$. Hence $\mathbf  C$ is a nonsingular $K\times K$ matrix. This in turn, by \cref{eq:drviaC}, implies that $d_r=L_r$. Thus,
	\cref{item:th1cond4} in \cref{thm: maintheorem} is equivalent to \cref{item:th1cond4ABC} in \cref{thm: maintheoremABC}.
\end{proof}
\begin{proof}[Proof of \cref{corollary:verynew}]
	We consider two cases $r_{\mathbf C}=K$ and $r_{\mathbf C}<K$.
	
	i) Let $r_{\mathbf C}=K$.
		Together the assumptions in \cref{eq:corollary261} and conditions in \cref{eq:corollary262} imply  that  assumption \cref{eq:ranksofFand GABC} and \cref{item:th1cond3a} in \cref{thm: maintheoremABC} hold.
		In turn, \cref{item:th1cond3a} implies that assumption \cref{item:th1cond2ABC} holds.
		The two conditions in \cref{eq:corollary262} coincide with \cref{item:th1cond5} and \cref{item:th1cond6} in \cref{thm: maintheoremABC}, respectively.
		Thus, to apply \cref{item:th1stat4} in \cref{thm: maintheoremABC} it only remains to verify that 
		assumption \cref{item:th1cond1ABC} holds. Since $r_{\mathbf C}=K$, it is sufficient to prove that the matrix $[\mathbf a_1\otimes\mathbf B_1\ \dots\ \mathbf a_R\otimes\mathbf B_R]$ has full column rank. This  follows from \cref{sta:lemmaApp4,sta:lemmaApp45} of  \cref{lemma: Q2viaABC}.

\tcr{ii) If $r_{\mathbf C}<K$, then the result follows from i) and \cref{item:thm:sonvention:statement1} of \cref{thm:convention}.}
\end{proof}

\begin{proof}[Proof of   \cref{thm:maingen}]
%
%
%
%
	We show that \cref{item:genstate1,item:genstate2ad,item:th1firstfmhardgen,item:genstate2} in \cref{thm:maingen} correspond, respectively, to statements \ref{item:th1stat1}, \ref{item:th1newstat}, \ref{item:th1firstfmhard},   and \ref{item:th1stat4} in \cref{thm: maintheorem}.	
	\tcr{One can easily check that   assumptions \cref{item:th1gencond1}, \cref{item:th1gencond2}, 
		and conditions 	\cref{eq:genb},  \cref{eq:gend} in \cref{thm:maingen}   are, respectively, the generic versions of assumptions
		 \cref{item:th1cond1}, \cref{item:th1cond2} and conditions \ref{item:th1cond5}, \ref{item:th1cond4} in \cref{thm: maintheorem}.}
	%
	Hence,  to prove   statements \ref{item:genstate1}, \ref{item:genstate2ad},   and \ref{item:genstate2},   it is sufficient to show that  assumption \cref{eqLgenericdimQ_2} implies that \cref{item:th1cond3} holds generically. To prove  \cref{item:th1firstfmhardgen} we should  additionally show that \cref{eq:ineqforuni1fmgen} implies that \cref{eq:ineqforuni1fm} holds generically.
	
	1) We show that  assumption \cref{eqLgenericdimQ_2} implies that \cref{item:th1cond3} holds generically.	We will make use of \cite[Lemma 6.3]{PartII} which states the following: if  the entries of a matrix $\mathbf F(\mathbf x)$ depend analytically on $\mathbf x\in\fF^n$ and if $\mathbf F(\mathbf x_0)$ has full column rank
	for at least one $\mathbf x_0$, then $\mathbf F(\mathbf x)$ has  full column rank for generic $\mathbf x$.   
	Let the vectors $\mathbf x$ and $\mathbf x_0$ be formed by the entries of $\mathbf A$, $\mathbf B$,  $\mathbf C$ and $\tilde{\mathbf A}$, $\tilde{\mathbf B}$, and $\tilde{\mathbf C}$ respectively. We construct $\mathbf F(\mathbf x)$ as follows.
	By \cref{lemma: Q2viaABC},  each entry of $\mathbf Q_2(\mathcal T)$ is a polynomial in $\mathbf x$. By the rank-nullity theorem and  assumption \cref{eqLgenericdimQ_2},
	\begin{equation}
	r_{\mathbf Q_2(\tilde{\mathcal T})}=\rubinom{K+1}{2} - \sum\limits_{r=1}^R \rubinom{K-(L_1+\dots+L_{r-1}+L_{r+1}+\dots+L_R)+1}{2}=:P,
	\label{eq:ranknullity}
	\end{equation}
	implying that  $P$ columns of  $\mathbf Q_2(\tilde{\mathcal T})$ are linearly independent. We define
	$\mathbf F(\mathbf x)$  as the submatrix  formed by the corresponding columns\footnote{The column selection depends
		only on the fixed $\mathbf x_0$.} of $\mathbf Q_2(\mathcal T)$. Then \cref{eq:ranknullity} implies that $\mathbf F(\mathbf x_0)$ has full column rank.
	Now, by \cite[Lemma 6.3]{PartII}, $\mathbf F(\mathbf x)$ has full column rank for generic $\mathbf x$. Hence  $r_{\mathbf Q_2(\mathcal T)}\tcr{\geq} P$. \tcr{ Hence, by the rank-nullity theorem, 
		$\dim\nullsp{\mathbf Q_2({\mathcal T})}=\rubinom{K+1}{2}- 	r_{\mathbf Q_2({\mathcal T})}  \leq \rubinom{K+1}{2}-P= \sum\limits_{r=1}^R \rubinom{d_r+1}{2}$.
		On the other hand, since, by \cref{sta:lemmaApp3} of 	\cref{lemma: Q2viaABC}, 
		$\dim\nullsp{\mathbf Q_2({\mathcal T})}\geq \sum\limits_{r=1}^R \rubinom{d_r+1}{2}$
	we obtain	that	\cref{item:th1cond3} in \cref{thm: maintheorem} holds.} 
	
	2)  We show that assumption \cref{eq:ineqforuni1fmgen} implies that \cref{eq:ineqforuni1fm} holds generically. Let $S=\sum L_r$. Then $d_r = K-\sum\limits_{k=1}^RL_k+L_r=K-S+L_r$. Since $L_1\leq\dots\leq L_R$, the inequality in \cref{eq:ineqforuni1fm} takes the form
	\begin{equation}
	\rubinom{K+1}{2} - \sum\limits_{r=1}^R \rubinom{K-S+L_r\tcr{+1}}{2} > \sum\limits_{1\leq r_1<r_2\leq R} L_{r_1}L_{r_2} - L_1L_2=\frac{S^2-\sum L_r^2}{2}-L_1L_2.\label{eq:ineqforuni1fmgengen}
	\end{equation}	
	Using simple algebraic manipulations one can rewrite  \cref{eq:ineqforuni1fmgengen} as
	\begin{equation}
	K^2 + K(1-2S)+S^2-S-\frac{2L_1L_2}{R-1}<0.\label{eq:quadraticequation}
	\end{equation}
	One can easily check that $K$ is a solution of  \cref{eq:quadraticequation} if and only if
	$$
	S-\frac{1}{2} -\sqrt{\frac{1}{4}+\frac{2L_1L_2}{R-1}} < K < S-\frac{1}{2} +\sqrt{\frac{1}{4}+\frac{2L_1L_2}{R-1}},
	$$
	implying that \cref{eq:ineqforuni1fmgen} is a generic version of \cref{eq:ineqforuni1fm}.
\end{proof}
\section{Proof of \cref{thm:maingenLL1}}\label{sec:AppendixC}
First we recall a result on the generic uniqueness of the
decomposition of a matrix into rank-$1$ terms that admit a particular structure \cite{JSTSP2016IDLDL}. 
Let $p_1,\dots,p_N$ be known polynomials in $l$ variables and let $\mathbf Y\in\fF^{I\times N}$ admit  a decomposition  of the form
\begin{equation}
\mathbf Y=\sum\limits_{r=1}^R\mathbf a_r [p_1(\mathbf z_r)\ \dots\  p_N(\mathbf z_r)],\qquad 
\mathbf a_r\in\fF^I,\quad \mathbf z_r\in\fF^l,\quad r=1,\dots,R.
\label{eq:structuredrank1}
\end{equation}
Decomposition \cref{eq:structuredrank1} can  be interpreted as a matrix factorization $\mathbf Y=\mathbf A\mathbf P^T$ that is structured in the sense that 
the  columns of $\mathbf P$ are  in
\begin{equation}
V: = \{[p_1(\mathbf z)\ \dots\  p_N(\mathbf z)]^N:\ \mathbf z\in\fF^l\}\subset \fF^N.\label{eq:setV}
\end{equation}
We say that the  decomposition   {\em  is unique} if   any two decompositions of the form \cref{eq:structuredrank1}
are the same up to permutation of summands. We say that the decomposition into a sum of structured rank-$1$ matrices  
is {\em generically unique} if 
$$
\mu\{(\mathbf a_1,\dots,\mathbf a_R,\mathbf z_1,\dots,\mathbf z_R):\ \text{decomposition \cref{eq:structuredrank1} is not unique}\}=0,
$$
where $\mu$ denotes a measure on $\fF^{(I+l)R}$ that is  absolutely continuous with respect to the Lebesgue measure.
We will need the following result.
\begin{theorem}(a corollary of \cite[Theorem 1]{JSTSP2016IDLDL})\label{thm:auxgen}
	Assume that
	\begin{assumptions}
		\item\label{appauxgenthm1} $R\leq I$;
		\item\label{appauxgenthm2} $\dim\sspan\{V\}\geq \hat{N}$; 
		\item\label{appauxgenthm3} the set $V$ is invariant under complex scaling, i.e., $\lambda V=V$ for all $\lambda\in C$;
		\item\label{appauxgenthm4} the dimension of the Zariski closure of $V$ is less than or equal to $\hat l$;
		\item\label{appauxgenthm5} $R\leq \hat{N}-\hat{l}$.
	\end{assumptions}
	Then decomposition \cref{eq:structuredrank1} is  generically unique.
\end{theorem}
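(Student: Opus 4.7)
My plan is to derive this as an application of [Theorem 1]{JSTSP2016IDLDL}, so the bulk of the work is to verify that conditions \ref{appauxgenthm1}--\ref{appauxgenthm5} capture precisely the hypotheses needed to invoke that parent result, and to outline the underlying dimension-counting argument.

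First I would reduce uniqueness of \cref{eq:structuredrank1} to uniqueness of the structured factor $\mathbf P$. By \cref{appauxgenthm1} $R\leq I$, together with absolute continuity of the distribution of the entries of $\mathbf A$, the matrix $\mathbf A$ has full column rank with probability one. Consequently the column space of $\mathbf Y$ equals $\operatorname{col}(\mathbf A)$ and is $R$-dimensional. Given any alternative decomposition $\mathbf Y=\tilde{\mathbf A}\tilde{\mathbf P}^T$, the columns of $\tilde{\mathbf A}$ must span the same $R$-dimensional subspace, so $\tilde{\mathbf A}=\mathbf A\mathbf M$ for some nonsingular $R\times R$ matrix $\mathbf M$, and then $\tilde{\mathbf P}=\mathbf P\mathbf M^{-T}$. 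Uniqueness therefore boils down to showing that any invertible $\mathbf M$ for which every column of $\mathbf P\mathbf M^{-T}$ lies in $V$ must factor as a permutation times a diagonal.

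The key is then a dimension count carried out on the Zariski closure $\bar V$. By \cref{appauxgenthm3} $\bar V$ is a cone, so it descends to a projective variety $\mathbb P(\bar V)$. By \cref{appauxgenthm4} $\dim\mathbb P(\bar V)\leq \hat l-1$, and by \cref{appauxgenthm2} the projective span of $\bar V$ has projective dimension at least $\hat N-1$. Let $W:=\operatorname{col}(\mathbf P)\subseteq\operatorname{span}V$, a subspace of projective dimension $R-1$. Inside $\mathbb P(\operatorname{span}V)$, the expected dimension of $\mathbb P(\bar V)\cap\mathbb P(W)$ is at most
\begin{equation*}
(\hat l-1)+(R-1)-(\hat N-1)=\hat l+R-\hat N-1,
\end{equation*}
which by \cref{appauxgenthm5} $R\leq \hat N-\hat l$ is at most $-1$; thus generically the intersection is zero-dimensional. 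Since $\mathbf P$ already exhibits $R$ distinct lines in $\mathbb P(\bar V)\cap\mathbb P(W)$, the intersection consists (generically) of exactly these $R$ lines.

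Having shown $\mathbb P(\bar V)\cap\mathbb P(W)$ contains exactly the $R$ lines spanned by the columns of $\mathbf P$, every column of $\tilde{\mathbf P}=\mathbf P\mathbf M^{-T}$ (being in $V\cap W$) must be a scalar multiple of some column of $\mathbf P$, forcing $\mathbf M^{-T}$ to be a diagonal matrix times a permutation; this is exactly the trivial indeterminacy of \cref{eq:structuredrank1}. The main obstacle, which is precisely the content of [Theorem 1]{JSTSP2016IDLDL}, is to make this transversality argument rigorous over the (possibly real) field $\mathbb F$: one has to guarantee that the $R$ columns of $\mathbf P$ generated from generic $\mathbf z_1,\dots,\mathbf z_R$ sit in the smooth part of $\bar V$, that $W$ meets $\bar V$ transversely there, and that no spurious extraneous intersection points arise—all of which are established by constructing an explicit analytic map whose generic fibre is finite and using a Sard/Bertini-type argument together with the fibre dimension theorem, exactly as in \cite{JSTSP2016IDLDL}.
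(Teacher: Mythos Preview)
The paper does not actually prove this theorem: it is stated without proof as ``a corollary of \cite[Theorem 1]{JSTSP2016IDLDL}'' and then immediately used to derive \cref{thm:maingenLL1}. So there is no original argument to compare against; the paper's ``proof'' is simply the citation.

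Your proposal is consistent with this and goes further by sketching what the parent result does: reduce to the structured factor via full column rank of $\mathbf A$ (using \ref{appauxgenthm1}), then argue by a projective dimension count (using \ref{appauxgenthm2}--\ref{appauxgenthm5}) that $\mathbb P(\bar V)\cap\mathbb P(\operatorname{col}\mathbf P)$ generically contains nothing beyond the $R$ lines through the columns of $\mathbf P$. Two small remarks on your sketch: (i) the reduction $\tilde{\mathbf A}=\mathbf A\mathbf M$ with $\mathbf M$ nonsingular also needs $r_{\tilde{\mathbf A}}=R$, which follows because generically $r_{\mathbf Y}=R$ (the columns of $\mathbf P$ are generically independent since $\dim\operatorname{span}V\geq\hat N\geq R$); (ii) your phrase ``generically the intersection is zero-dimensional'' is slightly loose, since the expected dimension you compute is $\leq -1$, i.e.\ a \emph{generic} $R$-plane would miss $\bar V$ entirely --- the point, as you then note, is that $W=\operatorname{col}\mathbf P$ is not generic but is forced to pass through $R$ points of $V$, and the content of \cite[Theorem 1]{JSTSP2016IDLDL} is precisely that for generic $\mathbf z_1,\dots,\mathbf z_R$ no \emph{additional} point of $V$ lies in their span. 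You correctly identify this as the main obstacle and defer it to the reference, which is exactly what the paper does.
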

\begin{proof}[Proof of \cref{thm:maingenLL1}] (i) First we rewrite   \cref{eq:LrLr1mainBC} in the form of the structured matrix decomposition 
	\cref{eq:structuredrank1}.
	In step (ii) we will apply \cref{thm:auxgen} to \cref{eq:structuredrank1}.
	By \cref{eq:unf_T_1}, decomposition \cref{eq:LrLr1mainBC} can be rewritten as 
	$$
	\mathbf Y:=\unf{T}{1}^T=\mathbf A[\operatorname{vec}(\mathbf B_1\mathbf C_1^T)\ \dots\ \operatorname{vec}(\mathbf B_R\mathbf C_R^T)]^T=:\mathbf A\mathbf P^T.
	$$
	So, the columns of $\mathbf P$ are of the form 
	$$
	\operatorname{vec}([\mathbf b_1\ \dots\ \mathbf b_L][\mathbf c_1\ \dots\ \mathbf c_L]^T) = \mathbf c_1\otimes \mathbf b_1+\dots+\mathbf c_L\otimes\mathbf b_L=:
	[p_1(\mathbf z)\ \dots\ p_N(\mathbf z)]^T,
	$$ 
	where
	$$
	\mathbf z=[\mathbf b_1^T\ \dots\ \mathbf b_L^T\ \mathbf c_1^T\ \dots\ \mathbf c_L^T]^T,\quad l=JL+KL,\quad N=JK.
	$$ 
	Hence the set $V$	in \cref{eq:setV} consists of vectorized $J\times K$ matrices whose rank does not exceed $L$.
	
	(ii) Now we check  \cref{appauxgenthm1,appauxgenthm2,appauxgenthm3,appauxgenthm4,appauxgenthm5} in \cref{thm:auxgen}.   \Cref{appauxgenthm1} holds by \cref{eq:RleqI}. Since $V$ contains, in particular,  all vectorized rank-$1$ matrices, it  spans the entire $\fF^N$. Hence we can choose $\hat N=N=JK$ in   \cref{appauxgenthm2}.  \Cref{appauxgenthm3} is trivial.
	It is well-known that the set $V$ is an algebraic variety of dimension $(J+K-L)L$, so    \cref{appauxgenthm4} holds for $\hat l=(J+K-L)L$. Finally,     \cref{appauxgenthm5} holds by \cref{eq:RleqI}:
	$
	R\leq (J-L)(K-L)=JK-(J+K-L)L=\hat N - \hat l
	$.
\end{proof}
\section{Proofs of  \cref{sta:lemmaApp1,sta:lemmaApp2,sta:lemmaApp5} of  \cref{lemma: Q2viaABC} and proof of \cref{Lemma:redtobtdI}}\label{sec:appendixB}
\begin{proof}[Proofs of  \cref{sta:lemmaApp1,sta:lemmaApp2,sta:lemmaApp5} of \cref{lemma: Q2viaABC}]
	1) Since $\mathcal T = \sum\limits_{r=1}^R\mathbf a_r\circ(\mathbf B_r\mathbf C_r^T)$, it follows that
	$
	t_{ijk} = \sum\limits_{r=1}^R a_{ir} \sum\limits_{l=1}^{L_r}b_{jl,r}c_{kl,r}
	$. 
	Hence
	\begin{equation}
	t_{i_1j_1k_1}t_{i_2j_2k_2} = \sum\limits_{r_1=1}^R \sum\limits_{r_2=1}^R a_{i_1r_1}a_{i_2r_2}
	\sum\limits_{l_1=1}^{L_{r_1}} \sum\limits_{l_2=1}^{L_{r_2}} b_{j_1l_1,r_1} b_{j_2l_2,r_2} c_{k_1l_1,r_1} c_{k_2l_2,r_2}.
	\label{eq:tt}
	\end{equation}
	By \cref{def:Q2}, the entry of $\mathbf Q_2(\mathcal T)$ with the index in \cref{eq:ttttindex}
	is equal to \cref{eq: tttt}, where  $1\leq i_1<i_2\leq I$, $1\leq j_1<j_2\leq J$, and $1\leq k_1\leq k_2\leq K$.
	Applying \cref{eq:tt} to each term in \cref{eq: tttt} and making simple algebraic manipulations we obtain that the expression in 
	\cref{eq: tttt} is equal to
	\begin{align*}
	&\sum\limits_{1\leq r_1<r_1\leq R} \Big[ (a_{i_1r_1}a_{i_2r_2} - a_{i_2r_1}a_{i_1r_2})\times \\
	&\qquad\qquad\sum\limits_{l_1=1}^{L_{r_1}} \sum\limits_{l_2=1}^{L_{r_2}} (b_{j_1l_1,r_1} b_{j_2l_2,r_2}-b_{j_2l_1,r_1} b_{j_1l_2,r_2}) (c_{k_1l_1,r_1} c_{k_2l_2,r_2}+c_{k_2l_1,r_1} c_{k_1l_2,r_2})\Big]=\\
	&\sum\limits_{1\leq r_1<r_1\leq R} \left(\wprod{\mathbf a_{r_1}}{\mathbf a_{r_2}}\right)_{i_1+\rubinom{i_2-1}{2}}
	\sum\limits_{l_1=1}^{L_{r_1}} \sum\limits_{l_2=1}^{L_{r_2}}
	\left(\wprod{\mathbf b_{l_1,r_1}}{\mathbf b_{l_2,r_2}}\right)_{j_1+\rubinom{j_2-1}{2}}
	\left(\symprod{\mathbf c_{l_1,r_1}}{\mathbf c_{l_2,r_2}}\right)_{k_1+\rubinom{k_2}{2}},
	\end{align*}
	which, by the definition of $\Phi(\mathbf A,\mathbf B)$ and $\mathbf S_2(\mathbf C)$, is the  entry of  $\Phi(\mathbf A,\mathbf B)\mathbf S_2(\mathbf C)^T$ with the index in \cref{eq:ttttindex}.\par
		2) follows from the identity $\mathbf R_2(\mathcal T)=\mathbf Q_2(\mathcal T)\mathbf P_K^T$ and 1).\par
       6) We assume that $\Phi(\mathbf A,\mathbf B)$ has full column rank. It is sufficient to prove that the identities  $\mathbf h=\mathbf B_{r_1}\mathbf f_1=\mathbf B_{r_1}\mathbf f_2$ are valid only for $\mathbf h=\mathbf 0$.  From the definition of the operation ``$\wprod{}{}$'' it follows that     $(\wprod{\mathbf B_{r_1}}{\mathbf B_{r_2}})(\mathbf f_1\otimes\mathbf f_2) = \wprod{(\mathbf B_{r_1}\mathbf f_1)}{(\mathbf B_{r_2}\mathbf f_2)}=
    \wprod{\mathbf h}{\mathbf h}=\mathbf 0$. 
    Hence $\left[(\wprod{\mathbf a_{r_1}}{\mathbf a_{r_2}})\otimes(\wprod{\mathbf B_{r_1}}{\mathbf B_{r_2}})\right](\mathbf f_1\otimes\mathbf f_2)=
    (\wprod{\mathbf a_{r_1}}{\mathbf a_{r_2}})\otimes 
    \left[(\wprod{\mathbf B_{r_1}}{\mathbf B_{r_2}})(\mathbf f_1\otimes\mathbf f_2)\right]=\mathbf 0$.
    Now, since $(\wprod{\mathbf a_{r_1}}{\mathbf a_{r_2}})\otimes(\wprod{\mathbf B_{r_1}}{\mathbf B_{r_2}})$
    is formed by the columns of the full column rank matrix      $\Phi(\mathbf A,\mathbf B)$, it follows that
    $\mathbf f_1\otimes\mathbf f_2=\mathbf 0$, which easily implies that $\mathbf h=\mathbf 0$.
   \end{proof}
\begin{proof}[Proof of \cref{Lemma:redtobtdI}]
	W.l.o.g. we assume that $i=1$ and  $j=2$. Since $\mathbf C$ has full column rank, and, by \cref{item:th1cond1ABC}, $\mathbf C^T$ has full column rank, it follows that  $\mathbf C$ is $K\times K$ nonsingular and that $K=\sum L_r$. 
	This readily implies that $d_r=L_r$ for all $r$.
	From the rank-nullity theorem and \cref{item:th1cond3ABC} it follows that
	\begin{multline*}
	r_{\Phi(\mathbf A,\mathbf B)}\geq r_{\Phi(\mathbf A,\mathbf B)\mathbf S_2(\mathbf C)^T} = 
	\rubinom{K+1}{2}
	-\dim\nullsp{\Phi(\mathbf A,\mathbf B)\mathbf S_2(\mathbf C)^T}=\\
	\rubinom{\sum L_r+1}{2}-\sum \rubinom{L_r+1}{2}=\sum\limits_{r_1<r_2} L_{r_1}L_{r_2}.
	\end{multline*}
	Since $\Phi(\mathbf A,\mathbf B)$ is a $\rubinom{K+1}{2} \times \sum\limits_{r_1<r_2} L_{r_1}L_{r_2}$ matrix, it follows that
	$\Phi(\mathbf A,\mathbf B)$  has full column rank. In particular, the submatrix $(\wprod{\mathbf a_1}{\mathbf a_2})\otimes(\wprod{\mathbf B_1}{\mathbf B_2})$ has full column rank, implying  that the same holds true for the matrix $\wprod{\mathbf B_1}{\mathbf B_2}$.
	Assume that $[\mathbf B_1\ \mathbf B_2][\mathbf f_1^T\ \mathbf f_2^T]^T=\mathbf 0$ for some $\mathbf f_1\in\fF^{L_1}$ and
	$\mathbf f_2\in\fF^{L_2}$. Then $\mathbf B_2\mathbf f_2 = -\mathbf B_1\mathbf f_1$.  One can  easily verify that $(\wprod{\mathbf B_1}{\mathbf B_2})(\mathbf f_1\otimes \mathbf f_2)=
	\wprod{\mathbf B_1\mathbf f_1}{\mathbf B_2\mathbf f_2}=-\wprod{\mathbf B_1\mathbf f_1}{\mathbf B_1\mathbf f_1}=\mathbf 0$.
	Hence $\mathbf f_1\otimes \mathbf f_2=\mathbf 0$. Thus,  $\mathbf f_1=\mathbf 0$ or $\mathbf f_2=\mathbf 0$, implying that
	$\mathbf B_1\mathbf f_1=\mathbf 0$ or $\mathbf B_2\mathbf f_2=\mathbf 0$.
	Since $\mathbf B_1$ and $\mathbf B_2$ have full column rank  and $\mathbf B_2\mathbf f_2 = -\mathbf B_1\mathbf f_1$,
	it follows that both $\mathbf f_1$ and $\mathbf f_2$ are the zero vectors. Hence  the matrix $[\mathbf B_1\ \mathbf B_2]$ has full column rank.
\end{proof}
\section{Proof of  \cref{sta:lemmaApp3} of  \cref{lemma: Q2viaABC}}\label{sec:appendixLemmas3}
\begin{proof}[Proofs of  \cref{sta:lemmaApp3} of  \cref{lemma: Q2viaABC}] 
	The inequality in \cref{sta:lemmaApp3}
	%
	 follows  immediately from 	
	\cref{sta:lemmaApp1}. We prove the identity $\dim\nullsp{\mathbf S_2(\mathbf C)^T}=\sum\rubinom{d_r+1}{2}$. 	
	Throughout the proof,  $\scol(\cdot)$ denotes the column space of a matrix.
	
	Obviously, $\dim\nullsp{\mathbf S_2(\mathbf C)^T}=\dim\nullsp{\mathbf S_2(\mathbf C)^H}$.
	Since $\vecsym{K}$ is the orthogonal sum of the subspaces $\nullsp{\mathbf S_2(\mathbf C)^H}$ and $\scol(\mathbf S_2(\mathbf C))$,
	it is sufficient to show that there exists a subspace  $S$ such that
	\begin{gather}
	\label{eq:subspaceS1}
	\vecsym{K}=\sspan\{S,\scol(\mathbf S_2(\mathbf C))\},\\
	S\cap\scol(\mathbf S_2(\mathbf C))  = \{\mathbf 0\},\label{eq:subspaceS2}\\ 
	\dim S =\sum\rubinom{d_r+1}{2}. \label{eq:subspaceS3}
	\end{gather}
	We explicitly construct a possible $S$ and show that  \cref{eq:subspaceS1,eq:subspaceS2,eq:subspaceS3} hold.
	
	(i) {\em Construction of $S$.}
	Since $r_{\mathbf C}=K$ and $\dim \nullsp{\mathbf Z_{r,\mathbf C}}=d_r$, it follows that
	$r_{\mathbf Z_{r,\mathbf C}^T}=r_{\mathbf Z_{r,\mathbf C}}=K-d_r$. 	Let $W_r=\scol(\mathbf Z_{r,\mathbf C}^T) \cap \scol(\mathbf C_r)$ and let $V_r$ denote the orthogonal complement of $W_r$ in $\scol(\mathbf C_r)$. Then
	\begin{equation*}
	\begin{split}
	\dim W_r =& \dim \scol(\mathbf Z_{r,\mathbf C}^T) + \dim\scol(\mathbf C_r)\\-&
	\dim \scol([\mathbf C_1\ \dots \mathbf C_{r-1}\ \mathbf C_{r+1}\ \dots\ \mathbf C_R\ \mathbf C_r])
	=K-d_r+L_r-K=L_r-d_r,\\
	\dim V_r =&  \dim\scol(\mathbf C_r) - \dim W_r = L_r-(L_r-d_r) = d_r.
	\end{split}
	\end{equation*}
	Let  $\mathbf V_r\in\fF^{K\times d_r}$ be  a matrix whose columns form a basis of $V_r$.
	We set
	$$
	S = \scol([\mathbf V_1\cdot\mathbf V_1\ \dots\ \mathbf V_R\cdot\mathbf V_R]).
	$$
	
	(ii) {\em Proof of \cref{eq:subspaceS1}}.  Let  $\mathbf W_r\in\fF^{K\times (L_r-d_r)}$ be  a matrix whose columns form a basis of $W_r$. 
	Since $r_{\mathbf C}=K$ and $\scol(\mathbf C_r)=\scol([\mathbf V_r\ \mathbf W_r])$,
	it follows that
	\begin{equation}
	\begin{split}
	&\vecsym{K}
	=\scol([\symprod{\mathbf C}{\mathbf C}])=
	\sspan\{\scol(\symprod{\mathbf C_{r_1}}{\mathbf C_{r_2}}):1\leq r_1,r_2\leq R\}\\
	=&	\sspan\{\scol(\mathbf S_2(\mathbf C)),\scol(\symprod{\mathbf C_{r}}{\mathbf C_{r}}):1\leq r\leq R\}\\
	=&\sspan\{\scol(\mathbf S_2(\mathbf C)),\scol(\symprod{\mathbf V_{r}}{\mathbf V_{r}}),\scol(\symprod{\mathbf V_{r}}{\mathbf W_{r}}),
		\scol(\symprod{\mathbf W_{r}}{\mathbf W_{r}}):1\leq r\leq R\}\\
		=&\sspan\{\scol(\mathbf S_2(\mathbf C)),S,\scol(\symprod{\mathbf V_{r}}{\mathbf W_{r}}),
		\scol(\symprod{\mathbf W_{r}}{\mathbf W_{r}}):1\leq r\leq R\}.
	\end{split}\label{eq:splittedeq1}
	\end{equation}
	From the construction of $\mathbf W_r$, $\mathbf V_r$  and $\mathbf S_2(\mathbf C)$ it follows that
	\begin{equation}
	 \sspan\{ \scol(\symprod{\mathbf V_{r}}{\mathbf W_{r}}),
	 \scol(\symprod{\mathbf W_{r}}{\mathbf W_{r}})\}\subseteq 
	 \scol(\symprod{\mathbf C_{r}}{\mathbf Z_{r,\mathbf C}^T})\subseteq
	 \scol(\mathbf S_2(\mathbf C)),\quad 	 
	 1\leq r\leq R.\label{eq:splittedeq2}
	\end{equation}
	Now,  \cref{eq:subspaceS1} follows from \cref{eq:splittedeq1,eq:splittedeq2}.
		
	(iii) {\em Proof of \cref{eq:subspaceS2}}. From the construction of $V_r$ it follows that
	\begin{equation}
	\label{eq:orthogonal}
	\scol(\mathbf V_r) \text{ is orthogonal to  } \scol(\mathbf C_1),\dots,\scol(\mathbf C_{r-1}),\scol(\mathbf C_{r+1}),\dots,\scol(\mathbf C_R).
	\end{equation}
	Let $\mathbf P_K$ be  defined as in \cref{eq:defPn}.
	Then
	\begin{gather}
	\scol(\mathbf P_K(\symprod{\mathbf V_r}{\mathbf V_r}))=\sspan\{\mathbf x_r\otimes\mathbf y_r+\mathbf y_r\otimes\mathbf x_r:\ \mathbf x_r,\mathbf y_r\in V_r\},\label{eq:new82}\\
	\scol(\mathbf P_K(\symprod{\mathbf C_{r_1}}{\mathbf C_{r_2}}))=\sspan\{\mathbf x_{r_1}\otimes\mathbf y_{r_2}+\mathbf y_{r_2}\otimes\mathbf x_{r_1}:\ \mathbf x_{r_1}\in \scol(\mathbf C_{r_1}),\mathbf y_{r_2}\in \scol(\mathbf C_{r_2})\}.\nonumber
	\end{gather}
	It now easily follows from \cref{eq:orthogonal} that
	$$
	\scol(\mathbf P_K(\symprod{\mathbf V_r}{\mathbf V_r})) \text{ is orthogonal to }
	\scol(\mathbf P_K(\symprod{\mathbf C_{r_1}}{\mathbf C_{r_2}})),\ 1\leq r\leq R,\ 1\leq r_1<r_2\leq R.
	$$
	Hence $\mathbf P_K S$ is orthogonal to $\mathbf P_K\scol(\mathbf S_2(\mathbf C)) $.
	Since $ \mathbf P_K$ is a bijective linear map from $\fF^{\rubinom{K+1}{2}}$ to $\vecsym{K}$, it follows that
	the subspaces 	$S$ and $\scol(\mathbf S_2(\mathbf C))$ are linearly independent, that is, \cref{eq:subspaceS2} holds.
	
	(iii) {\em Proof of \cref{eq:subspaceS3}}. 	Since $ \mathbf P_K$ is a bijective linear map, it is sufficient to prove that 
	$\dim \mathbf P_K S=\sum\rubinom{d_r+1}{2}$. 
	 From the construction of $V_r$ it follows that  $\scol(\mathbf V_{r_1})$ is orthogonal to $\scol(\mathbf V_{r_2})$ for $r_1\ne r_2$.
	 Hence, by \eqref{eq:new82},   $\scol(\mathbf P_K(\symprod{\mathbf V_{r_1}}{\mathbf V_{r_1}}))$ is orthogonal to 
	 $\scol(\mathbf P_K(\symprod{\mathbf V_{r_2}}{\mathbf V_{r_2}}))$ for $r_1\ne r_2$. 
	 Since $\mathbf P_K S= \sspan\{ \scol(\mathbf P_K(\symprod{\mathbf V_r}{\mathbf V_r})):\ 1\leq r\leq R\}$, it follows that
	 $\mathbf P_K S$   is the orthogonal sum of the subspaces 	 $\scol(\mathbf P_K(\symprod{\mathbf V_r}{\mathbf V_r}))$. Hence
	 	$\dim \mathbf P_K S=\sum \dim \scol(\mathbf P_K(\symprod{\mathbf V_r}{\mathbf V_r}))$. 	 
	   To prove that	 $\dim\scol(\mathbf P_K(\symprod{\mathbf V_r}{\mathbf V_r}))= \rubinom{d_r+1}{2}$
	   we show that the $\rubinom{d_r+1}{2}$  columns
	   $\mathbf v_i\otimes \mathbf v_j+\mathbf v_j\otimes \mathbf v_i$, $1\leq i\leq j\leq d_r$ of 
	   $\mathbf P_K(\symprod{\mathbf V_r}{\mathbf V_r})$ are linearly independent, where
	   $\mathbf v_1,\dots,\mathbf v_{d_r}$ denote the columns of $\mathbf V_r$. Indeed, assume that there exist values $\lambda_{ij}$ , $1\leq i\leq j\leq d_r$
	   such that $\mathbf 0=\sum\limits_{1\leq i\leq j\leq d_r}\lambda_{ij}( \mathbf v_i\otimes \mathbf v_j+\mathbf v_j\otimes \mathbf v_i)$.
	   Then
	   \begin{equation}
	   \begin{split}
	   \mathbf 0&=
	   \sum\limits_{1\leq i\leq d_r} \mathbf v_i\otimes \sum\limits_{i\leq j\leq d_r}\lambda_{ij}\mathbf v_j+
	   \sum\limits_{1\leq j\leq d_r} \mathbf v_j\otimes \sum\limits_{1\leq i\leq j}\lambda_{ij}\mathbf v_i\\
	   &= \sum\limits_{1\leq i\leq d_r} \mathbf v_i\otimes \left( 
	   \sum\limits_{i<j\leq d_r}\lambda_{ij}\mathbf v_j+
	   \sum\limits_{1\leq j< i}\lambda_{ji}\mathbf v_j+
	   2\lambda_{ii}\mathbf v_{ii}
	    \right).
	    \end{split}\label{eq:neweq83}
	   \end{equation}
	   Since the vectors  $\mathbf v_1,\dots,\mathbf v_{d_r}$ are linearly independent, it follows from \cref{eq:neweq83} that
	   $\lambda_{ij}=0$ for all values of indices. 
\end{proof}
\section{Proof of  \cref{sta:lemmaApp4,sta:lemmaApp45} of  \cref{lemma: Q2viaABC}}\label{sec:appendixLemmas4}
By definition, set
\begin{align}
\mathcal C_2(\mathbf A) &:= [\wprod{\mathbf a_1}{\mathbf a_2}\ \dots\ \wprod{\mathbf a_{R-1}}{\mathbf a_R}]\in\mathbb F^{\rubinom{I}{2}\times \rubinom{R}{2}}\label{eq:C2A},\\
\mathcal C_2'(\mathbf B)&:= [\wprod{\mathbf B_1}{\mathbf B_2}\ \dots\  \wprod{\mathbf B_{R-1}}{\mathbf B_R}]\in\mathbb F^{\rubinom{J}{2}\times \sum\limits_{r_1<r_2} L_{r_1}L_{r_2}}.\label{eq:C2B}
\end{align}
The matrix $\mathcal C_2(\mathbf A)$ is called the second compound matrix of $\mathbf A$. We will need the following  properties of 
$\mathcal C_2(\cdot)$ and $\mathcal C_2'(\cdot)$.
\begin{lemma} \label{lemma:compound} Let  $\mathbf Y$ be a matrix such that  $\mathcal C_2(\mathbf Y)$, and $\mathcal C_2'(\mathbf Y\mathbf B)$ are defined. Then the following
	statements hold.
	\begin{statements}
	\item	\label{lemma:newcompoundidentity1}
	If $\mathbf A$ has full column rank, then $\mathcal C_2(\mathbf A)$ also has full column rank;
	\item \label{lemma:newcompoundidentity3}
	 $\mathcal C_2(\mathbf A^T)= \mathcal C_2(\mathbf A)^T$;
    \item \label{lemma:newcompoundidentity2}
     $\mathcal C_2(\mathbf Y)\mathcal C_2(\mathbf B) = \mathcal C_2(\mathbf Y\mathbf B)$ (Binet-Cauchy formula);
     \item \label{lemma:newcompoundidentity}
     $\mathcal C_2(\mathbf Y)\mathcal C_2'(\mathbf B) = \mathcal C_2'(\mathbf Y\mathbf B)$.
\end{statements}
\end{lemma}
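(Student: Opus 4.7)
The plan is to establish the four claims by working outward from the Cauchy--Binet identity \cref{lemma:newcompoundidentity2}, which I regard as the foundational tool. I would begin with \cref{lemma:newcompoundidentity3}, the transpose identity, because it is essentially a tautology from the definition: the entry of $\mathcal C_2(\mathbf A)$ indexed by the row-pair $(i_1,i_2)$ and column-pair $(r_1,r_2)$ is the $2\times 2$ minor of $\mathbf A$ on those rows and columns, and since $\det(\mathbf M)=\det(\mathbf M^T)$ this coincides with the entry of $\mathcal C_2(\mathbf A^T)$ indexed by the swapped pair. No substantive work is needed beyond unwinding the indexing convention in \cref{eq:C2A}.

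Next I would prove \cref{lemma:newcompoundidentity2} by direct expansion. Fix row indices $i_1<i_2$ and column indices $j_1<j_2$ and write the $2\times 2$ minor of $\mathbf Y\mathbf B$ on those rows and columns as a determinant of a product; expanding each entry $(\mathbf Y\mathbf B)_{ij}=\sum_k Y_{ik}B_{kj}$, using bilinearity of the determinant in its columns, and grouping the resulting terms into unordered index-pairs $\{k_1,k_2\}$, one obtains
\[
\sum_{k_1<k_2}\det\!\begin{bmatrix}Y_{i_1k_1} & Y_{i_1k_2}\\ Y_{i_2k_1} & Y_{i_2k_2}\end{bmatrix}\det\!\begin{bmatrix}B_{k_1j_1} & B_{k_1j_2}\\ B_{k_2j_1} & B_{k_2j_2}\end{bmatrix},
\]
which is precisely the entry of $\mathcal C_2(\mathbf Y)\mathcal C_2(\mathbf B)$ indexed by the pair $(\{i_1,i_2\},\{j_1,j_2\})$. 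This is the classical Cauchy--Binet formula specialized to $2\times 2$ minors and is essentially a bookkeeping exercise.

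For \cref{lemma:newcompoundidentity} I would apply Cauchy--Binet column-by-column to $\mathcal C_2'(\mathbf B)$. Each column of $\mathcal C_2'(\mathbf B)$ has the form $\wprod{\mathbf b_{l_1,r_1}}{\mathbf b_{l_2,r_2}}$ with $r_1<r_2$; this is the single nonzero column of the second compound of the $J\times 2$ matrix $[\mathbf b_{l_1,r_1}\ \mathbf b_{l_2,r_2}]$. Multiplying on the left by $\mathcal C_2(\mathbf Y)$ and invoking \cref{lemma:newcompoundidentity2} transforms this column into $\wprod{(\mathbf Y\mathbf b_{l_1,r_1})}{(\mathbf Y\mathbf b_{l_2,r_2})}$, which is the corresponding column of $\mathcal C_2'(\mathbf Y\mathbf B)$. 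Concatenating over all admissible quadruples $(l_1,r_1,l_2,r_2)$ yields the claimed matrix identity.

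Finally I would deduce \cref{lemma:newcompoundidentity1} from \cref{lemma:newcompoundidentity2}. If $\mathbf A$ has full column rank $R$, then it contains an $R\times R$ nonsingular row submatrix $\mathbf A'$, and $\mathcal C_2(\mathbf A')$ is a row submatrix of $\mathcal C_2(\mathbf A)$ (the rows labelled by the $\binom{R}{2}$ pairs drawn from the chosen $R$ rows). Applying Cauchy--Binet to the identity $\mathbf A'(\mathbf A')^{-1}=\mathbf I_R$ gives $\mathcal C_2(\mathbf A')\mathcal C_2((\mathbf A')^{-1})=\mathcal C_2(\mathbf I_R)=\mathbf I_{\binom{R}{2}}$, so $\mathcal C_2(\mathbf A')$ is invertible and $\mathcal C_2(\mathbf A)$ inherits full column rank from this submatrix. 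I do not foresee a real obstacle: all four claims are classical facts about compound matrices, and the only thing that needs care is the index bookkeeping --- in particular verifying that $\mathcal C_2'$ interacts correctly with the block structure (it uses only inter-block wedges, so Cauchy--Binet must be invoked one pair at a time rather than globally), and that the row-index set of $\mathcal C_2(\mathbf A)$ genuinely contains the row-index set of $\mathcal C_2(\mathbf A')$ so that the submatrix argument in \cref{lemma:newcompoundidentity1} is literally valid.
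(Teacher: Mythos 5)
Your proof is correct and takes essentially the same route as the paper: statements \ref{lemma:newcompoundidentity1}--\ref{lemma:newcompoundidentity2} are the classical compound-matrix facts that the paper simply cites from Horn and Johnson (your direct verifications are fine), and your column-by-column application of Cauchy--Binet to obtain \ref{lemma:newcompoundidentity} is the same argument the paper phrases globally via a column-selection matrix $\mathbf P$ with $\mathcal C_2'(\mathbf B)=\mathcal C_2(\mathbf B)\mathbf P$. The only cosmetic slip is calling $\wprod{\mathbf b_{l_1,r_1}}{\mathbf b_{l_2,r_2}}$ the ``single nonzero column'' of the second compound of a $J\times 2$ matrix --- it is simply the single column, which may happen to be zero.
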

\begin{proof}
	\Cref{lemma:newcompoundidentity1,lemma:newcompoundidentity3,lemma:newcompoundidentity2} are classical properties of the compound matrices (see, for instance, \cite[pp. 21--22]{HornJohnson}). \Cref{lemma:newcompoundidentity} follows from \cref{lemma:newcompoundidentity2}.
	Indeed, from the definition of $\mathcal C_2(\mathbf B)$ and $\mathcal C_2'(\mathbf B)$ it follows that there exists a column selection matrix $\mathbf P$ such that  $\mathcal C_2'(\mathbf B)=\mathcal C_2(\mathbf B)\mathbf P$. Moreover, for any matrix $\mathbf Y$
	such that  $\mathcal C_2(\mathbf Y)$, and $\mathcal C_2'(\mathbf Y\mathbf B)$ are defined, the identity 
	$\mathcal C_2'(\mathbf Y\mathbf B)=\mathcal C_2(\mathbf Y\mathbf B)\mathbf P$ holds with the same $\mathbf P$.
	Hence, by \cref{lemma:newcompoundidentity2}, $\mathcal C_2(\mathbf Y)\cdot \mathcal C_2'(\mathbf B)=\mathcal C_2(\mathbf Y)\cdot \mathcal C_2(\mathbf B)\mathbf P=\mathcal C_2(\mathbf Y\mathbf B)\mathbf P=\mathcal C_2'(\mathbf Y\mathbf B)$.
\end{proof}
\begin{proof}[Proof of  \cref{sta:lemmaApp4} of  \cref{lemma: Q2viaABC}]
		First we prove that condition \cref{eq:ranksofFand GABC} implies that  $\Phi(\mathbf A,\mathbf B)$ has full column rank. In the case $k_{\mathbf B}'=2$, we have $r_{\mathbf A} =R$. Hence, by \cref{lemma:newcompoundidentity1}
		of \cref{lemma:compound}  the  $\rubinom{I}{2}\times \rubinom{R}{2}$ matrix $C_2(\mathbf A)$ has full column rank. The fact that $k_{\mathbf B}'=2$  further implies that $[\mathbf B_{r_1}\ \mathbf B_{r_2}]$ has full column rank for all $r_1\leq r_2$.
		Hence, by \cref{lemma:newcompoundidentity1} of \cref{lemma:compound}, the matrix $\mathcal C_2([\mathbf B_{r_1}\ \mathbf B_{r_2}])$ also has full column rank. Since $\wprod{\mathbf B_{r_1}}{\mathbf B_{r_2}}$ is formed by columns of $\mathcal C_2([\mathbf B_{r_1}\ \mathbf B_{r_2}])$, it also has full column rank. One can easily prove that full column rank of $C_2(\mathbf A)$ and the matrices $\wprod{\mathbf B_{r_1}}{\mathbf B_{r_2}}$, $r_1\leq r_2$ implies full column rank of  $\Phi(\mathbf A,\mathbf B)$.
	
	We now consider the case $k_{\mathbf B}'>2$. 
	\begin{romannum}
		\item
		Suppose that $\Phi(\mathbf A,\mathbf B)\mathbf f=\mathbf 0$ for some $(\sum\limits_{r_1<r_2} L_{r_1}L_{r_2})\times 1$
		vector $\mathbf f$. We represent $\mathbf f$ as $\mathbf f=[\mathbf f_{1,2}^T\ \dots\ \mathbf f_{R-1,R}^T]^T$, where $\mathbf f_{r_1,r_2}\in \fF^{L_{r_1}L_{r_2}}$.
		Then $\Phi(\mathbf A,\mathbf B)\mathbf f=\mathbf 0$ is equivalent to
		\begin{equation}
		\label{eq:sumfij}
		\sum\limits_{r_1<r_2} (\mathbf a_{r_1}\wedge \mathbf a_{r_2})\otimes (\mathbf B_{r_1}\wedge \mathbf B_{r_2})\mathbf f_{r_1,r_2}=\mathbf 0.
		\end{equation}
		We can further rewrite \cref{eq:sumfij} in  matrix form as 
		\begin{equation}
		\begin{split}
		\label{eq:withoutY}
		\mathbf O &= \sum\limits_{r_1<r_2}(\mathbf B_{r_1}\wedge \mathbf B_{r_2})\mathbf f_{r_1,r_2}(\mathbf a_{r_1}\wedge \mathbf a_{r_2})^T\\
		&=
		\mathcal C_2'(\mathbf B)\Bdiag(\mathbf f_{1,2},\dots,\mathbf f_{R-1,R})\mathcal C_2(\mathbf A)^T.
		\end{split}
		\end{equation}
		\item Let us for now assume that the last $r_{\mathbf A}$ columns of $\mathbf A$ are linearly independent. We show that
		$\mathbf f_{k_{\mathbf B}'-1,k_{\mathbf B}'}=\mathbf 0$. Let us set
		$$
		s_1:=L_1+\dots+L_{k_{\mathbf B}'-2},\quad s_2:=L_{k_{\mathbf B}'-1}+L_{k_{\mathbf B}'},\quad s_3:=L_{k_{\mathbf B}'+1}+\dots+L_R.
		$$ 		
		By definition of $k_{\mathbf B}'$, the matrix $\mathbf X:=\left[\begin{matrix}\mathbf B_1&\dots&\mathbf B_{k_{\mathbf B}}\end{matrix}\right]$
		has full column rank. Hence, $\mathbf X^\dagger \mathbf X  =\mathbf  I_{s_1+s_2}$, where $\mathbf X^\dagger$ denotes the Moore--Penrose pseudo-inverse of $\mathbf X$.
		Denoting $
		\mathbf Y:= [{\mathbf O}_{s_2\times s_1}\ \mathbf I_{s_2}]
		\mathbf X^\dagger $,
		we have 
		\begin{equation*}
		\begin{split}
		\mathbf Y\mathbf B
		=& [{\mathbf O}_{s_2\times s_1}\ \mathbf I_{s_2}]
		\mathbf X^\dagger [\mathbf X\ \mathbf B_{k_{\mathbf B}'+1}\ \dots\ \mathbf B_{R}]\\
		=&	[{\mathbf O}_{s_2\times s_1}\ \mathbf I_{s_2}]
		[\mathbf I_{s_1+s_2}\ \boxplus_{(s_1+s_2)\times s_3}]
		=
		[{\mathbf O}_{s_2\times s_1}\
		\mathbf I_{s_2}\
		\boxplus_{s_2\times s_3}
		]\\
		=&\left[{\mathbf O}_{s_2\times L_1}\ \dots\ {\mathbf O}_{s_2\times L_{k_{\mathbf B}'-2}}\ 
		\left[\begin{array}{l}
		{\mathbf I}_{L_{k_{\mathbf B}'-1}}\\
		\mathbf O_{L_{k_{\mathbf B}'}\times L_{k_{\mathbf B}'-1}}
		\end{array}\right]\ 
		\left[\begin{array}{l}
		\mathbf O_{L_{k_{\mathbf B}'-1}\times L_{k_{\mathbf B}'}}\\
		\mathbf I_{L_{k_{\mathbf B}'}}
		\end{array}\right]	\	
		\boxplus_{s_2\times s_3}	
		\right],
		\end{split}
		\end{equation*}
		where $\boxplus_{p\times q}$ denotes a $p\times q$ matrix that is not further specified.
		From the definition of the matrix $\mathcal C_2'(\cdot)$ it follows  that
		$\mathcal C_2'(\mathbf Y\mathbf B)$ consists of 
		$(R-1) + (R-2) + \dots+(R-k_{\mathbf B}'+2)$ zero blocks followed by the nonzero block
		$\mathbf G:=\left[\begin{array}{l}
		{\mathbf I}_{L_{k_{\mathbf B}'-1}}\\
		\mathbf O_{L_{k_{\mathbf B}'}\times L_{k_{\mathbf B}'-1}}
		\end{array}\right]\wedge 
		\left[\begin{array}{l}
		\mathbf O_{L_{k_{\mathbf B}'-1}\times L_{k_{\mathbf B}'}}\\
		\mathbf I_{L_{k_{\mathbf B}'}}
		\end{array}\right]$ and some other blocks. One can easily verify that $\mathbf G$ is formed by distinct columns of the $\rubinom{s_2}{2}\times \rubinom{s_2}{2}$
		identity matrix, implying that $\mathbf G$ has full column rank.
		Multiplying \cref{eq:withoutY} by $\mathcal C_2(\mathbf Y)$, applying \cref{lemma:newcompoundidentity} of \cref{lemma:compound} and taking into account that
		the first	$(R-1) + (R-2) + \dots+(R-k_{\mathbf B}'+2)$ blocks of $\mathcal C_2'(\mathbf Y\mathbf B)$ are zero,
		we obtain
		\begin{equation}
		\begin{split}
		\label{eq:withoutYY}
		&\mathbf O =\mathcal C_2(\mathbf Y)\mathbf O=
		\mathcal C_2(\mathbf Y)\mathcal C_2'(\mathbf B)\Bdiag(\mathbf f_{1,2},\dots,\mathbf f_{R-1,R})\mathcal C_2(\mathbf A)^T\\
		&=\mathcal C_2'(\mathbf Y\mathbf B)\Bdiag(\mathbf f_{1,2},\dots,\mathbf f_{R-1,R})\mathcal C_2(\mathbf A)^T\\
		&= [\mathbf G  \boxplus\ \dots\ \boxplus]\Bdiag(\mathbf f_{k_{\mathbf B}'-1,k_{\mathbf B}'},\dots,\mathbf f_{R-1,R})
		[{\mathbf a}_{k_{\mathbf B}'-1}\wedge {\mathbf a}_{k_{\mathbf B}'}\ \dots\ \mathbf a_{R-1}\wedge \mathbf a_R]^T,
		\end{split}
		\end{equation}
		where $\boxplus$ denotes a block of  the matrix $\mathcal C_2'(\mathbf Y\mathbf B)$.
		From the definition of $\mathcal C_2(\cdot)$ it  follows that
		$[{\mathbf a}_{k_{\mathbf B}'-1}\wedge {\mathbf a}_{k_{\mathbf B}'}\ \dots\ \mathbf a_{R-1}\wedge \mathbf a_R]=
		\mathcal C_2([{\mathbf a}_{k_{\mathbf B}'-1}\ \dots\ \mathbf a_R])$. 
		Since the last $r_{\mathbf A}$ columns of $\mathbf A$ are linearly independent and  $r_{\mathbf A}\geq R-k_{\mathbf B}'+2$ it follows that
		the vectors ${\mathbf a}_{k_{\mathbf B}'-1}, \dots, \mathbf a_R$ are also linearly independent. Hence, by \cref{lemma:compound}
		the matrix $\mathcal C_2([{\mathbf a}_{k_{\mathbf B}'-1}\ \dots\ \mathbf a_R])$ has full column rank.
		Hence \cref{eq:withoutYY} is equivalent to 
		$$
		\mathbf O=[\mathbf G  \boxplus\ \dots\ \boxplus]\Bdiag(\mathbf f_{k_{\mathbf B}'-1,k_{\mathbf B}'},\dots,\mathbf f_{R-1,R}),
		$$
		implying that $\mathbf G\mathbf f_{k_{\mathbf B}'-1,k_{\mathbf B}'}=\mathbf 0$. Since $\mathbf G$ has full column rank, it follows that
		$\mathbf f_{k_{\mathbf B}'-1,k_{\mathbf B}'}=\mathbf 0$.
		\item
		We show that  $\mathbf f_{r_1,r_2}=\mathbf 0$ for all $1\leq r_1<r_2\leq R$.
		Since $k_{\mathbf A}\geq 2$, the  vectors $\mathbf a_{r_1}, \mathbf a_{r_2}$  are linearly independent.
		Let us extend two vectors $\mathbf a_{r_1},\mathbf a_{r_2}$ to a basis of $\textup{range}(\mathbf A)$
		by adding $r_{\mathbf A}-2$ linearly independent columns of $\mathbf A$. 
		It is clear that there exists an  $R\times R$ permutation matrix $\mathbf \Pi$  such that
		the last $r_{\mathbf A}$ columns of $\mathbf A\mathbf \Pi$ coincide with the chosen basis.   
		Moreover, since $k_{\mathbf B}'-1\geq R-r_{\mathbf A}+1$ we can choose $\mathbf\Pi$ such that 
		the $(k_{\mathbf B}'-1)$th and $k_{\mathbf B}'$th columns of $\mathbf A\mathbf \Pi$ are equal to 
		$\mathbf a_{r_1}$ and $\mathbf a_{r_2}$, respectively.
		We can now reason as under (ii) for $\mathbf A \mathbf{\Pi}$ and $\mathbf B \mathbf{\Pi}$ to obtain that $\mathbf f_{r_1,r_2}=\mathbf 0$.
		
		\item From (iii) we immediately obtain that $\mathbf f=\mathbf 0$.
		Hence, $\Phi(\mathbf A,\mathbf B)$ has  full column rank.
	\end{romannum}

Now we prove that  \cref{eq:ranksofFand G} implies \cref{item:th1cond3}. Substituting  $\mathbf E_r=\mathbf B_r\mathbf C_r^T$ in the expressions for  $\mathbf F$,  we obtain that
	$
	\mathbf F = [\mathbf B_{r_1}\ \mathbf B_{r_2}\ \dots\ \mathbf B_{r_{R-r_{\mathbf A}+2}}]\Bdiag(\mathbf C_{r_1}^T,\mathbf C_{r_2}^T,\dots,\newline \mathbf C_{r_{R-r_{\mathbf A}+2}}^T)$, implying that $r_{[\mathbf B_{r_1}\ \mathbf B_{r_2}\ \dots\ \mathbf B_{r_{R-r_{\mathbf A}+2}}]}\geq r_{\mathbf F}$.
	Hence, by \cref{eq:ranksofFand G}, $k'_{\mathbf B}\geq R-r_{\mathbf A}+2$. Since $k_{\mathbf A }\geq 2$, the result follows from the first part of
	\cref{sta:lemmaApp4}.
\end{proof}
\begin{proof}[Proof of  \cref{sta:lemmaApp45} of  \cref{lemma: Q2viaABC}]
Assume that $(\mathbf a_1\otimes \mathbf B_1)\mathbf f_1+\dots+(\mathbf a_R\otimes \mathbf B_R)\mathbf f_R$ $=\mathbf 0$ for some vectors $\mathbf f_r\in \fF^{L_r}$. It is sufficient to prove that all vectors $\mathbf f_r$ are zero. We rewrite the identity  $(\mathbf a_1\otimes \mathbf B_1)\mathbf f_1+\dots+(\mathbf a_R\otimes \mathbf B_R)\mathbf f_R=\mathbf 0$ in the matrix form
$[\mathbf a_1\ \dots\ \mathbf a_R][
	\mathbf B_1\mathbf f_1\ \dots \ \mathbf B_R\mathbf f_R]^T
=\mathbf O.$ Then from  \cref{lemma:newcompoundidentity2,lemma:newcompoundidentity3} of \cref{lemma:compound} and from the definition of the second compound matrix it follows that
\begin{equation*} 
\begin{split}
\mathcal C_2(\mathbf O) &= \mathcal C_2([\mathbf a_1\ \dots\ \mathbf a_R][
\mathbf B_1\mathbf f_1\ \dots \ \mathbf B_R\mathbf f_R]^T)=
\mathcal C_2([\mathbf a_1\ \dots\ \mathbf a_R])
\mathcal C_2(
[\mathbf B_1\mathbf f_1\ \dots \ \mathbf B_R\mathbf f_R]
)^T\\
&=\sum\limits_{1\leq r_1<r_2\leq R} \left(\wprod{\mathbf a_{r_1}}{\mathbf a_{r_2}}\right)\left(
\wprod{\mathbf B_{r_1}\mathbf f_{r_1}}{\mathbf B_{r_2}\mathbf f_{r_2}}
\right)^T\\
&=
\sum\limits_{1\leq r_1<r_2\leq R} \left(\wprod{\mathbf a_{r_1}}{\mathbf a_{r_2}}\right)\left(
(\wprod{\mathbf B_{r_1}}{\mathbf B_{r_2}})(\mathbf f_{r_1}\otimes \mathbf f_{r_2})
\right)^T,
\end{split}
\end{equation*}
which can be rewritten in  vectorized form as
$\mathbf 0=\Phi(\mathbf A,\mathbf B)[(\mathbf f_1\otimes\mathbf f_2)^T\ \dots\ (\mathbf f_{R-1}\otimes\mathbf f_{R})^T]^T$. Since the matrix $\Phi(\mathbf A,\mathbf B)$ has full column rank, it follows easily  that at least $R-1$ of the vectors $\mathbf f_1,\dots,\mathbf f_R$ are zero. We assume w.l.o.g. that
the last $R-1$ vectors are zero. Then $\mathbf 0= (\mathbf a_1\otimes \mathbf B_1)\mathbf f_1$, which implies that $\mathbf f_1$ is also zero.
\end{proof}
\section{Proofs of \cref{lemma:new,lemma:lemma3.1}}\label{sec:appendixC}
\begin{proof}[Proof of \cref{lemma:new}]
	\ref{eq:Nfcr} Assume that $\mathbf N\mathbf f=\mathbf 0$, where $\mathbf f = [\mathbf f_1^T\ \dots\ \mathbf f_R^T]^T$ and $\mathbf f_r\in\fF^{d_r}$. Then, by construction of $\mathbf N_r$,
	$$
	\mathbf 0=\mathbf C^T\mathbf N\mathbf f =\Bdiag(\mathbf C_1^T\mathbf N_1,\dots,\mathbf C_R^T\mathbf N_R)\mathbf f=
	[(\mathbf C_1^T \mathbf N_1 \mathbf f_1 )^T\ \dots\ (\mathbf C_R^T\mathbf N_R\mathbf f_R)^T]^T,$$
	implying that $\mathbf C_r^T\mathbf N_r\mathbf f_r=\mathbf 0$ for  $r=1,\dots,R$.
	Hence, 
	\begin{equation}
	\mathbf C^T(\mathbf N_r\mathbf f_r)=(\mathbf 0,\dots,\mathbf 0,\mathbf C_r^T\mathbf N_r\mathbf f_r,\mathbf 0,\dots,\mathbf 0)=\mathbf 0,\qquad r=1,\dots,R.
	\label{eq:CTNrfr}
	\end{equation}
	By \cref {eq:unf_T_3,item:th1cond1}, $\mathbf C^T$ has full column rank.  Since $\mathbf N_r$ also has  full column rank, it follows from  \cref{eq:CTNrfr} that $\mathbf f_r=\mathbf 0$ for $r=1,\dots,R$. Hence we must have $\mathbf f=\mathbf 0$. Thus the matrix $\mathbf N$ has full column rank.
	
	\ref{eq:NNfcr} 
	\tcr{
	It follows from   \cref{eq:Nfcr} that $[\mathbf N_1\otimes \mathbf N_1\ \dots\ \mathbf N_R\otimes \mathbf N_R]$ has full column rank.
	Obviously, $\Bdiag(\mathbf M_1,\dots,\mathbf M_R)$ has full column rank.  Since
    $\mathbf W= [\mathbf N_1\otimes \mathbf N_1\ \dots\ \mathbf N_R\otimes \mathbf N_R]\Bdiag(\mathbf M_1,\dots,\mathbf M_R)$, it also has full column rank.}
%
%
%
	
	\ref{linindEEE} Since, by \cref{item:th1cond1}, $r_{\unf{T}{3}}=K$ and, by \cref{eq:unf_T_3},
	$\unf{T}{3}=[\mathbf a_1\otimes \mathbf I_J\ \dots\ \mathbf a_R\otimes \mathbf I_J][\mathbf E_1^T\ \dots\ \mathbf E_R^T]^T$, it follows that the $JR\times K$ matrix $[\mathbf E_1^T\ \dots\ \mathbf E_R^T]^T$ has full column rank. Hence for any $r$ the columns of $[\mathbf E_1^T\ \dots\ \mathbf E_R^T]^T\mathbf N_r = [\mathbf O\ \dots\mathbf O\ (\mathbf E_r\mathbf N_r)^T\ \mathbf O\ \dots\ \mathbf O]^T$ are nonzero. Assume that $\mathbf O=\alpha_1\mathbf E_1+\dots+\alpha_R\mathbf E_R$ for some $\alpha_1,\dots,\alpha_R\in\fF$. Then for any $r$,
	$\mathbf O=(\alpha_1\mathbf E_1+\dots+\alpha_R\mathbf E_R)\mathbf N_r=\alpha_r\mathbf E_r\mathbf N_r$. Since  $\mathbf E_r\mathbf N_r$ is not the zero matrix, it follows that $\alpha_r=0$. Thus, the matrices $\mathbf E_1,\dots,\mathbf E_R$ are linearly independent.
\end{proof}
\begin{proof}[Proof of \cref{lemma:lemma3.1}]
	By \cref{eq:unf_T_1},
	\begin{equation}
	\unf{T}{1}=[\operatorname{vec}(\mathbf E_1)\ \dots\ \operatorname{vec}(\mathbf E_R)]\mathbf A^T=[\operatorname{vec}(\tilde{\mathbf E}_1)\ \dots\ \operatorname{vec}(\tilde{\mathbf E}_{\tilde R})]\tilde{\mathbf A}^T,\label{eq:T1AA}
	\end{equation}
	where $\tilde{\mathbf A}=[\tilde{\mathbf a}_1\ \dots\ \tilde{\mathbf a}_{\tilde R}]$.
	
	{\em Case 1:  \cref{item:th1cond5} holds.} Then, $\mathbf A$ has full column rank. Hence, by \cref{eq:T1AA},
	\begin{equation*}
	[\operatorname{vec}(\mathbf E_1)\ \dots\ \operatorname{vec}(\mathbf E_R)]=[\operatorname{vec}(\tilde{\mathbf E}_1)\ \dots\ \operatorname{vec}(\tilde{\mathbf E}_{\tilde R})](\mathbf A^\dagger\tilde{\mathbf A})^T.
	\end{equation*}
	\tcr{Since any column of $\tilde{\mathbf A}$ is a column of $\mathbf A$, }
	each column of $\mathbf A^\dagger\tilde{\mathbf A}$  contains at most one nonzero entry.
	Since	$\mathbf E_1,\dots,\mathbf E_R$ are nonzero matrices, it follows that the columns of $(\mathbf A^\dagger\tilde{\mathbf A})^T\in\fF^{\tilde R\times R}$ are also nonzero, which is possible only if
	$\tilde R=R$ and  $\tilde{\mathbf A}=\mathbf A\mathbf P$ for some $R\times R$ permutation matrix $\mathbf P$. Hence, by \cref{eq:T1AA},
	$[\operatorname{vec}(\mathbf E_1)\ \dots\ \operatorname{vec}(\mathbf E_R)]=[\operatorname{vec}(\tilde{\mathbf E}_1)\ \dots\ \operatorname{vec}(\tilde{\mathbf E}_{\tilde R})]\mathbf P^T$.
	Thus, the decompositions coincide up to permutation of summands. It is also clear that the matrices $\mathbf E_1,\dots,\mathbf E_R$  can be computed by solving the system of linear equations 
		$[\operatorname{vec}(\mathbf E_1)\ \dots\ \operatorname{vec}(\mathbf E_R)]\mathbf A^T=\unf{T}{1}$.
	
	{\em Case 2:   \cref{item:th1cond6} holds.}
	To prove \cref{st:hardlemma1} it is sufficient to show that the matrices $\mathbf E_1,\dots,\mathbf E_R$ can be computed by EVD up to  scaling. Indeed, if
	$\mathbf E_r=x_r\hat{\mathbf E}_r$ and the matrices $\hat{\mathbf E}_r$ are known, then,  by \cref{eq:unf_T_1},
	the scaling factors $x_r$ can be found as  from the linear equation $[\mathbf a_1\otimes\operatorname{vec}(\hat{\mathbf E}_1)\ \dots\ \mathbf a_r\otimes\operatorname{vec}(\hat{\mathbf E}_R)][x_1\ \dots\ x_r]^T=\operatorname{vec}(\unf{T}{1})$.
	
	We choose arbitrary integers $r_1,\dots,r_{R-r_{\mathbf A}+2}$ such that $1\leq r_1<\dots<r_{R-r_{\mathbf A}+2}\leq R$ and show that the matrices $\mathbf E_{r_1},\dots,\mathbf E_{r_{R-r_{\mathbf A}+2}}$ can be computed by EVD up to scaling. We set 
	\begin{equation}
	\Omega = \{r_1,\dots,r_{R-r_{\mathbf A}+2}\}\ \text{ and }\ \{p_1,\dots,p_{r_{\mathbf A}-2}\}=\{1,\dots,R\}\setminus\Omega.\label{eq:Omega}
	\end{equation}
	Since $k_{\mathbf A}=r_{\mathbf A}$, it follows that the intersection of the null space of the
	$(r_{\mathbf A}-2)\times I$ matrix $[\mathbf a_{p_1}\ \dots\ \mathbf a_{p_{r_{\mathbf A}-2}}]^T$ and the column space of $\mathbf A$ is two-dimensional. Let the intersection be spanned by the vectors $\mathbf h_{\Omega,1},\mathbf h_{\Omega,2}\in\fF^I$,
	where here and later in the proof the subindex ``$ \Omega$'' indicates that a quantity depends on $r_1,\dots,r_{R-r_{\mathbf A}+2}$.
	Then again, since $k_{\mathbf A}=r_{\mathbf A}$, it follows that
	\begin{equation}
	\text{any two columns of }
	\begin{bmatrix}     \mathbf h_{\Omega,1}^T {\mathbf a}_{r_1}\ \dots\ \mathbf h_{\Omega,1}^T {\mathbf a}_{r_{R-r_{\mathbf A}+2}}\\
	\mathbf h_{\Omega,2}^T {\mathbf a}_{r_1}\ \dots\ \mathbf h_{\Omega,2}^T {\mathbf a}_{r_{R-r_{\mathbf A}+2}}
	\end{bmatrix} \ \text{are linearly independent.}\label{eq:bigkrank}
	\end{equation}
	Let $\mathcal Q_{\Omega}$ denote the  $2\times J\times K$ tensor such that $\mathbf Q_{\Omega (1)}=\unf{T}{1}[\mathbf h_{\Omega,1}\ \mathbf h_{\Omega,2}]$.
	Then, by \cref{eq:unf_T_1},
	\begin{equation}
	\mathcal Q_{\Omega} =
	\sum\limits_{r=1}^{R}\begin{bmatrix}\mathbf h_{\Omega,1}^T\mathbf a_r\\ \mathbf h_{\Omega,2}^T\mathbf a_r
	\end{bmatrix}
	\circ\mathbf E_{r}
	=
	\sum\limits_{k=1}^{R-r_{\mathbf A}+2}\begin{bmatrix}\mathbf h_{\Omega,1}^T\mathbf a_{r_k}\\\mathbf h_{\Omega,2}^T\mathbf a_{r_k}
	\end{bmatrix}
	\circ\mathbf E_{r_k} = 
	\sum\limits_{k=1}^{R-r_{\mathbf A}+2}\begin{bmatrix}\mathbf h_{\Omega,1}^T\mathbf a_{r_k}\\\mathbf h_{\Omega,2}^T\mathbf a_{r_k}
	\end{bmatrix}
	\circ(\mathbf B_{r_k}\mathbf C_{r_k}^T),\label{eq:Qmtensdec1}
	\end{equation}
	where $\mathbf B_{r_k}\in\fF^{J\times L_{r_k}}$ and  $\mathbf C_{r_k}\in\fF^{K\times L_{r_k}}$ denote full column rank matrices such that
	$\mathbf E_{r_k}=\mathbf B_{r_k}\mathbf C_{r_k}^T$.
	Since   \cref{item:th1cond6} in \cref{thm: maintheorem} is equivalent to \cref{item:th1cond6ABC} in \cref{thm: maintheoremABC}, it follows that
	 	 $ k_{\mathbf B}'\geq R-r_{\mathbf A}+2$ and $ k_{\mathbf C}'\geq R-r_{\mathbf A}+2$. Hence, 
		\begin{equation}
		[\mathbf B_{r_1}\   \dots\ \mathbf B_{r_{R-\tcr{r_{\mathbf A}}+2}}]\ \text{ and }\ 
		[\mathbf C_{r_1}\  \dots\ \mathbf  C_{r_{R-\tcr{r_{\mathbf A}}+2}}]\ 
		\ \text{have full column rank.}\label{eq:ranksofFand GABCincnew} 
		\end{equation}
	Hence, by \cref{thm:ll1_gevd},  the    decomposition of $\mathcal Q_{\Omega}$ into a sum of max ML rank-$(1,L_{r_k},L_{r_k})$ terms is unique and can be computed by EVD.
	Thus,  the matrices $\mathbf E_{r_1}, \dots,\mathbf E_{r_{R-r_{\mathbf A}+2}}$ can be computed by EVD up to scaling. Since the indices $r_1,\dots,$ $r_{R-r_{\mathbf A}+2}$ were chosen arbitrary, it follows that
	all matrices  $\mathbf E_{r_1},\dots,\mathbf E_{r_{R-r_{\mathbf A}+2}}$ can be computed by EVD up to scaling. The overall procedure is summarized in 
	steps $11-18$ of \cref{algorithm:1}.

	Now we prove \cref{st:hardlemma2}.
	First we show that $\tilde{R}=R$ and that the  $\tilde{\mathbf E}_1,\dots,\tilde{\mathbf E}_{R}$ involves the same matrices
	as $\mathbf E_1,\dots,\mathbf E_R$. 
	Similarly to \cref{eq:Qmtensdec1} we obtain that
	\begin{equation}
	\mathcal Q_{\Omega}=
	\sum\limits_{r=1}^{\tilde{R}}   \begin{bmatrix}\mathbf h_{\Omega,1}^T\tilde{\mathbf a}_{r}\\ \mathbf h_{\Omega,2}^T\tilde{\mathbf a}_{r}
	\end{bmatrix}\circ   \tilde{\mathbf E}_r.\label{eq:Qmtensdec2}
	\end{equation}
	It is clear that there exist $\rubinom{R}{R-r_{\mathbf A}+2}$ sets $\Omega$ of the form \cref{eq:Omega}. Thus, by \cref{eq:Qmtensdec1,eq:Qmtensdec2}, we obtain a system of $\rubinom{R}{R-r_{\mathbf A}+2}$ identities:
	\begin{equation}
	\mathcal Q_{\Omega}=
	\sum\limits_{k=1}^{R-r_{\mathbf A}+2}\begin{bmatrix}\mathbf h_{\Omega,1}^T\mathbf a_{r_k}\\\mathbf h_{\Omega,2}^T\mathbf a_{r_k}
	\end{bmatrix}
	\circ\mathbf E_{r_k}=
	\sum\limits_{r=1}^{\tilde{R}}   \begin{bmatrix}\mathbf h_{\Omega,1}^T\tilde{\mathbf a}_{r}\\ \mathbf h_{\Omega,2}^T\tilde{\mathbf a}_{r}
	\end{bmatrix}\circ   \tilde{\mathbf E}_r,\ 1\leq r_1<\dots<r_{R-r_{\mathbf A}+2}\leq R.\label{eq:QOmegaQOmega}
	\end{equation}
	Hence, by \cref{eq:unf_T_3,eq:Qmtensdec1}, system \cref{eq:QOmegaQOmega} can be rewritten in   matrix form as
	\begin{equation}
	\begin{split}
	\mathbf Q_{\Omega(3)} = &\left[\begin{bmatrix}\mathbf h_{\Omega,1}^T\mathbf a_{r_1}\\\mathbf h_{\Omega,2}^T\mathbf a_{r_1}
	\end{bmatrix}\otimes\mathbf B_{r_1}\ \dots\ \begin{bmatrix}\mathbf h_{\Omega,1}^T\mathbf a_{r_{R-r_{\mathbf A}+2}}\\\mathbf h_{\Omega,2}^T\mathbf a_{r_{R-r_{\mathbf A}+2}}
	\end{bmatrix}\otimes\mathbf B_{r_{R-r_{\mathbf A}+2}}\right][\mathbf C_{r_1}\ \dots\ \mathbf C_{r_{R-r_{\mathbf A}+2}}]^T=\\
	& \sum\limits_{r=1}^{\tilde{R}}   \begin{bmatrix}\mathbf h_{\Omega,1}^T\tilde{\mathbf a}_{r}\\ \mathbf h_{\Omega,2}^T\tilde{\mathbf a}_{r}
	\end{bmatrix}\otimes   \tilde{\mathbf E}_r,\qquad\qquad\qquad \qquad\qquad\ { 1\leq r_1<\dots<r_{R-r_{\mathbf A}+2}\leq R.}
	\end{split}\label{QOmegamatrixform}
	\end{equation}
	From \cref{eq:bigkrank}, \cref{eq:ranksofFand GABCincnew} and the first identity in \cref{QOmegamatrixform}, it follows that $\mathbf Q_{\Omega(3)}$ has rank $L_{r_1}+\dots+L_{r_{R-r_{\mathbf A}+2}}$.
	Since the rank is subadditive, it follows from \cref{QOmegamatrixform}, that
	\begin{equation}
	L_{r_1}+\dots+L_{r_{R-r_{\mathbf A}+2}} \leq \sum\limits_{r=1}^{\tilde{R}} r \left( \begin{bmatrix}\mathbf h_{\Omega,1}^T\tilde{\mathbf a}_{r}\\ \mathbf h_{\Omega,2}^T\tilde{\mathbf a}_{r}
	\end{bmatrix}\right)r_{ \tilde{\mathbf E}_r},\   1\leq r_1<\dots<r_{R-r_{\mathbf A}+2}\leq R,
	\label{eq:allidentities}
	\end{equation}
	where $r(\mathbf f)$  denotes the rank of a $2\times 1$ matrix $\mathbf f$: $r(\mathbf 0)=0$ and $r(\mathbf f) = 1$, if $\mathbf f\ne 0$. 
	It is clear that for each $r$ there exist  exactly $\rubinom{R-1}{R-r_{\mathbf A}+1}$ subsets $\{r_1,\dots,r_{R-r_{\mathbf A}+2}\}\subset\{1,\dots,R\}$ that contain $r$.
	Hence each $L_r$ appears in  exactly $\rubinom{R-1}{R-r_{\mathbf A}+1}$ inequalities in \cref{eq:allidentities}.
	Since  $\tilde{\mathbf a}_1=\mathbf a_r$ for some $r$, it follows that the term
	$
	r \left( \begin{bmatrix}\mathbf h_{\Omega,1}^T\tilde{\mathbf a}_{1}\\ \mathbf h_{\Omega,2}^T\tilde{\mathbf a}_{1}
	\end{bmatrix}\right)r_{ \tilde{\mathbf E}_1} =
	r \left( \begin{bmatrix}\mathbf h_{\Omega,1}^T{\mathbf a}_{r}\\ \mathbf h_{\Omega,2}^T{\mathbf a}_{r}
	\end{bmatrix}\right)r_{ \tilde{\mathbf E}_1}
	$
	appears in the same $\rubinom{R-1}{R-r_{\mathbf A}+1}$ inequalities as $L_r$, implying, by the construction of  $\mathbf h_{\Omega,1}$ and $\mathbf h_{\Omega,2}$, that 
	$\begin{bmatrix}\mathbf h_{\Omega,1}^T{\mathbf a}_{r}\\ \mathbf h_{\Omega,2}^T{\mathbf a}_{r}
	\end{bmatrix}\ne\mathbf 0$. Thus, $r_{ \tilde{\mathbf E}_1}$ appears in  exactly $\rubinom{R-1}{R-r_{\mathbf A}+1}$ inequalities in \cref{eq:allidentities}. In the same fashion one can prove that each of the values $1\cdot r_{ \tilde{\mathbf E}_2},\dots,1\cdot r_{ \tilde{\mathbf E}_{\tilde R}}$ appears in \cref{eq:allidentities}
	exactly $\rubinom{R-1}{R-r_{\mathbf A}+1}$ times.
	Thus, summing  all inequalities in \cref{eq:allidentities} and taking into account that $\tilde R\leq R$ and $r_{ \tilde{\mathbf E}_r}\leq L_r$  for all $r$ we obtain 
	\begin{multline}
	(L_1+\dots+L_R)\rubinom{R-1}{R-r_{\mathbf A}+1}\leq (r_{ \tilde{\mathbf E}_1} + \dots + r_{ \tilde{\mathbf E}_{\tilde R}})\rubinom{R-1}{R-r_{\mathbf A}+1}\leq\\
	(L_1+\dots+L_{\tilde R})\rubinom{R-1}{R-r_{\mathbf A}+1}\leq (L_1+\dots+L_R)\rubinom{R-1}{R-r_{\mathbf A}+1}.
	\label{summofineq}
	\end{multline}
	Hence $\tilde R=R$ and $r_{ \tilde{\mathbf E}_r}= L_r$ for all $r$.
	
	To complete the proof of \cref{st:hardlemma2} we need to show that the terms $\tilde{\mathbf a}_1\circ\tilde{\mathbf E}_1,\dots,\tilde{\mathbf a}_R\circ\tilde{\mathbf E}_R$
	coincide with the terms ${\mathbf a}_1\circ {\mathbf E}_1,\dots, {\mathbf a}_R\circ {\mathbf E}_R$. If we assume that at least one of the inequalities in \cref{eq:allidentities} is strict, then
	the first inequality in \cref{summofineq} should also be strict, which is not possible. Thus,  \cref{eq:allidentities} holds with ``$\leq$'' replaced by
	``$=$''. Hence, by  \cref{thm:ll1_gevd}, the two decompositions of $\mathcal Q_{\Omega}$ in \cref{eq:QOmegaQOmega} coincide up to permutation of their terms. This readily
	implies that the matrices $\tilde{\mathbf E}_1,\dots,\tilde{\mathbf E}_R$ coincide with  $\lambda_1\mathbf E_1,\dots,\lambda_R\mathbf E_R$ for some $\lambda_1,\dots,\lambda_R\in\fF\setminus\{0\}$, i.e.,
	there exists an $R\times R$ permutation matrix $\mathbf P$ such that 
	\begin{equation}
	[\operatorname{vec}(\tilde{\mathbf E}_1)\ \dots\ \operatorname{vec}(\tilde{\mathbf E}_{\tilde R})] = 	[\operatorname{vec}(\mathbf E_1)\ \dots\ \operatorname{vec}(\mathbf E_R)]\operatorname{diag}(\lambda_1,\dots,\lambda_R)\mathbf P.\label{eq:ultimateintheproof}
	\end{equation}
	Substituting \cref{eq:ultimateintheproof} in \cref{eq:T1AA} we obtain that
	\begin{equation}
	[\operatorname{vec}(\mathbf E_1)\ \dots\ \operatorname{vec}(\mathbf E_R)]\mathbf A^T=[\operatorname{vec}(\mathbf E_1)\ \dots\ \operatorname{vec}(\mathbf E_R)]\operatorname{diag}(\lambda_1,\dots,\lambda_R)\mathbf P\tilde{\mathbf A}^T.\label{eq:eeeaeeepa}
	\end{equation}
	Since the matrices $\mathbf E_1,\dots,\mathbf E_R$ are linearly independent, it follows from  \cref{eq:eeeaeeepa} that $\mathbf A^T=\operatorname{diag}(\lambda_1,\dots,\lambda_R)\mathbf P\tilde{\mathbf A}^T$. Hence $\mathbf A=\tilde{\mathbf A}\mathbf P^T\operatorname{diag}(\lambda_1,\dots,\lambda_R)$. Since \tcr{any column of $\tilde{\mathbf A}$ is a column of $\mathbf A$ and since} $k_{\mathbf A}=r_{\mathbf A}\geq 2$, it follows that
	$\lambda_1=\dots=\lambda_R=1$. Hence $\tilde{\mathbf A}=\mathbf A\mathbf P$ and, by \cref{eq:ultimateintheproof},
	$[\operatorname{vec}(\tilde{\mathbf E}_1)\ \dots\ \operatorname{vec}(\tilde{\mathbf E}_{\tilde R})] = 	[\operatorname{vec}(\mathbf E_1)\ \dots\ \operatorname{vec}(\mathbf E_R)]\mathbf P$, i.e.,
	the terms $\tilde{\mathbf a}_1\circ\tilde{\mathbf E}_1,\dots,\tilde{\mathbf a}_R\circ\tilde{\mathbf E}_R$
	coincide with the terms ${\mathbf a}_1\circ {\mathbf E}_1,\dots, {\mathbf a}_R\circ {\mathbf E}_R$.
\end{proof}
\section{Nonuniqueness of the generic decomposition  of a $2\times 8\times 7$ tensor into a sum of $3$ max ML rank-$(1,3,3)$ terms}\label{AppendixE}
Let $\mathcal T$ admit decomposition \cref{eq:LrLr1mainBC} with generic factor matrices $\mathbf A$, $\mathbf B$, and  $\mathbf C$. Then the matrices $\mathbf U:=[\mathbf a_2\ \mathbf a_3]\in\fF^{2\times 2}$, $\mathbf V:=[\mathbf b_2\ \dots\ \mathbf b_9]\in\fF^{8\times 8}$, and $\mathbf W:=[\mathbf c_1\ \dots\ \mathbf c_5\ \mathbf c_7\ \mathbf c_8]\in\fF^{7\times 7}$ are nonsingular. Let $\widehat{\mathcal T}$ denote a tensor such that
$\widehat{\mathbf T}_{(3)} = (\mathbf U^{-1}\otimes\mathbf V^{-1})\unf{T}{3}\mathbf W^{-T}$. Then, by \cref{eq:unf_T_3}, $\widehat{\mathcal T}$ admits the decomposition
of the form \cref{eq:LrLr1mainBC}, where $\mathbf A$, $\mathbf B$, and $\mathbf C$ are replaced by
%
\begin{gather*}
\mathbf U^{-1}\mathbf A = \begin{bmatrix}
\numberx&1&0\\
\numbery&0&1
\end{bmatrix},\quad
\mathbf V^{-1}\mathbf B = [\vectora\ \mathbf I_8],\ \text{ and }\ 
\mathbf W^{-1}\mathbf C = [\mathbf e_1\ \mathbf e_2\ \mathbf e_3\ \mathbf e_4\ \mathbf e_5\ \vectorb\ \mathbf e_6\ \mathbf e_7\ \vectorc],
\end{gather*}
respectively.
It is  clear that  a decomposition of $\widehat{\mathcal T}$  with factor matrices 
$\widehat{\mathbf A}$, $\widehat{\mathbf B}$, and $\widehat{\mathbf C}$ generates
a decomposition of $\mathcal T$ with   factor matrices $\mathbf U\widehat{\mathbf A}$, $\mathbf V\widehat{\mathbf B}$, and $\mathbf W\widehat{\mathbf C}$. In particular, if the decomposition of $\widehat{\mathcal T}$ is not unique, then the decomposition of $\mathcal T$ is not unique either. 
Below we  present a procedure to construct a two-parameter family of decompositions of $\widehat{\mathcal T}$.
First we choose parameters $\parameterhone, \parameterhtwo\in\fF$  and compute the values $\valuep$, $\valuehthree$, $\valuehfour$, and $\valued$:
\begin{flalign*}
\valuep   &\mapsfrommy (\numbera_1\numberb_2 - \numberb_1 + \numbera_2\numberb_3)\parameterhone + (\numbera_1\numberc_2 - \numberc_1 + \numbera_2\numberc_3)\parameterhtwo + 1,    \span\span\span\span   \\
\valuehthree &\mapsfrommy (\numbera_3\numberb_4 - \numbera_5 + \numbera_4\numberb_5)\numberx \parameterhone + (\numbera_3\numberc_4 + \numbera_4\numberc_5)\numberx \parameterhtwo,           \span\span\span\span  \\
\valuehfour &\mapsfrommy (\numbera_6\numberb_6 + \numbera_7\numberb_7)\numbery \parameterhone + (\numbera_6\numberc_6 - \numbera_8 + \numbera_7\numberc_7)\numbery \parameterhtwo,           \span\span\span\span \\
\valued   &\mapsfrommy \valuehthree+\valuep-\valuehfour\valuep.\span\span\span\span\\ 
\intertext{Second, if $\valuep$ and $\valued$ are nonzero, we  also compute the values:}
\valueqone &\mapsfrommy -\parameterhone \valuehfour/\valued,& \valueqtwo&\mapsfrommy -\parameterhtwo \valuehthree/\valued,&
\valueqthree  &\mapsfrommy (\parameterhtwo+\valueqtwo)/\valuep,& \valueqfour&\mapsfrommy \valuep \valueqone - \parameterhone, \\
\valueyone &\mapsfrommy \numberc_1\valueqthree + \numberb_1 \valueqone+1,& \valueytwo&\mapsfrommy \numberc_1 \valueqtwo + \numberb_1 \valueqfour+1,&
\valuezone  &\mapsfrommy \numberc_2 \valueqthree + \numberb_2 \valueqone, & \valueztwo&\mapsfrommy \numberc_2 \valueqtwo + \numberb_2 \valueqfour,\\
\valuetone &\mapsfrommy \numberc_3 \valueqthree + \numberb_3 \valueqone, & \valuettwo&\mapsfrommy \numberc_3 \valueqtwo + \numberb_3 \valueqfour,& \span\span\span\\
\values&\mapsfrommy \numberc_4 \parameterhtwo /\valued, &\valueu   &\mapsfrommy \numberc_5 \parameterhtwo /\valued , &
\valuev   &\mapsfrommy -\numberb_6 \parameterhone /\valued, & \valuew&\mapsfrommy -\numberb_7 \parameterhone /\valued.
\end{flalign*}
Third, we construct   matrices $\tilde{\mathbf E}_1$ , $\tilde{\mathbf E}_2$ , and  $\tilde{\mathbf E}_3$ as 
\begin{gather}
\tilde{\mathbf E}_1 :=
\begin{bmatrix}
\numbera_1& 1& 0& 0& 0& 0& 0\\
\numbera_2& 0& 1& 0& 0& 0& 0\\
\numbera_3 \valueyone& \numbera_3 \valuezone& \numbera_3 \valuetone&   \numbera_3\values&     \numbera_3\valueu&     \numbera_3\valuev&    \numbera_3\valuew\\
\numbera_4 \valueyone& \numbera_4 \valuezone& \numbera_4 \valuetone&   \numbera_4\values&     \numbera_4\valueu&     \numbera_4\valuev&    \numbera_4\valuew\\
\numbera_5 \valueyone& \numbera_5 \valuezone& \numbera_5 \valuetone&   \numbera_5\values&     \numbera_5\valueu&     \numbera_5\valuev&    \numbera_5\valuew\\
\numbera_6 \valueytwo& \numbera_6 \valueztwo& \numbera_6 \valuettwo&   \numbera_6\values\valuep&   \numbera_6\valueu\valuep&   \numbera_6\valuev\valuep&  \numbera_6\valuew\valuep\\
\numbera_7 \valueytwo& \numbera_7 \valueztwo& \numbera_7 \valuettwo&   \numbera_7\values\valuep&   \numbera_7\valueu\valuep&   \numbera_7\valuev\valuep&  \numbera_7\valuew\valuep\\
\numbera_8 \valueytwo& \numbera_8 \valueztwo& \numbera_8 \valuettwo&   \numbera_8\values\valuep&   \numbera_8\valueu\valuep&   \numbera_8\valuev\valuep&  \numbera_8\valuew\valuep
\end{bmatrix},\nonumber \\
\tilde{\mathbf E}_2 := \widehat{\mathbf H}_1-\numberx\tilde{\mathbf E}_1,\qquad
\tilde{\mathbf E}_3 := \widehat{\mathbf H}_2-\numbery\tilde{\mathbf E}_1,\label{eq:hatT1T2}
\end{gather}
where $\widehat{\mathbf H}_1\in\fF^{8\times 7}$ and $\widehat{\mathbf H}_2\in\fF^{8\times 7}$ denote the horizontal slices of $\widehat{\mathcal T}$.
The identities in \cref{eq:hatT1T2} mean that  $\widehat{\mathcal T}=\begin{bmatrix}
\numberx\\ \numbery
\end{bmatrix}\circ \tilde{\mathbf E}_1+\begin{bmatrix}
1\\ 0
\end{bmatrix}\circ \tilde{\mathbf E}_2+\begin{bmatrix}
0\\ 1
\end{bmatrix}\circ \tilde{\mathbf E}_3$, i.e., $\widehat{\mathcal T}$ admits a two-parameter family of decompositions, as indicated above. By symbolic computations in MATLAB we have also verified that all $4\times 4$ minors of 
$\tilde{\mathbf E}_1$, $\tilde{\mathbf E}_2$, and $\tilde{\mathbf E}_3$ are identically zero, that is 
$\tilde{\mathbf E}_1$, $\tilde{\mathbf E}_2$, and  $\tilde{\mathbf E}_3$ are at most rank-$3$ matrices.
\section{Proof of \cref{thm:maingenStrassen}} \label{Appendixthm:maingenStrassen}
The following theorem complements results on uniqueness\footnote{\tcr{
It can be shown that  if $\mathbf C$ has full column rank, then \cref{Thm:determgen} guarantees uniqueness under more relaxed assumptions than
\cref{thm: maintheoremABC}. On the other hand, assumption \cref{eq:U2} in \cref{Thm:determgen} is not easy to verify for particular $\mathbf A$ and $\mathbf B$ and
\cref{Thm:determgen} does not come with an EVD-based algorithm.}}
presented in \cref{subsub251}
and will be used in the proof of \cref{thm:maingenStrassen}. Namely, we will show that
\cref{thm:maingenStrassen} is the generic counterpart of \cref{Thm:determgen}. 
%
\begin{theorem}\label{Thm:determgen} 
	Let $\mathcal T\in\fF^{I\times J\times K}$ admit   decomposition  \cref{eq:LrLr1mainBC}
	with  $\mathbf a_r\ne\mathbf 0$ and $r_{\mathbf B_r}=r_{\mathbf C_r}=L_r$ for all $r$.
	Assume that the matrix $\mathbf C$ has full column rank and that the matrices $\mathbf A$ and $\mathbf B$ satisfy the following assumption:
	\begin{equation}
	\begin{split}
	&\text{if at least two of the vectors }\mathbf g_1\in\mathbb C^{L_1},\dots,\mathbf g_R\in\mathbb C^{L_R} \ \text{are nonzero},\  \\
	&\text{then the rank of }\mathbf a_1(\mathbf B_1\mathbf g_1)^T+\dots +\mathbf a_R(\mathbf B_R\mathbf g_R)^T \text{ is at least } 2.
	\end{split}\label{eq:U2}
	\end{equation}
	Then  the decomposition of $\mathcal T$ into a sum of \MLatmost terms is unique. 
\end{theorem}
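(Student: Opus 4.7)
The plan is to exploit condition \cref{eq:U2} by reducing the comparison of two decompositions to a block-permutation analysis of an invertible matrix $\mathbf S$ that links the Khatri--Rao-like matrices $\mathbf M:=[\mathbf a_1\otimes\mathbf B_1\ \dots\ \mathbf a_R\otimes\mathbf B_R]$ and its counterpart $\tilde{\mathbf M}:=[\tilde{\mathbf a}_1\otimes\tilde{\mathbf B}_1\ \dots\ \tilde{\mathbf a}_R\otimes\tilde{\mathbf B}_R]$ for any alternative decomposition. The heart of the argument is that \cref{eq:U2}, applied column-wise to $\mathbf S$, forces at most one nonzero block per block-column of $\mathbf S$.

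First I observe that \cref{eq:U2} implies that $\mathbf M$ has full column rank and that no two $\mathbf a_r$ are parallel: if $\sum_r(\mathbf a_r\otimes\mathbf B_r)\mathbf v_r=\mathbf 0$, then $\sum_r\mathbf a_r(\mathbf B_r\mathbf v_r)^T=\mathbf O$, so \cref{eq:U2} permits at most one $\mathbf v_r$ to be nonzero, after which $r_{\mathbf B_r}=L_r$ together with $\mathbf a_r\ne\mathbf 0$ gives $\mathbf v_r=\mathbf 0$; similarly, $\mathbf a_i\parallel\mathbf a_j$ would render $\mathbf a_i(\mathbf B_i\mathbf g_i)^T+\mathbf a_j(\mathbf B_j\mathbf g_j)^T$ rank one irrespective of $\mathbf g_i,\mathbf g_j$, contradicting \cref{eq:U2}. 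Combined with the hypothesis $r_{\mathbf C}=\sum L_r$, this yields $r_{\unf{T}{3}}=\sum L_r$ and $\scol(\unf{T}{3})=\scol(\mathbf M)$. For any alternative decomposition $\mathcal T=\sum_s\tilde{\mathbf a}_s\circ\tilde{\mathbf B}_s\tilde{\mathbf C}_s^T$ with the same $(L_1,\dots,L_R)$, the identity $\unf{T}{3}=\tilde{\mathbf M}\tilde{\mathbf C}^T$ then automatically forces both $\tilde{\mathbf M}$ and $\tilde{\mathbf C}$ (and hence each $\tilde{\mathbf B}_s$, $\tilde{\mathbf C}_s$) to be of full column rank, and $\scol(\tilde{\mathbf M})=\scol(\mathbf M)$; therefore $\tilde{\mathbf M}=\mathbf M\mathbf S$ and $\tilde{\mathbf C}^T=\mathbf S^{-1}\mathbf C^T$ for a unique invertible $\mathbf S\in\fF^{\sum L_r\times\sum L_r}$.

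The core step is to extract the block-permutation structure of $\mathbf S$. Partitioning $\mathbf S=(\mathbf S_{r,s})_{r,s=1}^R$ into $L_r\times L_s$ blocks and reading $\tilde{\mathbf M}=\mathbf M\mathbf S$ column by column (then reshaping each vectorized identity back into an $I\times J$ matrix) yields, for every $s$ and every column index $l$,
\[
\tilde{\mathbf a}_s\tilde{\mathbf b}_{s,l}^T=\sum_{r=1}^R\mathbf a_r(\mathbf B_r\mathbf s_{r,s,l})^T,
\]
where $\mathbf s_{r,s,l}$ denotes the $l$-th column of $\mathbf S_{r,s}$. The left-hand side is rank one (it is nonzero because $\tilde{\mathbf b}_{s,l}\ne\mathbf 0$ by the full column rank of $\tilde{\mathbf B}_s$), so \cref{eq:U2} allows at most one $\mathbf s_{r,s,l}$ to be nonzero, while invertibility of $\mathbf S$ rules out all being zero. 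The unique nonzero index $r=\pi(s,l)$ satisfies $\tilde{\mathbf a}_s\parallel\mathbf a_{\pi(s,l)}$, and because no two $\mathbf a_r$ are parallel, $\pi(s,l)$ is in fact independent of $l$; hence $\mathbf S_{r,s}=\mathbf O$ unless $r=\pi(s)$. A dimension count (invertibility forces $\sum_{s:\pi(s)=r}L_s\le L_r$, with equality after summing over $r$) combined with $L_r\ge 1$ forces $\pi$ to be a bijection with $L_s=L_{\pi(s)}$. Substituting $\tilde{\mathbf a}_s=\lambda_s\mathbf a_{\pi(s)}$, $\tilde{\mathbf B}_s=\lambda_s^{-1}\mathbf B_{\pi(s)}\mathbf S_{\pi(s),s}$ and $\tilde{\mathbf C}_s=\mathbf C_{\pi(s)}\mathbf S_{\pi(s),s}^{-T}$ in the $s$-th alternative term reproduces the $\pi(s)$-th original term.

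The main obstacle is the third paragraph: one must combine the per-column restriction imposed by \cref{eq:U2} (at most one nonzero block per block-column of $\mathbf S$) with the consistency of $\tilde{\mathbf a}_s$ across all columns within a block to promote $r=\pi(s,l)$ into a single-valued map $\pi\colon\{1,\dots,R\}\to\{1,\dots,R\}$, and then upgrade $\pi$ to a bijection by exploiting $L_r\ge 1$ together with the invertibility of $\mathbf S$. The initial rank identifications and the concluding substitution are routine.
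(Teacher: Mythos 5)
Your proof is correct and follows essentially the same route as the paper: reformulate \cref{eq:U2} to get full column rank of $[\mathbf a_1\otimes\mathbf B_1\ \dots\ \mathbf a_R\otimes\mathbf B_R]$ and pairwise non-proportional $\mathbf a_r$, link the two decompositions by an invertible matrix ($\mathbf S$ here, $\mathbf G=\mathbf C^T\widehat{\mathbf C}^{-T}$ in the paper), apply \cref{eq:U2} column-wise to force at most one nonzero block per column, and use $k_{\mathbf A}\geq 2$ to make the nonzero block index constant within each block-column, yielding a block permutation. The only cosmetic differences are that the paper first compresses to $K=\sum L_r$ via \cref{thm:convention} and explicitly rules out alternatives with fewer terms or smaller ranks, which your full-column-rank observation on $\tilde{\mathbf M}$ and $\tilde{\mathbf C}$ handles implicitly.
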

\begin{proof}
	Since $\mathbf C$ has full column rank we have that $K\geq \sum L_r$.
	By \cref{item:thm:sonvention:statement1} of \cref{thm:convention}, we can assume that $K=\sum L_r$, i.e., that $\mathbf C$ is square and nonsingular.
	
	i) First we reformulate assumption \cref{eq:U2}. Such reformulation  will immediately imply  that 
	\begin{equation}
	 k_{\mathbf A}\geq 2\ \text{ and  matrix } [\mathbf a_1\otimes\mathbf B_1\ \dots\ \mathbf a_R\otimes\mathbf B_R]\
	\text{has full column rank.}
	\label{eq:kA2andfcr}
	\end{equation}
	If the rank of $\mathbf a_1(\mathbf B_1\mathbf g_1)^T+\dots +\mathbf a_R(\mathbf B_R\mathbf g_R)^T$ is less than $2$, then there exist vectors $\mathbf z\in\fF^{I}$ and $\mathbf y\in\fF^J$ such that
	\begin{equation}
	\mathbf a_1(\mathbf B_1\mathbf g_1)^T+\dots +\mathbf a_R(\mathbf B_R\mathbf g_R)^T=\mathbf z\mathbf y^T.
	\label{eq:beforetandvec}
	\end{equation}
	 Transposing and vectorizing both sides of \cref{eq:beforetandvec} we obtain that
	$(\mathbf a_1\otimes\mathbf B_1)\mathbf g_1+\dots+(\mathbf a_R\otimes\mathbf B_R)\mathbf g_R = \mathbf z\otimes\mathbf y$. Hence
		assumption \cref{eq:U2} can be reformulated as follows:
	\begin{equation}
\begin{split}
&\text{the identity 	}\ (\mathbf a_1\otimes\mathbf B_1)\mathbf g_1+\dots+(\mathbf a_R\otimes\mathbf B_R)\mathbf g_R = \mathbf z\otimes\mathbf y\
\text{ holds}\\
&\text{only if at most one of }\	\mathbf g_1,\dots,\mathbf g_{R}\ \text{is nonzero.}
\end{split}\label{eq:reformulatedU2}
\end{equation}	
One can now easily derive \cref{eq:kA2andfcr} from \cref{eq:reformulatedU2}.

	ii)  Now we prove uniqueness. 	
	Let  $\mathcal T = \sum_{r=1}^{\widehat{R}}\widehat{\mathbf a}_r\circ(\widehat{\mathbf B}_r\widehat{\mathbf C}_r^T)$, where $\widehat{R}\leq R$, $\widehat{\mathbf a}_r\ne \mathbf 0$,  $\widehat{\mathbf B}_r\in\fF^{J\times \widehat{L}_r}$ and $\widehat{\mathbf C}_r\in\fF^{K\times \widehat{L}_r}$ have full column rank, and $\widehat{L}_r\leq L_r$ for  $r=1,\dots,\widehat{R}$.
	Then, by \cref{eq:unf_T_3},
	\begin{equation}
	[\mathbf a_1\otimes\mathbf B_1\ \dots\ \mathbf a_R\otimes\mathbf B_R]\mathbf C^T=\unf{T}{3}
	=[\widehat{\mathbf a}_1\otimes\widehat{\mathbf B}_1\ \dots\ \widehat{\mathbf a}_{\widehat{R}}\otimes\widehat{\mathbf B}_{\widehat{R}}]\widehat{\mathbf C}^T.
	\label{eq:T3T3}
	\end{equation}
	Since, by \cref{eq:kA2andfcr}, $[\mathbf a_1\otimes\mathbf B_1\ \dots\ \mathbf a_R\otimes\mathbf B_R]$
	has full column rank and since $\mathbf C$ is a nonsingular matrix, it follows from \cref{eq:T3T3} that $r_{	\unf{T}{3}}=\sum L_r$.
	Hence the matrices $[\widehat{\mathbf a}_1\otimes\widehat{\mathbf B}_1\ \dots\ \widehat{\mathbf a}_{\widehat{R}}\otimes\widehat{\mathbf B}_{\widehat{R}}]$
	and $\widehat{\mathbf C}$
	 are at least  rank-$\sum L_r$, implying that $\sum\limits_{r=1}^{\widehat{R}}\widehat{L}_r\geq\sum\limits_{r=1}^R L_r$. On the other hand, since $\widehat{R}\leq R$ and $\widehat{L}_r\leq L_r$ for  $r=1,\dots,\widehat{R}$, we also have that $\sum\limits_{r=1}^{\widehat{R}}\widehat{L}_r\leq\sum\limits_{r=1}^R L_r$. Hence  $\sum\limits_{r=1}^{\widehat{R}}\widehat{L}_r=\sum\limits_{r=1}^R L_r$ which is possible only if $\widehat{R}=R$ and $\widehat{L}_r = L_r$ for all $r$.  Multiplying 
	\cref{eq:T3T3} by $\widehat{\mathbf C}^{-T}$ we obtain that
	\begin{equation}
	[\mathbf a_1\otimes\mathbf B_1\ \dots\ \mathbf a_R\otimes\mathbf B_R]\mathbf G
	=[\widehat{\mathbf a}_1\otimes\widehat{\mathbf B}_1\ \dots\ \widehat{\mathbf a}_R\otimes\widehat{\mathbf B}_R],
	\label{eq:bothsides}
	\end{equation}
	where $\mathbf G=\mathbf C^T\widehat{\mathbf C}^{-T}$ is a $\sum L_r\times \sum L_r$ nonsingular matrix.
	Let $\mathbf g_1=[\mathbf g_{1,1}^T\ \dots\ \mathbf g_{1,R}^T]^T$ and $\mathbf g_2=[\mathbf g_{2,1}^T\ \dots\ \mathbf g_{2,R}^T]^T$ be  columns of $\mathbf G$, where  $\mathbf g_{1,r},\mathbf g_{2,r} \in\fF^{L_r}$.
	Then, by  assumption \cref{eq:U2}, at most one of the vectors $\mathbf g_{1,1},\dots,\mathbf g_{1,R}$ is nonzero.
	Since $\mathbf G$ is nonsingular we have that exactly one of the vectors $\mathbf g_{1,1},\dots,\mathbf g_{1,R}$ is nonzero. Let $\mathbf g_{1,i}\ne\mathbf 0$.
	Similarly, we also have that exactly one of the vectors $\mathbf g_{2,1},\dots,\mathbf g_{2,R}$ is nonzero.
	 Let $\mathbf g_{2,j}\ne\mathbf 0$.  
	  We claim that if  $\mathbf g_1$ and $\mathbf g_2$ are columns of the same block $\mathbf G_r\in\fF^{\sum L_r\times L_r}$ of $\mathbf G=[\mathbf G_1\ \dots\ \mathbf G_R]$, then $i=j$. Indeed, by \cref{eq:T3T3},
	 \begin{equation}
	 (\mathbf a_i\otimes \mathbf B_i)\mathbf g_{1,i} = \widehat{\mathbf a}_r\otimes \mathbf y_1 \ \text{ and }\ 
	 (\mathbf a_j\otimes \mathbf B_j)\mathbf g_{2,j} = \widehat{\mathbf a}_r\otimes \mathbf y_2,
	 \label{eq:aiajar}
	 \end{equation}
	 where $\mathbf y_1$ and $\mathbf y_2$ are columns of $\widehat{\mathbf B}_r$. It follows from \cref{eq:aiajar} that
	 $\mathbf a_i$ and $\mathbf a_j$ are proportional to $\widehat{\mathbf a}_r$. Since, by \cref{eq:kA2andfcr}, $k_{\mathbf A}\geq 2$, it follows that $i=j$. Thus, in the partition $\mathbf G_r=[\mathbf G_{1r}^T\ \dots\ \mathbf G_{Rr}^T]^T$ with $\mathbf G_{1r}\in\fF^{L_1\times L_r},\dots\,\mathbf G_{Rr}\in\fF^{L_R\times L_r}$, exactly one block is nonzero.  Since
	 $\mathbf G=[\mathbf G_1\ \dots\ \mathbf G_R]$   is nonsingular,  it follows that the nonzero block of $\mathbf G_r$ is square, i.e. $L_r\times L_r$, and nonsingular, $r=1,\dots,R$.
	 Hence $\mathbf G$ can be reduced to block diagonal form by permuting its  blocks $\mathbf G_1,\dots,\mathbf G_R$. Let $\mathbf P$ denote a  permutation matrix such that
	 $\mathbf G\mathbf P = \Bdiag(\tilde{\mathbf G}_{11},\dots,\tilde{\mathbf G}_{RR})$ with nonsingular $\tilde{\mathbf G}_{rr}\in\fF^{L_r\times L_r}$. It is clear that multiplication of the right hand side 
	 of \cref{eq:bothsides} by $\mathbf P$ corresponds to a permutation  of the summands in $\mathcal T = \sum_{r=1}^{ R}\widehat{\mathbf a}_r\circ(\widehat{\mathbf B}_r\widehat{\mathbf C}_r^T)$. Thus,  the terms in $\mathcal T = \sum_{r=1}^{ R}\widehat{\mathbf a}_r\circ(\widehat{\mathbf B}_r\widehat{\mathbf C}_r^T)$  can can be permuted so that  \cref{eq:bothsides} holds for $\mathbf G=\Bdiag(\tilde{\mathbf G}_{11},\dots,\tilde{\mathbf G}_{RR})$. Hence \cref{eq:bothsides} reduces to the  $R$ identities
	 $$
	 (\mathbf a_r\otimes\mathbf B_r)\tilde{\mathbf G}_{rr} = \widehat{\mathbf a}_r\otimes\widehat{\mathbf B}_r,\qquad r=1,\dots,R
	 $$
	 which imply that 
	 $\widehat{\mathbf a}_r$ is proportional to $\mathbf a_r$ and that the column space of $\widehat{\mathbf B}_r$ coincides with the column space of
	 $\mathbf B_r$. In other words, we have shown that  $\widehat{\mathbf a}_r$ and $\widehat{\mathbf B}_r$ in  $\mathcal T = \sum_{r=1}^{ R}\widehat{\mathbf a}_r\circ(\widehat{\mathbf B}_r\widehat{\mathbf C}_r^T)$ can be chosen to be equal to $\mathbf a_r$ and $\mathbf B_r$, respectively.
	 Since the matrix $[\mathbf a_1\otimes\mathbf B_1\ \dots\ \mathbf a_R\otimes\mathbf B_R]$ has full column rank, we also have from \cref{eq:T3T3} that $\widehat{\mathbf C}=\mathbf C$.
\end{proof}
		\begin{proof}[Proof of \cref{thm:maingenStrassen}]
If $I\geq R$, then the result follows from \cref{thm:genericknown}. So, throughout the proof we assume that $I<R$.

By definition set
\begin{equation}
W_{\mathbf A,\mathbf B,\mathbf C}:=\{(\mathbf A,\mathbf B,\mathbf C):\
\text{ the assumptions in  \cref{Thm:determgen} do not hold} \}.\label{eq:mu=0}
\end{equation}
We show that  $\mu\{W_{\mathbf A,\mathbf B,\mathbf C}\}=0$, where $\mu$ denotes a measure on $\fF^{I\times R}\times\fF^{J\times\sum L_r}\times \fF^{K\times\sum L_r}$ that is absolutely continuous with respect to the Lebesgue measure. Obviously, $W_{\mathbf A,\mathbf B,\mathbf C}= W_{\mathbf C}\cup W_{\mathbf A,\mathbf B}$, where
\begin{align*}
W_{\mathbf C}&:=\{(\mathbf A,\mathbf B,\mathbf C):\ \mathbf C\
\text{does not have full column rank} \} \ \text{  and }\\
W_{\mathbf A,\mathbf B}&:=\{(\mathbf A,\mathbf B,\mathbf C):\
\text{assumption \cref{eq:U2} does not hold} \}.
\end{align*}
It is clear that, by the assumption $\sum L_r\leq K$ in \cref{eq:sumLrleqI1J1}, $\mu\{W_{\mathbf C}\}=0$, so we need to show that $\mu\{W_{\mathbf A,\mathbf B}\}=0$. 
Since \cref{eq:U2} does not depend on $\mathbf C$, we have  $W_{\mathbf A,\mathbf B}=W\times \fF^{J\times \sum L_r}$, where
$$
W:=\{(\mathbf A,\mathbf B):\ \text{assumption \cref{eq:U2} does not hold} \}
$$
is a subset of  $\fF^{I\times R}\times\fF^{J\times\sum L_r}$. From  Fubini’s theorem \cite[Theorem C, p.148]{halmos1974measure} it follows that
$\mu\{W_{\mathbf A,\mathbf B}\}=0$   if and only if $\mu_1\{W\}=0$, where
$\mu_1$ is a measure on $\fF^{I\times R}\times\fF^{J\times\sum L_r}$ that is absolutely continuous with respect to the Lebesgue measure.
Since $R>I$ and $J\geq L_{R-1}+L_R$ ($=\max\limits_{1\leq i<j\leq R}(L_i+L_j)$), it follows that 
$$
\mu_1\{(\mathbf A,\mathbf B ):\ k_{\mathbf A}<I\ \text{ or }\ k_{\mathbf B}'<2\}=0.
$$
Hence we can assume w.l.o.g. that
\begin{equation}
W = \{(\mathbf A,\mathbf B):\ \text{assumption \cref{eq:U2} does not hold, } k_{\mathbf A}=I, \text{ and } k_{\mathbf B}'\geq 2 \}.
\label{eq:defW}
\end{equation}
The remaining part of the proof is based on a well-known algebraic geometry based method. In \cite{AlgGeom1} we have   explained  the method and used it 
to study generic uniqueness of CPD and INDSCAL. We have explained in \cite{AlgGeom1} that to prove that $\mu_1\{W\}=0$, it is sufficient to show that for $\fF=\mathbb C$ the Zariski closure $\overline{W}$ of $W$  is not the entire space $\mathbb C^{I\times R}\times\mathbb C^{J\times\sum L_r}$, which is equivalent to $\dim \overline{W}\leq IR + J\sum L_r-1$.
To estimate the dimension of $\overline{W}$ we will take the following four steps (for a detailed explanation of the steps and examples see \cite{AlgGeom1};
also, for $L_1=\dots=L_r=1$, the overall derivation is similar to the proof of Lemma 2.5 in \cite{Strassen1983}). To simplify the presentation of the steps, we   omit mentioning the isomorphism between $\mathbb C^{k\times l}\times \mathbb C^{m\times n}$ and $\mathbb C^{kl+mn}$; for instance, we consider $W$  as a subset of  $\mathbb C^{d_1}$, where $d_1=IR + J\sum L_r$.  
In the first step we parameterize $W$. Namely, we construct a subset $\widehat{Z}\subseteq \mathbb C^{d_1+I+J+\sum L_r}$  and a projection $\pi: \mathbb C^{d_1+I+J+\sum L_r}\rightarrow \mathbb C^{d_1}$ such that $W=\pi(\widehat{Z})$. 
 In step $2$ we represent $\widehat{Z}$ as a finite union of subsets $Z_{r_1,\dots,r_I}^{l_1,\dots,l_I}$ such that each $ Z_{r_1,\dots,r_I}^{l_1,\dots,l_I}$ is the image of a Zariski open subset of $\mathbb C^{d_1-d_2+1}$ under a rational mapping, where  $d_2:=(I-1)(J-1)-\sum L_r$ is nonnegative by \cref{eq:sumLrleqI1J1}.  In step $3$ we show that $\dim(Z_{r_1,\dots,r_I}^{l_1,\dots,l_I})=d_1-d_2+1$ and that $\dim(\pi(Z_{r_1,\dots,r_I}^{l_1,\dots,l_I}))\leq d_1-d_2 -1$. Finally, in step $4$ we conclude that $\dim \overline{W} = \dim(\pi(\widehat{Z}))\leq \max(\dim (\pi(Z_{r_1,\dots,r_I}^{l_1,\dots,l_I})))=d_1-d_2-1\leq d_1-1$. 
		
\textit{Step 1.} Let $\omega(\mathbf g_1,\dots,\mathbf g_R)$  denote the number of nonzero vectors in the set $\{\mathbf g_1,\dots,\mathbf g_R\}$.
 We claim that if assumption \cref{eq:U2} does not hold, $k_{\mathbf A}=I$, and $k_{\mathbf B}'\geq 2$, then
 $\omega(\mathbf g_1,\dots,\mathbf g_R)\geq I$. Indeed, if $I>\omega(\mathbf g_1,\dots,\mathbf g_R)\geq 2$, then by the Frobenius inequality, 
\begin{align*}
1\geq r_{\mathbf a_1(\mathbf B_1\mathbf g_1)^T+\dots +\mathbf a_R(\mathbf B_R\mathbf g_R)^T}=
r_{\mathbf A \Bdiag(\mathbf g_1^T,\dots,\mathbf g_R^T)\mathbf B^T}\geq\\
r_{\mathbf A \Bdiag(\mathbf g_1^T,\dots,\mathbf g_R^T)} + r_{\Bdiag(\mathbf g_1^T,\dots,\mathbf g_R^T)\mathbf B^T}-
r_{ \Bdiag(\mathbf g_1^T,\dots,\mathbf g_R^T) }=\\ \omega(\mathbf g_1,\dots,\mathbf g_R) + r_{[\mathbf B_1\mathbf g_1\ \dots\ \mathbf B_R\mathbf g_r]}-
\omega(\mathbf g_1,\dots,\mathbf g_R)\geq 2,
\end{align*}
which is a contradiction.
  Hence, $W$ in \cref{eq:defW} can be expressed as
\begin{gather}
W = \Big\{(\mathbf A,\mathbf B):\ 
 \text{there exist } \mathbf g_1\in\mathbb C^{L_1},\dots,\mathbf g_R\in\mathbb C^{L_R},\  \mathbf z\in\mathbb C^I, \text{ and }\mathbf y\in\mathbb C^J\nonumber\\
\text{ such that } \ \mathbf a_1(\mathbf B_1\mathbf g_1)^T+\dots +\mathbf a_R(\mathbf B_R\mathbf g_R)^T =\mathbf z\mathbf y^T,\label{eq:115}\\
k_{\mathbf A}=I,\ k_{\mathbf B}'\geq 2, \text{ and }\label{eq:116}\\
\omega(\mathbf g_1,\dots,\mathbf g_R)\geq I\Big\}.\label{eq:117}
\end{gather}
It is clear that $W=\pi(\widehat Z)$, where
\begin{equation*}
\widehat{Z} = \Big\{
(\mathbf A,\mathbf B,\mathbf g_1,\dots,\mathbf g_R,\mathbf z,\mathbf y): \cref{eq:115}\text{--} \cref{eq:117}\ \text{hold}\Big\}
\end{equation*}
is a subset of $\mathbb C^{I\times R}\times \mathbb C^{J\times \sum L_r}\times \mathbb C^{L_1}\times\dots\times\mathbb C^{L_R}\times\mathbb C^I\times\mathbb C^J$ and $\pi$ is the projection onto the first two factors
$$
\pi: \mathbb C^{I\times R}\times \mathbb C^{J\times \sum L_r}\times \mathbb C^{L_1}\times\dots\times\mathbb C^{L_R}\times\mathbb C^I\times\mathbb C^J\rightarrow 
\mathbb C^{I\times R}\times \mathbb C^{J\times \sum L_r}.
$$

\textit{Step 2.} Let $g_{l,r}$ denote the $l$th entry of $\mathbf g_r$.
Since
$$
\omega(\mathbf g_1,\dots,\mathbf g_R)\geq I \Leftrightarrow \mathbf g_{r_1}\ne\mathbf 0,\dots,\mathbf g_{r_I}\ne\mathbf 0\ \text{for some }1\leq r_1<\dots<r_I\leq R
$$
and since
$$
\mathbf g_{r_1}\ne\mathbf 0,\dots,\mathbf g_{r_I}\ne\mathbf 0 \Leftrightarrow g_{l_1,r_1}\cdots g_{l_I,r_I}\ne 0 \text{ for some } 1\leq l_{1}\leq L_{r_1},\dots,
1\leq l_{I}\leq L_{r_I}, 
$$
we obtain that
\begin{multline*}
\widehat{Z}=\bigcup\limits_{1\leq r_1<\dots<r_I\leq R}\ \bigcup\limits_{1\leq l_{1}\leq L_{r_1},\dots,
	1\leq l_{I}\leq L_{r_I}} \\
\Big\{
(\mathbf A,\mathbf B,\mathbf g_1,\dots,\mathbf g_R,\mathbf z,\mathbf y): \cref{eq:115}\text{--} \cref{eq:116}\ \text{hold and } g_{l_1,r_1}\cdots g_{l_I,r_I}\ne 0\Big\}.
\end{multline*}
Let $\mathbf A_{r_1,\dots,r_I}$ denote the submatrix of $\mathbf A$ formed by columns $r_1,\dots,r_I$.
Since \cref{eq:116} is more restrictive than the condition $\det(\mathbf A_{r_1,\dots,r_I})\ne 0$, it follows that
$$
\widehat{Z}\subseteq\bigcup\limits_{1\leq r_1<\dots<r_I\leq R}\ \bigcup\limits_{1\leq l_{1}\leq L_{r_1},\dots,
	1\leq l_{I}\leq L_{r_I}} Z_{r_1,\dots,r_I}^{l_1,\dots,l_I},
$$
 where
\begin{multline*}
Z_{r_1,\dots,r_I}^{l_1,\dots,l_I} = \\
\Big\{
(\mathbf A,\mathbf B,\mathbf g_1,\dots,\mathbf g_R,\mathbf z,\mathbf y): \cref{eq:115}\ \text{holds, }\ 
\det(\mathbf A_{r_1,\dots,r_I})\ne 0,\ g_{l_1,r_1}\cdots g_{l_I,r_I}\ne 0\Big\}.
\end{multline*}
We show that each subset $Z_{r_1,\dots,r_I}^{l_1,\dots,l_I}$ can be represented as  the image of a Zariski open subset
$Y_{r_1,\dots,r_I}^{l_1,\dots,l_I}$  of  $\mathbb C^{IR+J\sum L_r+\sum L_r-IJ+I+J}$ under a rational map
$\phi_{r_1,\dots,r_I}^{l_1,\dots,l_I}$,
$Z_{r_1,\dots,r_I}^{l_1,\dots,l_I}=\phi_{r_1,\dots,r_I}^{l_1,\dots,l_I}(Y_{r_1,\dots,r_I}^{l_1,\dots,l_I})$. To simplify the presentation we restrict ourselves to the case $r_1=1,\dots,r_I=I$ and $l_1=\dots=l_I=1$. The general case can be proved in the same way. Let $\mathbf A=[\mathbf A_1\  \mathbf A_2]$ with
$\mathbf A_1\in \fF^{I\times I}$ and $\mathbf A_2\in \fF^{I\times (R-I)}$, so that $\mathbf A_1=\mathbf A_{1\dots 1}$. By \cref{eq:115},
\begin{equation}
[\mathbf B_1\mathbf g_1\ \dots\ \mathbf B_I\mathbf g_I] = [\mathbf y\mathbf z^T-[\mathbf B_{I+1}\mathbf g_{I+1}\ \dots\ \mathbf B_R\mathbf g_R]\mathbf A_2^T]\mathbf A_1^{-T}.\label{eq:B1g1etc}
\end{equation}
Let $\mathbf B_r = [\mathbf b_{1,r}\ \mathbf B_{2,r}]$ and $\mathbf g_r=[g_{1,r}\ \mathbf g_{2,r}^T]^T$, so 
\begin{equation}
[\mathbf B_1\mathbf g_1\ \dots\ \mathbf B_I\mathbf g_I]=[\mathbf b_{1,1}\dots\ \mathbf b_{1,I}] \diag(g_{1,1},\dots,g_{1,I})+[\mathbf B_{2,1}\mathbf g_{2,1}\ \dots\ \mathbf B_{2,I}\mathbf g_{2,I}].\label{eq:BBB}
\end{equation}
 Then, by \cref{eq:B1g1etc,eq:BBB},
\begin{equation}
\begin{split}
[\mathbf b_{1,1}\dots\ \mathbf b_{1,I}] &= 
\big(
[\mathbf y\mathbf z^T-[\mathbf B_{I+1}\mathbf g_{I+1}\ \dots\ \mathbf B_R\mathbf g_R]\mathbf A_2^T]\mathbf A_1^{-T}-\\
&\qquad\qquad\qquad[\mathbf B_{2,1}\mathbf g_{2,1}\ \dots\ \mathbf B_{2,I}\mathbf g_{2,I}]\big)\diag(g_{1,1}^{-1},\dots,g_{1,I}^{-1}),
\end{split}\label{eq:expressionrat}
\end{equation}
so the entries of $\mathbf b_{1,1}\dots\ \mathbf b_{1,I}$ are rational functions of the entries of 
$\mathbf A$, $\mathbf B_{2,1}, \dots,\mathbf B_{2,I}$, $\mathbf B_{I+1},\dots,\mathbf B_R$, $\mathbf g_1,\dots,\mathbf g_R$, $\mathbf z$, and $\mathbf y$.
It is clear  that
\begin{multline*}
Y_{1,\dots,I}^{1,\dots,1}: = 
\Big\{
([\mathbf A_1\ \mathbf A_2],[\mathbf B_{2,1}\ \dots\ \mathbf B_{2,I}\ \mathbf B_{I+1}\ \dots\ \mathbf B_R],\mathbf g_1,\dots,\mathbf g_R,\mathbf z,\mathbf y):\\
\det(\mathbf A_{1})\ne 0,\ g_{1,1}\cdots g_{1,I}\ne 0 \Big\}
\end{multline*}
is a Zariski open subset of
$\mathbb C^{I\times R}\times \mathbb C^{J\times \left(\sum\limits_{r=1}^I (L_r-1) + \sum\limits_{r=I+1}^R L_r\right)}\times \mathbb C^{L_1}\times\dots\times\mathbb C^{L_R}\times\mathbb C^I\times\mathbb C^J$ and that
$Z_{1,\dots,I}^{1,\dots,1}=\phi_{1,\dots,I}^{1,\dots,1}(Y_{1,\dots,I}^{1,\dots,1})$, where the rational mapping
\begin{equation*}
\begin{split}
\phi_{1,\dots,I}^{1,\dots,1}:\ &([\mathbf A_1\ \mathbf A_2],[\mathbf B_{2,1}\ \dots\ \mathbf B_{2,I}\ \mathbf B_{I+1}\ \dots\ \mathbf B_R],\mathbf g_1,\dots,\mathbf g_R,\mathbf z,\mathbf y)\rightarrow\\
&([\mathbf A_1\ \mathbf A_2],[[\mathbf b_{1,1}\ \mathbf B_{2,1}]\ \dots\ [\mathbf b_{1,I}\ \mathbf B_{2,I}]\ \mathbf B_{I+1}\ \dots\ \mathbf B_R],\mathbf g_1,\dots,\mathbf g_R,\mathbf z,\mathbf y)=\\
&(\mathbf A,\mathbf B,\mathbf g_1,\dots,\mathbf g_R,\mathbf z,\mathbf y)
\end{split} 
\end{equation*}
is defined by \cref{eq:expressionrat}.

\textit{Step 3.} In this step we prove that $\dim(\pi(Z_{r_1,\dots,r_I}^{l_1,\dots,l_I}))\leq IR+J\sum L_r-1$. W.l.o.g. we  restrict ourselves again to the case 
$r_1=1,\dots,r_I=I$ and $l_1=\dots=l_I=1$.
Since the dimension of the image $\phi_{1,\dots,I}^{1,\dots,1}(Y_{1,\dots,I}^{1,\dots,1})$ cannot exceed the dimension of
$Y_{1,\dots,I}^{1,\dots,1}$ and since $Y_{1,\dots,I}^{1,\dots,1}$ is a Zariski open subset we have
\begin{equation}
\dim (Z_{1,\dots,I}^{1,\dots,1}) \leq\footnote{\tcr{It can be proved that actually ``$=$'' holds but in the sequel we will only need ``$\leq$''.}} \dim(Y_{1,\dots,I}^{1,\dots,1}) = IR+J(-I+\sum\limits_{r=1}^R L_r)+L_1+\dots+L_r +I+J.
\label{eq:dimZ}
\end{equation}
Let $f:\ Z_{1,\dots,I}^{1,\dots,1}\rightarrow \mathbb C^{I\times R}\times \mathbb C^{J\times \sum L_r} $ denote the restriction of $\pi$ to 
$ Z_{1,\dots,I}^{1,\dots,1}$:
$$
f:\ (\mathbf A,\mathbf B,\mathbf g_1,\dots,\mathbf g_R,\mathbf z,\mathbf y)\rightarrow (\mathbf A,\mathbf B),\qquad
(\mathbf A,\mathbf B,\mathbf g_1,\dots,\mathbf g_R,\mathbf z,\mathbf y)\in 
Z_{1,\dots,I}^{1,\dots,1}.
$$
From the definition of $Z_{1,\dots,I}^{1,\dots,1}$
it follows that if $(\mathbf A,\mathbf B,\mathbf g_1,\dots,\mathbf g_R,\mathbf z,\mathbf y)\in 
Z_{1,\dots,I}^{1,\dots,1}$, then
$(\mathbf A,\mathbf B,\alpha\beta\mathbf g_1,\dots,\alpha\beta\mathbf g_R,\alpha\mathbf z,\beta\mathbf y)\in 
Z_{1,\dots,I}^{1,\dots,1}$ for any nonzero $\alpha,\beta\in\mathbb C$. Hence for any $(\mathbf A,\mathbf B)\in f(Z_{1,\dots,I}^{1,\dots,1})$ we have that
$$
f^{-1}((\mathbf A,\mathbf B))\supseteq \{(\mathbf A,\mathbf B,\alpha\beta\mathbf g_1,\dots,\alpha\beta\mathbf g_R,\alpha\mathbf z,\beta\mathbf y):\ \alpha\ne 0,\ \beta\ne 0\},
$$
implying that 
\begin{equation}
\dim (f^{-1}(\mathbf A,\mathbf B))\geq \dim \{(\alpha\mathbf z,\beta\mathbf y):\ \alpha\ne 0,\ \beta\ne 0\}=2, 
\label{eq:dimfminusone}
\end{equation}
where $f^{-1}(\cdot)$ denotes the preimage.
From the  fiber dimension theorem \cite[Theorem 3.7, p. 78]{Perrin2008}, \cref{eq:dimZ}, \cref{{eq:dimfminusone}},  and the assumption
$\sum  L_r\leq (I-1)(J-1)$ in \cref{eq:sumLrleqI1J1}
 it follows that 
\begin{multline*}
\dim(f(Z_{1,\dots,I}^{1,\dots,1}))\leq \dim(Z_{1,\dots,I}^{1,\dots,1})-\dim (f^{-1}(\mathbf A,\mathbf B))=\\
IR+J\sum\limits_{r=1}^R L_r-1+\sum\limits_{r=1}^R L_r-(I-1)(J-1)\l\leq IR+J\sum\limits_{r=1}^R L_r-1.
\end{multline*}
Since $\pi(Z_{1,\dots,I}^{1,\dots,1})=f(Z_{1,\dots,I}^{1,\dots,1})$, we have that
$\dim(\pi(Z_{1,\dots,I}^{1,\dots,1}))\leq IR+J\sum\limits_{r=1}^R L_r-1$.

\textit{Step 4.} Finally, we  have that
$
\dim \overline{W} = \dim(\pi(\widehat{Z}))\leq \max(\dim (\pi(Z_{r_1,\dots,r_I}^{l_1,\dots,l_I})))\leq IR+J\sum  L_r-1. 
$
\end{proof}
\bibliographystyle{siamplain}
\bibliography{BTDpaper}

\begin{thebibliography}{10}

\bibitem{realvscomplex}
{\sc E.~Angelini, C.~Bocci, and L.~Chiantini}, {\em Real identifiability vs.
  complex identifiability}, Linear and Multilinear Algebra, 66 (2018),
  pp.~1257--1267.

\bibitem{NVNagain}
{\sc C.~Beltr\'{a}n, P.~Breiding, and N.~Vannieuwenhoven}, {\em Pencil-based
  algorithms for tensor rank decomposition are not stable}, arXiv:1807.04159,
  (2018).

\bibitem{Bocci2013}
{\sc C.~Bocci, L.~Chiantini, and G.~Ottaviani}, {\em Refined methods for the
  identifiability of tensors}, Ann. Mat. Pura. Appl., 193 (2014),
  pp.~1691--1702.

\bibitem{Bro2009}
{\sc R.~Bro, R.~A. Harshman, N.~D. Sidiropoulos, and M.~E. Lundy}, {\em
  Modeling multi-way data with linearly dependent loadings}, Journal of
  Chemometrics, 23 (2009), pp.~324--340.

\bibitem{Cay2017SIMAX}
{\sc Y.~Cai and C.~Liu}, {\em An algebraic approach to nonorthogonal general
  joint block diagonalization}, SIAM J. Matrix Anal. Appl., 38 (2017),
  pp.~50--71.

\bibitem{Cherrak2017}
{\sc O.~Cherrak, H.~Ghennioui, N.~Thirion-Moreau, and E.~H. Abarkan}, {\em
  Preconditioned optimization algorithms solving the problem of the non unitary
  joint block diagonalization: application to blind separation of convolutive
  mixtures}, Multidim. Syst. Sign. Process., 29 (2018), pp.~1373--1396.

\bibitem{Nick2014}
{\sc L.~Chiantini, G.~Ottaviani, and N.~Vannieuwenhoven}, {\em An algorithm for
  generic and low-rank specific identifiability of complex tensors}, SIAM J.
  Matrix Anal. Appl., 35 (2014), pp.~1265--1287.

\bibitem{2016NVNeffective}
{\sc L.~Chiantini, G.~Ottaviani, and N.~Vannieuwenhoven}, {\em Effective
  criteria for specific identifiability of tensors and forms}, SIAM J. Matrix
  Anal. Appl., 38 (2017), pp.~656--681.

\bibitem{LievenCichocki2013}
{\sc A.~Cichocki, D.~Mandic, C.~Caiafa, A.-H. Phan, G.~Zhou, Q.~Zhao, and
  L.~De~Lathauwer}, {\em Tensor decompositions for signal processing
  applications. {F}rom two-way to multiway component analysis}, IEEE Signal
  Process. Mag., 32 (2015), pp.~145--163.

\bibitem{ComoJ10}
{\em Handbook of Blind Source Separation, Independent Component {A}nalysis and
  {A}pplications}, Academic Press, Oxford, UK, 2010.

\bibitem{DeLathauwer2006}
{\sc L.~De~Lathauwer}, {\em A link between the canonical decomposition in
  multilinear algebra and simultaneous matrix diagonalization}, SIAM J. Matrix
  Anal. Appl., 28 (2006), pp.~642--666.

\bibitem{LDLBTDPartII}
{\sc L.~De~Lathauwer}, {\em Decompositions of a higher-order tensor in block
  terms --- {P}art {II}: {D}efinitions and uniqueness}, SIAM J. Matrix Anal.
  Appl., 30 (2008), pp.~1033--1066.

\bibitem{LievenLrLr1}
{\sc L.~De~Lathauwer}, {\em Blind separation of exponential polynomials and the
  decomposition of a tensor in rank-$({{L}}_r,{{L}}_r,1)$ terms}, SIAM J.
  Matrix Anal. Appl., 32 (2011), pp.~1451--1474.

\bibitem{LDL2008}
{\sc L.~De~Lathauwer and A.~de~Baynast}, {\em Blind deconvolution of {DS-CDMA}
  signals by means of decomposition in rank-$(1,{{L}},{{L}})$ terms}, IEEE
  Trans. Signal Process., 56 (2008), pp.~1562--1571.

\bibitem{Otto2016}
{\sc O.~Debals, M.~Van~Barel, and L.~De~Lathauwer}, {\em L\"{o}wner-based blind
  signal separation of rational functions with applications}, IEEE Trans.
  Signal Process., 64 (2016), pp.~1909--1918.

\bibitem{PartI}
{\sc I.~Domanov and L.~De~Lathauwer}, {\em {O}n the uniqueness of the canonical
  polyadic decomposition of third-order tensors --- {P}art {I}: {B}asic results
  and uniqueness of one factor matrix}, SIAM J. Matrix Anal. Appl., 34 (2013),
  pp.~855--875.

\bibitem{PartII}
{\sc I.~Domanov and L.~De~Lathauwer}, {\em {O}n the uniqueness of the canonical
  polyadic decomposition of third-order tensors --- {P}art {II}: {O}verall
  uniqueness}, SIAM J. Matrix Anal. Appl., 34 (2013), pp.~876--903.

\bibitem{LinkGEVD}
{\sc I.~Domanov and L.~De~Lathauwer}, {\em Canonical polyadic decomposition of
  third-order tensors: reduction to generalized eigenvalue decomposition}, SIAM
  J. Matrix Anal. Appl., 35 (2014), pp.~636--660.

\bibitem{AlgGeom1}
{\sc I.~Domanov and L.~De~Lathauwer}, {\em Generic uniqueness conditions for
  the canonical polyadic decomposition and {INDSCAL}}, SIAM J. Matrix Anal.
  Appl., 36 (2015), pp.~1567--1589.

\bibitem{JSTSP2016IDLDL}
{\sc I.~Domanov and L.~De~Lathauwer}, {\em Generic uniqueness of a structured
  matrix factorization and applications in blind source separation}, IEEE J.
  Sel. Topics Signal Process., 10 (2016), pp.~701--711.

\bibitem{BTD1paper}
{\sc I.~Domanov, N.~Vervliet, and L.~De~Lathauwer}, {\em Decomposition of a
  tensor into multilinear rank-$({{M}}_r,{{N}}_r,\cdot)$ terms}, Internal
  Report 18-51, ESAT-STADIUS, KU Leuven (Leuven, Belgium),  (2018).

\bibitem{GolubVanLoan}
{\sc G.~H. Golub and C.~F. Van~Loan}, {\em {M}atrix {C}omputations}, Johns
  Hopkins University Press, Baltimore, 4th~ed., 2013.

\bibitem{halmos1974measure}
{\sc P.~R. Halmos}, {\em Measure theory}, Springer-Verlag, New-York, 1974.

\bibitem{HornJohnson}
{\sc R.~A. Horn and C.~R. Johnson}, {\em {M}atrix {A}nalysis}, Cambridge
  University Press, Cambridge, 1990.

\bibitem{Kolda}
{\sc T.~G. Kolda and B.~W. Bader}, {\em Tensor decompositions and
  applications}, SIAM Review, 51 (2009), pp.~455--500.

\bibitem{Liu2012}
{\sc X.~Liu, T.~Jiang, L.~Yang, and H.~Zhu}, {\em Paralind-based
  identifiability results for parameter estimation via uniform linear array},
  EURASIP Journal on Advances in Signal Processing, 2012 (2012), p.~154.

\bibitem{Mueller_Smith_2016}
{\sc C.~{Mueller-Smith} and P.~Spasojevi\'c}, {\em Column-wise symmetric block
  partitioned tensor decomposition}, in 2016 IEEE International Conference on
  Acoustics, Speech and Signal Processing (ICASSP), 2016, pp.~2956--2960.

\bibitem{lowrankbasis2017}
{\sc Y.~Nakatsukasa, , T.~Soma, and A.~Uschmajew}, {\em Finding a low-rank
  basis in a matrix subspace}, Mathematical Programming, 162 (2017),
  pp.~325--361.

\bibitem{Nion_LDL_LL1}
{\sc D.~Nion and L.~De~Lathauwer}, {\em A link between the decomposition of a
  third-order tensor in rank-$({L},{L},1)$ terms and joint block
  diagonalization}, in 2009 3rd IEEE International Workshop on Computational
  Advances in Multi-Sensor Adaptive Processing (CAMSAP), 2009, pp.~89--92.

\bibitem{Perrin2008}
{\sc D.~Perrin}, {\em Algebraic Geometry. An Introduction}, Springer-Verlag
  London, 2008.

\bibitem{TensRev2017}
{\sc N.~D. Sidiropoulos, L.~De~Lathauwer, X.~Fu, K.~Huang, E.~E. Papalexakis,
  and C.~Faloutsos}, {\em Tensor decomposition for signal processing and
  machine learning}, IEEE Trans. Signal Process., 65 (2017), pp.~3551--3582.

\bibitem{MikaelCoupledPII}
{\sc M.~S{\o}rensen, I.~Domanov, and L.~De~Lathauwer}, {\em Coupled canonical
  polyadic decompositions and (coupled) decompositions in multilinear
  rank-$({{L}}_{r_n},{{L}}_{r_n},1)$ terms---{P}art {II}: {A}lgorithms}, SIAM
  J. Matrix Anal. Appl., 36 (2015), pp.~1015--1045.

\bibitem{Strassen1983}
{\sc V.~Strassen}, {\em Rank and optimal computation of generic tensors},
  Linear Algebra Appl., 52--53 (1983), pp.~645--685.

\bibitem{TenBerge2004}
{\sc J.~M.~F. ten Berge}, {\em Partial uniqueness in {CANDECOMP/PARAFAC}},
  Journal of Chemometrics, 18 (2004), pp.~12--16.

\bibitem{VanDerVeen1996}
{\sc A.-J. Van Der~Veen and A.~Paulraj}, {\em An analytical constant modulus
  algorithm}, IEEE Trans. Signal Process., 44 (1996), pp.~1136--1155.

\bibitem{tensorlab3.0}
{\sc N.~Vervliet, O.~Debals, L.~Sorber, M.~Van~Barel, and L.~De~Lathauwer},
  {\em Tensorlab 3.0}, Mar. 2016, \url{https://www.tensorlab.net}.
\newblock Available online.

\bibitem{Yang2014}
{\sc M.~Yang}, {\em On partial and generic uniqueness of block term tensor
  decompositions}, Annali Dell'Universita'Di Ferrara, 60 (2014), pp.~465--493.

\bibitem{StrangeLL1paper}
{\sc M.~Yang, D.~Che, W.~Liu, Z.~Kang, C.~Peng, M.~Xiao, and Q.~Cheng}, {\em On
  identifiability of 3-tensors of multilinear rank $(1,{{L}}_r, {{L}}_r)$}, Big
  Data \& Information Analytics, 1 (2016), pp.~391--401.

\end{thebibliography}
\end{document}